\documentclass{amsart}
\usepackage[utf8]{inputenc}

\usepackage[mathscr]{euscript}
\usepackage{amssymb}
\usepackage{amsmath}
\usepackage{amscd}
\usepackage{url}
\usepackage{bbm}
\usepackage{ytableau}
\usepackage[colorlinks]{hyperref}
\hypersetup{%
  colorlinks = true,
  linkcolor  = black
}

\makeatletter
\pdfstringdefDisableCommands{\let\HyPsd@CatcodeWarning\@gobble}
\makeatother


\newtheorem{prop}{Proposition}[section]

\newtheorem{ques}[prop]{Question}
\newtheorem{lem}[prop]{Lemma}
\newtheorem{thm}[prop]{Theorem}
\newtheorem{cor}[prop]{Corollary}
\theoremstyle{definition}
\newtheorem{defi}[prop]{Definition}
\newtheorem{exo}[prop]{Example}
\newtheorem{remar}[prop]{Remark}

\newcommand{\Aut}{{\mathrm {Aut}}}


\def\id{\mathop{\mathrm{ id}}\nolimits}

\newcommand{\ord}{{\mathrm {ord}}}

\newcommand{\Hom}{{\mathrm {Hom}}}

\newcommand{\Nm}{{\mathrm {Nm}}}

\newcommand{\tr}{{\mathrm {tr}}}

\newcommand{\Sym}{{\mathrm {Sym}}}

\newcommand{\Frob}{{\mathrm {Frob}}}

\newcommand{\Gal}{\mathrm {Gal}}

\newcommand{\diag}{\mathrm{diag}}

\newcommand{\A}{{\mathbb A}}
\newcommand{\CC}{{\mathbb C}}
\newcommand{\C}{{\mathbb C}}
\newcommand{\RR}{{\mathbb R}}
\newcommand{\R}{{\mathbb R}}

\newcommand{\QQ}{{\mathbb Q}}
\newcommand{\Q}{{\mathbb Q}}
\newcommand{\ZZ}{{\mathbb Z}}
\newcommand{\Z}{{\mathbb Z}}

\newcommand{\SSS}{{\mathbb S}}

\newcommand{\OOO}{{\mathcal O}}

\newcommand{\HH}{{\mathfrak H}}

\newcommand{\UU}{{\mathfrak U}}

\newcommand{\pp}{{\mathfrak p}}

\newcommand{\p}{{\mathfrak p}}

\newcommand{\BB}{{\mathcal B}}
\newcommand{\FF}{{\mathbb F}}

\newcommand{\GL}{\mathrm {GL}}

\newcommand{\Sp}{\mathrm {Sp}}
\newcommand{\SO}{\mathrm {SO}}

\newcommand{\GUtwo}[1]{\mathrm {GU}(2,#1)}
\newcommand{\GUone}[1]{\mathrm {GU}(1,1,#1)}
\newcommand{\OO}{\mathrm {O}}
\newcommand{\GSp}{\mathrm {GSp}}
\newcommand{\GSpin}{\mathrm {GSpin}}

\newcommand{\Qbar}{\overline{\mathbb Q}}
\newcommand{\Zbar}{\overline{\mathbb Z}}
\newcommand{\Fbar}{\overline{\mathbb F}}
\newcommand{\rhobar}{\overline{\rho}}

\DeclareMathOperator{\adj}{Adj}
\DeclareMathOperator{\Stab}{Stab}
\DeclareMathOperator{\Rad}{Rad}
\DeclareMathOperator{\HW}{HW}
\DeclareMathOperator{\norm}{{\mathscr N}}
\DeclareMathOperator{\trace}{{\mathrm{Tr}}}

\newcommand{\DimSix}{U}
\newcommand{\DimFive}{V}
\newcommand{\Lattice}{L}

\newcommand{\mxlram}{R_p}
\newcommand{\mxlramtwo}{R_2}
\def\id#1{{\mathfrak{#1}}}      
\newcommand{\Level}{\UU^+}

\newcommand{\Cliff}{{\rm Cliff}}

\newcommand{\lmfdbform}[3]{\href{http://www.lmfdb.org/ModularForm/GL2/Q/holomorphic/#1/#2/#3}{{\text{\rm#1.#2.#3}}}}
\newcommand{\lmfdbfform}[4]{\href{http://www.lmfdb.org/ModularForm/GL2/Q/holomorphic/#1/#2/#3/#4}{{\text{\rm#1.#2.#3.#4}}}}


\def\smat#1{\left(\begin{smallmatrix}#1\end{smallmatrix}\right)}

\begin{document}
\title{Quinary forms and paramodular forms}
\author{Neil Dummigan}

\date{December 7th, 2021.}
\address{University of Sheffield\\ School of Mathematics and Statistics\\
Hicks Building\\ Hounsfield Road\\ Sheffield, S3 7RH\\
U.K.}
\email{n.p.dummigan@shef.ac.uk}
\author{Ariel Pacetti}
\address{Center for Research and Development in Mathematics and Applications (CIDMA),
Department of Mathematics, University of Aveiro, 3810-193 Aveiro, Portugal}
\email{apacetti@ua.pt}
\thanks{AP was partially supported by FonCyT BID-PICT 2018-02073 and by
the Portuguese Foundation for Science and Technology (FCT) within
project UIDB/04106/2020 (CIDMA)}

\author{Gustavo Rama}
\address{Facultad de Ingeniería, Universidad de la República, Montevideo, Uruguay}
\email{grama@fing.edu.uy}

\author{Gonzalo Tornar\'ia}
\address{Centro de Matemática, Universidad de la República, Montevideo, Uruguay}
\email{tornaria@cmat.edu.uy}
\thanks{GR and GT were partially supported by CSIC I+D 2020/651}

\keywords{quinary lattices, paramodular forms, Harder's conjecture}
\subjclass[2020]{11F46, 11F55, 11F33}

\begin{abstract}
 We work out the exact relationship between algebraic modular forms for a two-by-two general unitary group over a definite quaternion algebra, and those arising from genera of positive-definite quinary lattices, relating stabilisers of local lattices with specific open compact subgroups, paramodular at split places, and with Atkin-Lehner operators. Combining this with the recent work of R\"osner and Weissauer, proving conjectures of Ibukiyama on Jacquet-Langlands type correspondences (mildly generalised here), provides an effective tool for computing Hecke eigenvalues for Siegel modular forms of degree two and paramodular level. It also enables us to prove examples of congruences of Hecke eigenvalues connecting Siegel modular forms of degrees two and one. These include some of a type conjectured by Harder at level one, supported by computations of Fretwell at higher levels, and a subtly different congruence discovered experimentally by Buzzard and Golyshev.
\end{abstract}
\maketitle

\tableofcontents

\section{Introduction}

Modular forms play a central role in modern mathematics, leading to
the development of very fruitful areas of mathematics (such as
automorphic forms, Galois representations and many applications to
diophantine problems).  An instance of the interaction between modular
forms and geometry is the modularity of rational elliptic curves as
conjectured by Shimura and Taniyama, and proved by Wiles et al (in
\cite{MR1333035} and \cite{MR1839918}). A natural generalisation in
this direction is understanding the relation between analytic objects
and higher dimensional abelian varieties (a particular case of the
Langlands program). In \cite{MR586427} (\S 8, Example 2) Yoshida 
suggested that an abelian surface whose endomorphism ring over $\Q$ equals $\Z$
should be related to a Siegel modular form of degree $2$.

Let $\HH_2$ be the Siegel upper half-plane of degree $2$ consisting of
$2 \times 2$ complex symmetric matrices whose imaginary part is
positive-definite (a natural generalisation of Poincar\'e's upper half
plane). Siegel modular forms are holomorphic functions on $\HH_2$ that
satisfy a transformation property similar to classical modular
forms. More concretely, let $V$ be a finite dimensional $\C$-vector
space and let $\rho:\GL_2(\C) \to \Aut(V)$ be a representation. A Siegel modular form of weight $\rho$ is an holomorphic map $f: \HH_2 \to V$ such that
\[
F((AZ+B)(CZ+D)^{-1})=\rho(CZ+D)(F(Z)),
  \]
  for all $\left(\begin{smallmatrix}A&B\\C&D\end{smallmatrix}\right)$
  in a subgroup of the symplectic group $\Sp_2(\Q)$ (see \cite{MR2409679} for a nice exposition).

  In \cite{BK} (see also \cite{MR3976591}) Brumer and Kramer made the
  following precise conjecture (known as the ``paramodular
  conjecture''): abelian surfaces (with the same endomorphism
  restriction as in Yoshida's remark) should be related to weight $2$
  (i.e. $V = \C$ and $\rho(CZ+D) w = \det(CZ+D)^{2}w$) Siegel modular
  forms, transforming as above under the paramodular group of level $N$ (the
  conductor of the surface) given by
  \[
    P(N):=\left[\begin{array}{cccc} \Z & N\Z & \Z &\Z \\ \Z & \Z & \Z
        & \frac{1}{N}\Z \\ \Z & N\Z & \Z &\Z \\ N\Z & N\Z & N\Z &
   \Z \end{array}\right] \cap \mathrm{Sp}_2(\Q).
 \]
 Some genuine cases of the paramodular conjecture were proven in
 \cite{MR3981316} (see also \cite{1812.09269}). This conjecture
 motivated the study of Siegel paramodular forms, in particular, their
 $L$-series, the theory of newforms (as developed in \cite{RS}) and
 their Galois representations (see \cite{M}).

 A related problem is that of constructing tables of paramodular
 forms. There are nowadays several different algorithms for computing
 classical modular forms. The most well-known are the modular symbol
 approach (as in \cite{MR1628193}), quaternion algebras and Brandt
 matrices (as in \cite{MR579066}), or the use of ternary quadratic
 forms (as in \cite{MR1151865}, \cite{MR2717378}, \cite{rama_msc},
 \cite{MR3553638} and \cite{htv}).
 
 There are some tables of paramodular forms, based on Fourier series expansions, due mostly to Poor, Yuen
 and some coauthors (see \cite{MR3315514}, \cite{MR3713095},
 \cite{MR3739221} and \cite{MR3981316}). A different approach using quinary forms, analogous to Birch's use of ternary quadratic forms, can be used to compute Hecke eigenvalues more easily. This builds on the lattice-neighbour method for algebraic modular forms on orthogonal groups, using an algorithm of Plesken and Souvignier \cite{MR1484483} to test for lattice isometry, as introduced by Greenberg and Voight \cite{GV}. Following earlier computations by Hein \cite{MR3553638} and Ladd \cite{Lad}, this approach was developed in \cite{RT}. One of the main achievements of the present article is to extend their method to more
 general values of $N$ (not just square-free ones) and weights. Conjecture 15 of \cite{RT} is a special case of results proved here. Note that what we call ``$N$'' here will generally be ``$D$'' later in the paper.
 
 Our result is in the spirit of Eichler's basis problem (as in
 \cite{MR0485698}). Eichler's statement of the basis problem is the
 following: ``the basis problem, is to give bases of linearly
 independent forms of these spaces which are arithmetically
 distinguished and whose Fourier series are known or easy to
 obtain''. Eichler's solution, given a positive integer $N$
 (under the assumption that $N$ is square-free, which was later relaxed by
 Hijikata in \cite{MR337783}), takes a prime $p$ dividing it. Then the
 space of quaternionic modular forms for the quaternion algebra
 ramified at $\{p,\infty\}$, of level given by an Eichler order (of
 level $N$) provides a solution to the basis problem. Furthermore,
 such a space can be computed easily (as do Fourier expansions,
 corresponding to theta functions of positive-definite quadratic forms
 in four variables).

 The main idea of Birch was to relate the arithmetic of quaternion
 algebras to ternary quadratic forms (instead of quaternary ones),
 making computations more efficient. In the present article, we
 present a partial solution to the basis problem for paramodular
 forms. The word partial refers to two main obstacles of our
 method. The first one is related to the possible weights we can
 compute. Unfortunately, paramodular forms of weight $2$ (related to
 abelian surfaces) are not cohomological (as happens for classical
 weight $1$ modular forms), hence they cannot be computed with our
 approach, whereas all weights with scalar part $3$ or more can.
 The second issue has to do with a big difference between classical
 and Siegel modular forms (of degree greater than one). For
 classical modular forms, a newform satisfies that its Fourier
 expansion is trivially determined by the eigenvalues of Hecke operators. This is no longer the case for Siegel modular forms. Our
 approach only allows to compute a basis for the space of algebraic
 modular forms (for the orthogonal group of a positive-definite
 quinary quadratic form) and to compute Hecke operators acting on them
 (as in \cite{RT}).

 More concretely, let $N$ be a positive integer,
 and assume that there exists a prime $p$ such that $p \mid N$ but
 $p^2 \nmid N$. In Section~\ref{section:special} we prove that there
 is a (unique up to semi-equivalence) quinary positive-definite
 integral quadratic form $Q$, of determinant $2N$, with the following
 properties:
 \begin{itemize}
 \item The Hasse-Witt invariant of $Q$ is $-1$ at $p$ and $\infty$,
     and $+1$ at all other primes.
 \item The quadratic form $Q$ is special, with Eichler invariant
     $e(Q_q) = +1$ for all primes different from $p$ (see \S
     \ref{section:special}).
 \end{itemize}
 Then, neglecting Yoshida lifts (cf. Proposition \ref{prop:yoshida}) and any Saito-Kurokawa lifts (cf. Proposition \ref{prop:saito-kurokawa}), the space of algebraic modular forms for the orthogonal group of $Q$, with values in a certain representation $W_{j+k-3, k-3}$, is isomorphic (as a
 Hecke module) to the space of $p$-new paramodular forms of weight $\det^k\otimes\Sym^j$, with $k\geq 3$. This is Theorem~\ref{thm:Rainer1}, with $D^-=p$, $D^+=N/p$.
 
Let $B$ denote a definite quaternion algebra over $\Q$.  The proof of
our result exploits the relation between the algebraic group $\GSp_2$
and its compact twist $\GUtwo{B}$. In a series of articles, Ibukiyama
and some coauthors (see \cite{MR3638279} and also \cite{I,I2}) stated
conjectures relating automorphic forms on $\GUtwo{B}$ and $\GSp_2$, in
the case of square-free levels (see the articles
\cite{MR3194136,MR2555703} on computations of automorphic forms on
$\GUtwo{B}$).  The conjectures were proven by R\"osner and Weissauer
in a recent article \cite{RW}, using the trace formula. A somewhat
less general result was obtained independently by van Hoften \cite{vH}
using very different, algebro-geometric tools. Although in \cite{RW}
the result is proven for groups whose level involves only primes
ramified in the quaternion algebra $B$, we extend their result to our
more general setting (see Theorem~\ref{thm:Rainer1}).

 A main contribution of the present article is to relate algebraic
 modular forms for $\GUtwo{B}$ with those for $\SO(Q)$ for a suitable
 integral quadratic form $Q$ (see Theorem~\ref{GU2SO5iso}). A partial
 result in this direction was obtained by Ladd (\cite{Lad}) in his
 doctoral thesis in the case $N=p$, though our approach was influenced more by a paper of Ibukiyama \cite{I}. Our strategy is to construct a six-dimensional space $\DimSix$ in $M_2(B)$ invariant under conjugation
 by $\GUtwo{B}$, and a quadratic form on it that is invariant under the
 action of $\GUtwo{B}$, and also under translation by scalar matrices. This induces a quadratic form on the five-dimensional quotient
 $\DimFive := \DimSix/\Q I$. We construct a rank $6$ lattice inside
 $\DimSix$ (defined in \S \ref{section:lattice}) and consider its
 quotient by $\ZZ I$. The dual of this produces the rank $5$ integral lattice $\Lattice$.
 One is left to relate the compact
 level in $\GUtwo{B}$ studied by Ibukiyama with the stabiliser of
 $\Lattice$ (up to the centre of $\GUtwo{B}$). These two groups are not
 exactly the same, as the Atkin-Lehner operator on $\GUtwo{B}$
 stabilises the lattice $\Lattice$. To get the right subgroup, we
 define a character on the stabiliser of
 $\Lattice_p:=\Lattice \otimes \Z_p$ for each prime $p$ (see
 Definition~\ref{defi:character}), whose kernel does match the open
 compact subgroup of $\GUtwo{B_p}$ corresponding to a paramodular
 form. This allows us to transfer automorphic forms from one algebraic
 group to the other one.

 It is important to mention that we can prove not only that the
 correspondence preserves Hecke operators, but also a
 precise relation between the action of the Atkin-Lehner operators
 (see Theorem~\ref{ALsignchange}). In particular, for genuine forms
 (those that cannot be constructed from forms for $\GL_2$), the
 Atkin-Lehner sign changes sign for the ramified primes, while it
 stays the same for the non-ramified ones (as happens with the
 classical Jacquet-Langlands correspondence between $\GL_2$ and $B$).
 
 The above is directly applicable to the efficient computation of Hecke eigenvalues for Siegel modular forms of degree two and paramodular level, at least if the vector part of the weight is small, and the scalar part at least $3$. But actually looking at the eigenvectors within spaces of algebraic modular forms also allows us to prove various instances of congruences of Hecke eigenvalues. This is the subject of \S \ref{section:congruences}. We warm up by re-proving a congruence originally obtained by Poor and Yuen \cite[\S 8, Example 1]{MR3315514}. This is of the form
 $$\lambda_F(p)\equiv a_p(g)+p+p^2\pmod{\lambda}.$$
On the left, $F$ is a cuspidal Hecke eigenform of degree $2$, weight $3$ and paramodular level $61$, and $\lambda_F(p)$ its eigenvalue for $T(p)$, with $p$ any prime number different from $61$. On the right, $g$ is a newform of degree $1$, of weight $4$ for $\Gamma_0(61)$, with Hecke eigenvalues $a_p(g)$ in a field of degree $6$, in which the modulus $\lambda$ is a divisor of the rational prime $43$.
The right-hand-side can be interpreted as the eigenvalue of $T(p)$ on the Saito-Kurokawa lift $\mathrm{SK}(g)$ of $g$. Both $F$ and $\mathrm{SK}(g)$ have corresponding eigenforms inside a space of algebraic modular forms arising from a certain genus of quinary lattices of determinant $2\cdot 61$. We can prove the congruence of Hecke eigenvalues by observing that these eigenvectors are the same modulo $\lambda$. The modulus comes from the algebraic part of the critical $L$-value $L(3,g)$.

Such congruences can be extended to $F$ of weight $(k,j)$ with $k\geq 3$ and even $j>0$, with $g$ of weight $j+2k-2$ and $\lambda\mid \ell$ coming from $L(j+k, g)$. For $F$ and $g$ of level $1$ this is a conjecture of Harder \cite{MR2409680}. Computational evidence for some examples of levels $2,3,5,7$ was obtained by Fretwell \cite{MR3803970}. Congruences involving Saito-Kurokawa lifts are a degenerate case $j=0$, but for $j>0$ the problem is that the right hand side of the congruence, $a_p(g)+p^{k-2}+p^{j+k-1}$, is not the Hecke eigenvalue of $T(p)$ on any Siegel modular form. We address this by observing that $p^{k-2}+p^{j+k-1}=p^{k-2}(1+p^{j+1})$, that $(1+p^{j+1})$ is a Hecke eigenvalue for an Eisenstein series of level $1$ and weight $j+2$, and that modulo $\lambda$ this can be replaced by a cuspidal eigenform of level $q$ and weight $j+2$, where $q$ is an auxiliary prime such that $q^{j+2}\equiv 1\pmod{\ell}$. Thus, modulo $\lambda$, $a_p(g)+p^{k-2}+p^{j+k-1}$ becomes the eigenvalue of $T(p)$ on some Yoshida lift, which does not exist as a holomorphic Siegel modular form of paramodular level, but does exist in one of our spaces of algebraic modular forms, allowing us to proceed almost as before to prove several congruences of this type. Actually the target $F$ is represented by an eigenvector in a different space of algebraic modular forms, coming from a different genus of quinary lattices with the same determinant. But it is linked to the Yoshida lift by their mutual congruence with a third form, of paramodular level $qN$, represented by eigenvectors in both spaces.

The same idea using Yoshida lifts allows us to prove a congruence discovered experimentally by Buzzard and Golyshev (see Theorem~\ref{thm:buzzard}). This involves the same $F$ as in the example of Poor and Yuen, but the right hand side is now $1+p^3+pa_p(g)$, where $g$ is now weight $2$ and level $61$, with Hecke eigenvalues in a cubic field, and $\lambda$ is a divisor of $19$ in this field. We use an auxiliary weight $4$ form of level $37$. This congruence, and its proof, is more subtle in two ways. First, the modulus is not observable in a critical value of $L(s,g)$. Second, in the proof we see two eigenvectors that are not the same modulo $\lambda$, but they are forced nonetheless to lie in the same mod-$\lambda$ Hecke eigenspace, thanks to the intervention of a newform of level $61\cdot 37$, with Hecke eigenvalues congruent to both.

\subsection*{Acknowledgements}
This project has its roots in a visit of Jeffery Hein and Watson Ladd
to Uruguay in 2014 for a small research workshop on the subject of
quinary orthogonal modular forms. The workshop topic was suggested by
John Voight to whom we are grateful for many conversations regarding
orthogonal modular forms. A main motivation for the present article
was to prove the conjectures stated in \cite{RT}.

The project benefitted from communications with V. Golyshev (on
congruences) and R.  Weissauer (on the relation between
Siegel modular forms and automorphic forms for $\GUtwo{B}$); indeed
another main motivation for this paper was to prove the mod $19$
congruence brought to our attention by Golyshev.  The first and
fourth-named authors met during the workshop ``Picard-Fuchs Equations
and Hypergeometric Motives'' at the Hausdorff Research Institute for
Mathematics, Bonn, in March 2018, and are also grateful for the
hospitality of the Max Planck Institute for Mathematics, Bonn, during
a short visit in April 2019.
 
\section{The general spin group}
\label{section:spingroup}

Let $k$ be a field of characteristic different from $2$ and let
$(V,Q)$ be a quadratic space over $k$. Let $\Cliff(V)$ be the Clifford
algebra attached to $(V,Q)$. Recall that the Clifford algebra
$\Cliff(V)$ has a natural $\Z/2$-graduation, so let $\Cliff_0(V)$ denote
its even part. The subspace $\Cliff_0(V)$ is a subalgebra of $\Cliff(V)$ which
is central if $(V,Q)$ is regular (i.e. non-degenerate) of odd dimension, so from now on we
assume this is the case.

The Clifford algebra $\Cliff(V)$ has two natural anti-involutions
$*: \Cliff(V) \to \Cliff(V)$ (see \cite{2008.12847}) which agree on
the even part $\Cliff_0(V)$ so we do not need to make any particular
choice.
\begin{defi}
  The General Spin group $\GSpin(V)$ is the subgroup of
  $\Cliff_0(V)^\times$ given by
  \[
    \GSpin(V) = \{g \in \Cliff_0(V) \; : \; g^*  g \in
    k^\times \text{ and }g^{-1} V g = V\}.
  \]
  The spinor norm $\nu:\GSpin(V)\to k^\times$ is given by $\nu(g)=g^*g$.
\end{defi}

There is a natural homomorphism
\begin{equation}
  \label{eq:mapphi}
\phi : \GSpin(V) \to \OO(V)  
\end{equation}
given by $\phi(g)(v) = g v g^{-1}$
(see \cite[Chapter 10, Lemma 3.1]{MR522835}).
\begin{thm}
  The image of $\phi$ equals $\SO(V)$ and its kernel equals
  $k^\times$.
\end{thm}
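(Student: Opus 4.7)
The plan is to establish the kernel and the image separately. For the kernel: if $\phi(g) = \id$, then $gv = vg$ for every $v \in V$, and since $V$ generates $\Cliff(V)$ as an algebra, $g$ lies in $Z(\Cliff(V))$. The centre of $\Cliff(V)$ is spanned by $1$ and the volume element $e = e_1\cdots e_n$ (for an orthogonal basis), and $e$ lies in the odd part $\Cliff_1$ when $\dim V$ is odd; hence $Z(\Cliff(V)) \cap \Cliff_0(V) = k$, giving $\ker\phi = k^\times$. The reverse inclusion is immediate.

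For the image, first $\phi(g) \in \OO(V)$: the Clifford relation $v^2 = Q(v)$ gives $\phi(g)(v)^2 = gv^2 g^{-1} = Q(v)$, so $Q$ is preserved. The key computation for the rest is that, for a non-isotropic $u \in V$, the identity $uw + wu = 2B(u,w)$ (with $B$ the polarization of $Q$) yields $uwu^{-1} = -\tau_u(w)$, where $\tau_u$ is the reflection through $u^\perp$. Consequently, any product $h = u_1\cdots u_r$ of non-isotropic vectors with $r$ even satisfies $h \in \Cliff_0(V)$, $\phi(h) = \tau_{u_1}\cdots\tau_{u_r}$, and $h^* h = Q(u_1)\cdots Q(u_r) \in k^\times$, so $h \in \GSpin(V)$. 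By Cartan--Dieudonn\'e, every $\sigma \in \SO(V)$ is such a product with $r$ even (forced by $\det\sigma = 1$), giving $\SO(V) \subseteq \phi(\GSpin(V))$.

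The main obstacle is ruling out that the image contains elements of $\OO(V) \setminus \SO(V)$. Since $\dim V$ is odd, $-\id \in \OO(V)\setminus \SO(V)$ and $\OO(V) = \SO(V) \sqcup (-\id)\cdot \SO(V)$. Suppose $\phi(g) = -\sigma$ for some $g \in \GSpin(V)$ and $\sigma \in \SO(V)$, and pick $h \in \GSpin(V)$ with $\phi(h) = \sigma$ from the previous step; then $x := gh^{-1} \in \Cliff_0(V)$ satisfies $xv = -vx$ for all $v \in V$. Pairing two anticommutations gives $x(v_1 v_2) = v_1 v_2 x$, so $x$ commutes with all of $\Cliff_0(V)$, whence $x \in Z(\Cliff_0(V)) = k$. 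But then $vx = xv = -vx$ forces $2vx = 0$ for every $v \in V$, so $x = 0$ in characteristic $\neq 2$, a contradiction. This shows $\phi(g) \in \SO(V)$ for every $g \in \GSpin(V)$, completing the proof.
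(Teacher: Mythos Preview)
Your proof is correct. The paper itself does not give an argument here; it simply cites \cite[Chapter 10, Theorem 3.1]{MR522835} (Scharlau's book), so there is no approach in the paper to compare against. What you have written is the standard proof: Cartan--Dieudonn\'e for surjectivity onto $\SO(V)$, and the centrality of $\Cliff_0(V)$ (used twice, once for the kernel and once to exclude $-\id$ from the image) for the rest. Note that both places where you invoke odd dimension---the fact that the volume element $e_1\cdots e_n$ lies in $\Cliff_1(V)$, and the fact that $-\id\notin\SO(V)$ so that $\OO(V)=\SO(V)\sqcup(-\id)\cdot\SO(V)$---are exactly the standing hypothesis of the paper, so the argument fits the context precisely.
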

\begin{proof}
See \cite[Chapter 10, Theorem 3.1]{MR522835}.
\end{proof}

When $V$ has odd dimension, the natural copy of $V$ in $\Cliff(V)$
lies in the odd part. However, in such a case, the centre of
$\Cliff(V)$ has a one dimensional odd part. For example, if
$\{e_1,\ldots,e_n\}$ is an orthogonal basis,
then the vector $c = e_1\cdots e_n$ generates such subspace and
$c^2\in k$. In
particular, the subspace $c \cdot V$ does lie in $\Cliff_0(V)$ and
furthermore, the elements of $\Cliff_0(V)$ normalising $V$ are the
same as the ones normalising $c \cdot V$.
Note also that the involution acts on $c\cdot V$ by
$(-1)^{\lfloor n/2\rfloor}$.

\subsection{The $5$-dimensional case}
From now on we restrict to quadratic forms over a number field $k$.
Let $(V,Q)$ be a regular quinary quadratic space over $k$. 

\begin{lem}
  The subspace $\DimSix = \{v \in \Cliff_0(V) \; : \; v^* = v\}$
  equals the subspace $\Q \oplus c\cdot V$.
\label{lemma:clifordinvariant}
\end{lem}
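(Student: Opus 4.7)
The plan is to verify the equality by explicit computation in an orthogonal basis. Fix an orthogonal basis $e_1,\dots,e_5$ of $V$. Then $\Cliff_0(V)$ has the standard monomial basis
\[
  \{\,e_{i_1}\cdots e_{i_m} : m\in\{0,2,4\},\ i_1<\cdots<i_m\,\},
\]
of total dimension $1+\binom{5}{2}+\binom{5}{4}=16$.

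The next step is to read off the action of $*$ on each basis vector. Since $*$ is the anti-involution fixing each $e_i$, one has $(e_{i_1}\cdots e_{i_m})^*=e_{i_m}\cdots e_{i_1}$, and restoring the factors to increasing order costs $\binom{m}{2}$ transpositions of distinct anti-commuting generators, so $(e_{i_1}\cdots e_{i_m})^*=(-1)^{m(m-1)/2}e_{i_1}\cdots e_{i_m}$. This sign is $+1$ for $m=0$ and $m=4$, but $-1$ for $m=2$. Consequently $\DimSix$ is precisely the six-dimensional subspace spanned by $1$ together with the five length-four monomials $e_{i_1}e_{i_2}e_{i_3}e_{i_4}$.

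It remains to match this with $\QQ\oplus c\cdot V$. For each $i$, commuting $e_i$ past the $5-i$ factors to its right in $c\cdot e_i=e_1\cdots e_5\cdot e_i$ and then collapsing $e_i^2=Q(e_i)$ yields
\[
  c\cdot e_i = (-1)^{5-i} Q(e_i)\,\prod_{j\neq i} e_j.
\]
Regularity of $Q$ ensures that each $Q(e_i)$ is nonzero, so $\{c\cdot e_i\}_{i=1}^5$ forms a basis of the span of the length-four monomials. Adjoining $\QQ\cdot 1$ recovers $\DimSix$ exactly. The inclusion $\QQ\oplus c\cdot V\subseteq\DimSix$ is in fact already contained in the paragraph preceding the lemma (since $(-1)^{\lfloor 5/2\rfloor}=+1$), so alternatively one can conclude by a dimension count once the $5$-dimensionality of $c\cdot V$ has been verified.

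There is no substantive obstacle here: the only point requiring any care is keeping track of the sign of the reversal involution on monomials of different lengths, and the calculation of $c\cdot e_i$; both are entirely mechanical.
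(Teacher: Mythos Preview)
Your proof is correct and follows essentially the same approach as the paper: both fix an orthogonal basis, decompose $\Cliff_0(V)$ into monomials of lengths $0$, $2$, and $4$, and check that the involution negates the length-$2$ monomials while fixing the rest. Your version is simply more explicit, writing out the sign $(-1)^{m(m-1)/2}$ and the formula for $c\cdot e_i$, whereas the paper immediately identifies the length-$4$ span with $c\cdot V$.
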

\begin{proof}
  Clearly $k$ is invariant under the involution. Let
  $\{e_1,\ldots,e_5\}$ be an orthogonal basis for $V$, so
  $\Cliff_0(V) = k \oplus c\cdot V \oplus \langle e_ie_j \;:\;
  1\le i<j \le 5\rangle$.
  The involution sends $e_ie_j$ to $e_je_i=-e_ie_j$ and fixes $c\cdot
  V$, hence $U=k\oplus c\cdot V$.
\end{proof}

%
In the five dimensional case, the
condition $g^{-1} V g = V$ on the definition of the General Spin group
is superfluous provided $g^* g \in k^\times$.

\begin{lem}
  If $\dim(V)=5$, then
  \[
    \GSpin(V) = \{g \in \Cliff_0(V) \; : \; g^* g \in k^\times\}.
    \]
\end{lem}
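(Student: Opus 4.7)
The plan is to prove the nontrivial inclusion: if $g \in \Cliff_0(V)$ satisfies $\nu := g^* g \in k^\times$, then $g V g^{-1} \subseteq V$. The reverse inclusion then comes for free, since $g^{-1} = \nu^{-1} g^*$ satisfies $(g^{-1})^* g^{-1} = \nu^{-1} \in k^\times$, so applying the same statement to $g^{-1}$ gives $V \subseteq g V g^{-1}$. Note that the argument will work with either of the two anti-involutions on $\Cliff(V)$, since both agree on $\Cliff_0(V)$ and I will only ever apply $*$ to elements of $\Cliff_0(V)$.

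First I would show that conjugation by $g$ preserves the $*$-fixed subspace $U = k \oplus cV$ of Lemma~\ref{lemma:clifordinvariant}. For $u \in U$, using $g^* = \nu g^{-1}$ and hence $(g^*)^{-1} = \nu^{-1} g$,
\[
(gug^{-1})^* = (g^*)^{-1}\, u^*\, g^* = \nu^{-1}g \cdot u \cdot \nu g^{-1} = g u g^{-1},
\]
so $gug^{-1} \in U$. Since $\dim V = 5$ is odd, the pseudoscalar $c = e_1 \cdots e_5$ lies in the centre of $\Cliff(V)$ and $c^2 \in k^\times$, so $c^{-1} = (c^2)^{-1} c \in kc$. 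Taking $u = cv$ for an arbitrary $v \in V$ and using centrality of $c$,
\[
g v g^{-1} = c^{-1}\, g(cv) g^{-1} \in c^{-1} U = kc \oplus V,
\]
so there are unique $v' \in V$ and $\beta \in k$ with $g v g^{-1} = v' + \beta c$.

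The crux is to show $\beta = 0$. Squaring, and using that $v^2 = Q(v)$, that $c$ is central, and that $c^2 \in k$,
\[
Q(v) = (g v g^{-1})^2 = (v' + \beta c)^2 = Q(v') + \beta^2 c^2 + 2\beta\, c v'.
\]
The three summands on the right lie, respectively, in $k$, $k$, and $cV$, so comparing $cV$-components (via the direct sum $\Cliff_0(V) = k \oplus cV \oplus \langle e_ie_j \rangle_{i<j}$) forces $\beta v' = 0$. If $\beta \neq 0$, then $v' = 0$ and $gvg^{-1} = \beta c$; conjugating back and again invoking centrality of $c$ gives $v = \beta c$, which is impossible because $V$ (degree one) and $kc$ (degree five) are linearly independent subspaces of $\Cliff(V)$. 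Thus $\beta = 0$ and $g v g^{-1} = v' \in V$, completing the proof. I do not expect a serious obstacle beyond bookkeeping with the three decompositions in play ($\Cliff(V)$ by degree, $\Cliff_0(V)$ into $k \oplus cV \oplus \langle e_ie_j\rangle$, and $U = k \oplus cV$); the two structural facts doing the real work are centrality of $c$ in odd dimension and the triviality of $V \cap kc$.
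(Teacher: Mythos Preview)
Your argument is correct and is essentially the same as the paper's, which also conjugates $cv$ into the $*$-fixed subspace $k\oplus cV$, squares, and compares components to kill the scalar piece. The only cosmetic differences are that the paper works with $g^*(cv)g$ rather than $g(cv)g^{-1}$ (these differ by the scalar $\nu(g)$), and that the paper squares before dividing by $c$, whereas you divide by $c$ first; the elimination of the extraneous scalar term is handled in both cases by the same linear-independence observation.
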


\begin{proof}
    We recall the proof from \cite[\S5.5]{Eichler}.
    Let $g\in\Cliff_0(V)$ such that $g^* g\in k^\times$ and let
    $v\in V$.
    Then $g^* (cv) g$ is fixed by the involution so by
    Lemma~\ref{lemma:clifordinvariant} we have
    $g^* (cv) g = \alpha + cw$ with $\alpha\in k$ and $w\in V$.
    Squaring this equality gives
    $\nu(g) c^2 Q(v) = \alpha^2 + c^2 Q(w) + 2\alpha cw \in k$.
    If $v\neq 0$ then clearly $w\neq 0$ so $\alpha=0$ and
    $g^* (cv) g = cw$. Since $c$ is central, we conclude
    $g^{-1} v g = \frac1{\nu(g)}w\in V$.
\end{proof}

For each place $v$ of $k$ the Hasse-Witt invariant
$\mathrm{HW}_v(Q)\in\{\pm 1\}$ is an invariant of the
quadratic space $V_v$ given by the class of
$\Cliff_0(V_v)$ in the Brauer group (see \cite[(3.12) in p.117]{MR2104929}).
If $\{e_1,\dotsc,e_5\}$ is an orthogonal basis with $Q(e_i)=a_i$
then
\begin{equation}
  \label{eq:HWdefi}
  \mathrm{HW}_v(Q)=(-1,-1)_v\prod_{i<j}(a_i,a_j)_v,
\end{equation}
where the
quadratic Hilbert symbol $(a,b)_v$ is $+1$ or $-1$, according as
$ax^2+by^2=z^2$ has, or has not (respectively), a solution
$(x,y,z)\neq (0,0,0)$ in $k_v^3$ (see \cite{MR2104929} Proposition
3.20).

\begin{remar}
  The definition of the Hasse-Witt invariant for quinary forms coincides
  with the classical Hasse invariant for odd primes,
  but it differs by $(-1,-1)_v$ for even primes and real
  places.
\end{remar}
Let
\begin{equation}
  \label{eq:ramifiedset}
  S = \{ v \; : \; \HW_v(Q) = -1\},
\end{equation}
the set of places where the quadratic form $Q$ has Hasse-Witt invariant
$-1$. By Hilbert's reciprocity $S$ has even cardinality.
\begin{remar}
  For computational purposes, we assume $k$ is totally real and
  the quinary quadratic form $Q$ is totally positive definite,
  so all the archimedean places are in $S$.
\end{remar}
\begin{remar}\label{M2BCliff}
Let $B$ be the quaternion algebra over $k$ ramified precisely at the places of
$S$ and denote $b\mapsto\overline{b}$ its standard involution.
By definition, the central simple algebras $\Cliff_0(V)$ and $B$
correspond to the same class in the Brauer group. It follows that
$\Cliff_0(V)\simeq M_2(B)$ since $\dim\Cliff_0(V)=4\dim B$.
\end{remar}

\begin{lem}
    The isomorphism $\Cliff_0(V)\simeq M_2(B)$
    can be chosen so that the involution of
$\Cliff_0(V)$ corresponds to the involution of $M_2(B)$
given by $m^*=\overline{m}^t$.
\end{lem}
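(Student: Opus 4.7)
The plan is to take an arbitrary algebra isomorphism $\phi_0 \colon \Cliff_0(V) \to M_2(B)$ as in Remark~\ref{M2BCliff}, and then adjust it by an inner automorphism of $M_2(B)$ so that the Clifford involution $*$ is carried to $\sigma_0(m) := \overline{m}^t$. Writing $\tau := \phi_0 \circ {*} \circ \phi_0^{-1}$ for the induced involution on $M_2(B)$, it suffices to find $v \in M_2(B)^\times$ with $\mathrm{Int}(v) \circ \tau \circ \mathrm{Int}(v)^{-1} = \sigma_0$; then $\phi := \mathrm{Int}(v) \circ \phi_0$ satisfies $\phi \circ * = \sigma_0 \circ \phi$ as required.

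The first step is to observe that $\tau$ and $\sigma_0$ are both involutions of the first kind on the degree-$4$ central simple algebra $M_2(B)$, and both are of symplectic type. Indeed, by Lemma~\ref{lemma:clifordinvariant} the $+1$-eigenspace of $*$ is $\Q \oplus c\cdot V$, of dimension $6$, while the fixed subspace of $\sigma_0$ consists of the Hermitian matrices $\smat{a & b \\ \overline{b} & d}$ with $a, d \in k$ and $b \in B$, also of dimension $6$; on a CSA of degree $4$ the symplectic type is characterized by this fixed dimension $\binom{4}{2} = 6$.

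Applying Skolem-Noether to the algebra automorphism $\tau \circ \sigma_0$ yields $u \in M_2(B)^\times$ with $\tau(m) = u \sigma_0(m) u^{-1}$; applying $\tau$ twice and using $\tau^2 = \mathrm{id}$ shows $\sigma_0(u) \equiv \pm u$, and the same-type condition forces the sign to be $+$, so $\overline{u}^t = u$ (i.e., $u$ is Hermitian). A short calculation then reduces the existence of $v$ to the condition $vu\sigma_0(v) \in k^\times$ for some $v \in M_2(B)^\times$---equivalently, $u$ must be similar as a Hermitian form on $B^2$, up to a scalar in $k^\times$, to the standard form given by the identity matrix.

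Establishing this similarity is the main obstacle, handled by the Hasse principle for Hermitian forms over quaternion algebras. At finite split places $v$, $M_2(B_v) \simeq M_4(k_v)$ and all symplectic involutions are inner-conjugate; at finite ramified places, surjectivity of $\mathrm{Nrd}\colon B_v^\times \to k_v^\times$ collapses the similitude invariant; so local matching is automatic at every finite place. At each archimedean place in $S$ (all archimedean places, since $Q$ is totally positive definite), $B_v \simeq \mathbb{H}$ and Hermitian forms on $\mathbb{H}^2$ fall into two similitude classes (definite and indefinite), so one must verify that the class corresponding to $*$ is the definite one. This follows from a signature computation: the quadratic form $N(\lambda + cv) = \lambda^2 - c^2 Q(v)$ on the Jordan subalgebra $\Q \oplus c \cdot V$ of $\Cliff_0(V)$ has signature $(1,5)$ over $\R$ (using $c^2 = \det Q > 0$ and the positive definiteness of $Q$), matching the signature of the Moore-determinant form $ad - N(b)$ on Hermitian $2 \times 2$ matrices over $\mathbb{H}$. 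Local similarity at every place yields global similarity, producing the required $v$ and completing the proof.
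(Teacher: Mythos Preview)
Your proof is correct (under the paper's standing hypothesis that $Q$ is totally positive definite), but it takes a genuinely different route from the paper's argument.

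The paper proceeds by explicit construction: one chooses $\alpha,\beta\in k^\times$ with $(-\alpha D,-\beta D)_v=\HW_v(Q)$ for all $v$ (where $D=\det V$), then realises $V$ with an orthogonal basis having $Q(e_1)=\alpha$, $Q(e_2)=\beta$, $Q(e_3)=\alpha\beta D$, $Q(e_4)=Q(e_5)=D$. Eichler's tensor factorisation $\Cliff_0(V)\simeq[1,e_1e_2,e_2e_3,e_3e_1]\otimes[1,e_1e_2e_3e_4,e_1e_2e_3e_5,e_4e_5]$ then identifies the first factor with $B$ (carrying the standard involution) and the second explicitly with $M_2(k)$ (carrying the transpose), so that the Clifford involution becomes $m\mapsto\overline{m}^t$ directly. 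This is entirely constructive and yields an explicit formula for the isomorphism.

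Your argument instead starts from an arbitrary isomorphism, recognises both involutions as symplectic via the dimension of their fixed spaces, writes their difference as $\mathrm{Int}(u)$ for a Hermitian $u$, and reduces to showing that the Hermitian form $u$ on $B^2$ is similar to the standard one. You then invoke a local--global principle and check the archimedean places via the signature of the Pfaffian form. This is more conceptual and situates the statement within the general theory of algebras with involution, but it trades the paper's explicit isomorphism for an existence argument relying on the Hasse principle for Hermitian forms over quaternion algebras. Two points in your write-up could be tightened: the formula $N(\lambda+cv)=\lambda^2-c^2Q(v)$ for the Pfaffian norm deserves a one-line justification (e.g.\ from $(cv)^2=c^2Q(v)\in k$ and the reduced characteristic polynomial), and the passage ``local similarity at every place yields global similarity'' is not literally the Hasse--Minkowski statement---you need to first scale by a global $\mu\in k^\times$ (chosen by weak approximation to correct the sign at each real place) so as to reduce to local \emph{isometry} everywhere, and only then apply Hasse.
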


\begin{proof}
    Let $D=\det V$ and choose $\alpha$ and $\beta$ such that
    $(-\alpha D,-\beta D)_v=\HW_v(Q)$.
    Without loss of generality we can assume
    $V$ has an orthogonal basis $\{e_1,e_2,e_3,e_4,e_5\}$
    with $Q(e_1)=\alpha$, $Q(e_2)=\beta$, $Q(e_3)=\alpha\beta D$
    and $Q(e_4)=Q(e_5)=D$.
    \par
    Consider the representation of $\Cliff_0(V)$ as a tensor
    product of quaternion algebras given in \cite[(5.18)]{Eichler}:
    \[
        \Cliff_0(V) \simeq
        [1, e_1e_2, e_2e_3, e_3e_1]
        \otimes
        [1, e_1e_2e_3e_4, e_1e_2e_3e_5, e_4e_5] \,.
    \]
    By our choice of $\alpha$ and $\beta$ we have
    $[1, e_1e_2, e_2e_3, e_3e_1]\simeq B$
    with the involution on the left side corresponding to the
    standard involution of $B$.
    \par
    On the other hand we have
    $[1, e_1e_2e_3e_4, e_1e_2e_3e_5, e_4e_5]\simeq M_2(k)$
    as follows:
    \[
    e_1e_2e_3e_4\mapsto\smat{\alpha\beta D&0\\0&-\alpha\beta D},
    \qquad
    e_1e_2e_3e_5\mapsto\smat{0 & \alpha\beta D\\\alpha\beta D&0},
    \qquad
    e_4e_5\mapsto\smat{0 & D\\-D&0},
    \]
    and the involution on the left side corresponds to the transpose
    in $M_2(k)$.
    Thus $\Cliff_0(V)\simeq B\otimes M_2(k)\simeq M_2(B)$
    and the involution of $\Cliff_0(V)$ corresponds to the
    involution on $M_2(B)$ given by $m^*=\overline{m}^t$.
\end{proof}

By the lemma we can (and will) identify $\Cliff_0(V)$ with $M_2(B)$ with the
involution given by $m^*=\overline{m}^t$.
The group $\GSpin(V)$ is then isomorphic to the group
\begin{equation}
  \label{eq:G}
  \GUtwo{B}:=\{g\in M_2(B):\,g^* g=\nu(g) I,\; \nu(g)\in k^{\times}\}.
\end{equation}
The group $\GUtwo{B}$ consists of the invertible elements in $M_2(B)$
preserving the hermitian form
$\langle(x,y),(r,s)\rangle = \overline{x}{r} + \overline{y}{s}$ on
$B^2$ (via left multiplication) up to scale.

\section{Quaternionic unitary groups and some local subgroups}
Keep the notation of the previous section.  Let $B$ be a definite
quaternion algebra over $\Q$ ramified at a finite set of primes $S$
(containing the infinity place), with main involution
$\alpha\mapsto\overline{\alpha}$ and denote
$\GUtwo{B}:=\{g\in M_2(B):\,g^* g=\nu(g) I, \;\nu(g)\in \Q^{\times}\}$.

\subsection{Local subgroups of $\GUtwo{B}$} For $v$ a rational place,
let $B_v := B \otimes \Q_v$ denote the completion of $B$ at $v$. We
define for each place $v$ an open compact subgroup
$U_v \subset \GUtwo{B_v}$ as follows.

\subsubsection{Archimedean place}
Our assumption that $B$ is definite implies that $\GUtwo{B_\infty}$ is
the compact group $\Sp(2)$ of rank $2$ (the compact form of
$\Sp(2,\RR)$). Hence we take $K_\infty:=\GUtwo{B_\infty}$ as our compact open subgroup.

\subsubsection{Non-archimedean places in S} \label{section:levelatS}
Let $\mxlram$ denote the unique maximal order of $B_p$ \cite[Chapitre II, Lemme 1.5]{MR580949}, and $\id{p}$
its maximal ideal (given by the elements of norm divisible by $p$). Let $R_p^0:=\{r\in R_p:\, r+\overline{r}=0\}$.
Following \cite{MR3638279}, let $\xi \in M_2(B_p)$ be such that
$\xi^*\, \xi = \left(\begin{smallmatrix} 0 & 1\\ 1 &
    0\end{smallmatrix} \right)$ and consider the $\ZZ_p$-lattice
\begin{equation}
  \label{eq:ramifiedleveldefi}
L_p:=\xi\cdot\begin{pmatrix}\id{p}\\ R_p\end{pmatrix} \subset B_p^2
\,.
\end{equation}
Let $K_p^-$ be the subgroup of $\GUtwo{B_p}$ given by the stabiliser
of $L_p$ (column vectors) under the natural left action of
$\GUtwo{B_p}$ on $B_p^2$.

Let $H:=\left(\begin{smallmatrix} 0 & 1\\1 & 0\end{smallmatrix}\right)$,
and denote by $\GUone{B_p}$ the group
\[
\GUone{B_p}:=\{g\in M_2(B_p):\, g^*Hg=\nu(g) H,\,\,\nu(g)\in\Q_p^{\times}\}.
\]
Conjugation by $\xi$ gives an isomorphism between $\GUtwo{B_p}$ and
$\GUone{B_p}$. Under this isomorphism, the group $K_p^-$ maps to the
stabiliser $K^-(p)$ of the lattice $\id{p} \oplus \mxlram$ (see \cite[\S 2]{I}
and \cite[\S 4.3]{vH}), the intersection of $\GUone{B_p}$ with the order
$\begin{pmatrix}\mxlram & \id{p} \\
    \id{p}^{-1} & \mxlram\\
  \end{pmatrix}\,\text{of $M_2(B_p)$}$. 
      
  Note that the subgroup $K^-(p)$ of $\GUone{B_p}$ is normalised by
  an Atkin-Lehner element
  $\omega'_p:=\begin{pmatrix}0&p\\1&0\end{pmatrix}\in\GUone{B_p}$,
  which satisfies $(\omega'_p)^2=pI$, and
  $(\omega'_p)^*H\omega'_p=pH$. So we define
  $\omega_p:=\xi\omega_p'\xi^{-1}\in \GUtwo{B_p}$, which normalises
  $K_p^-$, with $\omega_p^2=pI$ and $\omega_p^*\omega_p=pI$, so
  $\nu(\omega_p)=p$.
  
\subsubsection{Non-archimedean places not in S} \label{section:levelnotatS}
At any prime number $p \not \in S$ we fix an isomorphism
$B_p\simeq M_2(\Q_p)$, then let $R_p$ denote the matrix order $M_2(\ZZ_p)$. For $n$ a non-negative
integer, consider the $\ZZ_p$-lattice
%
%
\begin{equation}
  \label{eq:extralevel}
L_{p^n}:= \begin{pmatrix}M_2(\Z_p)\\ \pi^n \cdot M_2(\ZZ_p)\end{pmatrix}
\subset B_p^2
\,,
\end{equation}
where
$\pi = \left(\begin{smallmatrix} 1 & 0\\ 0 &
    p\end{smallmatrix}\right)$. Similar to the definition of $K_p^-$ above, we
define $K_{p^n}^+$ as the subgroup of $G(\Q_p)$ of elements preserving
the lattice $L_{p^n}$ under left multiplication.

The main involution is given by
$\begin{pmatrix}a&b\\c&d\end{pmatrix}\mapsto\begin{pmatrix}d&-b\\-c&a\end{pmatrix}$.
Consider the isomorphism
$\Psi:M_2(B_p) \stackrel\sim\longrightarrow M_4(\Q_p)$ given by
\begin{equation}
  \label{eq:isom}
  \Psi
  \smat{\smat{a_{11}&a_{12}\\a_{21}&a_{22}} &
    \smat{b_{11}&b_{12}\\b_{21}&b_{22}} \\
    \smat{c_{11}&c_{12}\\c_{21}&c_{22}} &
    \smat{d_{11}&d_{12}\\d_{21}&d_{22}}}
  =
  \left(\begin{smallmatrix}
      a_{11} & b_{11} & a_{12} &b_{12} \\
      c_{11} & d_{11} & c_{12} & d_{12} \\
      a_{21} & b_{21} & a_{22} & b_{22}\\
      c_{21} & d_{21} & c_{22} & d_{22}
    \end{smallmatrix} \right).
    \end{equation}
    Note that $\Psi$ swaps second and third rows, then also second and third columns.
    Let
    $\GSp_2(\Q_p)=\{g\in M_4(\Q_p):\,g^tJg=\nu
    J,\,\nu\in\Q^{\times}\}$, where
    \[
        J=\smat{0&0&1&0\\0&0&0&1\\-1&0&0&0\\0&-1&0&0}
      \]
\begin{lem}\label{symplectic}
  The isomorphism $\Psi$ induces an isomorphism between the groups
  $\GUtwo{B_p}$ and $\GSp_2(\Q_p)$.
\end{lem}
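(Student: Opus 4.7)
The plan is to reduce the lemma to a single intertwining identity
\[
\Psi(m^*) = J\,\Psi(m)^t\,J^{-1} \qquad \text{for all } m\in M_2(B_p),
\]
which transports the involution $m\mapsto m^*$ on $M_2(B_p)$ to the symplectic involution $X\mapsto J X^t J^{-1}$ on $M_4(\Q_p)$. Granting this, the defining relation $g^*g = \nu(g) I$ of $\GUtwo{B_p}$ translates under $\Psi$ to $J\,\Psi(g)^t J^{-1}\Psi(g) = \nu(g)I$, equivalently (using $J^{-1}=-J$) to $\Psi(g)^t J\Psi(g) = \nu(g)J$, which is precisely the defining condition of $\GSp_2(\Q_p)$ with the same multiplier $\nu(g)$. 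Since $\Psi$ is already a $\Q_p$-algebra isomorphism, this gives the claimed group isomorphism, and as a bonus identifies the spinor norm on $\GUtwo{B_p}$ with the symplectic multiplier on $\GSp_2(\Q_p)$.

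To prove the intertwining identity I would argue by a direct block computation. Writing $m = \begin{pmatrix}A & B\\ C & D\end{pmatrix}$ with $A,B,C,D\in M_2(\Q_p)$, the definition gives $m^* = \begin{pmatrix}\overline{A} & \overline{C}\\ \overline{B} & \overline{D}\end{pmatrix}$, where $\overline{x}$ denotes the adjugate of $x$, as specified immediately before the lemma. On the other hand, in $2\times 2$-block form $\Psi(m)^t = \begin{pmatrix}P & Q\\ R & S\end{pmatrix}$ for blocks $P,Q,R,S$ read off directly from the $(i,j)$-entries of $A,B,C,D$. A short block computation using $J = \begin{pmatrix}0 & I_2\\ -I_2 & 0\end{pmatrix}$ and $J^{-1} = -J$ yields
\[
J\,\Psi(m)^t J^{-1} = \begin{pmatrix}S & -R\\ -Q & P\end{pmatrix},
\]
and comparing entry by entry with the explicit form of $\Psi(m^*)$ shows the two sides coincide.

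The only obstacle is the bookkeeping: because $\Psi$ incorporates a non-standard swap of rows $2$ and $3$, followed by the analogous swap of columns, one must be careful in matching indices, and the minus signs produced by the adjugate in $\overline{A},\overline{B},\overline{C},\overline{D}$ must be paired correctly with the minus sign in $J^{-1}=-J$. Since both sides of the intertwining identity are $\Q_p$-linear in $m$, the whole check reduces to a purely mechanical verification on the $16$ matrix units spanning $M_2(B_p)$, and presents no conceptual difficulty once the indexing conventions are fixed.
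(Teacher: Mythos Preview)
Your proposal is correct and follows essentially the same route as the paper: both reduce the lemma to the intertwining identity $\Psi(m^*)=J\,\Psi(m)^t J^{-1}$ and then read off $g^*g=\nu I\iff \Psi(g)^tJ\Psi(g)=\nu J$. The only difference is packaging: rather than a direct $16$-entry (or block) check, the paper introduces $J'=\Psi^{-1}(J)$, observes that the $2\times2$ adjugate is exactly $X\mapsto\smat{0&-1\\1&0}X^t\smat{0&1\\-1&0}$, so $m^*=(J')^{-1}m^tJ'$ as $4\times4$ matrices, and then transports via $\Psi$ (which sends $J'\mapsto J$ and commutes with transpose), avoiding the entrywise bookkeeping.
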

\begin{proof}
 If we let $J'=\begin{pmatrix} 0&1&0&0\\-1&0&0&0\\0&0&0&1\\0&0&-1&0\end{pmatrix}$, then $\Psi(J')=J$ and ${J'}^{-1}=-J'$.
 Since $\begin{pmatrix}0&-1\\1&0\end{pmatrix}\,{}^t\begin{pmatrix}a&b\\c&d\end{pmatrix}\begin{pmatrix}0&1\\-1&0\end{pmatrix}=\begin{pmatrix}d&-b\\-c&a\end{pmatrix},$ we see that $B^*={J'}^{-1}\,{}^tBJ'$, where the transpose is as a $4$-by-$4$ matrix. Hence for $g\in M_2(B_p)$, 
$$g^*g=\nu I\iff {J'}^{-1}\,{}^tg J'g=\nu I\iff {}^tg J'g=\nu J'\iff {}^t\Psi(g)J\Psi(g)=\nu J.$$
\end{proof}
%
%
%
The paramodular group of level $p^n$ is given by
$K(p^n):=\{k\in \GSp_2(\Q_p):\,h^{-n}kh^n\in \GL_4(\Z_p)\}$, where
$h:=\diag(1,1,1,p)$.

\begin{lem}
  The group $K_{p^n}^+$ maps onto the paramodular
  group $K(p^n)$ under the isomorphism (\ref{eq:isom}).
\end{lem}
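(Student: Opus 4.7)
The strategy is to unwind the stabiliser condition $gL_{p^n}=L_{p^n}$ into explicit valuation constraints on the entries of a block matrix representative of $g$, push these constraints through the reshuffle $\Psi$, and verify that the result matches the shape cut out by $h^{-n}(\cdot)h^n\in\GL_4(\Z_p)$.

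Writing $g=\begin{pmatrix}A&B\\ C&D\end{pmatrix}$ with $A,B,C,D\in M_2(\Q_p)=B_p$, I would check the stabiliser condition on the two obvious generating families of $L_{p^n}$: computing $g\begin{pmatrix}X\\ 0\end{pmatrix}=\begin{pmatrix}AX\\ CX\end{pmatrix}$ and $g\begin{pmatrix}0\\ \pi^n Y\end{pmatrix}=\begin{pmatrix}B\pi^n Y\\ D\pi^n Y\end{pmatrix}$ for $X,Y\in M_2(\Z_p)$, the inclusion $gL_{p^n}\subseteq L_{p^n}$ is equivalent to the four block conditions
\[
A\in M_2(\Z_p),\quad C\in\pi^n M_2(\Z_p),\quad B\pi^n\in M_2(\Z_p),\quad \pi^{-n}D\pi^n\in M_2(\Z_p).
\]
Since $\pi=\diag(1,p)$, these unpack entry-wise: $A$ is integral; the first column of $B$ is integral while the second lies in $p^{-n}\Z_p$; symmetrically, the first row of $C$ is integral while the second row lies in $p^n\Z_p$; and $D$ is integral on the diagonal with $d_{12}\in p^{-n}\Z_p$ and $d_{21}\in p^n\Z_p$.

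Next, I would push these sixteen scalar conditions through $\Psi$, which interleaves the columns of $B$ into columns $2$ and $4$ of $\Psi(g)$ and the rows of $C$ into rows $2$ and $4$. A direct inspection shows that $\Psi(g)$ has precisely the paramodular shape
\[
\begin{pmatrix}\Z_p&\Z_p&\Z_p&p^{-n}\Z_p\\ \Z_p&\Z_p&\Z_p&p^{-n}\Z_p\\ \Z_p&\Z_p&\Z_p&p^{-n}\Z_p\\ p^n\Z_p&p^n\Z_p&p^n\Z_p&\Z_p\end{pmatrix},
\]
which is equivalent to $h^{-n}\Psi(g)h^n\in M_4(\Z_p)$. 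Combined with Lemma~\ref{symplectic} (which identifies $\GUtwo{B_p}$ with $\GSp_2(\Q_p)$ via $\Psi$), this gives $\Psi(K_{p^n}^+)\subseteq K(p^n)$.

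For the reverse inclusion, given $k\in K(p^n)$, both $k$ and $k^{-1}$ have the paramodular shape above (since $K(p^n)$ is a group and $\GL_4(\Z_p)$ is closed under inversion), so running the entry-wise equivalences backwards yields that $g:=\Psi^{-1}(k)$ and $g^{-1}$ both satisfy the stabiliser conditions. This forces $gL_{p^n}=L_{p^n}$ and hence $g\in K_{p^n}^+$. The only real obstacle in the whole argument is the combinatorial bookkeeping of tracking each of the sixteen entries correctly through the row/column swap of $\Psi$, so that the fractional entries of $B$, $C$, and $D$ land precisely in the fourth column and fourth row of the paramodular shape; the underlying algebra is otherwise routine.
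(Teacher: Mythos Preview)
Your proof is correct and follows essentially the same approach as the paper's: both identify the stabiliser of $L_{p^n}$ with the block order $\begin{pmatrix}M_2(\Z_p)&M_2(\Z_p)\pi^{-n}\\ \pi^n M_2(\Z_p)&\pi^n M_2(\Z_p)\pi^{-n}\end{pmatrix}$ and check that $\Psi$ carries this to the paramodular shape $h^n M_4(\Z_p)h^{-n}$. The paper packages this more tersely by observing at once that $\Psi$ sends the $4\times 4$ order $L$ to itself, whereas you do the sixteen entry checks explicitly; you are also slightly more careful in handling the $M_4(\Z_p)$ versus $\GL_4(\Z_p)$ distinction via the inverse.
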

\begin{proof}
  The isomorphism (\ref{eq:isom}) sends the lattice
  $L=\left(\begin{smallmatrix} \Z_p & \Z_p & \Z_p & \Z_p/p^n\\ \Z_p &
      \Z_p & \Z_p & \Z_p/p^n\\ \Z_p & \Z_p & \Z_p & \Z_p/p^n\\ p^n\Z_p
      & p^n\Z_p & p^n\Z_p & \Z_p\end{smallmatrix} \right)$ to
  itself, and $K(p^n)$ is the intersection of $L$ with $\GSp_2(\Q_p)$, so it suffices to show that $K^+_{p^n}$ is the intersection of $L$ with $\GUtwo{B_p}$. If we describe $L$ with the notation
  $\left(\begin{smallmatrix} M_2(\ZZ_p) & M_2(\ZZ_p) \cdot \pi^{-n}\\
      \pi^n\cdot M_2(\ZZ_p) & \pi^n \cdot M_2(\ZZ_p)\cdot
      \pi^{-n}\end{smallmatrix}\right)$, it is easy to verify this, recalling that $L_{p^n}:= \begin{pmatrix}M_2(\Z_p)\\ \pi^n \cdot M_2(\ZZ_p)\end{pmatrix}$.
\end{proof}

It is well known that the integral condition on
elements of $K(p^n)$ plus the fact that it preserves the symplectic
form imply that $K(p^n)$ is the intersection of $\GSp_2(\Q_p)$ with the order
\begin{equation}
  \label{eq:eltsparamodular}
R:=  \left( \begin{smallmatrix}
    \Z_p&p^n \Z_p&\Z_p&\Z_p\\
    \Z_p& \Z_p&\Z_p&p^{-n}\Z_p\\
    \Z_p&p^n\Z_p&\Z_p&\Z_p\\
    p^n\Z_p&p^n\Z_p&p^n\Z_p&\Z_p
\end{smallmatrix}\right) \subset M_4(\Q_p).
\end{equation}

   Note that the subgroup $K(p^n)$ is normalised by the Atkin-Lehner
   element of $\GSp_2(\Q_p)$
   \begin{equation}
     \label{eq:A-L}
     W_{p^n}:=\left(\begin{smallmatrix}
       0 & p^n &0 & 0\\
       1 & 0 & 0 & 0\\
       0 & 0 & 0 & 1\\
       0 & 0 & p^n & 0
      \end{smallmatrix}\right).
   \end{equation}
  The Atkin-Lehner involution satisfies $W_{p^n}^2=p^nI$, and
  $W_{p^n}^*W_{p^n}=p^nI$ hence $\nu(W_{p^n})=p^n$.  The preimage of
  (\ref{eq:eltsparamodular}) under (\ref{eq:isom}) can be expressed as
  the block matrices
\begin{equation}
  \label{eq:paramodularlevel}
    \left(\begin{smallmatrix} M_2(\ZZ_p) & M_2(\ZZ_p)\cdot \bar{\pi}^n\\ \pi^n \cdot M_2(\ZZ_p) & \pi^n\cdot M_2(\ZZ_p) \cdot \pi^{-n}\end{smallmatrix} \right) 
  = 
  \smat{1&0\\0&\bar\pi^{-n}}
  \smat{M_2(\ZZ_p) & M_2(\ZZ_p)\\ p^n
  M_2(\ZZ_p)&M_2(\ZZ_p)}
  \smat{1&0\\0&\bar\pi^n}.
\end{equation}
 The preimage under (\ref{eq:isom}) of the Atkin-Lehner involution equals $W_{p^n}^+:=\smat{0&0&p^n&0\\0&0&0&1\\1&0&0&0\\0&p^n&0&0}$.

The following is \cite[Chapitre II, Theoreme 2.3(1)]{MR580949}.

\begin{lem}
  Let $R_1, R_2$ be two maximal orders of $M_n(\Q_p)$, then there exists $\alpha \in \GL_n(\Q_p)$ such that $\alpha R_1 \alpha^{-1} = R_2$.
\end{lem}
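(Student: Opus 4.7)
The plan is to reduce the statement to showing that every maximal order of $M_n(\Q_p)$ is conjugate to the standard maximal order $M_n(\Z_p)$; the conclusion then follows by transitivity of conjugation.

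First, I would recall that any $\Z_p$-order $R$ in $M_n(\Q_p)$ is, by definition, a $\Z_p$-subalgebra of $M_n(\Q_p)$ that is finitely generated as a $\Z_p$-module and spans $M_n(\Q_p)$ over $\Q_p$; in particular, $R$ contains $\Z_p\cdot I$. The clean observation that drives the whole proof is that a maximal order is always the endomorphism ring of a full-rank $\Z_p$-lattice in the natural module $\Q_p^n$.

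To make this precise, fix a maximal order $R\subseteq M_n(\Q_p)$, pick any nonzero $v\in\Q_p^n$, and set $L:=R\cdot v\subseteq\Q_p^n$. Since $R$ is a finitely generated $\Z_p$-module, so is $L$, and $L$ is automatically $\Z_p$-stable. Because $R\otimes_{\Z_p}\Q_p=M_n(\Q_p)$ acts transitively on $\Q_p^n\setminus\{0\}$, the $\Q_p$-span of $L$ is all of $\Q_p^n$, so $L$ is a full-rank $\Z_p$-lattice. By construction, $R\subseteq\End_{\Z_p}(L)$. The latter is itself an order in $M_n(\Q_p)$: it is finitely generated over $\Z_p$ (once a $\Z_p$-basis of $L$ is fixed, $\End_{\Z_p}(L)\simeq M_n(\Z_p)$ as $\Z_p$-modules) and it spans $M_n(\Q_p)$ after tensoring with $\Q_p$. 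Maximality of $R$ then forces $R=\End_{\Z_p}(L)$.

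Next, since $L$ and $\Z_p^n$ are both free $\Z_p$-modules of rank $n$ inside $\Q_p^n$, there exists $\alpha\in\GL_n(\Q_p)$ with $\alpha L=\Z_p^n$ (just send a $\Z_p$-basis of $L$ to the standard basis). For this $\alpha$ we have
\[
\alpha R\alpha^{-1}=\alpha\,\End_{\Z_p}(L)\,\alpha^{-1}=\End_{\Z_p}(\alpha L)=\End_{\Z_p}(\Z_p^n)=M_n(\Z_p).
\]
Applying this to both $R_1$ and $R_2$ produces $\alpha_1,\alpha_2\in\GL_n(\Q_p)$ with $\alpha_i R_i\alpha_i^{-1}=M_n(\Z_p)$, and then $\alpha:=\alpha_2^{-1}\alpha_1$ satisfies $\alpha R_1\alpha^{-1}=R_2$.

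The only delicate point is verifying that the candidate lattice $L=Rv$ really is a full-rank $\Z_p$-lattice (finitely generated, saturated, and spanning), and that $\End_{\Z_p}(L)$ is an order, so that maximality of $R$ can actually be invoked to identify $R$ with $\End_{\Z_p}(L)$. Everything else is formal from the transitivity of $\GL_n(\Q_p)$ on full-rank $\Z_p$-lattices in $\Q_p^n$.
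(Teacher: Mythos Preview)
Your argument is correct: the key reduction to showing that every maximal order is the endomorphism ring of a full-rank $\Z_p$-lattice, via $L=Rv$, is the standard and cleanest route, and the remainder is indeed formal. (One minor quibble: the word ``saturated'' in your final paragraph is out of place---a $\Z_p$-submodule of $\Q_p^n$ is automatically torsion-free, and all you need is finitely generated plus spanning.)

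The paper does not actually prove this lemma; it simply cites \cite[Chapitre II, Th\'eor\`eme 2.3(1)]{MR580949} (Vign\'eras). So your proposal supplies a self-contained proof where the paper gives only a reference, and the argument you give is essentially the one underlying that reference.
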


\begin{lem}\label{maximal} The only maximal orders in $M_4(\Q_p)$ containing $R$ are $\alpha_m^{-1}\,M_4(\Z_p)\alpha_m$, for $0\leq m\leq n$, where $\alpha_m:=\diag(1,p^m,1,p^{m-n})$.
\end{lem}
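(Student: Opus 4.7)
The plan is to exploit the one-to-one correspondence between maximal orders in $M_4(\Q_p)$ and homothety classes of full-rank $\Z_p$-lattices in $\Q_p^4$: by the preceding lemma every maximal order is conjugate to $M_4(\Z_p)$, hence of the form $\End_{\Z_p}(L) = \{g \in M_4(\Q_p) : gL \subseteq L\}$, and two lattices give the same endomorphism order iff they are homothetic. Under this dictionary, a maximal order contains $R$ precisely when the underlying lattice is $R$-stable, so the task reduces to classifying $R$-stable lattices up to homothety.

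First I would note that the four diagonal matrix units $E_{11}, E_{22}, E_{33}, E_{44}$ all lie in $R$, since each has its unique nonzero entry equal to $1\in\Z_p$ on the diagonal. These orthogonal idempotents decompose any $R$-stable $L$ as $L = \bigoplus_{i=1}^{4} (L \cap \Q_p e_i)$ with respect to the standard basis $e_1,\ldots,e_4$ of $\Q_p^4$, so $L = \bigoplus_i p^{a_i} \Z_p e_i$ for some integers $a_i\in\Z$.

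Next, reading off (\ref{eq:eltsparamodular}), the scaled matrix units $p^{v_{ij}} E_{ij}$ lie in $R$, where $v_{ij} := v_p(R_{ij})$ equals $n$ for $(i,j) \in \{(1,2),(3,2),(4,1),(4,2),(4,3)\}$, equals $-n$ for $(i,j) = (2,4)$, and equals $0$ otherwise. Since these elements generate $R$ as a $\Z_p$-module, $R$-stability of $L$ is equivalent to the finite system of inequalities $a_i - a_j \leq v_{ij}$ for all $i\neq j$. The coupled pair at positions $(2,4)$ and $(4,2)$ forces $a_4 - a_2 = n$, the pair at $(1,3)$ and $(3,1)$ forces $a_1 = a_3$, and the pair at $(1,2)$ and $(2,1)$ yields $0 \leq a_1 - a_2 \leq n$; the remaining inequalities are automatic consequences. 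Normalising the homothety class by $a_2 = 0$ leaves exactly the $n+1$ lattices
$$L_m = p^m\Z_p e_1 \oplus \Z_p e_2 \oplus p^m\Z_p e_3 \oplus p^n\Z_p e_4, \qquad 0\leq m\leq n.$$

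Finally, I would observe that $p^{-m} L_m = \alpha_m^{-1}\Z_p^4$ for $\alpha_m = \diag(1,p^m,1,p^{m-n})$, so $\End_{\Z_p}(L_m) = \alpha_m^{-1} M_4(\Z_p)\alpha_m$, yielding exactly the maximal orders in the statement. The only non-routine step is the linear-inequality analysis in the previous paragraph, which is a short finite calculation rather than a genuine obstacle.
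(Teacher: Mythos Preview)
Your proof is correct and takes a genuinely different, more conceptual route than the paper's.

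The paper works on the side of the conjugating matrix $\alpha$: it uses the Iwasawa decomposition to reduce $\alpha$ to upper-triangular form, then conjugates three diagonal idempotents of $R$ to force the strictly upper-triangular part of $\alpha$ to be integrally absorbable into a $\GL_4(\Z_p)$ factor, reducing to diagonal $\alpha$. Only then does it compute $\alpha R\alpha^{-1}$ entrywise and read off the valuation constraints on the diagonal entries.

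Your approach short-circuits the first half of this by working on the lattice side from the outset. The observation that all four diagonal idempotents $E_{ii}$ lie in $R$ immediately forces any $R$-stable lattice to be a direct sum of coordinate lines, replacing the Iwasawa decomposition and the subsequent factoring in one stroke. The remaining inequality analysis $a_i-a_j\leq v_{ij}$ is the exact lattice-side counterpart of the paper's entrywise computation of $\alpha R\alpha^{-1}$, and yields the same constraints. What your approach buys is a cleaner and shorter argument that makes transparent why the answer is parametrised by a single integer $m$ in $[0,n]$; what the paper's approach buys is that it stays entirely inside matrix manipulations without invoking the lattice--order dictionary, which may be marginally more self-contained for a reader who has not seen that correspondence.
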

\begin{proof} Recall the definition of $R$ given in
  (\ref{eq:eltsparamodular}). By the previous lemma, any maximal order
  is of the form $\alpha^{-1}\,M_4(\Z_p)\alpha$, for some
  $\alpha\in\GL_4(\Q_p)$. Suppose that
  $R\subset \alpha^{-1}\,M_4(\Z_p)\alpha$. By the Iwasawa
  decomposition, we may left multiply $\alpha$ by an element of
  $\GL_4(\Z_p)$ to put it in upper triangular form
  $\alpha=\begin{pmatrix}a&b&c&d\\0&e&f&g\\0&0&h&i\\0&0&0&j\end{pmatrix}$. Applying
  $\alpha\,R\alpha^{-1}\subset M_4(\Z_p)$ to the elements
  $\diag(0,1,0,0)$, $\diag(0,0,1,0)$ and $\diag(0,0,0,1)$ of $R$, and
  inspecting the above-diagonal elements of the second, third and
  fourth columns respectively, we find that
  $\frac{b}{e}, \frac{c}{h}, \frac{f}{h}, \frac{d}{j}, \frac{g}{j},
  \frac{i}{j}\in\Z_p$.  The factorisation
  \[
    \alpha=
    \begin{pmatrix}1&\frac{b}{e}&\frac{c}{h}&\frac{d}{j}\\
      0&1&\frac{f}{h}&\frac{g}{j}\\
      0&0&1&\frac{i}{j}\\
      0&0&0&1\end{pmatrix}
    \begin{pmatrix}
      a&0&0&0\\
      0&e&0&0\\
      0&0&h&0\\0&0&0&j
    \end{pmatrix},
    \]
    and the fact that the first factor is in $\GL_4(\Z_p)$ implies
    that we can assume $\alpha$ equals the second factor. Then
    \[
      \alpha\,R\alpha^{-1}=
      \begin{pmatrix}
        \Z_p&p^n\frac{a}{e}\Z_p&\frac{a}{h}\Z_p&\frac{a}{j}\Z_p\\
        \frac{e}{a}\Z_p&\Z_p&\frac{e}{h}\Z_p&p^{-n}\frac{e}{j}\Z_p\\
        \frac{h}{a}\Z_p&p^n\frac{h}{e}\Z_p&\Z_p&\frac{h}{j}\Z_p\\
        p^n\frac{j}{a}\Z_p&p^n\frac{j}{e}\Z_p&p^n\frac{j}{h}\Z_p&\Z_p
      \end{pmatrix}.
      \]
      For this to be contained in $M_4(\Z_p)$, we see that
      $-n\leq\ord_p(\frac{a}{e})\leq 0$, $-n\leq\ord_p(\frac{h}{e})\leq 0$,
      $\ord_p(\frac{a}{h})=0$,
      $0\leq\ord_p(\frac{a}{j})\leq n$, $\ord_p(\frac{e}{j})=n$ and
      $0\leq\ord_p(\frac{h}{j})\leq n$. Hence (removing a scalar
      power of $p$ and a diagonal unit matrix) we may assume that
      $\alpha=\alpha_m:=\diag(1,p^m,1,p^{m-n})$ for $0\leq m\leq n$.
    \end{proof}

\begin{lem}
  \label{intersection}
The unique way of writing $R$ as an intersection of maximal orders in $M_4(\Q_p)$ is
\[
  R=h^n\,M_4(\Z_p)h^{-n}\cap W_{p^n}^{-1}h^n\,M_4(\Z_p)h^{-n}W_{p^n}.
\]
\end{lem}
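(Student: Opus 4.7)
The plan is to use Lemma~\ref{maximal}, which classifies every maximal order of $M_4(\Q_p)$ containing $R$ as one of the $n+1$ diagonal conjugates $O_m := \alpha_m^{-1} M_4(\Z_p) \alpha_m$ with $\alpha_m = \diag(1,p^m,1,p^{m-n})$, for $0 \leq m \leq n$. Any expression of $R$ as an intersection of maximal orders must draw from this finite list, so the lemma reduces to an explicit calculation with these diagonal conjugates.

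First I will identify the two orders on the right-hand side of the claimed identity with particular $O_m$'s. Since $\alpha_0 = h^{-n}$, we have $O_0 = h^n M_4(\Z_p) h^{-n}$ at once. For the second, set $\gamma := h^{-n} W_{p^n}$, so that $W_{p^n}^{-1} h^n M_4(\Z_p) h^{-n} W_{p^n} = \gamma^{-1} M_4(\Z_p) \gamma$; a short multiplication shows $\gamma = u\,\alpha_n$ where $u$ is the permutation matrix of the product of transpositions $(1\,2)(3\,4)$, hence $u \in \GL_4(\Z_p)$ and the second order on the right-hand side equals $O_n$. To verify $R = O_0 \cap O_n$, I will write each $O_m$ out entry-by-entry: since $\alpha_m$ is diagonal with $p$-adic valuations $(b_1,b_2,b_3,b_4) = (0,m,0,m-n)$, the $(i,j)$-entry of $O_m$ equals $p^{b_j - b_i}\Z_p$. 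Intersecting the resulting matrices for $m=0$ and $m=n$ entry-by-entry then reproduces the definition~\eqref{eq:eltsparamodular} of $R$ exactly.

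For uniqueness, suppose $R = \bigcap_{i} O_{m_i}$ with each $O_{m_i}$ a maximal order. Each $O_{m_i}$ contains $R$, so belongs to $\{O_0,\dotsc,O_n\}$. The $(1,2)$-entry of $\bigcap_i O_{m_i}$ equals $p^{\max_i m_i}\Z_p$, and matching the $(1,2)$-entry $p^n\Z_p$ of $R$ forces $n \in \{m_i\}$; similarly, the $(4,1)$-entry of the intersection is $p^{n-\min_i m_i}\Z_p$, which matches $R$ only when $0 \in \{m_i\}$. Hence every such expression must include both $O_0$ and $O_n$; but then $O_0 \cap O_n = R$ already, so any additional $O_m$'s are redundant. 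I expect the only moderately subtle step to be the identification of $h^{-n}W_{p^n}$ with a unit multiple of $\alpha_n$; the rest reduces to bookkeeping with $p$-adic valuations.
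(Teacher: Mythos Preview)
Your proof is correct and follows essentially the same approach as the paper's: both use Lemma~\ref{maximal} to reduce to the finite list $\{O_m\}_{0\le m\le n}$, both identify the two orders on the right-hand side with $O_0$ and $O_n$ via the same factorisation $h^{-n}W_{p^n}=u\,\alpha_n$ with $u$ a permutation matrix, and both pin down $\min m_i=0$ and $\max m_i=n$ by inspecting particular matrix entries (the paper uses the $(2,1)$ and $(1,4)$ entries rather than your $(1,2)$ and $(4,1)$, but the mechanism is identical). Your write-up is slightly more explicit in verifying the equality $R=O_0\cap O_n$ entry-by-entry and in handling intersections of arbitrarily many maximal orders rather than just pairs, but these are cosmetic differences.
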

\begin{proof} 
Recall that
\[
  R=\left(\begin{smallmatrix}
    \Z_p&p^n\Z_p&\Z_p&\Z_p\\\
    Z_p&\Z_p&\Z_p&p^{-n}\Z_p\\
    \Z_p&p^n\Z_p&\Z_p&\Z_p\\
    p^n\Z_p&p^n\Z_p&p^n\Z_p&\Z_p
  \end{smallmatrix}\right),
\]
while
\[
  \alpha_m^{-1}\,M_4(\Z_p)\alpha_m=
  \left(\begin{smallmatrix}
    \Z_p&p^m\Z_p&\Z_p&p^{m-n}\Z_p\\
    p^{-m}\Z_p&\Z_p&p^{-m}\Z_p&p^{-n}\Z_p\\
    \Z_p&p^m\Z_p&\Z_p&p^{m-n}\Z_p\\
    p^{n-m}\Z_p&p^n\Z_p&\p^{n-m}Z_p&\Z_p
  \end{smallmatrix}\right).
  \]
If
$R=\alpha_{m_1}^{-1}\,M_4(\Z_p)\alpha_{m_1}\cap\alpha_{m_2}^{-1}\,M_4(\Z_p)\alpha_{m_2}$,
with $0\leq m_1,m_2\leq
n$ then to avoid non-integral elements in the left entry of the
second row we must have some
$m_i=0$, and to avoid non-integral elements in the top entry of
the fourth column we must have some $m_i=n$, say $m_1=0$, $m_2=n$.

Clearly $$\alpha_0^{-1}\,M_4(\Z_p)\alpha_0=h^n\,M_4(\Z_p)h^{-n},$$
since $\alpha_0=\diag(1,1,1,p^{-n})=h^{-n}$, and $$\alpha_n^{-1}\,M_4(\Z_p)\alpha_n=W_{p^n}^{-1}h^n\,M_4(\Z_p)h^{-n}W_{p^n},$$
since
\[
  h^{-n}W_{p^n}=
  \begin{pmatrix}
    0&p^n&0&0\\
    1&0&0&0\\
    0&0&0&1\\
    0&0&1&0\end{pmatrix}=
  \begin{pmatrix}
    0&1&0&0\\
    1&0&0&0\\
    0&0&0&1\\
    0&0&1&0\end{pmatrix}\,\alpha_n.
  \]
\end{proof}
\begin{lem}\label{normaliser} The normaliser of $R$, i.e. the set $\{g\in\GL_4(\Q_p):\,\,g^{-1}Rg=R\}$, is the union of $p^{\Z}W_{p^n}^{\mu}\,R^{\times}$ for $\mu=0,1$.
\end{lem}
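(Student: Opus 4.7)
The strategy is to exploit Lemma~\ref{intersection}, which expresses $R$ as the intersection of precisely two maximal orders $O_0 := h^n\,M_4(\Z_p)h^{-n}$ and $O_n := W_{p^n}^{-1} O_0 W_{p^n}$, and to use the uniqueness of this pair among all ways to write $R$ as an intersection of two maximal orders in $M_4(\Q_p)$. The key observation is that any $g \in \GL_4(\Q_p)$ normalising $R$ permutes, by conjugation, the set of maximal orders containing $R$: indeed $gO_0 g^{-1}$ and $gO_n g^{-1}$ are two distinct maximal orders whose intersection equals $gRg^{-1} = R$, so the uniqueness of Lemma~\ref{intersection} forces $\{gO_0g^{-1},\, gO_n g^{-1}\} = \{O_0, O_n\}$.

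This splits into two cases. In the ``fix'' case ($gO_0 g^{-1} = O_0$ and $gO_n g^{-1} = O_n$), the element $g$ lies in the intersection of the normalisers of $O_0$ and $O_n$. Since the normaliser of $M_4(\Z_p)$ in $\GL_4(\Q_p)$ is $\Q_p^\times\,\GL_4(\Z_p) = p^\Z\,\GL_4(\Z_p)$, conjugation by $h^n$ shows that the normaliser of $O_0$ is $p^\Z O_0^\times$, and similarly that of $O_n$ is $p^\Z O_n^\times$. Comparing $p$-adic valuations of determinants forces the powers of $p$ in the two expressions to coincide, reducing to $O_0^\times \cap O_n^\times = (O_0 \cap O_n)^\times = R^\times$; hence $g \in p^\Z R^\times$.

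In the ``swap'' case, one uses that $W_{p^n}$ itself interchanges $O_0$ and $O_n$ under conjugation (by the very definition of $O_n$, together with the fact that $W_{p^n}^2 = p^n I$ is central so conjugation by $W_{p^n}$ is an involution). Consequently $g W_{p^n}$ normalises both maximal orders, putting us back in the fix case: $g W_{p^n} \in p^\Z R^\times$. Since $W_{p^n}$ normalises $R$ and hence $R^\times$, and $W_{p^n}^{-1} = p^{-n} W_{p^n}$, one rearranges to obtain $g \in p^\Z W_{p^n} R^\times$. The converse inclusion $p^\Z W_{p^n}^{\mu} R^\times \subseteq \{g : g^{-1}Rg = R\}$ is immediate because $W_{p^n}$ and the elements of $R^\times$ normalise $R$, and $p^\Z$ is central.

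The main subtlety is the correct use of the uniqueness in Lemma~\ref{intersection}: although $n+1$ maximal orders contain $R$ (cf.\ Lemma~\ref{maximal}), only the unordered pair $\{O_0, O_n\}$ intersects to $R$, which is exactly what pins the permutation action down to a two-element set and makes the dichotomy above exhaustive. The remaining work is routine bookkeeping with the explicit descriptions of $O_0^\times$, $O_n^\times$, and $R^\times$, and with the scalar identity $W_{p^n}^2 = p^n I$.
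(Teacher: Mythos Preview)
Your proposal is correct and follows essentially the same approach as the paper: use the uniqueness in Lemma~\ref{intersection} to deduce that conjugation by $g$ either fixes or swaps the pair $\{O_0,O_n\}$, then reduce to the fact that the normaliser of $M_4(\Z_p)$ is $p^{\Z}\GL_4(\Z_p)$. The paper's proof is terser and leaves the fix/swap bookkeeping (your determinant-valuation argument and the identity $O_0^\times\cap O_n^\times=R^\times$) to the reader, but the ideas are identical.
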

\begin{proof} By Lemma \ref{intersection}, conjugation by $g$ either fixes or swaps the maximal orders $h^n\,M_4(\Z_p)h^{-n}$ and $W_{p^n}^{-1}h^n\,M_4(\Z_p)h^{-n}W_{p^n}$. Clearly it suffices to show that the normaliser of $M_4(\Z_p)$ is $p^{\Z}M_4(\Z_p)^{\times}$, but this can be done by reducing to diagonal elements, as in the proof of Lemma \ref{maximal}, but easier.
\end{proof}

\section{Local lattices in a six-dimensional $\GUtwo{B}$-space}

Consider an action of $\GUtwo{B}$ on the $\Q$-vector space (of
Lemma~\ref{lemma:clifordinvariant})
\begin{equation}
  \label{eq:starinvariant}
\DimSix:=\{A\in M_2(B):\,A^*=A\}.  
\end{equation}
This space is very related to the one considered by Ibukiyama in
\cite{I}, but we remove the trace zero hypothesis.

\begin{lem}
  The vector space $\DimSix$ is of dimension $6$. Moreover, it is given
  by
  \[
\DimSix = \left \{ \left(\begin{matrix} s & r \\ \overline{r} & t\end{matrix}\right) \; : \; s, t \in \QQ, r \in B\right\}.
\]
\label{lemma:W}
\end{lem}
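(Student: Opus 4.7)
The plan is a direct unpacking of the definition of the involution $m^* = \overline{m}^t$ on $M_2(B)$, using the fact that $\{b \in B : \bar b = b\} = \Q$ (the fixed subspace of the standard involution of the quaternion algebra).

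First, write a general element of $M_2(B)$ as $A = \begin{pmatrix} a & b \\ c & d \end{pmatrix}$ with $a,b,c,d \in B$. Then, by the description of the involution established in the preceding lemma,
\[
A^* = \overline{A}^{\,t} = \begin{pmatrix} \bar a & \bar c \\ \bar b & \bar d \end{pmatrix}.
\]
So the condition $A^* = A$ is equivalent to the four conditions $\bar a = a$, $\bar d = d$, and $\bar c = b$ (the last of which automatically implies $\bar b = c$, since $B \to B$, $x \mapsto \bar x$ is an involution).

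Second, recall that for a quaternion algebra over $\Q$ the fixed points of the main involution are precisely the scalars: $\{x \in B : \bar x = x\} = \Q$. Therefore the conditions $\bar a = a$ and $\bar d = d$ translate to $a = s \in \Q$ and $d = t \in \Q$, while $\bar c = b$ says that $c$ is determined by $b$ (set $r := b \in B$ and $c = \bar r$). This yields exactly the claimed parametrisation
\[
\DimSix = \left\{ \begin{pmatrix} s & r \\ \bar r & t \end{pmatrix} : s,t \in \Q,\ r \in B \right\}.
\]

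Finally, counting dimensions over $\Q$: the parameters $s,t$ each contribute $1$, and $r \in B$ contributes $\dim_\Q B = 4$, for a total of $\dim_\Q \DimSix = 6$. There is no real obstacle here; the statement is a straightforward computation once the involution has been explicitly identified (as it was in the previous lemma), which is the only ingredient needed.
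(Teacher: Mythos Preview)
Your proof is correct and takes essentially the same approach as the paper: both unpack the condition $A^*=A$ entrywise, use that the fixed subspace of the quaternion involution is $\Q$, and read off the parametrisation. The paper's proof is simply a terser version of what you wrote.
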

\begin{proof}
  It is clear that if $\overline{M^t} =M$ then the entries $(1,1)$ and
  $(2,2)$ of the matrix are fixed by the involution, hence
  rational. The second hypothesis is also clear.
\end{proof}

Note that the space $\DimSix$ contains the centre of $M_2(B)$
(i.e. the rational scalar matrices). In 
Section~\ref{section:quadraticform} we will define a quadratic form on
$\DimSix$ invariant under translation by the centre, hence the
quotient space becomes a quadratic space.

\begin{prop}
  The group $\GUtwo{B}$ acts on $\DimSix$ via conjugation.
\label{prop:Winvariance}
\end{prop}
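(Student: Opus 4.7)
The plan is to verify that for any $g\in\GUtwo{B}$ and any $A\in\DimSix$, the conjugate $gAg^{-1}$ still satisfies $(gAg^{-1})^*=gAg^{-1}$, and that this defines a group action in the usual way. The group action axioms are immediate from the associativity of multiplication in $M_2(B)$, so the only real content is the invariance of $\DimSix$ under conjugation.

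For the invariance, I would exploit the defining relation $g^*g=\nu(g)I$ with $\nu(g)\in\Q^\times$. Since $\nu(g)$ is a (nonzero) rational scalar, it lies in the centre of $M_2(B)$, and $g$ is invertible with $g^*=\nu(g)g^{-1}$ and hence $(g^{-1})^*=(g^*)^{-1}=\nu(g)^{-1}g$. Using that the involution $m\mapsto m^*=\overline{m}^t$ reverses products, one computes
\[
(gAg^{-1})^*=(g^{-1})^*\,A^*\,g^*=\nu(g)^{-1}g\cdot A\cdot \nu(g)g^{-1}=gAg^{-1},
\]
where $A^*=A$ because $A\in\DimSix$ and the two factors of $\nu(g)^{\pm 1}$ cancel as they are central scalars. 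This shows $gAg^{-1}\in\DimSix$.

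Finally, the compatibility $(gh)A(gh)^{-1}=g(hAh^{-1})g^{-1}$ and $IAI^{-1}=A$ are formal, giving a well-defined left action of $\GUtwo{B}$ on $\DimSix$. There is essentially no obstacle here beyond remembering that the involution is order-reversing and that $\nu(g)$ is scalar; the proof is a two-line calculation once these points are in hand.
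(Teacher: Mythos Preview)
Your proof is correct and follows essentially the same approach as the paper: both use the relation $g^*=\nu(g)g^{-1}$ (equivalently $(g^{-1})^*=\nu(g)^{-1}g$) together with the fact that $\nu(g)$ is central to verify that $(gAg^{-1})^*=gAg^{-1}$. The only difference is cosmetic, in how the algebra is organised.
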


\begin{proof}
  If $g \in \GUtwo{B}$ it satisfies that $g^{-1} = \frac{g^*}{\nu(g)}$. Then
  $(g v g^{-1})^* = (g v \frac{g^*}{\nu(g)})^* = \frac{g}{\nu(g)}
  v^* g^*  = g v g^{-1}$ because
  $\nu(g)$ is in the centre of $M_2(B)$.
\end{proof}

\subsection{Local lattices in $U$ at split primes.}
\label{section:lattice}
Let $p$ be an unramified prime (i.e. $p \not \in S$). 
Given $n\geq 0$, define a $\Z_p$-lattice
$\UU^+_{p^n}\subseteq \DimSix_p:=\DimSix\otimes\Q_p$ by
\begin{equation}
  \label{eq:splitlattice}
  \UU^+_{p^n}:=\left\{\begin{pmatrix}
      s & \begin{pmatrix}p^na&b\\p^nc&d \end{pmatrix}\\
      \begin{pmatrix}d&-b\\-p^nc&p^na\end{pmatrix} & t\end{pmatrix}
    :\, a,b,c,d, s,t\in \Z_p\right\}.  
\end{equation}
Using the previous notation (i.e. $\pi=\left(\begin{smallmatrix}1 & 0\\ 0 & p\end{smallmatrix}\right)$), the lattice can be written in the compact form $\UU^+_{p^n}
= \left \{ \left(\begin{matrix} s & r\\ \bar{r} &
        t\end{matrix} \right) \; : \; s,t \in \Z_p, \text{ } r
    \in \Z_{B_p}\cdot \bar{\pi}^n\right\}$.

  \begin{lem}
    \label{order}
    Suppose that $p$ is prime. The subring $R'$ of $M_4(\Q_p)$ generated by
    $\UU^+_{p^n}$ equals
    \[
\qquad       \begin{pmatrix}
        \Z_pI_2&0_2\\
        0_2&\Z_pI_2\end{pmatrix}
      \oplus
        \begin{pmatrix}
          p^n\Z_p&p^n\Z_p&p^n\Z_p&\Z_p\\
          p^n\Z_p&p^n\Z_p&p^n\Z_p&\Z_p\\
          \Z_p&\Z_p&p^n\Z_p&\Z_p\\
          p^n\Z_p&p^n\Z_p&p^{2n}\Z_p&p^n\Z_p\end{pmatrix}.
          \]
\end{lem}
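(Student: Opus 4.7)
The plan is to exploit that $\UU^+_{p^n}$ contains the orthogonal idempotents $P':=\smat{I_2&0\\0&0}$ and $Q':=\smat{0&0\\0&I_2}$ (from $s=1,t=0$ and $s=0,t=1$ with $a=b=c=d=0$), together with the purely off-diagonal elements $X(a,b,c,d)=\smat{0&B\\\bar B&0}$ obtained by setting $s=t=0$, where $B=\smat{p^na&b\\p^nc&d}\in M_2(\Z_p)\bar\pi^n$ and $\bar B\in\pi^n M_2(\Z_p)$, with $\pi=\smat{1&0\\0&p}$ satisfying $\pi^n\bar\pi^n=\bar\pi^n\pi^n=p^nI_2$. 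Write $\mathcal R$ for the ring claimed by the lemma.

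For the inclusion $\mathcal R\subseteq R'$, each of the four blocks is realized by a short product. The scalar summand is just $\Z_pP'+\Z_pQ'$. Left-multiplying $X(a,b,c,d)$ by $P'$ or $Q'$ isolates the upper-right and lower-left, showing that $M_2(\Z_p)\bar\pi^n$ and $\pi^n M_2(\Z_p)$ lie in $R'$. For the diagonal blocks one uses $X(a,b,c,d)X(a',b',c',d')=\smat{B\bar{B'}&0\\0&\bar B B'}$ followed by projection by $P'$ or $Q'$ on either side; direct computation gives
\[
B\bar{B'}=p^n\smat{ad'-bc'&a'b-ab'\\cd'-dc'&a'd-b'c},\qquad
\bar B B'=\smat{p^n(da'-bc')&db'-bd'\\p^{2n}(ac'-ca')&p^n(ad'-b'c)},
\]
and varying parameters spans all of $p^n M_2(\Z_p)$ and of $\pi^n M_2(\Z_p)\bar\pi^n=\smat{p^n\Z_p&\Z_p\\p^{2n}\Z_p&p^n\Z_p}$ respectively.

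For the reverse inclusion, since $\UU^+_{p^n}\subseteq\mathcal R$ and $\mathcal R$ contains $I_4=P'+Q'$, it suffices to verify that $\mathcal R$ is closed under multiplication. Parametrizing a general element as
\[
m=\smat{sI_2+p^nA&B\bar\pi^n\\\pi^nC&tI_2+\pi^nD\bar\pi^n},\qquad s,t\in\Z_p,\ A,B,C,D\in M_2(\Z_p),
\]
and similarly for $m'$, the four blocks of $mm'$ can be computed using only the identity $\bar\pi^n\pi^n=p^nI_2$; for instance $(mm')_{22}=tt'I_2+\pi^n(CB'+tD'+t'D+p^nDD')\bar\pi^n$, which lies back in $\Z_pI_2+\pi^nM_2(\Z_p)\bar\pi^n$, and the other three blocks are absorbed analogously.

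The main subtlety is the asymmetric $(2,2)$-block $\smat{p^n\Z_p&\Z_p\\p^{2n}\Z_p&p^n\Z_p}$, with $p^{2n}$ (not $p^n$) in the lower-left entry. One must check both that this exact shape arises already from products $\bar B B'$ of off-diagonal generators, and that it is preserved under further multiplication: it is precisely the relation $\bar\pi^n\pi^n=p^nI_2$ that absorbs the extra factor of $p^n$ needed to keep the block closed, preventing any spurious contribution outside $\pi^nM_2(\Z_p)\bar\pi^n$. Once the block arithmetic is set up, everything else is a direct verification comparing entries.
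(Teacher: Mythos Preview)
Your proof is correct and follows essentially the same route as the paper's: isolate the idempotents $P',Q'$, use them to project out the off-diagonal blocks $M_2(\Z_p)\bar\pi^n$ and $\pi^n M_2(\Z_p)$, then multiply these in the two orders to generate the non-scalar parts of the diagonal blocks $p^nM_2(\Z_p)$ and $\pi^nM_2(\Z_p)\bar\pi^n$. The paper simply asserts that the resulting set is closed under multiplication, whereas you verify this explicitly via the parametrization $m=\smat{sI_2+p^nA&B\bar\pi^n\\\pi^nC&tI_2+\pi^nD\bar\pi^n}$ and the identity $\bar\pi^n\pi^n=p^nI_2$; this is a welcome level of detail but not a genuinely different argument.
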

\begin{proof}
  Since $\begin{pmatrix}I_2&0_2\\0_2&0_2\end{pmatrix} \in R'$ and
  $\begin{pmatrix}0_2&0_2\\0_2&I_2\end{pmatrix}\in R'$, if we multiply
  these elements by elements in $\UU^+_{p^n}$ of the form
  $\begin{pmatrix} 0_2 & \begin{pmatrix}
      a&b\\c&d\end{pmatrix}\\\begin{pmatrix}d&-b\\-c&a\end{pmatrix} &
    0_2\end{pmatrix}$, with $b,d\in\Z_p$ and $a,c\in p^n\Z_p$, we find
  that
  $\begin{pmatrix} 0_2 & \begin{pmatrix}
      p^n\Z_p&\Z_p\\p^n\Z_p&\Z_p\end{pmatrix}\\0_2 &
    0_2\end{pmatrix}\subset R'$ and
  $\begin{pmatrix} 0_2 &
    0_2\\\begin{pmatrix}\Z_p&\Z_p\\p^n\Z_p&p^n\Z_p\end{pmatrix} &
    0_2\end{pmatrix}\subset R'$. Multiplying elements of these subsets
  together, one way round or the other, we find that
$$\begin{pmatrix}\Z_pI_2&0_2\\0_2&\Z_pI_2\end{pmatrix}\oplus\begin{pmatrix}
  p^n\Z_p&p^n\Z_p&p^n\Z_p&\Z_p\\p^n\Z_p&p^n\Z_p&p^n\Z_p&\Z_p\\\Z_p&\Z_p&p^n\Z_p&\Z_p\\p^n\Z_p&p^n\Z_p&p^{2n}\Z_p&p^n\Z_p\end{pmatrix}\subseteq
R'.$$ It is easy to see that the left hand side is closed under
multiplication, hence the inclusion must be equality.
\end{proof}
\begin{remar} 
If one takes the second summand, divides the top left and bottom right $2 \times 2$ blocks by $p^n$ and applies the map $\Psi: M_2(B_p) \to M_4(\Q_p)$~(cf. (\ref{eq:isom})) then one obtains $R$ from the previous section (cf. (\ref{eq:eltsparamodular})).
\end{remar}

\begin{remar}\label{p2trace0}
  Anticipating Remark~\ref{remark:tracezero} below, when
  $p=2$, had we replaced $\UU^+_{p^n}$ by its trace zero
  sublattice, we would only have been able to show that
  $\left(\begin{smallmatrix} 2I_2 & 0_2 \\ 0_2 &
      0_2\end{smallmatrix}\right)$ lies in $R'$.
\end{remar}

\subsection{Local lattices in $U$ at non-split primes.}
Let $p$ now be a ramified prime. Recall that $\xi$ was chosen so that
$\xi^*\xi=H:=\left(\begin{smallmatrix} 0 & 1\\1 & 0\end{smallmatrix}\right)$, giving
$\xi^{-1}\GUtwo{B_p}\xi=\GUone{B_p}$. Following \cite[\S
4]{I}, define $\widetilde{\DimSix}_p=\xi^{-1}\DimSix_p\xi$.

\begin{lem}
  The vector space $\widetilde{\DimSix}_p$ equals the space
  $\left\{\left(\begin{smallmatrix} r & s\\ t &
        \bar{r}\end{smallmatrix} \right) \, : \, s,t \in \Q_p, r \in
      B_p\right\}$.
\end{lem}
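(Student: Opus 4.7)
The plan is to translate the defining condition $A^* = A$ on $\DimSix_p$ through the conjugation $A \mapsto \tilde A := \xi^{-1} A \xi$ into an intrinsic condition on matrices in $\widetilde{\DimSix}_p$, and then read off the shape of such matrices by writing them in block form and solving the resulting quaternionic equations entry-by-entry.

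First I would compute how the involution interacts with conjugation by $\xi$. Writing $A = \xi \tilde A \xi^{-1}$ and applying $*$ gives $A^* = (\xi^{-1})^* \tilde A^* \xi^*$, so the condition $A^* = A$ is equivalent to $(\xi^*)^{-1} \tilde A^* \xi^* = \xi \tilde A \xi^{-1}$, i.e., after multiplying by $\xi^*$ on the left and $\xi$ on the right,
\[
\tilde A^* H = H \tilde A,
\]
where we have used the defining property $\xi^* \xi = H$. Thus $\widetilde{\DimSix}_p = \{\tilde A \in M_2(B_p) : \tilde A^* H = H \tilde A\}$.

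Next I would solve this equation componentwise. Writing $\tilde A = \left(\begin{smallmatrix} a & b \\ c & d \end{smallmatrix}\right)$ with $a,b,c,d \in B_p$, a direct computation with $H = \left(\begin{smallmatrix} 0 & 1 \\ 1 & 0 \end{smallmatrix}\right)$ gives
\[
\tilde A^* H = \begin{pmatrix} \bar c & \bar a \\ \bar d & \bar b \end{pmatrix},
\qquad
H \tilde A = \begin{pmatrix} c & d \\ a & b \end{pmatrix}.
\]
Equating entries yields the four conditions $\bar c = c$, $\bar b = b$, $\bar a = d$, and (redundantly) $\bar d = a$. The first two say $b, c \in \Q_p$ (the fixed field of the main involution on $B_p$), while the remaining pair says $d = \bar a$ with $a \in B_p$ arbitrary. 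This matches the claimed description (after renaming $b, c$ as $s, t$), giving the lemma.

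The computation is routine and the only mild subtlety is keeping track of the transposition of entries induced by multiplication by the anti-diagonal matrix $H$; no step looks like a genuine obstacle.
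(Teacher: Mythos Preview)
Your proof is correct and fully self-contained. The paper's own proof simply cites Ibukiyama \cite{I} for the trace-zero subspace and notes that the identity matrix is preserved under conjugation by $\xi$, whereas you carry out the underlying computation directly: translating $A^*=A$ through $\xi^*\xi=H$ to the condition $\tilde A^*H=H\tilde A$ and reading off the entry-by-entry constraints. This is exactly the kind of argument the citation is hiding, so the two approaches coincide in substance; yours simply makes the verification explicit rather than deferring it.
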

\begin{proof}
  The proof is given in \cite{I} (page 212), although the author
  considers only the subspace of $\DimSix_p$ of trace zero
  elements. Recall that over $\Q_p$ the two spaces differ by the
  identity matrix, which maps to itself under conjugation.
\end{proof}
For any $p$, define $\UU^-_p\subseteq \DimSix_p$ by $\UU^-_p:=\xi\tilde{\UU}^-_p\xi^{-1}$, where
$\tilde{\UU}^-_p\subseteq\widetilde{\DimSix}_p$ is defined by
\begin{equation}
  \label{eq:nonsplitlattice}
  \tilde{\UU}^-_p:=\left\{
    \begin{pmatrix}
      r&ps\\
      t&\overline{r}
    \end{pmatrix}:\,r \in \OOO_p, s,t\in \Z_p\right\}.
\end{equation}

  \begin{lem}
    \label{lemma:orderramifiedprime}
    Suppose that $p$ is prime. The subring $R'$ of $M_2(B_p)$ generated by
    $\tilde{\UU}^-_{p}$ equals 
    \[
\qquad      \left\{ \begin{pmatrix}
        a & b\\
        c & d\end{pmatrix} \in M_2(B_p) \; : \; a,c,d \in \mxlram, b \in p\mxlram \text{ and } a \equiv \bar{d} \pmod{\pi}\right\}.
          \]
\end{lem}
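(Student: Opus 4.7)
The plan is to prove the equality by establishing both inclusions. Denote temporarily by $\mathcal{M}$ the claimed right-hand side set.

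First I would show $R' \subseteq \mathcal{M}$ by verifying that $\mathcal{M}$ is itself a subring of $M_2(B_p)$ containing $\tilde{\UU}^-_p$. Additive closure is immediate, as is the containment $\tilde{\UU}^-_p \subseteq \mathcal{M}$ (for a matrix $\left(\begin{smallmatrix} r & ps \\ t & \bar r \end{smallmatrix}\right)$ one has $r - \overline{\bar r} = 0 \in \mathfrak{p}$). For multiplicative closure one expands the product of two elements of $\mathcal{M}$ entrywise; the upper-right position stays in $p\mxlram$ because every term carries a factor in $p\mxlram$, and the diagonal positions plainly lie in $\mxlram$. The delicate point is the congruence $a \equiv \bar d \pmod \pi$. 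Modulo $\mathfrak{p}$ the cross terms $b_1 c_2$ and $\overline{c_1 b_2}$ drop out (both lie in $p\mxlram \subseteq \mathfrak{p}$), so one is reduced to $a_1 a_2 \equiv \bar d_2 \bar d_1 \pmod \pi$; the hypothesis $a_i \equiv \bar d_i \pmod \pi$ together with the commutativity of the residue field $\mxlram/\mathfrak{p} \cong \mathbb{F}_{p^2}$ closes this out.

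For the reverse inclusion $\mathcal{M} \subseteq R'$ I would decompose a general element of $\mathcal{M}$ as a sum of a diagonal, an upper-right and a lower-left matrix, and realise each piece from elements of $\tilde{\UU}^-_p$. Writing $X(r) = \left(\begin{smallmatrix} r & 0 \\ 0 & \bar r \end{smallmatrix}\right)$, $E = \left(\begin{smallmatrix} 0 & p \\ 0 & 0 \end{smallmatrix}\right)$ and $F = \left(\begin{smallmatrix} 0 & 0 \\ 1 & 0 \end{smallmatrix}\right)$ (all in $\tilde{\UU}^-_p$), the products $X(r) E = \left(\begin{smallmatrix} 0 & pr \\ 0 & 0 \end{smallmatrix}\right)$ and $X(r) F = \left(\begin{smallmatrix} 0 & 0 \\ \bar r & 0 \end{smallmatrix}\right)$ generate the two off-diagonal blocks. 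For a diagonal contribution $\left(\begin{smallmatrix} a & 0 \\ 0 & d \end{smallmatrix}\right)$ with $a - \bar d \in \mathfrak{p}$, subtracting $X(\bar d)$ reduces the task to showing $\left(\begin{smallmatrix} \alpha & 0 \\ 0 & 0 \end{smallmatrix}\right) \in R'$ for every $\alpha \in \mathfrak{p}$. The products $(EF) X(r) = \left(\begin{smallmatrix} pr & 0 \\ 0 & 0 \end{smallmatrix}\right)$ cover the sub-ideal $p\mxlram$ in the corner; for the quotient $\mathfrak{p}/p\mxlram$ I would use the commutator identity $X(r) X(r') - X(r'r) = \left(\begin{smallmatrix} rr' - r'r & 0 \\ 0 & 0 \end{smallmatrix}\right)$.

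The main obstacle will be showing that these commutators really do cover $\mathfrak{p}/p\mxlram \cong \mathbb{F}_{p^2}$. Using the standard description $\mxlram = W + W\pi$, with $W$ the ring of integers of the unramified quadratic extension of $\Q_p$ and the relation $\pi w = \bar w \pi$ for $w \in W$, one computes $[w, v\pi] = v(w - \bar w)\pi$, which as $w$ ranges over $W$ sweeps out $v \cdot 2W^0 \cdot \pi$. For odd $p$ this is enough, since $2 \in \Z_p^\times$ and $W \cdot W^0 = W$, so the commutators together with $p\mxlram$ fill all of $\mathfrak{p}$ and we are done. For $p = 2$ one would have to refine the argument by using additional commutators involving two elements of $W\pi$, or a more specific choice of generators of the maximal order.
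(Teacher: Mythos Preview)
Your approach is essentially the same as the paper's: build the off-diagonal blocks from products $X(r)F$ and $X(r)E$, then use the diagonal identity $X(r)X(r')-X(r'r)$ (the paper writes it in the $(2,2)$ corner rather than the $(1,1)$ corner, which is immaterial) to produce commutators filling out $\mathfrak{p}$. You are in fact more careful than the paper, which does not explicitly verify the inclusion $R'\subseteq\mathcal{M}$ and simply asserts that the commutators ``generate the maximal ideal''.

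Your hesitation at $p=2$ is unnecessary, and stems from a small miscalculation. The set $\{w-\bar w: w\in W\}$ equals $2W^0$ only for odd $p$; for $p=2$, with $W=\Z_2[\omega]$ and $\omega$ a primitive cube root of unity, one has $w-\bar w = b(1+2\omega)$ for $w=a+b\omega$, so the image is $W^0$ itself, not $2W^0$. Since $(1+2\omega)^2=-3\in\Z_2^\times$, this $W^0$ is generated by a unit of $W$, hence $v\cdot W^0\cdot\pi$ already sweeps out all of $W\pi$ as $v$ ranges over $W$. Thus the same commutator argument covers $\mathfrak{p}/p\mxlram$ for every $p$, and no refinement is needed.
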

\begin{proof}
  The proof is similar to \cite[Lemma 4.2]{I}. Multiplying the elements
\[
  \begin{pmatrix} 0 & 0 \\ 1 & 0\end{pmatrix}
  \begin{pmatrix}r & 0\\ 0 & \bar{r}\end{pmatrix} = \begin{pmatrix} 0
    & 0 \\ r & 0\end{pmatrix},
\]
proves that the element in the place $(2,1)$ of the matrix can be
arbitrary. Considering the element
$\left(\begin{smallmatrix} 0 & p\\0 & 0 \end{smallmatrix}\right)$ we
get the same result for the entry $(1,2)$ but with multiples of $p$. To get the diagonal elements, note that
\[
  \begin{pmatrix} r & 0\\ 0 & \bar{r} \end{pmatrix} \begin{pmatrix} s
    & 0 \\ 0 & \bar{s} \end{pmatrix} = \begin{pmatrix} rs & 0 \\ 0 &
    \overline{rs}\end{pmatrix} + \begin{pmatrix} 0 & 0 \\ 0 & \bar{r}\bar{s} - \bar{s}
    \bar{r}\end{pmatrix}.
\]
In particular, we can add to an element of $\tilde{\UU}^-_p$ a matrix
with any element of the form $rs - sr$ to the place
$(2,2)$. Note that any such difference lies in the unique maximal ideal
(as the quotient of $\mxlram$ by $\pi$ is the finite field of $p^2$
elements, in particular is abelian) and in fact, they generate the maximal ideal, hence the statement.
\end{proof}

\section{Special lattices}
\label{section:special}
Let $R$ be a Dedekind domain with field of fractions $k$, of
characteristic different from $2$.
Let $(V,Q)$ be a regular quadratic space over $k$ with
associated symmetric bilinear form
$\langle v,w\rangle=Q(v+w)-Q(v)-Q(w)$, so that $\langle v,v\rangle = 2Q(v)$.

An $R$-lattice on $V$ is a finitely generated $R$-submodule
$L\subset V$ such that $k\,L=V$. If $L$ is an $R$-lattice, its \emph{dual lattice} $L^\vee$ is defined by
\[
  L^\vee:=\{v\in V \;:\; \langle v,w\rangle \in R \; \,\,\forall w \in L\}.
\]
A lattice $L$ is \emph{integral} if $Q(L)\subset R$; this implies
$L\subseteq L^\vee$.
If $L$ is integral and $L=L^\vee$ it is called \emph{even unimodular}.
More generally $L$ is called \emph{modular}
if $L=IL^\vee$ for some ideal $I\lhd R$.
The \emph{(signed) determinant} of a free lattice $L$ is defined
as $\det L:=(-1)^{\lfloor n/2\rfloor}\det(\langle v_i,v_j\rangle)$
where $\{v_1,\dotsc,v_n\}$ is a basis of $L$.
Note that $\det L$ is, up to squares of units, independent of the
choice of basis.
\par
It is clear that if $L$ is integral then $\det L\in R$.
If, in addition,
the rank is odd we have $\det L\in 2R$: considering the expression for
$\det L$ as an alternating sum of products, transposition gives us pairs
of products, equal because the matrix is symmetric,
and a product can be paired with itself only when either the rank is
even or the product includes an even factor from the diagonal.
\par
A lattice is called \emph{maximal} if it is maximal among all integral
lattices in its ambient quadratic space. Note
that an integral lattice with unit determinant is necessarily
maximal since any proper super-lattice would have non-integral
determinant; for a similar reason an integral lattice of odd rank
whose determinant is twice a unit is maximal.



\begin{defi}
    A \emph{special lattice} is
    an integral $R$-lattice $L$ of odd rank such that
    $L^\vee/L$ is cyclic as an $R$-module.
\end{defi}




\subsection{Local classification of special lattices}
In this section $R$ is the ring of integers of a local field with
valuation $v$ and maximal ideal $p$. Since $R$ is a principal ideal
domain all $R$-lattices are free.

\begin{thm} \label{thm:decomposition}
    If $L$ is a special lattice then $L=A\perp Rw$, where $A$ is even
    unimodular and $w\in L$.
  \end{thm}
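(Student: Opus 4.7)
The plan is to invoke the Jordan decomposition of $L$ as an orthogonal sum of modular sublattices, available over any local PID, and then use the cyclicity hypothesis to cut it down to a unimodular part plus a single rank-one piece.

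Concretely, I would begin by writing
\[
L \;=\; L_{a_0} \perp L_{a_1} \perp \cdots \perp L_{a_s},
\]
where each $L_{a_i}$ is a (possibly trivial) $p^{a_i}$-modular lattice of rank $r_i$, with $0 = a_0 < a_1 < \cdots < a_s$. Since $L_{a_i}^\vee = p^{-a_i} L_{a_i}$, one obtains
\[
L^\vee/L \;\cong\; \bigoplus_{i \geq 1} (R/p^{a_i}R)^{r_i}.
\]
A finitely generated torsion module over a DVR is cyclic if and only if its invariant factor decomposition has at most one term, so the cyclicity hypothesis forces either $s=0$ (i.e., $L=L^\vee$), or $s=1$ together with $r_1 = 1$. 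In the latter case set $Rw := L_{a_1}$ and $A := L_{a_0}$; then $L = A \perp Rw$, $A$ equals its dual by construction, and $Q(A) \subseteq Q(L) \subseteq R$ by integrality of $L$, so $A$ is even unimodular as required.

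The technical subtlety is concentrated at $p=2$, where Jordan decomposition may involve indecomposable binary $2^a$-modular blocks of the form $\smat{0 & 2^a \\ 2^a & 0}$ or $\smat{2^{a+1} & 2^a \\ 2^a & 2^{a+1}}$. Such a block with $a\geq 1$ would contribute $(R/2^aR)^2$ to $L^\vee/L$, violating cyclicity, which rules them out of the non-unimodular part and preserves the rank-one conclusion. For the unimodular part, a rank-one integral block $\langle u\rangle$ with $u\in R^\times$ does not exist over $R=\Z_2$, since then $Q(v)=u/2\notin R$; hence $A$ is forced to be composed entirely of the even binary blocks $\smat{0&1\\1&0}$ or $\smat{2&1\\1&2}$, each of which is manifestly even unimodular. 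The same observation shows that the $s=0$ case cannot occur when $p=2$ with odd rank, as every even unimodular lattice over $\Z_2$ has even rank. The remaining case $s=0$ with $p$ odd is handled trivially by picking any orthogonal basis vector of $L$ as $w$.

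I expect the main obstacle to be this bookkeeping at $p=2$: one must verify that Jordan decomposition is available over $\Z_2$ in the required form, argue that the cyclicity hypothesis excludes rank-two non-unimodular blocks, and confirm that integrality of $L$ excludes non-even rank-one pieces from the unimodular component. Once those three points are secured, the odd-residue-characteristic case and the conclusion follow formally.
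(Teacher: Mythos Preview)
Your argument is correct and follows essentially the same route as the paper: Jordan decomposition into modular summands, followed by the observation that cyclicity of $L^\vee/L$ forces all but one summand to be unimodular, with the exceptional one of rank $1$. The paper's proof is slightly more economical in two respects. First, it cites O'Meara \S91C for a decomposition into modular pieces of rank $1$ or $2$, so that in the fully unimodular case (your $s=0$) one simply picks any rank-$1$ summand, which must exist by parity of the total rank; this dispenses with your separate treatment of $p$ odd versus $p=2$. Second, your analysis of binary blocks over $\Z_2$ is unnecessary: once $A$ is unimodular and sits inside the integral lattice $L$, it is automatically even unimodular in the paper's sense ($A=A^\vee$ and $Q(A)\subseteq R$), with no need to inspect its internal structure.
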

\begin{proof}
    The lattice $L$ can be written as the orthogonal sum of modular
    lattices of rank 1 or 2 \cite[§91C]{OMeara}.
    Say
    \[
        L = A_1 \perp \dotsb \perp A_s
    \]
    where $A_i = I_i\,A_i^\vee$ for some $I_i\lhd R$. Then
    \[
        L^\vee/L = \bigoplus_i A_i^\vee/I_i\,A_i^\vee
        \simeq \bigoplus_i (R/I_i)^{\dim A_i}.
    \]
    Since $L$ is special, at most one $I_{i_0}\neq R$ and necessarily
    $\dim A_{i_0}=1$. If all $I_i=R$ choose any $i_0$ with
    $\dim A_{i_0}=1$, which exists since $\dim L$ is odd.
    In any case $L = A \perp Rw$ where $A=\bigoplus_{i\neq i_0} A_i$
    is even unimodular and $A_{i_0}=Rw$.
\end{proof}

We now recall some useful facts about even unimodular lattices.
In what follows we will say a unit $u$ is \emph{unramified} if
it is a square modulo $4$. 
\begin{lem}\label{lem:unimodular_HW}
    Let $A$ be an even unimodular lattice.
    Then $\det A$ is an unramified unit
    and $\HW_v(A)=1$.
\end{lem}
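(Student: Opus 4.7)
The plan is to split into cases according to the residue characteristic of $R$, reducing the dyadic case to a block-by-block computation on the atomic pieces of a structural decomposition of $A$.

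When the residue characteristic is odd we have $2\in R^\times$ and hence $4R=R$, so the condition ``square modulo $4$'' is vacuous; combined with $\det A\in R^\times$ (which is immediate from unimodularity $A=A^\vee$), the determinant claim follows at once. For the Hasse--Witt invariant, since $2$ is a unit the lattice $A$ admits an orthogonal basis $\langle a_1,\ldots,a_n\rangle$ with $a_i\in R^\times$, and at a non-dyadic place the Hilbert symbol of any two units is $+1$ and $(-1,-1)_v=1$, so \eqref{eq:HWdefi} (or its analogue in the relevant rank) gives $\HW_v(A)=1$.

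In the dyadic case I would invoke the classical local structure theorem for even unimodular lattices (see O'Meara \S\,93, or Scharlau), according to which $A$ is an orthogonal direct sum of binary blocks of the form $M_{\alpha,\beta}=\bigl(\begin{smallmatrix}2\alpha&1\\1&2\beta\end{smallmatrix}\bigr)$ with $\alpha,\beta\in R$. The signed determinant of $M_{\alpha,\beta}$ is $1-4\alpha\beta\in 1+4R$, hence an unramified unit; since $A$ has even rank, multiplicativity of the signed determinant under orthogonal sums then gives $\det A\in 1+4R$, as required. For the HW invariant I would work block-by-block: after completing the square, $M_{\alpha,\beta}$ is equivalent to the diagonal form $\langle 2\alpha,\,2\alpha(1-4\alpha\beta)\rangle$, and a direct Hilbert-symbol computation (using $1-4\alpha\beta\equiv 1\pmod{4}$) shows that, with the paper's sign convention $\HW_v=(-1,-1)_v\cdot\text{(classical Hasse)}$, the block satisfies $\HW_v(M_{\alpha,\beta})=1$; multiplicativity under orthogonal sums (whose correction terms are Hilbert symbols of block determinants, which are trivial because those determinants are unramified units) then yields $\HW_v(A)=1$.

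The main obstacle will be the dyadic case, and in particular reconciling conventions: the signed-determinant factor $(-1)^{\lfloor n/2\rfloor}$, the correction $(-1,-1)_v$ that distinguishes the paper's $\HW_v$ from the classical Hasse invariant at dyadic and real places, and the precise multiplicativity of $\HW_v$ under orthogonal sums all carry sign subtleties which must be handled carefully in order to conclude $\HW_v(A)=1$ cleanly from the block-by-block verification on the $M_{\alpha,\beta}$.
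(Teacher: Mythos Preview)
Your outline is the same as the paper's: split into non-dyadic and dyadic, and in the dyadic case decompose $A$ as an orthogonal sum of rank-$2$ even unimodular blocks, compute the determinant and $\HW_v$ for each block, and assemble via $\HW_v(A_1\perp A_2)=\HW_v(A_1)\HW_v(A_2)(\det A_1,\det A_2)_v$, the correction term being trivial since block determinants are unramified units.

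The difference is in the rank-$2$ dyadic $\HW$ step, where your argument has a small gap. Diagonalizing via the lattice basis reduces you (after fixing a sign) to showing $(\alpha,1-4\alpha\beta)_v=1$, but $\alpha=Q(e_1)$ is an arbitrary element of $R$: it may be $0$ (your diagonalization then fails outright), and in any case the stated justification ``$1-4\alpha\beta\equiv1\pmod4$'' is not by itself enough, since e.g.\ $5\equiv1\pmod4$ in $\Z_2$ is not a square. The paper bypasses this by first picking $w_1\in A$ with $Q(w_1)=\alpha\in R^\times$: such a vector always exists, since if $Q(e_1),Q(e_2)$ both lie in the maximal ideal then $Q(e_1+e_2)=Q(e_1)+\langle e_1,e_2\rangle+Q(e_2)$ is a unit. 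With $\alpha$ a unit and $\det A$ an unramified unit one has $(\alpha,\det A)_v=1$ immediately, and the convention worries you flag evaporate. Your route is salvageable---treat $\alpha=0$ as the hyperbolic plane; for $\alpha$ a nonzero non-unit invoke the local square theorem to get $1-4\alpha\beta\in1+4p\subset(R^\times)^2$; for $\alpha\in R^\times$ use the paper's argument---but the paper's choice of a unit-norm vector is cleaner. One more correction: for rank-$2$ forms $\HW_v(\langle a,b\rangle)=(a,b)_v$ with no $(-1,-1)_v$ factor; formula~\eqref{eq:HWdefi} is specific to rank~$5$.
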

\begin{proof}
    When $p\nmid 2$
    the lattice $A$ has an orthogonal basis of vectors
    with unit norm so $\det A$ is a unit.
    The Hilbert symbol is trivial on units, hence
    $\HW_v(A)=1$.
    \par
    When $p\mid 2$ the lattice $A$ is the orthogonal sum of
    even unimodular lattices of rank 2 (since there are no even
    unimodular lattices of rank 1).
    An even unimodular lattice of rank 2 has matrix
    $\smat{2a&b\\b&2c}$ with $b$ a unit,
    hence its (signed) determinant $b^2-4ac$ is the square of a unit
    modulo 4.
    Since the determinant for lattices of even rank is multiplicative,
    it follows that $\det A$ is the square of a unit modulo $4$.
    \par
    For the Hasse-Witt invariant use the fact that
    for lattices $A_1$ and $A_2$ of even rank we have
    $\HW_v(A_1\perp A_2)=\HW_v(A_1)\cdot\HW_v(A_2)\cdot(\det A_1,\det A_2)_v$
    (see \cite[(3.13) in p.117]{MR2104929}); the Hilbert symbol is
    trivial on unramified units so it suffices to
    prove $\HW_v(A)=1$ when $A$ has rank 2.
    Since $A$ is even unimodular there is some $w_1\in A$
    with $Q(w_1)=\alpha\in R^\times$.
    Let $w_2\in A$ such that $\langle w_1,w_2\rangle=0$
    and let $Q(w_2)=\beta$. The vectors $w_1$ and $w_2$ span a lattice
    $B\subseteq A$ with $\det(B) = -\alpha\beta = \det A\cdot s^2$
    for some $s\in R$ and
    $\HW_v(A)=\HW_v(B)
    =(\alpha,\beta)_v
    = (\alpha,-\alpha\beta)_v =(\alpha,\det A)_v=1$
    where in the last equality we have used that $\alpha$ is a unit
    and that $\det A$ is an unramified unit.
\end{proof}

In view of this lemma we define an invariant for even unimodular
lattices as follows.
\begin{defi}
    If $A$ is an even unimodular lattice, we let
    \[
        d(A) = (\tilde p,\det A)_v \,.
    \]
    where $\tilde p$ is a local uniformizer.
\end{defi}

Note that $d(A)$ is independent of the choice of uniformizer.
Indeed, since $\det A$ is an unramified unit
the Hilbert symbol $(u,\det A)_v$ equals $1$ for any unit $u$.

\begin{lem}\label{lem:unimodular_invariant}
    Let $A$ be an even unimodular lattice. Then
    the invariant $d(A)$, together with the rank,
    determines the isometry class of $A$.
\end{lem}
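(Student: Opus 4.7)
The strategy splits naturally into two reductions: (i) show that the ambient $k$-quadratic spaces $A\otimes_R k$ and $A'\otimes_R k$ are isometric whenever $A, A'$ are even unimodular of the same rank with $d(A)=d(A')$, and (ii) show that any two even unimodular lattices in a fixed regular $k$-quadratic space are isometric.

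For (i) I would invoke the local classification of regular quadratic spaces over $k$ by the triple (rank, discriminant modulo squares, Hasse--Witt invariant). The rank matches by hypothesis, and the Hasse--Witt invariants are both $+1$ by Lemma~\ref{lem:unimodular_HW}. It remains to prove $\det A \equiv \det A' \pmod{(k^\times)^2}$. Both determinants are unramified units (again by Lemma~\ref{lem:unimodular_HW}), so the question is whether the Hilbert symbol $(\tilde p,\cdot)_v$ is injective on the image of unramified units in $k^\times/(k^\times)^2$. In the non-dyadic case this is immediate: $R^\times/(R^\times)^2$ has order $2$ and $(\tilde p, u)_v = +1$ precisely when $u$ is a square unit. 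In the dyadic case, the unramified units modulo squares again form a group of order $2$ (represented, e.g., by $1$ and a non-square unit which is a square modulo $4$), and a short direct computation (in $\QQ_2$, $(2,5)_2 = -1$, with the general dyadic case reducing similarly to explicit Hilbert symbol evaluations) shows that $(\tilde p,\cdot)_v$ separates these two classes. Hence $d(A)=d(A')$ forces equality of determinants modulo squares.

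For (ii) I would argue that an even unimodular lattice $A$ is automatically maximal among integral lattices: any strict integral super-lattice $L\supsetneq A$ would satisfy $\det L = \det A\cdot[L:A]^{-2}$, which would have strictly negative valuation and so could not lie in $R$, contradicting the integrality of $L$. A classical theorem (Eichler; see, e.g., O'Meara \cite{OMeara}, Theorem~91:2) then asserts that on a regular quadratic space over a local field, any two maximal integral lattices are isometric. Combined with (i), this gives $A\simeq A'$.

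The main technical obstacle is the dyadic part of step~(i), namely the verification that the subgroup of unramified units modulo squares injects into $\{\pm 1\}$ via pairing with $\tilde p$; this is a finite Hilbert-symbol calculation rather than a deep point, but it is the step where the peculiarities of the $p\mid 2$ case must be confronted directly. Everything else is a routine application of classification results for quadratic spaces and lattices over local fields.
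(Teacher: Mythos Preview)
Your proposal is correct and follows essentially the same two-step structure as the paper's proof: first match the ambient quadratic spaces via (rank, determinant mod squares, Hasse--Witt), then invoke O'Meara~91:2 on maximal lattices. The only cosmetic difference is in how the determinant step is phrased---the paper packages your Hilbert-symbol injectivity argument as ``$\det A/\det A'$ is a square modulo $4p$'' and then cites the local square theorem (O'Meara~63:1), whereas you argue the injectivity of $(\tilde p,\cdot)_v$ on unramified units directly; these are the same fact.
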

\begin{proof}
    Suppose $A$ and $A'$ are unimodular lattices
    of the same rank such that $d(A)=d(A')$.
    The latter means that
    $\det A/\det A'$ is a square modulo $4p$ and by the local square
    theorem (see \cite[63:1]{OMeara}) this implies that $\det A/\det A'$ is
    a square in $R$. We conclude $A$ and $A'$ have the same rank,
    determinant and Hasse-Witt invariant so they lie in isometric
    quadratic spaces (\cite[63:20]{OMeara}). Finally, $A$ and $A'$
    are maximal lattices in isometric quadratic spaces so they are
    themselves isometric (\cite[91:2]{OMeara}).
\end{proof}

\begin{lem}
    \label{lem:unimodular_existence}
    Let $n\geq 1$ and $d\in\{\pm1\}$. There is an even
    unimodular lattice $A$ of rank $2n$ and $d(A)=d$.
\end{lem}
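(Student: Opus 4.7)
The plan is to reduce to the rank $2$ case by orthogonal sum. First I would record that the invariant $d$ is multiplicative: for even unimodular lattices $A_1,A_2$,
\[
  d(A_1\perp A_2)=(\tilde p,\det A_1\cdot\det A_2)_v=d(A_1)\,d(A_2),
\]
by bilinearity of the Hilbert symbol. It therefore suffices to exhibit, for each sign $\epsilon\in\{\pm 1\}$, a rank-$2$ even unimodular lattice $A_\epsilon$ with $d(A_\epsilon)=\epsilon$; the required rank-$2n$ lattice with invariant $d$ is then $A_+^{\perp(n-1)}\perp A_d$.

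For $A_+$ I would take the hyperbolic plane with Gram matrix $\smat{0&1\\1&0}$, which is even unimodular of rank $2$ with signed determinant $1$ and thus $d(A_+)=+1$. For $A_-$, the proof of Lemma~\ref{lem:unimodular_HW} shows that every rank-$2$ even unimodular lattice has Gram matrix $\smat{2a&b\\b&2c}$ with $b\in R^\times$ and signed determinant $b^2-4ac$; choosing $b=1$ gives the family $N_\alpha:=\smat{2&1\\1&2\alpha}$ for $\alpha\in R$, whose signed determinants $1-4\alpha$ sweep $1+4R$. The construction of $A_-$ thus reduces to finding $\alpha\in R$ with $(\tilde p,1-4\alpha)_v=-1$.

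If $p\nmid 2$ this is immediate, since $4$ is a unit and one can choose $\alpha$ so that $1-4\alpha$ reduces to a non-square modulo $\tilde p$, making $(\tilde p,1-4\alpha)_v=-1$. The main obstacle is the case $p\mid 2$, which I would handle by combining the non-degeneracy of the local Hilbert pairing on $k^\times/(k^\times)^2$ with the local square theorem (\cite[63:1]{OMeara}). These together guarantee an unramified unit $\delta$ with $(\tilde p,\delta)_v=-1$: for if every unramified unit paired trivially with $\tilde p$, then the local square theorem (which identifies the unramified units modulo squares with $1+4R$ modulo its square subgroup) would force the kernel of $u\mapsto(\tilde p,u)_v$ on $k^\times/(k^\times)^2$ to contain all unramified units, and together with $\tilde p$ this kernel would exhaust $k^\times/(k^\times)^2$, contradicting non-degeneracy since $\tilde p\notin(k^\times)^2$. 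After rescaling such a $\delta$ by a unit square so that $\delta\equiv 1\pmod 4$, we can write $\delta=1-4\alpha$ for some $\alpha\in R$, and then $A_-:=N_\alpha$ completes the proof.
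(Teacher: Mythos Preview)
Your construction is essentially the same as the paper's: both build the rank-$2$ lattice $J=\smat{2&1\\1&2\alpha}$ of determinant $u=1-4\alpha$ and take $J\perp H^{n-1}$. The paper simply asserts ``pick $u\in R^\times$ with $u\equiv 1\pmod 4$ and $(\tilde p,u)_v=d$'' without further comment, whereas you try to justify this existence, which is commendable.

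However, your justification in the case $p\mid 2$ has a genuine gap. You claim that if every unramified unit paired trivially with $\tilde p$, then the kernel of $u\mapsto(\tilde p,u)_v$ ``together with $\tilde p$\ldots would exhaust $k^\times/(k^\times)^2$''. This is false already for $k=\Q_2$: the unramified units modulo squares form the subgroup $\{1,5\}$, and adjoining $2$ gives $\{1,2,5,10\}$, which has index $2$ in $\Q_2^\times/(\Q_2^\times)^2$ (of order $8$). So no contradiction with non-degeneracy arises from your argument. (There is also no reason given that $\tilde p$ itself lies in the kernel; $(\tilde p,\tilde p)_v=(\tilde p,-1)_v$ need not be $+1$ in general.)

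A clean fix is to take $\delta$ so that $k(\sqrt{\delta})/k$ is the unramified quadratic extension. One may choose $\delta$ to be a unit with $\delta\equiv 1\pmod 4$ (write $\mathfrak{o}_L=\mathfrak{o}_k[\theta]$ with $\theta^2+b\theta+c=0$ and take $\delta=b^2-4c$), so $\delta$ is an unramified unit in the paper's sense. Then $(\tilde p,\delta)_v=-1$ because $\tilde p$, having odd valuation, is not a norm from the unramified quadratic extension. With this $\delta$ your construction of $A_-=N_\alpha$ goes through exactly as written.
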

\begin{proof}
    Pick $u\in R^\times$ such that $u\equiv1\pmod{4}$ and $(p,u)_v=d$,
    and write $u=1-4\alpha$ with $\alpha\in R$.
    Let $J$ be the binary lattice with
    matrix $\smat{2&1\\1&2\alpha}$ which is even unimodular of determinant $u$,
    and let $H$ be the hyperbolic plane which is even unimodular of
    determinant $1$.  Then $A=J\perp H^{n-1}$ has rank $2n$, determinant $u$
    and invariant $d(A)=d$.
\end{proof}

\begin{prop}\label{prop:special_unitdet}
    Let $L$ be a special lattice of determinant $2N$
    with $p\nmid N$.
    Then $\HW_v(L)=1$, $L$ is maximal, and
    the class of $N$ modulo squares, together with the
    rank, determines the isometry class of $L$.
    In particular $L=A\perp Rw$ where $A$ is even unimodular of
    determinant $1$ and $Q(w)=N$.
\end{prop}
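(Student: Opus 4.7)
The plan is to combine the structural Theorem~\ref{thm:decomposition} with the classification facts for even unimodular lattices (Lemmas~\ref{lem:unimodular_HW}, \ref{lem:unimodular_invariant}, \ref{lem:unimodular_existence}) and with the uniqueness up to isometry of maximal lattices inside a fixed quadratic space. First I would apply Theorem~\ref{thm:decomposition} to write $L = A \perp Rw$ with $A$ even unimodular of rank $2m = \dim L - 1$. The block-diagonal Gram matrix yields the signed determinant relation $\det L = \det A \cdot 2 Q(w)$. By Lemma~\ref{lem:unimodular_HW}, $\det A$ is an unramified unit, so the hypothesis $\det L = 2N$ with $p \nmid N$ forces $Q(w)$ to be a unit of $R$, with $(\det A)\,Q(w) = N$. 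For the Hasse-Witt invariant I would use the multiplicativity formula $\HW_v(A \perp Rw) = \HW_v(A)\cdot\HW_v(Rw)\cdot(\det A, 2Q(w))_v$: the first factor is $1$ by Lemma~\ref{lem:unimodular_HW}, the second is trivially $1$, and the third is $1$ since the Hilbert symbol is trivial on unramified units. Hence $\HW_v(L) = 1$.

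For maximality I would invoke the observation, recorded just before \S5.1, that any integral lattice of odd rank satisfies $v(\det) \geq v(2)$; since $p \nmid N$ the lattice $L$ already achieves this minimum, so no proper integral super-lattice can exist. Combining maximality with the local classification of regular odd-dimensional quadratic spaces (determined by rank, signed determinant modulo squares, and Hasse-Witt invariant, as used inside Lemma~\ref{lem:unimodular_invariant}) and the uniqueness up to isometry of maximal lattices in a given quadratic space (O'Meara 91:2), shows that the isometry class of $L$ is determined by the class of $N$ modulo $(R^\times)^2$ and the rank.

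For the explicit ``in particular'' assertion, I would build an independent model $L' = A' \perp Rw'$ by taking $A'$ to be a sum of $m$ hyperbolic planes (each of signed determinant $1$, as in the proof of Lemma~\ref{lem:unimodular_existence}) and $w'$ with $Q(w') = N$. Repeating the calculations of the first paragraph on $L'$ yields rank $\dim L$, signed determinant $\det A' \cdot 2N = 2N$, and $\HW_v = 1$; $L'$ is also maximal by the same valuation argument. The classification just obtained then forces $L' \cong L$, giving the normalised decomposition claimed.

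The main obstacle I foresee is really a bookkeeping one rather than a deep step: the decomposition produced by Theorem~\ref{thm:decomposition} need not have $\det A = 1$ on the nose, as $\det A$ may a priori be any unramified unit. Rather than trying to modify $A$ inside $L$ (which would require a Witt-style cancellation argument one summand at a time), the cleanest route is to build the normalised model $L'$ independently and identify it with $L$ using maximality together with the local classification of quadratic spaces.
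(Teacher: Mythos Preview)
Your approach is exactly the paper's: decompose via Theorem~\ref{thm:decomposition}, compute $\HW_v$ through an orthogonal-sum formula, deduce maximality from the determinant valuation, classify via uniqueness of maximal lattices in a given quadratic space (O'Meara 91:2), and build the normalised model with hyperbolic planes.

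There is, however, a genuine error in the Hasse--Witt step. The cross term in Scharlau's (3.13), for $A$ of even rank and $Rw$ of rank one, is $(-Q(w),\det A)_v$, not $(\det A,2Q(w))_v$; you appear to have fed the signed Gram determinant $2Q(w)$ of $Rw$ into the even-rank/even-rank formula quoted in the proof of Lemma~\ref{lem:unimodular_HW}, which does not apply here. This matters at dyadic primes: the claim ``the Hilbert symbol is trivial on unramified units'' is only the statement $(\text{unit},\text{unramified unit})_v=1$, but $2Q(w)$ has positive valuation when $p\mid 2$. Concretely, over $\Z_2$ take $A$ with Gram matrix $\left(\begin{smallmatrix}2&1\\1&2\end{smallmatrix}\right)$ (so $\det A=-3$, an unramified unit) and $Q(w)=1$; your cross term is $(-3,2)_2=-1$, whereas the true value is $\HW_2(A\perp Rw)=(-1,-3)_2=1$. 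With the correct cross term both entries are units, one of them unramified, so the symbol is $1$ and the remainder of your argument goes through unchanged.
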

\begin{proof}
    Write $L=A\perp Rw$ as in Theorem~\ref{thm:decomposition}.
    From \cite[(3.13) in p.117]{MR2104929} we have
    $\HW_v(L)=\HW_v(A)\cdot\HW_v(Rw)\cdot(-Q(w),\det A)_v$.
    By Lemma~\ref{lem:unimodular_HW}, we have
    $\HW_v(A)=1$ and $\det A$ is an unramified unit;
    the hypothesis implies $Q(w)$ is a unit and so
    the Hilbert symbol $(-Q(w),\det A)_v$
    equals $1$. Finally $\HW_v(Rw)=1$ because it has rank 1, and it
    follows that $\HW_v(L)=1$.
    \par
    Since $L$ is of odd rank with $\det L\in 2R^\times$ we have that
    $L$ is maximal. From this it follows, as in the proof of
    Lemma~\ref{lem:unimodular_invariant},
    that $N$ and the rank determine the isometry class of $L$.
    \par
    For the last claim let $A$ be an orthogonal sum of hyperbolic
    planes so it is even unimodular with $\det A=1$ and consider a unary
    lattice $Rw$ with $Q(w)=N$. Then $A\perp Rw$ has the same
    determinant and rank as $L$.
\end{proof}

We aim to classify the special lattices of a given rank and
determinant.
For this purpose we introduce an invariant
which, in the case of rank 3, is related to the Eichler
invariant of quaternion orders.

\begin{defi}
    The \emph{Eichler invariant} of a special lattice $L$ of
    determinant $2N$ is given by
    \[
        e(L) = \begin{cases}
            1 & \text{if $4N\,Q(v)=1$ for some $v\in L^\vee$;} \\
            -1 & \text{otherwise.}
        \end{cases}
    \]
\end{defi}

\begin{prop}
    \label{prop:special_nonunitdet}
    Let $L$ be a special lattice of determinant $2N$
    and write $L=A\perp Rw$ as in Theorem~\ref{thm:decomposition}.
    If $p\nmid N$ then $e(L)=1$
    and if $p\mid N$ then $e(L)=d(A)$.
\end{prop}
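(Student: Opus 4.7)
My plan is to translate the defining equation $4NQ(v)=1$, $v\in L^\vee$, into an explicit arithmetic equation. Using the decomposition $L = A \perp Rw$ from Theorem~\ref{thm:decomposition}, $\det L = 2N = 2Q(w)\cdot\det A$ gives $Q(w) = N/u$, where $u := \det A$ is an unramified unit by Lemma~\ref{lem:unimodular_HW}. Since $A$ is unimodular, $L^\vee = A \oplus R w^{\ast}$ with $w^{\ast} := w/(2Q(w))$, so every $v \in L^\vee$ is uniquely written as $v = a + \beta w^{\ast}$ with $a \in A$, $\beta \in R$, and a short computation gives
\[
4N\,Q(v) \;=\; \beta^2 u + 4N\,Q(a).
\]
Hence $e(L) = 1$ if and only if the equation
\[
\beta^2 u + 4N\,Q(a) \;=\; 1 \qquad (\star)
\]
admits a solution with $\beta \in R$ and $a \in A$.

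For $p \nmid N$, I would show $(\star)$ is always solvable. If $u$ is a square in $R^{\times}$ (automatic when $\operatorname{rank} A = 0$), take $\beta = u^{-1/2}$ and $a = 0$. Otherwise $\operatorname{rank} A \ge 2$, and a binary block of $A$ represents every unit of $R$: hyperbolic planes represent all of $R$, and anisotropic even-unimodular binaries are norm forms from the unramified quadratic extension, hence surject onto the units. For odd $p$ one can then take $\beta = 0$ and $a \in A$ with $Q(a) = 1/(4N)$; for $p = 2$ one takes $\beta = 1$ and $a \in A$ with $Q(a) = (1-u)/(4N)$, which is a unit precisely because $u \equiv 5 \pmod{8}$ forces $(1-u) \in 4\Z_2 \setminus 8\Z_2$.

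For $p \mid N$ I show $e(L) = 1 \Leftrightarrow u \in R^{\times 2}$. Sufficiency is again $\beta = u^{-1/2}$, $a = 0$. For necessity, reduce $(\star)$ modulo $p$ for odd $p$ and modulo $8$ for $p = 2$; since $v(4N) \ge 1$ in the odd case and $v(4N) \ge 3$ in the even case, we get $\beta^2 u \equiv 1$ in the respective modulus. For odd $p$, Hensel upgrades the existence of a square root of $u$ mod $p$ to a square root in $R^{\times}$. For $p = 2$, the squares of elements of $\Z_2$ reduce modulo $8$ to $\{0,1,4\}$, so $\beta$ must be a unit and $u \equiv 1 \pmod{8}$, whence $u \in \Z_2^{\times 2}$ by the local square theorem. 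Finally the identification $u \in R^{\times 2} \Leftrightarrow d(A) = (\tilde p, u)_v = 1$ follows from the standard evaluation of the Hilbert symbol at an unramified unit: for odd $p$ this is the Legendre symbol of $u \bmod p$, and for $p = 2$ it equals $+1$ precisely when the unramified unit $u$ satisfies $u \equiv 1 \pmod{8}$.

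The main delicacy is the $p = 2$ analysis, where $4$ is no longer a unit and the clean odd-prime reduction modulo $p$ must be replaced by a reduction modulo $8$; the fact that $\det A$ is an unramified unit (i.e., $\equiv 1 \pmod{4}$) is precisely what ensures the sharp dichotomy between $u \equiv 1$ and $u \equiv 5 \pmod{8}$, matching the dichotomy $d(A) = \pm 1$.
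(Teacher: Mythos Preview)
Your proof is correct. The setup (computing $L^\vee$ and reducing to the equation $(\star)$) and the treatment of the case $p\mid N$ are essentially identical to the paper's proof, just with the $p=2$ subcase spelled out more carefully.

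The one genuine difference is in the case $p\nmid N$. The paper simply invokes Proposition~\ref{prop:special_unitdet}, which says that when $p\nmid N$ one may \emph{choose} the decomposition $L=A\perp Rw$ with $\det A=1$; then $u=1$ and $(\beta,a)=(1,0)$ solves $(\star)$ immediately. Since $e(L)$ is an intrinsic invariant of $L$, changing the decomposition is harmless. You instead keep the given decomposition and argue directly: if $u$ is a square take $\beta=u^{-1/2}$, and if not you use that any binary even-unimodular summand of $A$ (which exists since $\operatorname{rank}A\ge 2$) represents every unit, treating odd $p$ and $p=2$ separately. Your route is longer but self-contained and avoids the forward reference to the classification in Proposition~\ref{prop:special_unitdet}; the paper's route is a one-liner once that proposition is available.
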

\begin{proof}
We have $L^\vee=A\perp\frac1{2a}Rw$ where $a=Q(w)$ so
any vector $v\in L^\vee$ can be written as
$v=\frac{x}{2a}w+u$ with $x\in R$ and $u\in A$.
We compute
$4N\,Q(v)=tx^2 + 4N\,Q(u)$ where $t=\frac Na=\det A$.
When $p\nmid N$ we can assume, by Proposition~\ref{prop:special_unitdet},
that $t=\det A=1$ and so $4N\,Q(v)$ represents 1.
When $p\mid N$ we have $4N\,Q(v)\equiv tx^2\pmod{4p}$ represents 1 if
and only if $t$ is a square modulo $4p$, i.e. if and only if $d(A)=1$.
\end{proof}

In any case we can always assume $e(L)=d(A)$; when $p\nmid N$ using
the last part of
Proposition~\ref{prop:special_unitdet}.

\begin{cor} \label{cor:hasse-eichler}
    For a special lattice of determinant $2N$ the Hasse-Witt invariant
    satisfies $\HW_v(L)=e(L)^{v(N)}$.
\end{cor}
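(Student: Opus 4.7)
The plan is to reduce to the orthogonal decomposition $L = A \perp Rw$ provided by Theorem~\ref{thm:decomposition} and then unpack both sides in terms of the pieces.

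First I would write $L = A \perp Rw$ with $A$ even unimodular of rank $4$, and set $t = \det A$ and $a = Q(w)$, so that the determinant condition gives $N = a\,t$ up to squares. By Lemma~\ref{lem:unimodular_HW} we have $\HW_v(A) = 1$ and $t$ is an unramified unit, so the orthogonal-sum formula for $\HW_v$ used in the proof of Proposition~\ref{prop:special_unitdet} collapses $\HW_v(L)$ to a single Hilbert symbol of the form $(\pm a, t)_v$.

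Next I would dispose of the easy case $p \nmid N$: there $v(N) = 0$, so $e(L)^{v(N)} = 1$, and this matches the statement $\HW_v(L) = 1$ already proved in Proposition~\ref{prop:special_unitdet}. In the remaining case $p \mid N$, since $t$ is a unit we have $v(a) = v(N)$; writing $a = \tilde p^{v(N)} u$ with $u \in R^\times$ and exploiting bilinearity, the factors $(-1, t)_v$ and $(u, t)_v$ both vanish because $t$ is an unramified unit, leaving
\[
  \HW_v(L) = (\tilde p, t)_v^{v(N)} = d(A)^{v(N)} = e(L)^{v(N)},
\]
the last equality by Proposition~\ref{prop:special_nonunitdet}.

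The only subtle point, and thus the main potential obstacle, is the vanishing of $(u, t)_v$ for an arbitrary unit $u$ when $t$ is an unramified unit. At non-dyadic places this is immediate since all Hilbert symbols of pairs of units are trivial, while at dyadic places it uses precisely the definition of ``unramified unit'': this makes $k_v(\sqrt t)/k_v$ unramified, so every unit is a local norm. Everything else is a direct assembly of Theorem~\ref{thm:decomposition}, Lemma~\ref{lem:unimodular_HW}, and Propositions~\ref{prop:special_unitdet} and \ref{prop:special_nonunitdet}.
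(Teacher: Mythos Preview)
Your argument is correct and follows essentially the same route as the paper's: decompose $L = A \perp Rw$, use the orthogonal-sum formula together with $\HW_v(A)=1$ to reduce $\HW_v(L)$ to the single Hilbert symbol $(-Q(w),\det A)_v$ against the unramified unit $\det A$, and then identify this with $d(A)^{v(N)} = e(L)^{v(N)}$. One small slip: $A$ has rank $\dim L - 1$, not necessarily $4$ (the corollary is stated for special lattices of arbitrary odd rank), but nothing in your argument actually uses the rank; the paper also avoids your case split by first rewriting $(-Q(w),\det A)_v = (N,\det A)_v$ via the identity $(-t,t)_v=1$ and the fact that $N/Q(w)=\det A$ is a unit.
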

\begin{proof}
  Write $L=A\perp Rw$ as in Theorem~\ref{thm:decomposition}.
  We know $\HW_v(A)=1$ by Lemma~\ref{lem:unimodular_HW};
  also $\HW_v(Rw)=1$ since $Rw$ has rank 1.
  Applying \cite[(3.13) in p.117]{MR2104929} as before
  we conclude $\HW_v(A\perp Rw)=(-Q(w),\det A)_v$.
  Since $\det A$ is an unramified unit and $N/Q(w)$ is a
  unit we have $(-Q(w),\det A)_v=(N,\det A)_v
  =d(A)^{v(N)}=e(L)^{v(N)}$.
\end{proof}

\begin{cor}
\label{cor:special_unique}
    Two special lattices of the same rank, determinant and Eichler
    invariant are isometric.
\end{cor}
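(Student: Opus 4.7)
The plan is to reduce to the orthogonal decomposition $L = A \perp Rw$ guaranteed by Theorem~\ref{thm:decomposition} and to recover each summand up to isometry from the given invariants. Let $L$ and $L'$ both be special lattices of the common rank $2n+1$, common determinant class $2N \in k^\times/(k^\times)^2$, and common Eichler invariant $e(L)=e(L')$. Writing $L = A \perp Rw$ and $L' = A' \perp Rw'$ with $A,A'$ even unimodular of rank $2n$, the goal is to show $A \simeq A'$ and $Rw \simeq Rw'$.

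First I would handle the easy case $p \nmid N$: here the conclusion is immediate from Proposition~\ref{prop:special_unitdet}, which says that the rank and the class of $N$ modulo squares already determine the isometry class, and the Eichler invariant is automatically $+1$ by the first half of Proposition~\ref{prop:special_nonunitdet}. So assume $p \mid N$. Then Proposition~\ref{prop:special_nonunitdet} gives $d(A) = e(L) = e(L') = d(A')$. Since $A$ and $A'$ have the same rank $2n$ and the same invariant $d$, Lemma~\ref{lem:unimodular_invariant} yields $A \simeq A'$; in particular $\det A/\det A' \in (R^\times)^2$.

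It remains to show $Rw \simeq Rw'$, for which it suffices to verify that $Q(w)/Q(w') \in (R^\times)^2$ (two rank-one integral lattices are isometric iff their generators have norms differing by a square of a unit). Using multiplicativity of the signed determinant across the orthogonal sum, one has $\det L = 2\,Q(w)\,\det A$ and likewise $\det L' = 2\,Q(w')\,\det A'$. Since $\det L = \det L' = 2N$ modulo squares in $k^\times$, dividing these identities gives
\[
  \frac{Q(w)}{Q(w')} \;=\; \frac{\det A'}{\det A} \pmod{(k^\times)^2},
\]
and the right-hand side lies in $(R^\times)^2$ because $A \simeq A'$. Both $Q(w)$ and $Q(w')$ are units (when $p \nmid N$) or have the same valuation $v(N)$ (when $p \mid N$), so the ratio being a square in $k^\times$ forces it to be a square of a unit. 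Hence $Rw \simeq Rw'$, and combining this with $A \simeq A'$ we conclude $L \simeq L'$.

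The only mild subtlety is making sure that the \emph{signed} determinant really is multiplicative in the way used above and that the ambiguity ``modulo squares of units'' can be propagated cleanly from $\det L$ back to $Q(w)$; beyond that, the proof is a bookkeeping exercise assembling Theorem~\ref{thm:decomposition}, Lemma~\ref{lem:unimodular_invariant}, and Propositions~\ref{prop:special_unitdet} and \ref{prop:special_nonunitdet}.
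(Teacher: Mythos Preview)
Your proof is correct and follows essentially the same route as the paper: split on whether $p\mid N$, invoke Proposition~\ref{prop:special_unitdet} in the unit-determinant case, and otherwise use Proposition~\ref{prop:special_nonunitdet} and Lemma~\ref{lem:unimodular_invariant} to match the unimodular summands, then compare determinants to conclude $Q(w)/Q(w')\in (R^\times)^2$. The only difference is cosmetic: the paper writes the last step as the single line ``$Q(w)/Q(w')=\det A'/\det A$ is the square of a unit'', whereas you take a small detour through $(k^\times)^2$ before landing in $(R^\times)^2$; since the determinant is only defined modulo $(R^\times)^2$ to begin with, that detour is unnecessary but harmless.
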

\begin{proof}
Let $L$ and $L'$ be the two lattices of determinant $2N$.
When $p\nmid N$ the claim follows from
Proposition~\ref{prop:special_unitdet}.
When $p\mid N$ write $L=A\perp Rw$ and $L'=A'\perp Rw'$.
The hypothesis $e(L)=e(L')$ implies, by
Proposition~\ref{prop:special_nonunitdet}, that
$d(A)=d(A')$ and hence, by Lemma~\ref{lem:unimodular_invariant} that
$A$ and $A'$ are isometric.
Moreover $Q(w)/Q(w')=\det A'/\det A$ is the square of a unit so
that $Rw$ and $Rw'$ are isometric.
We conclude that $L$ and $L'$ are isometric.
\end{proof}

\begin{thm}
    \label{thm:special_local}
    For a given $n\geq1$, $N\in R$ and $e\in\{\pm1\}$ there exists a unique
    isometry class of special lattices of rank $2n+1$, determinant $2N$
    and Eichler invariant $e$, provided $e=1$ when $N\in R^\times$.
\end{thm}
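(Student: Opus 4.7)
The uniqueness half of the theorem follows immediately from Corollary~\ref{cor:special_unique}, so the task reduces to producing, for each triple $(2n+1, 2N, e)$ subject to the stated constraint, at least one special lattice with these invariants. Guided by Theorem~\ref{thm:decomposition}, the natural candidate is an orthogonal sum of the form $L = A \perp Rw$, where $A$ is even unimodular of rank $2n$ and $w$ has a suitable norm; I would construct both pieces separately and then verify speciality, rank, determinant, and Eichler invariant.

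I would split into two cases according to whether $N$ is a unit. When $N\in R^\times$, the hypothesis forces $e=1$, and the last paragraph of Proposition~\ref{prop:special_unitdet} already exhibits such an $L$: take $A$ to be a sum of $n$ hyperbolic planes (even unimodular with $\det A = 1$) and $Rw$ unary with $Q(w)=N$. When $p \mid N$, I would invoke Lemma~\ref{lem:unimodular_existence} to produce an even unimodular lattice $A$ of rank $2n$ with $d(A)=e$, then set $t := \det A$ (an unramified unit, by Lemma~\ref{lem:unimodular_HW}) and pick any $w$ with $Q(w) = N/t \in R$.

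The verification is then routine. The rank is $2n+1$ and the determinant is $\det A \cdot 2Q(w) = 2N$. Since $A$ is unimodular one has $L^\vee = A \perp (2Q(w))^{-1}Rw$, so $L^\vee/L \cong R/(2Q(w))$ is cyclic, confirming that $L$ is special. Finally, Proposition~\ref{prop:special_nonunitdet} yields $e(L)=1$ in the first case and $e(L)=d(A)=e$ in the second.

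The only genuine subtlety is book-keeping modulo squares of units: one must check that the pair $(\det A, Q(w))$ can be chosen so that their product is exactly $N$ rather than merely $N$ up to a square, which is why I fix $t := \det A$ first and only then choose $Q(w) = N/t$. This is also where the hypothesis $e=1$ when $N\in R^\times$ becomes essential, since in that case Proposition~\ref{prop:special_nonunitdet} forces $e(L)=1$ for \emph{any} special lattice of unit-times-$2$ determinant, so no other value of $e$ can be realised. No analogue of this obstruction arises when $p\mid N$, because Lemma~\ref{lem:unimodular_existence} provides both possible values of $d(A)$, and hence both values of $e$, freely.
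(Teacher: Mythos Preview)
Your proof is correct and follows essentially the same approach as the paper: construct $L = A \perp Rw$ with $A$ even unimodular of rank $2n$ chosen (via Lemma~\ref{lem:unimodular_existence}) to have $d(A)=e$, and $Q(w)=N/\det A$, then read off the Eichler invariant from Proposition~\ref{prop:special_nonunitdet}. The paper handles both cases uniformly without splitting on whether $N$ is a unit, and leaves the verification of speciality implicit, but your more explicit treatment is fine.
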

\begin{proof}
    Let $A$ be an even unimodular lattice of rank $2n$ with invariant
    $d(A)=e$ which exists by Lemma~\ref{lem:unimodular_existence}.
    Then $L=A\perp Rw$ where $Q(w)=N/\det A$ has rank $2n+1$,
    determinant $2N$ and $e(L)=d(A)=e$.
    Uniqueness follows from Corollary~\ref{cor:special_unique}.
\end{proof}

\subsection{Global classification}
In this section we let $R$ be the ring of integers of a number field
$k$ with $r_1$ real places.
If $p$ is a prime ideal of $R$, by $R_p$ we denote the completion of
$R$ at $p$. If $L$ is an $R$-lattice, then $L_p = L \otimes R_p$.
\begin{prop}
  \label{prop:speciallocalglobal}
  Let $L$ be an $R$-lattice of odd rank.
  Then $L$ is special if and only if $L_p$ is special for all
    non-archimedean places $p$ of $R$.
\end{prop}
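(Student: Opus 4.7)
The plan is to reduce each condition defining ``special'' --- integrality and cyclicity of $L^\vee/L$ as an $R$-module --- to its analogue at each non-archimedean prime. Integrality is immediate: from $R = \bigcap_p R_p$ inside $k$ (intersection over non-archimedean primes), $Q(L) \subseteq R$ if and only if $Q(L_p) \subseteq R_p$ for every $p$, which is exactly integrality of each $L_p$.

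Next I would verify that duality commutes with localization, i.e.\ $(L^\vee)_p = (L_p)^\vee$ inside $V \otimes_k k_p$. One inclusion is formal; for the other, use that $L$ is finitely generated to reduce membership in $L^\vee$ to a finite collection of integrality conditions on pairings, then apply $R = \bigcap_p R_p$ again. It follows that $(L^\vee/L) \otimes_R R_p \cong L_p^\vee/L_p$ for each prime $p$.

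The remaining, and main, step is the following local-global principle: a finitely generated torsion $R$-module $M$ is cyclic if and only if $M \otimes_R R_p$ is cyclic over $R_p$ for every non-archimedean $p$. Since $L$ and $L^\vee$ have the same $k$-rank, $M := L^\vee/L$ is finitely generated and torsion, so over the Dedekind domain $R$ it admits an elementary-divisor decomposition $M \cong \bigoplus_i R/\pp_i^{e_i}$. By the Chinese Remainder Theorem, cyclicity is equivalent to the $\pp_i$ being pairwise distinct; at each prime $\pp$ this is precisely the statement that $M_\pp$ has at most one nontrivial elementary divisor, i.e.\ is cyclic over $R_\pp$. Combined with the identification $M_p = L_p^\vee/L_p$ from the previous step, this yields the proposition. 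The only substantive ingredient is this local-global characterization of cyclicity for finitely generated torsion modules over a Dedekind domain; everything else is formal lattice bookkeeping, so I do not expect any real obstacle.
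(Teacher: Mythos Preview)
Your proposal is correct and follows essentially the same approach as the paper: both reduce to showing that cyclicity of the finite $R$-module $L^\vee/L$ is a local condition, and both invoke the Chinese Remainder Theorem for this. The paper's proof is terser --- it takes for granted the compatibilities (integrality being local, $(L^\vee)_p=(L_p)^\vee$) that you spell out, and it constructs the global ideal $I=\prod_{p\in S}(R\cap I_p)$ directly from the local data rather than going through the elementary-divisor structure theorem --- but these are cosmetic differences, not a genuinely different route.
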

\begin{proof}
    If $L$ is special then $L^\vee/L\simeq R/I$ for some ideal
    $I\lhd R$. Then $L_p^\vee/L_p\simeq R_p/I_p$ is cyclic.
    \par
    For the converse let $S$ be a finite set of primes such that
    $L_p$ is even unimodular for $p\not\in S$.
    For $p\in S$ we have $L_p^\vee/L_p\simeq R_p/I_p= R/(I_p\cap R)$
    for some ideal $I_p\lhd R_p$.
    Then,
    using the Chinese remainder theorem,
    we have
    $L^\vee/L\simeq R/I$ where $I=\prod_{p\in S} (R\cap I_p)$.
\end{proof}

\begin{thm}
For a given $n\geq3$ odd, $N\in R$ and $e_p\in\{\pm1\}$ for each
$p\mid N$, there is a unique genus of totally positive definite
special lattices of rank $n$, determinant $2N$ and local Eichler
invariants $e_p$, provided
\[
    \prod_{p\mid N} e_p^{v(N)} = (-1)^{[k:\QQ]}.
\]
\end{thm}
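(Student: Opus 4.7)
The proof combines the local classification of special lattices (Theorem~\ref{thm:special_local}) with the classical Hasse--Minkowski realisation theorem for quadratic forms over a number field and a standard lattice-patching argument. Because being special is a purely local condition (Proposition~\ref{prop:speciallocalglobal}) and because a genus of $R$-lattices is characterised by its local isometry classes, uniqueness of the genus is immediate from the local uniqueness in Theorem~\ref{thm:special_local}; consequently all the real work lies in establishing existence.

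The first step is to exhibit a totally positive definite quadratic space $(V,Q)$ of rank $n$ over $k$, of determinant $2N$, with prescribed finite Hasse--Witt invariants $\HW_p(Q)=e_p^{v(N)}$ for $p\mid N$ and $\HW_p(Q)=+1$ for $p\nmid N$; these values are forced by Corollary~\ref{cor:hasse-eichler} once one insists that the completions of $L$ be special lattices with the prescribed Eichler invariants. At each real place, formula~\eqref{eq:HWdefi} applied to a positive orthogonal basis gives $\HW_\infty(Q)=(-1,-1)_\infty=-1$, so the product over all places of the local Hasse--Witt invariants equals $(-1)^{[k:\QQ]}\prod_{p\mid N}e_p^{v(N)}$, which by hypothesis equals $+1$. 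This is precisely Hilbert reciprocity, and together with the compatibility of the archimedean signature data with the archimedean Hasse--Witt invariants computed above, the classical realisation theorem produces $(V,Q)$, unique up to global isometry.

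Next I would build the lattice. For each non-archimedean prime $p$, Theorem~\ref{thm:special_local} (combined with Proposition~\ref{prop:special_unitdet} at $p\nmid N$, where the hypothesis $e_p=+1$ is automatic) produces a special $R_p$-lattice $L^{(p)}\subset V_p$ of rank $n$, determinant $2N$ and Eichler invariant $e_p$, unique up to local isometry. Fixing any auxiliary $R$-lattice $L_0\subset V$, the lattices $L^{(p)}$ agree with $L_0\otimes R_p$ at all but finitely many $p$ after a local isometry of $V_p$, and standard lattice patching yields a global $R$-lattice $L\subset V$ with $L\otimes R_p\simeq L^{(p)}$ at every finite prime. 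By Proposition~\ref{prop:speciallocalglobal} the lattice $L$ is special, while its determinant, integrality, and positive-definiteness descend from the corresponding local and archimedean data.

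Finally, any two lattices $L,L'$ satisfying the hypotheses are locally isometric at every finite prime by Theorem~\ref{thm:special_local} and isometric at each real place (both being totally positive definite in the unique quadratic space with the prescribed invariants), so they lie in the same genus. The only delicate point, and the main obstacle, is to verify that the arithmetic parity condition $\prod_{p\mid N}e_p^{v(N)}=(-1)^{[k:\QQ]}$ coincides exactly with the Hilbert reciprocity constraint once the universal archimedean contribution $(-1)^{[k:\QQ]}$ is extracted; everything else is a routine application of classical local--global results.
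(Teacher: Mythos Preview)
Your proposal is correct and follows essentially the same route as the paper. The paper compresses your existence argument into a single citation of \cite[72:1]{OMeara} (the only global obstruction is $\prod_v\HW_v=1$), whereas you unpack this into Hasse--Minkowski for the ambient space followed by lattice patching; both arguments then reduce the parity condition to the archimedean contribution $(-1)^{r_1}=(-1)^{[k:\QQ]}$ via Corollary~\ref{cor:hasse-eichler}, exactly as you do.
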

\begin{proof}
    This follows immediately from the local classification
    Theorem~\ref{thm:special_local}, together with
the fact \cite[72:1]{OMeara}
that the only global obstruction is $\prod \HW_v(L)=1$.
By Corollary~\ref{cor:hasse-eichler}
the left hand side is the product of $\HW_v(L)$ for the
non-archimedean places.
Note that $\HW_v(L)=1$ for complex places and $\HW_v(L)=-1$ for real
places; hence the product of $\HW_v(L)$ for the archimedean places is
$(-1)^{r_1}$. Since $[k:\QQ]\equiv r_1\pmod{2}$ this equals the right
hand side.
\end{proof}

\section{A special global quinary lattice for $\GUtwo{B}$}
\label{section:quadraticform}

Let $D^-=\prod_{p \in S}p$ and let $D^+$ be a positive integer, not
necessarily square-free, but coprime to $D^-$, and let $D:=D^-D^+$.
Let $Q$ be the quadratic form on $\DimSix$ given by
\begin{equation}
  \label{eq:defquadratic}
Q\left( \left(\begin{smallmatrix}s & r \\ \overline{r} &
      t\end{smallmatrix} \right) \right) = \frac{1}{4D}((s-t)^2 + 4
\norm(r)),   
\end{equation}
where $\norm(r)=r\overline{r}$ is the norm of $r$ as an element of the quaternion
algebra.
\begin{remar}
  It follows from its definition that the quadratic form $Q$ is
  invariant under translation by centre elements, i.e.
  $Q(v+\lambda I) = Q(v)$ if $\lambda \in \QQ$.
\end{remar}

If $A  = \left(\begin{smallmatrix} a & b\\ c & d\end{smallmatrix}\right) \in M_2(B)$, let $\adj(A) = \left( \begin{smallmatrix} d & -b\\ -c & a\end{smallmatrix}\right)$ denote its usual adjoint matrix.
\begin{lem}
  If $v \in \DimSix$ the following relation holds
\begin{equation}
  \label{eq:adjoing}
\frac{1}{4D}(v - \adj(v))^2 = \left(\begin{smallmatrix} Q(v) & 0 \\ 0 & Q(v)\end{smallmatrix} \right).
\end{equation}
\label{lemma:quadraticalternatedefinition}
\end{lem}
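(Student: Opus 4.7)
The proof plan is a direct calculation using the explicit description of $\DimSix$ from Lemma~\ref{lemma:W}. First, write an arbitrary element as $v=\smat{s & r\\ \overline r & t}$ with $s,t\in\Q$ and $r\in B$, and observe that
\[
\adj(v) = \begin{pmatrix} t & -r\\ -\overline r & s\end{pmatrix},
\qquad
v-\adj(v) = \begin{pmatrix} s-t & 2r \\ 2\overline r & -(s-t)\end{pmatrix}.
\]
This last matrix has the form $M=\smat{a & b\\ c & -a}$ with $a=s-t\in\Q$, $b=2r$, $c=2\overline r$.

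Next, I would compute $M^2$ directly. The diagonal entries are $a^2+bc$ and $cb+a^2$; in our case $bc = 4r\overline r = 4\norm(r)$ and $cb=4\overline r r = 4\norm(r)$, using that the reduced norm lies in the centre of $B$ and equals $r\overline r = \overline r r$. The off-diagonal entries are $ab-ba=0$ (since $a$ is a scalar) and $-ca+ac = 0$. Hence
\[
(v-\adj(v))^2 = \bigl((s-t)^2 + 4\norm(r)\bigr)\,I_2.
\]
Dividing by $4D$ and comparing with the definition \eqref{eq:defquadratic} of $Q$ gives the claim.

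There is no real obstacle here; the only points requiring a word of justification are that $a=s-t$ is a rational scalar and hence commutes with $b$ and $c$, and that $r\overline r = \overline r r$ lies in $\Q$ so that both diagonal entries of $M^2$ are equal. Both facts are immediate from the structure of $\DimSix$ and from properties of the standard involution on the quaternion algebra $B$.
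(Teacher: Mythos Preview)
Your proof is correct and is exactly the elementary computation the paper alludes to; the paper's own proof is simply the one-line ``Follows from an elementary computation.''
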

\begin{proof}
  Follows from an elementary computation.
\end{proof}
  In particular, we can alternatively define the quadratic form via
  \begin{equation}
    \label{eq:normaltdef}
    Q(v) = \frac{1}{8D} \trace((v - \adj(v))^2).    
  \end{equation}
\begin{defi}
  Let $(\DimFive,Q)$ be the quadratic space
  $\DimFive = \DimSix/\Q I$, with the quadratic form $Q$ in the quotient
  space.
\end{defi}
\begin{remar}
  Over $\QQ$ we can take an orthogonal complement for the scalar
  matrices subspace (given by the elements in $\DimSix$ whose trace is
  zero), and the quadratic space
  $\left(\left(\begin{smallmatrix} 1 & 0 \\ 0 & 1\end{smallmatrix}
    \right)^\perp, Q\right)$ (isometric to the space $(\DimFive,Q)$)
  is isometric to the space considered by Ibukiyama in \cite{I}.  In
  this case the quadratic forms becomes
  $Q\left(\begin{smallmatrix}t & \bar{r}\\ r &
      -t\end{smallmatrix}\right) = \frac{1}{D}(t^2+\norm(r))$
  (i.e. $Q(A) = \frac{1}{2D}\trace(A^2)$).  The advantage of working
  with the $6$-dimensional space will prove crucial while working over
  $\Z_2$, cf. Remark \ref{p2trace0}.
\label{remark:tracezero}
\end{remar}
Since $B\otimes\R$ splits over $\C$, $B$ can be embedded in $M_2(\C)$, hence $M_2(B)$ in $M_4(\C)$, so the following is immediate. 
\begin{lem}
\label{lemma:traceinvariance} Let $g \in \GUtwo{B}$ and $v \in U$, then $\trace(g v  g^{-1}) = \trace(v)$.
\end{lem}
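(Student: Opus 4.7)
The plan is to reduce the claim to the familiar conjugation-invariance of the ordinary matrix trace. Since $B$ is definite, $B\otimes_{\Q}\R\simeq\mathbb{H}$, and the standard embedding $\mathbb{H}\hookrightarrow M_2(\C)$ yields (by extending scalars and restricting) an embedding of $\Q$-algebras $B\hookrightarrow M_2(\C)$. Applying this entry-wise produces the embedding of $\Q$-algebras $M_2(B)\hookrightarrow M_4(\C)$ alluded to just before the lemma. Because this is a ring homomorphism, it carries $gvg^{-1}$ to the matrix conjugate of the image of $v$ by the image of $g$.

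With this in hand, I would observe that the trace function being used on $M_2(B)$ (the one implicit in formula~(\ref{eq:normaltdef})) is exactly the restriction of the ordinary $4\times 4$ matrix trace under this embedding --- this is the standard compatibility of the reduced trace of a central simple algebra with any faithful matrix representation, and for this particular embedding it can also be verified by direct inspection. The conclusion then follows from the cyclic property of the trace on $M_4(\C)$:
\[
    \trace\!\bigl(gvg^{-1}\bigr)=\trace\!\bigl(g^{-1}gv\bigr)=\trace(v).
\]

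There is no real obstacle here; the only point that requires any care is the identification of the paper's $\trace$ on $M_2(B)$ with the matrix trace on $M_4(\C)$, and this is essentially built into the setup. That is presumably why the authors signal the argument with the single word \emph{immediate}.
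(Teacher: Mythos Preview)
Your proposal is correct and follows essentially the same approach as the paper: embed $M_2(B)\hookrightarrow M_4(\C)$ and invoke the cyclic property of the matrix trace. One small point of precision: the paper's $\trace$ on $U$ is the naive $2\times 2$ trace $s+t$ (the diagonal entries of $v\in U$ are scalars), which is \emph{half} the $4\times 4$ trace under the embedding, not exactly equal to it; the paper notes this factor of $2$ in the remark immediately following the lemma. This discrepancy is of course irrelevant to the conjugation-invariance argument, so your proof stands.
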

Note that $v$, considered as an element of $M_2(B)$, has rational scalar entries on the leading diagonal, so its trace as a $2\times 2$ matrix and its trace as an element of $M_4(\C)$ differ only by a factor of $2$.

\begin{prop}
  The action of $\GUtwo{B}$ on $B$ preserves the quadratic form $Q$, up to
  a factor $\nu(g)$.
\label{proposition:quadforminvariance}
\end{prop}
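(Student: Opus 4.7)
The plan is to leverage the identity from Lemma~\ref{lemma:quadraticalternatedefinition} together with the trace invariance of Lemma~\ref{lemma:traceinvariance}. First, I would observe that for any $w \in M_2(B)$ with diagonal entries $a, d$, a direct calculation shows
\[
  w - \adj(w) = 2w - (a+d)\,I.
\]
For $w \in \DimSix$ the diagonal entries are rational, so $a+d$ coincides with the usual $M_2(B)$-trace and lies in $\Q$; moreover, it equals one half of the trace of $w$ viewed as an element of $M_4(\C)$ via the splitting $B \otimes \R \hookrightarrow M_2(\C)$.

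Next, by Proposition~\ref{prop:Winvariance} we have $gvg^{-1} \in \DimSix$, so its $M_2(B)$-trace is again rational. Lemma~\ref{lemma:traceinvariance} asserts the $M_4(\C)$-trace is preserved by conjugation, hence $\trace_{M_2(B)}(gvg^{-1}) = \trace_{M_2(B)}(v)$. Setting $\tau := \trace_{M_2(B)}(v) \in \Q$, the identity above gives
\[
  gvg^{-1} - \adj(gvg^{-1}) = 2\, gvg^{-1} - \tau I = g(2v - \tau I) g^{-1} = g(v - \adj(v))\, g^{-1},
\]
where the second equality uses that $\tau$ is a central scalar. Squaring both sides and invoking Lemma~\ref{lemma:quadraticalternatedefinition} twice, together with the fact that $Q(v) \in \Q$ commutes with $g^{-1}$,
\[
  4D\, Q(gvg^{-1})\, I \;=\; g\,(v - \adj(v))^2\, g^{-1} \;=\; g \cdot 4D\, Q(v) \, I \cdot g^{-1} \;=\; 4D\, Q(v)\, I.
\]
Hence $Q(gvg^{-1}) = Q(v)$, and the ``factor $\nu(g)$'' in the statement is in fact trivial for the conjugation action.

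The main obstacle this strategy circumvents is the non-commutativity of $B$: there is no cyclic $M_2(B)$-trace and no well-defined determinant on $M_2(B)$, so one cannot directly invoke standard $2\times 2$ matrix identities. The trick is to reduce everything to the scalar matrix $(v - \adj(v))^2 = 4D\, Q(v)\, I$, which conjugation by $g$ then passes through transparently. The only subtle point is the passage from $w - \adj(w)$ to $2w - \trace(w)\, I$, which requires the trace to be central; that this is satisfied — both for $v$ and its conjugate — is precisely the content of $\DimSix$-invariance (Proposition~\ref{prop:Winvariance}) combined with Lemma~\ref{lemma:traceinvariance}.
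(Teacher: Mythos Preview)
Your proof is correct and follows essentially the same route as the paper's: both use the identity $v-\adj(v)=2v-\trace(v)\,I$, the trace invariance of Lemma~\ref{lemma:traceinvariance}, and then square and apply Lemma~\ref{lemma:quadraticalternatedefinition} to conclude $Q(gvg^{-1})=Q(v)$. Your version is a bit more explicit about distinguishing the $M_2(B)$-trace from the $M_4(\C)$-trace and about invoking Proposition~\ref{prop:Winvariance} to keep $gvg^{-1}$ in $\DimSix$, but the argument is the same.
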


\begin{proof}
  Note that $v - \adj(v) = 2v - \trace(v)$, hence by (\ref{eq:normaltdef}),
  $8D \, Q(g v g^{-1}) = \trace(2g v g^{-1} -
  \trace(g v g^{-1}))^2$. But
  $(2g v g^{-1} -\trace(g v g^{-1}))^2 =
  g (2v-\trace(v))^2 g^{-1} = g(v-\adj(v))^2g^{-1} = 
  4D g\left(\begin{smallmatrix} Q(v) & 0 \\ 0 &
      Q(v) \end{smallmatrix}\right) g^{-1} = 4D \, Q(v) I$
  and the result follows.
\end{proof}

\subsubsection{The quadratic form at split primes}

If $p$ is a split prime, i.e.  $B_p\simeq M_2(\Q_p)$, with
$r\mapsto \begin{pmatrix}a&b\\c&d\end{pmatrix}$, the quadratic form
(on trace zero elements) $\frac{1}{D}\left(t^2+r\overline{r}\right)$
becomes $\frac{1}{D}\left(t^2+ad-bc\right)$, giving an isomorphism
$\GUtwo{B_p}/\Q_p^{\times}\simeq\SO_5(\Q_p)$, the split special
orthogonal group.

\subsubsection{The quadratic form at non-split primes}
As mentioned earlier, we can identify the
quotient $\DimSix_p/\Q_p I$ with the subspace of trace zero
matrices. If $A\in \DimSix_p$ and
$\tilde{A}=\xi^{-1}A\xi\in\widetilde{\DimSix}_p$,
$\tr(A)=0\iff\tr(\tilde{A})=0$, hence the trace zero elements can be
described by
\[
  \widetilde{\DimFive}_p=\left\{\begin{pmatrix} y & s\\t & \overline{y}\end{pmatrix}:\,t,s\in\Q_p,\,y\in B_p, y+\overline{y}=0\right\}.
  \]
  and the quadratic form becomes
  \[
    Q(A)=\frac{1}{2D}\tr(A^2)=\frac{1}{2D}\tr(\tilde{A}^2)=\frac{1}{D}\left(st+y^2\right).
    \]
This is the quadratic form associated with the non-split special orthogonal group $\SO_5^*(\Q_p)$ (cf. \cite[\S 3]{GR}), and we get isomorphisms
$$\GUtwo{B_p}/\Q_p^{\times}\simeq\GUone{B_p}/\Q_p^{\times}\simeq\SO_5^*(\Q_p).$$

\subsection{A global lattice in $\DimFive$}
\label{section:globallattice}
Let $D = D^- D^+$ as before, and consider the $5$-dimensional
$\Q$-vector space $\DimFive = \DimSix/\Q I$ with the quadratic form $Q$ defined in
(\ref{eq:defquadratic}).

%
%
Recall that by the local-to-global principle, to give a lattice in a
$\Q$-vector space is equivalent to giving it locally for each finite
place $p$.

  \begin{defi}
    Let $\Lattice$ be the $\Z$-lattice in $\DimFive$ whose local completion
    $\Lattice_p:=\Lattice \otimes \Z_p$ at a prime $p$ is as follows.
\begin{enumerate}
\item For $p\nmid D$, $\Lattice_p:=(\UU_{p^0}^+/\Z_p I)^{\vee}$.
\item For $p\mid D^+$, $\Lattice_p:=(\UU^+_{p^n}/\Z_p I)^{\vee}$, where $n= v_p(D^+)$.
\item For $p\mid D^-$, $\Lattice_p:=(\UU^-_p/\Z_p I)^{\vee}$.
\end{enumerate}
\label{definition:locallattice}    
\end{defi}

\begin{prop}\label{prop:integral}
  The lattice $\Lattice$ is integral with respect to the quadratic form $Q$. Furthermore, there exists a basis such that the Hessian matrix of the quinary form is as follows:
  \begin{itemize}
  \item If $p \nmid D^-$, let $n = v_p(D)$, then
    \[
      H(Q) =  2D \perp \frac{D}{p^n}\begin{pmatrix}0 & 1\\ 1 & 0\end{pmatrix} \perp \frac{D}{p^n}\begin{pmatrix}0 & 1\\ 1 & 0\end{pmatrix}.
    \]
    
  \item If $p \mid D^-$ is odd, let $\varepsilon$ be a non-square modulo $p$. Then
    \[
H(Q) =  2D\varepsilon \perp \frac{2D}{p} \perp \frac{-2D\varepsilon}{p} \perp \frac{D}{p} \begin{pmatrix}0 & 1\\ 1 & 0\end{pmatrix}.
\]
\item If $2 \mid D^-$, then
  \[
H(Q) = \frac{-2D}{3} \perp \frac{-D}{2}\begin{pmatrix} 2 & 1\\ 1 & 2\end{pmatrix} \perp \frac{D}{2} \begin{pmatrix} 0 & 1\\ 1 & 0\end{pmatrix}.
    \]
  \end{itemize}
In particular, the determinant of $H(Q)$ equals $2D$.
\end{prop}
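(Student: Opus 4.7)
The plan is to verify the proposition prime by prime, since both the integrality of $\Lattice$ and its Hessian matrix are local properties. For each prime $p$ I would fix an explicit $\Z_p$-basis of the predual lattice ($\UU^+_{p^n}/\Z_p I$ in the split case, or $\tilde{\UU}^-_p/\Z_p I$ in the ramified case), compute the Gram matrix of $Q$ directly from~(\ref{eq:defquadratic}), invert to obtain the Gram matrix of $\Lattice_p$, and finally exhibit a $\Z_p$-basis of $\Lattice_p$ realising the Hessian stated in the proposition.

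For the split case ($p \nmid D^-$), I would use trace-zero representatives with $t=0$, parametrised by $(s,a,b,c,d)\in\Z_p^5$ with $r = \smat{p^n a & b \\ p^n c & d}$. The quadratic form becomes $Q = \tfrac{s^2}{4D} + \tfrac{p^n(ad-bc)}{D}$, so the Gram matrix of the predual is block-diagonal $\tfrac{1}{2D}\perp\tfrac{p^n}{D}\smat{0 & 1 \\ 1 & 0}\perp(-\tfrac{p^n}{D})\smat{0 & 1 \\ 1 & 0}$ in the basis $(s,a,d,b,c)$. Inverting and noting that $-\smat{0 & 1 \\ 1 & 0}$ is $\GL_2(\Z_p)$-equivalent to $\smat{0 & 1 \\ 1 & 0}$ via $\diag(1,-1)$ yields the Hessian in part~(1).

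For the ramified case with $p$ odd, I would pick a $\Z_p$-basis $\{1,i,j,k\}$ of $\mxlram$ with $i^2=\varepsilon$ (a non-square unit), $j^2=p$ and $k=ij$. On the basis $\{i,j,k,v_s,v_t\}$ of $\tilde{\UU}^-_p/\Z_p I$, the formula $Q(\tilde A) = \tfrac{1}{4D}(-\norm(r-\overline{r}) + 4pst)$ yields a diagonal Gram matrix $\diag(\tfrac{2\varepsilon}{D},\tfrac{2p}{D},-\tfrac{2\varepsilon p}{D})\perp\tfrac{p}{D}\smat{0 & 1 \\ 1 & 0}$. Its inverse is $\diag(\tfrac{D}{2\varepsilon},\tfrac{D}{2p},-\tfrac{D}{2\varepsilon p})\perp\tfrac{D}{p}\smat{0 & 1 \\ 1 & 0}$, and rescaling the first three dual-basis vectors by the units $2\varepsilon,\,2,\,2\varepsilon\in\Z_p^\times$ produces the Hessian in part~(2).

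The hard part will be the ramified case with $p=2$, where $\mxlramtwo$ is a Hurwitz-type order $\Z_2\langle 1,i,j,\omega\rangle$ with $\omega=(1+i+j+k)/2$, $i^2=j^2=-1$ and $k=ij$. Since $r-\overline{r} = i+j+k$ when $r=\omega$, the Gram matrix on $\{i,j,\omega,v_s,v_t\}$ is no longer diagonal on the quaternionic part, and after inversion the relevant $3\times 3$ block becomes $\smat{-D & -D/2 & D \\ -D/2 & -D & D \\ D & D & -2D}$. The key step is to use that $3\in\Z_2^\times$, so that the vector $\tfrac{2}{3}w_1+\tfrac{2}{3}w_2+w_3$ (with $w_1,w_2,w_3$ the dual basis of the $3\times 3$ block) lies in $\Lattice_2$; I would check that it is orthogonal to $w_1$ and $w_2$ and has $Q$-value $-\tfrac{D}{3}$, producing the orthogonal decomposition $\tfrac{-2D}{3}\perp\tfrac{-D}{2}\smat{2 & 1 \\ 1 & 2}$ of the $3\times 3$ block, while the $(v_s,v_t)$ block yields $\tfrac{D}{2}\smat{0 & 1 \\ 1 & 0}$ as before. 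Integrality in all three cases is immediate from the Hessians having entries in $\Z_p$ with diagonal in $2\Z_p$; the determinant claim $\det H(Q) = 2D$ follows from computing the block determinants and checking $v_p(\det H(Q)) = v_p(2D)$ with matching unit parts modulo $(\Z_p^\times)^2$ at every $p$, which determines the global discriminant of the positive-definite form.
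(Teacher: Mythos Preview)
Your proof is correct and follows the same local, basis-by-basis strategy as the paper: fix an explicit $\Z_p$-basis of the predual lattice, compute the Gram matrix of $Q$ from the formula, invert, and read off the Hessian of $\Lattice_p$. The split case and the odd ramified case match the paper's argument almost verbatim (the paper uses $\pi^2=-p$ where you use $j^2=p$, but this is cosmetic). The one genuine difference is at $p=2$: the paper works with the non-obvious $\Z_2$-basis $\langle 1,\tfrac{1+i+j+k}{2},\tfrac{-1+2i-j-k}{3},\tfrac{-1-i+2j-k}{3}\rangle$ of $\mxlramtwo$, chosen so that the predual Gram matrix is already in block form $0\perp(-3/2)\perp\tfrac{2}{3}\smat{-2&1\\1&-2}$, whereas you take the standard Hurwitz basis $\{1,i,j,\omega\}$, accept a non-diagonal $3\times3$ block, invert it explicitly, and then perform the orthogonal splitting on the dual side via the vector $\tfrac{2}{3}w_1+\tfrac{2}{3}w_2+w_3$. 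Both routes exploit $3\in\Z_2^\times$ at the key moment; the paper's is slicker because the orthogonalisation is done once in $\OOO_2$ rather than after inversion, but yours has the advantage of starting from the basis everyone knows.
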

\begin{proof}
  We can check this locally prime-by-prime. For the determinant
  statement, since $Q$ is positive-definite and $D>0$, it is enough to
  check that the valuation is correct at each prime $p$.

\begin{enumerate}
\item If $p\nmid D$ the quaternion algebra $B$ is unramified at $p$.
  In the canonical basis
  $\BB:=\left\{ \smat{1&0\\0 & 0},\smat{0 &0\\0 & 1},\smat{0& 1\\0&0},
      \smat{0 & 0\\-1 & 0}\right\}$ the quadratic norm form has Hessian
  matrix $\smat{0&1\\ 1&0} \perp \smat{0 & 1\\ 1& 0}$. Let $n = v_p(D)$. Consider the basis for $\Level_{p^n}$
\[
  \left\{ \left(\begin{matrix}
        I_2 & 0_2 \\
        0_2 & 0_2\end{matrix} \right),
    \left(\begin{matrix} 0_2 & 0_2 \\
        0_2 & I_2\end{matrix} \right),
    \left(\begin{matrix} 0_2 & v\bar{\pi}^n\\
        \pi^n\overline{v} & 0_2\end{matrix} \right) : v \in \BB\right\}.
\]
In particular, a basis for the quotient $\Level_{p^n}/\Z_p I$ is given
by the last five elements.  It is easy to check that in such a basis,
the quadratic form $Q$ has Hessian matrix
\[
  \frac{1}{4D}\left(2 \perp
    4p^n\begin{pmatrix} 0 & 1\\1 & 0 \end{pmatrix} \perp
  4p^n\begin{pmatrix} 0 & 1\\1 & 0 \end{pmatrix}\right).
\]
For $\Lattice_p=(\UU^+_{p^n}/\Z_p)^{\vee}$ a Hessian matrix is then
\begin{equation}
  \label{eq:quadformpmidD+}
2D \perp \frac{D}{p^n}
    \begin{pmatrix} 0 & 1\\1 & 0 \end{pmatrix} \perp
  \frac{D}{p^n}\begin{pmatrix} 0 & 1\\1 & 0 \end{pmatrix}.
\end{equation}
  Integrality at $p$ follows from the fact that
  $D/p^n\in\Z_p^{\times}$.
\item If $p\mid D^-$, the quaternion algebra is ramified at
  $p$. Recall from Lemma~\ref{lemma:quadraticalternatedefinition} that
  if $v \in \DimSix$, $(v - \adj(v))^2$ is a diagonal matrix, hence its
  trace (which gives the quadratic form) is invariant under
  conjugation by $\xi$. In particular, it is enough to understand the lattice
  \[
    \left \{ \left(\begin{matrix} r & ps\\ t & \bar{r}\end{matrix}
      \right) \; : \; s,t \in \Z_p, r \in \OOO_p\right\}.
  \]
  with the quadratic form $\frac{1}{4D}((r-\bar{r})^2+4pst)$. If $p$ is
  odd, then $\OOO_p=\langle 1,\mu,\pi,\mu\pi\rangle_{\Z_p}$, where
  the last three elements have trace zero and satisfy
  $\mu^2=\varepsilon \in\Z_p^{\times}$ (a non-square), $\pi^2=-p$ and
  $\pi\mu=-\mu\pi$. Then the Hessian matrix of the quadratic form
  $\frac{1}{4D}(r -\bar{r})^2$ has diagonal entries (see
  \cite{MR693798} and also Section 5 of \cite{joh})
  \[
    \frac{1}{4D}(0 \perp 8\varepsilon \perp 8p \perp -8p\varepsilon).
  \]
  Then in the basis
  $\left\{ \left(\begin{smallmatrix}\mu & 0 \\ 0 & -\mu\end{smallmatrix}\right),
    \left(\begin{smallmatrix}\pi & 0\\ 0 & -\pi\end{smallmatrix}\right),
    \left(\begin{smallmatrix}\mu \pi & 0\\ 0 & -\mu \pi\end{smallmatrix}\right),\left(\begin{smallmatrix} 0 & 1\\ 0 & 0\end{smallmatrix}
    \right), \left(\begin{smallmatrix} 0 & 0\\ p & 0\end{smallmatrix}
    \right)\right \}$ (a basis for $\UU^-_p/\Z_p I$) the
  quadratic form has matrix
  \[
    \frac{4}{4D}\left(2\varepsilon \perp 2p \perp -2p\varepsilon
      \perp \begin{pmatrix} 0 & p\\p &
          0\end{pmatrix}\right).
    \]
    In particular, its dual lattice has Hessian matrix
    \begin{equation}
      \label{eq:quadformpmidD-}
    2D\varepsilon \perp  \frac{2D}{p}\perp \frac{-2D\varepsilon}{p} \perp \frac{D}{p}\begin{pmatrix} 0 & 1\\1 & 0\end{pmatrix}.
    \end{equation}
    This implies both the integrality and the determinant statement (recall that $v_p(D) = 1$ hence $D/p \in \Z_p^\times$).

    If $p=2$, $B_2$ is the Hamilton $2$-adic quaternion algebra
    ($i^2=j^2=-1$), a basis for $\OOO_2$ is
    $\langle 1, i, j, \frac{1+i+j+k}{2}\rangle$. A better basis for
    the quadratic form $(r - \bar{r})^2/2$ (over $\Z_2$) is
    $\langle 1, \frac{1+i+j+k}{2}, \frac{-1+2i-j-k}{3}, \frac{-1 -i+
      2j -k}{3}\rangle$, where the Gram matrix becomes
    \[
      \frac{4}{4D} \; \left(0 \perp -3/2 \perp\frac{2}{3}
        \left(\begin{matrix} -2 & 1\\1 & -2\end{matrix}\right)\right)
    \]
    Then, the quinary form in the quotient equals
        \[
      \frac{-3}{2D}\perp,-\frac{2}{3D}
      \begin{pmatrix} 2 & -1\\-1 & 2\end{pmatrix} \perp
      \frac{1}{D}
        \begin{pmatrix} 0 & 2\\2 & 0\end{pmatrix},
    \]
    and its dual lattice has Gram matrix
    \[
\frac{-2D}{3} \perp \frac{-D}{2}\begin{pmatrix} 2 & 1\\1 & 2\end{pmatrix} \perp
   \frac{D}{2}\begin{pmatrix} 0 & 1\\1 & 0\end{pmatrix},
    \]
    which is an integral quadratic form, whose Hessian matrix has determinant
    valuation $2$ (since $D/2$ is a unit in $\Z_2$).
 \end{enumerate}
\end{proof}

\begin{prop}\label{HW}
  The lattice $\Lattice$ is special. Furthermore, its Eichler and Hasse-Witt
  invariants are the following:
\begin{enumerate}
\item $\mathrm{HW}_{\infty}(Q)=-1$.
\item $\mathrm{HW}_p(Q)=e(\Lattice_p) =1$ if $p \nmid D^-$.
\item $\mathrm{HW}_p(Q)=e(\Lattice_p)=-1$ if $p\mid D^-$.
\end{enumerate}
\end{prop}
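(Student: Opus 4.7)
The plan is to verify each assertion locally, using the explicit Gram matrices of Proposition~\ref{prop:integral}, and assemble the global information via Proposition~\ref{prop:speciallocalglobal} (for speciality) and Corollary~\ref{cor:hasse-eichler} (to link $\HW_p(Q)$ and $e(L_p)$). For speciality, I would check at each finite $p$ that $L_p^\vee/L_p$ is cyclic: reading Proposition~\ref{prop:integral}, each Gram matrix is the orthogonal sum of a rank-one block (with Hessian entry $2D$, $2D\varepsilon$, or $-2D/3$) together with rank-two blocks, namely the scaled hyperbolic planes $(D/p^n)\smat{0&1\\1&0}$ and, at $p=2\mid D^-$, also $(-D/2)\smat{2&1\\1&2}$, whose determinant $3(D/2)^2$ is a $2$-adic unit. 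All rank-two blocks are therefore unimodular over $\Z_p$, so only the rank-one block contributes a nontrivial elementary divisor to $L_p^\vee/L_p$, making the quotient cyclic.

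For $\HW_\infty(Q)$, positive-definiteness forces every $(a_i,a_j)_\infty=+1$, so by (\ref{eq:HWdefi}) we get $\HW_\infty(Q)=(-1,-1)_\infty=-1$. At each finite prime I would decompose $L_p=A_p\perp\Z_p w_p$ as in Theorem~\ref{thm:decomposition}, so that $A_p$ is even unimodular of rank $4$, and compute $e(L_p)=d(A_p)=(p,\det A_p)_p$ via Proposition~\ref{prop:special_nonunitdet} (combined with Proposition~\ref{prop:special_unitdet} when $p\nmid D$). The identification of $A_p$ is immediate in each case: for $p\nmid D^-$ the four-dimensional complement is a pair of scaled hyperbolic planes with $\det A_p$ the square of a unit, so $d(A_p)=+1$. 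For $p\mid D^-$ odd, $\det A_p=4D^4\varepsilon/p^4$ which reduces to $\varepsilon$ modulo squares in $\Z_p^\times$, giving $d(A_p)=(p,\varepsilon)_p=-1$. For $p=2\mid D^-$, $\det A_2=-3(D/2)^4$ which reduces to $-3$ modulo squares, and $-3\equiv 5\pmod 8$ yields the dyadic Hilbert symbol value $(2,-3)_2=-1$.

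Finally, Corollary~\ref{cor:hasse-eichler} gives $\HW_p(Q)=e(L_p)^{v_p(D)}$, whence $\HW_p(Q)=e(L_p)=+1$ for $p\nmid D^-$ (either $v_p(D)=0$, or $e(L_p)=+1$) and $\HW_p(Q)=e(L_p)=-1$ for $p\mid D^-$ (using $v_p(D)=1$). The main obstacle I expect is the dyadic ramified case $p=2\mid D^-$: the Gram matrix is no longer diagonal, the block $\smat{2&1\\1&2}$ must be handled directly, and one has to trace the factor of $3$ coming from its determinant to arrive at $-3$ modulo squares, then invoke $-3\equiv 5\pmod 8$ and the dyadic Hilbert symbol formula to conclude. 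The odd ramified case is structurally parallel but easier, with the non-residue $\varepsilon$ appearing transparently from the Gram matrix entry.
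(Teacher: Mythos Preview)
Your proof is correct and follows essentially the same route as the paper's: use the Gram matrices of Proposition~\ref{prop:integral} to see each $L_p$ as a rank-one piece orthogonal to a unimodular rank-four piece $A_p$ (giving speciality via Proposition~\ref{prop:speciallocalglobal}), compute $e(L_p)=d(A_p)$ from $\det A_p$, and invoke Corollary~\ref{cor:hasse-eichler}. One small slip: for odd $p\mid D^-$ the decomposition is not ``rank-one plus rank-two blocks'' but $2D\varepsilon \perp (2D/p) \perp (-2D\varepsilon/p) \perp (D/p)\smat{0&1\\1&0}$, with two further rank-one summands; these are unimodular, so your speciality and $\det A_p$ computations remain valid, but the description should be adjusted.
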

\begin{proof}
  By Corollary~\ref{cor:hasse-eichler} it is enough to compute the Eichler
  invariant at each finite place and the Hasse-Witt invariant at the
  infinite place. Note that Proposition~\ref{prop:integral} expresses
  each completion of the lattice at a finite place as an orthogonal
  sum of a rank one lattice and a unimodular one, hence $L$ is special
  by Proposition~\ref{prop:speciallocalglobal}. 
  \begin{enumerate}
\item At the infinity place, (\ref{eq:HWdefi}) gives $\mathrm{HW}_{\infty}(Q)=(-1,-1)_{\infty}(1,1)_{\infty}^{10}=-1$.
\item If $p \nmid D^-$, Proposition~\ref{prop:integral} implies that $e(\Lattice_p) = (p, 1)_p = 1$.
  
\item If $p \mid D^-$ is odd, Proposition~\ref{prop:integral} implies
  that $e(\Lattice_p) = (p, \varepsilon)_p = -1$ while the case $p=2$
  gives $e(\Lattice_2) = (2,-3)_2= -1$.
\end{enumerate}
\end{proof}

\begin{remar}
  \label{remark:identification} Let $(\tilde{V},\tilde{Q})$ be a
  quinary quadratic space, whose quadratic form is positive
  definite. Then by Remark \ref{M2BCliff} the even Clifford algebra $\Cliff_0(\tilde{V})$ is
  isomorphic to $M_2(B)$, where $B$ is a quaternion algebra ramified
  precisely at infinity and the finite primes where $\tilde{Q}$ has
  Hasse-Witt invariant $-1$. The quadratic space $(V,Q)$ constructed
  from $M_2(B)$ then has the same Hasse-Witt invariants as
  $(\tilde{V},\tilde{Q})$ by the last proposition, in particular they
  are isometric, providing an isomorphism
  $\GUtwo{B}/\Q^{\times}\simeq\SO(\tilde{V})$.
\end{remar}

\begin{remar}\label{Ltilde}
  Let $(\tilde{\Lattice},\tilde{Q})$ be a quinary lattice, whose
  quadratic form is positive definite, and is special of determinant
  $2D$. (Note that if $D$ is square-free, $(\tilde{\Lattice},\tilde{Q})$ is automatically special.) Let $S = \{p \; : \; \HW(\tilde{Q})_p = -1\}$ and suppose that
  $v_p(D) = 1$ for all $p \in S$. Let $(\Lattice,Q)$ be the quinary
  lattice of Definition~\ref{definition:locallattice}. Then
  $(\Lattice,Q)$ and $(\tilde{\Lattice},\tilde{Q})$ are in the same
  genus, in particular $\SO(Q) \simeq \SO(\tilde{Q})$.
\end{remar}

\subsection{Radicals}
Let $(q, \Lambda_p)$ be an integral
quadratic form, where $\Lambda_p$ is a $\Z_p$-lattice. 

\begin{defi}
  The \emph{radical} of the form $(q,\Lambda_p)$ equals
  \[
\Rad(q,\Lambda_p):=\left\{v\in \Lambda_p\otimes \Z/2p:\, \langle v,w\rangle \equiv 0 \pmod{2p}\,\,\,\forall w\in \Lambda_p\right\}.
    \]
\end{defi}
\noindent In particular, if $p \neq 2$, $\Rad(q,\Lambda_p)$ is an $\FF_p$-vector
space, while for $p=2$ it is a $\Z/4$-module.  

\begin{lem}
  Let $p$ be a prime number and $(Q,\Lattice_p)$ be as in
  Definition~\ref{definition:locallattice}. If $p \mid D$ then
  $\Rad(Q,\Lattice_p)$ is a $\Z/2p$ lattice of rank $1$.
  \label{lemma:radicalrank}
\end{lem}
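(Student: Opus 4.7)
The plan is to compute the radical directly from the explicit Gram matrices worked out in Proposition~\ref{prop:integral}. Writing $H = H(Q)$ for the Hessian of $Q$ in the bases produced there, the bilinear pairing on $\Lattice_p$ is $\langle v,w\rangle = v^{T} H w$, so
\[
\Rad(Q,\Lattice_p) = \ker\bigl(H : \Lattice_p/2p\Lattice_p \longrightarrow \Lattice_p/2p\Lattice_p\bigr),
\]
and everything reduces to kernel computations for explicit $5\times 5$ integer matrices modulo $2p$.

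For odd $p$, $2\in\Z_p^{\times}$ gives $\Z_p/2p\Z_p\cong\FF_p$, and I would treat two subcases. If $p\mid D^+$ with $n=v_p(D^+)$, Proposition~\ref{prop:integral} shows $H\equiv (2D)\perp (D/p^n)\smat{0&1\\1&0}\perp (D/p^n)\smat{0&1\\1&0}\pmod{p}$; the first entry is $0$ mod $p$ since $p\mid D$, while $D/p^n$ is a $p$-adic unit so the two hyperbolic blocks are nondegenerate mod $p$, giving a one-dimensional kernel spanned by the first basis vector. If $p\mid D^-$, then $v_p(D)=1$ so $2D/p$ and $D/p$ are units mod $p$ while $p\mid 2D\varepsilon$; again only the first coordinate contributes to the kernel.

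For $p=2$ the module is $\Z/4$ and I need to work mod $4$. When $2\mid D^+$ with $n=v_2(D)\geq 1$, the coefficient $2D=2^{n+1}(D/2^n)$ is divisible by $4$, so the first coordinate is free mod $4$, whereas in each hyperbolic block the coefficient $D/2^n$ is an odd unit and the pairing forces the other four coordinates to vanish mod $4$. When $2\mid D^-$, we have $v_2(D)=1$, so $-2D/3$ has $2$-adic valuation $2$ and is divisible by $4$ (the first coordinate is again free), while $D/2$ in the hyperbolic block is an odd unit forcing $v_4\equiv v_5\equiv 0\pmod{4}$. The only point that needs a small check is the block with matrix $(-D/2)\smat{2&1\\1&2}$: setting $u=-D/2\in\Z_2^{\times}$, the two conditions $u(2v_2+v_3)\equiv 0$ and $u(v_2+2v_3)\equiv 0\pmod{4}$ combine to give $v_3\equiv -2v_2$ and then $-3v_2\equiv v_2\equiv 0\pmod{4}$, so $v_2\equiv v_3\equiv 0\pmod{4}$ as well.

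In every case the radical is generated by the first basis vector and is isomorphic to $\Z/2p$, i.e.\ a cyclic rank-$1$ module over $\Z/2p$. The only mildly delicate step is the analysis of the even-unimodular block $\smat{2&1\\1&2}$ at $p=2$, which requires the small linear-algebra argument above rather than a reduction to units; everything else is immediate from Proposition~\ref{prop:integral}.
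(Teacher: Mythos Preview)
Your proof is correct and follows exactly the same approach as the paper: use the explicit Hessian matrices from Proposition~\ref{prop:integral} and observe that in each case the radical is spanned by the first basis vector. The paper's own proof is just the one-line assertion ``In all cases, the first element of the basis clearly spans the radical,'' so you have simply written out the verification (including the small $2$-adic check for the block $\smat{2&1\\1&2}$) that the paper leaves to the reader.
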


\begin{proof}
  Recall from Proposition~\ref{prop:integral} that the quadratic form $Q$
  is equivalent to
  \[
    H(Q)=
    \begin{cases}
      2D \oplus \frac{D}{p^n}\begin{pmatrix}0 & 1\\ 1 &
    0\end{pmatrix} \oplus \frac{D}{p^n}\begin{pmatrix}0 & 1\\ 1 &
    0\end{pmatrix} & \text{ if }p \mid D^+,\\
  2D\varepsilon \oplus \frac{2D}{p} \oplus \frac{-2D\varepsilon}{p} \oplus \frac{D}{p} \begin{pmatrix}0 & 1\\ 1 & 0\end{pmatrix} & \text{ if }p \mid D^-,\; p \neq 2,\\
  \frac{-2D}{3} \oplus \frac{-D}{2}\begin{pmatrix} 2 & 1\\ 1 & 2\end{pmatrix} \oplus \frac{D}{2} \begin{pmatrix} 0 & 1\\ 1 & 0\end{pmatrix} & \text{ if }2 \mid D^-.
\end{cases}
\]
In all cases, the first element of the basis clearly spans the radical.
\end{proof}

\section{Stabilisers of the local lattices \texorpdfstring{$\Lattice_p$}.}
Let us compute for each prime $p$ the stabiliser (under conjugation)
of the lattice $\Lattice_p$ of
Definition~\ref{definition:locallattice}. To easy notation, let us
denote by $\UU_p$ either $\UU_{p^0}^+$, $\UU_{p^n}^+$ or $\UU_p^-$
according to the case.

\begin{lem}
  \label{lemma:stabilizerquotient}
  The stabiliser in $\GUtwo{B_p}$ of the rank five lattice $\Lattice_p$ equals that of the rank six lattice $\UU_p$.
\end{lem}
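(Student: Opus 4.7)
The plan is to establish both inclusions of stabilisers by working with the conjugation action of $\GUtwo{B_p}$ on $\DimSix_p$, which fixes $I$ pointwise and therefore descends to a well-defined action on $\DimFive_p=\DimSix_p/\Q_p I$, compatible with the short exact sequence $0\to\Z_p I\to \UU_p\to\UU_p/\Z_p I\to 0$.

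The forward inclusion $\Stab(\UU_p)\subseteq\Stab(\Lattice_p)$ is direct: if $g\UU_p g^{-1}=\UU_p$, then $g$ preserves the image $\UU_p/\Z_p I$ in $\DimFive_p$, and since $g$ also preserves the quadratic form on $\DimFive_p$ (Proposition~\ref{proposition:quadforminvariance}), it preserves the dual $(\UU_p/\Z_p I)^\vee=\Lattice_p$.

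For the reverse inclusion, suppose $g\in\Stab(\Lattice_p)$. By double duality $g$ stabilises $\Lattice_p^\vee=\UU_p/\Z_p I$, so for each $v\in\UU_p$ we can write $gvg^{-1}=u+\alpha I$ with $u\in\UU_p$ and $\alpha\in\Q_p$, and the goal is to show $\alpha\in\Z_p$. The matrix trace on $\DimSix$ is $\Q$-valued by Lemma~\ref{lemma:W}, is half the reduced trace and hence conjugation-invariant by Lemma~\ref{lemma:traceinvariance}, and sends $\UU_p$ into $\Z_p$ in both the split and ramified cases. Applying it to $gvg^{-1}=u+\alpha I$ gives $\trace(v)=\trace(u)+2\alpha$, so $2\alpha\in\Z_p$ and $\alpha\in\tfrac12\Z_p$; for odd $p$ this immediately yields $\alpha\in\Z_p$.

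The main obstacle is the case $p=2$, where the trace alone only places $\alpha$ in $\tfrac12\Z_2$. To rule out half-integer $\alpha$, I would upgrade the trace identity to a quadratic one using the conjugation-invariant quasi-determinant $f(v):=\tfrac12(\trace(v)^2-\trace(v^2))$, which on the standard form $v=\smat{s&r\\\bar r&t}\in\DimSix$ evaluates to $st-\Nm(r)$. One checks $f(\UU_p)\subseteq\Z_p$ in both cases: the split case is immediate, and the ramified case reduces by conjugation with $\xi$ to the formula $\Nm(r)-pst$ on $\tilde\UU_p^-$, which is integral since $r\in\OOO_p$ and $s,t\in\Z_p$. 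Substituting $gvg^{-1}=u+\alpha I$ and using $f(v)=f(gvg^{-1})=f(u)+\alpha\trace(u)+\alpha^2$ yields
\[
\alpha^2+\alpha\trace(u)=f(v)-f(u)\in\Z_2.
\]
If $\alpha=m/2$ with $m\in\Z_2$ odd, then $v_2(\alpha^2)=-2$ while $v_2(\alpha\trace(u))\geq -1$, forcing $v_2(\alpha^2+\alpha\trace(u))=-2$, contradicting the right hand side being in $\Z_2$. Hence $\alpha\in\Z_2$, completing the reverse inclusion. The subtle point is recognising that a degree-two invariant beyond the trace is required to remove the last $2$-torsion obstruction at $p=2$.
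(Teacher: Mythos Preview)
Your proof is correct and agrees with the paper's up to the point where the trace argument settles the odd primes. At $p=2$ you take a genuinely different route: the paper passes to a quadratic extension splitting $B_2$, embeds everything in $M_4$, and uses that the ordinary $4\times4$ determinant is conjugation-invariant and integral on $\UU_2$ but has $2$-adic valuation $-4$ on $w+\lambda I$ when $v_2(\lambda)=-1$. You instead stay inside $M_2(B_2)$ and use the degree-two invariant $f(v)=\tfrac12(\trace(v)^2-\trace(v^2))=st-\Nm(r)$, which is just the second coefficient of the same characteristic polynomial. Your valuation contradiction is $-2$ versus $\geq 0$ rather than $-4$ versus $\geq 0$, but either suffices. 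The advantage of your approach is that it avoids the splitting extension entirely and makes the integrality of $f$ on $\UU_2$ transparent in both the split and ramified cases; the paper's determinant argument is perhaps more visual but requires the auxiliary base change. One small point worth making explicit in your write-up: the invariance of $\trace(v^2)$ under conjugation follows because $v\in\DimSix$ implies $v^2\in\DimSix$ (as $(v^2)^*=(v^*)^2=v^2$), so Lemma~\ref{lemma:traceinvariance} applies directly.
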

\begin{proof}
  Since the quadratic form is invariant under the conjugation action of $\GUtwo{B_p}$ 
   (by a local version of 
  Proposition~\ref{proposition:quadforminvariance}) the stabiliser of $\Lattice_p$ is the same as that of its dual lattice $\UU_p/\ZZ_p I$, which is what we shall actually look at.

  The action of $\GUtwo{B_p}$ is trivial at the identity matrix, hence
  if an element stabilises the rank $6$ lattice $\UU_p$ it also stabilises the
  quotient $\UU_p/\ZZ_p I$. To prove the converse, let $g$ be an element stabilising the
  quotient lattice $\UU_p/\ZZ_p I$. Let $v \in \UU_p$ be any vector, so
  $g\bar{v}g^{-1} = \bar{w}$ for some $w$ in $\Lattice_p$. In particular, there exists $\lambda \in \Q_p$ such that
  \begin{equation}
    \label{eq:stabilizerquotient}
g v g^{-1} = w + \lambda \left(\begin{smallmatrix} 1 & 0\\ 0 & 1\end{smallmatrix}\right),    
  \end{equation}
  for some element $w \in \UU_p$ in the preimage of $\bar{w}$. Since
  $v ,w \in \UU_p$, their traces are integral and since
  $\tr(gvg^{-1}) = \tr(v)$ (by Lemma~\ref{lemma:traceinvariance}),
  $2\lambda \in \Z_p$. This gives the statement when $p\neq
  2$. Suppose that $p=2$ and $\lambda \not \in \Z_2$. We can look at
  the ``determinants'' of equation~(\ref{eq:stabilizerquotient}). For
  that purpose, take a quadratic extension of $\Q_2$ that splits the
  quaternion algebra, and take the determinant (as $4 \times 4$
  matrices with coefficients in such an extension). Since all elements
  of $\UU_2$ have integral entries, their determinants are
  integral. Since $\det(AB) = \det(BA)$, $\det(gvg^{-1})$ is integral,
  which is not the case for
  $w + \left(\begin{smallmatrix}\lambda & 0 \\ 0 &
      \lambda\end{smallmatrix}\right)$ (as it corresponds to a
  $4 \times 4$ matrix with integral entries outside the diagonal, but
  with negative valuation at all diagonal elements).
\end{proof}

\begin{prop} Let $p \not \in S$ be an unramified prime.
\begin{enumerate} 
\item The subgroup $K_{0,p}$ of $\GUtwo{B_p}$ preserves the $\Z_p$-lattice $\UU_{p^0}^+\subseteq \DimSix_p$, which was defined by 
  \[
    \UU_{p^0}^+=\left\{\begin{pmatrix} s & \begin{pmatrix}
          a&b\\c&d\end{pmatrix}\\\begin{pmatrix}d&-b\\-c&a\end{pmatrix}
        & t\end{pmatrix}:\,s,t,a,b,c,d\in\Z_p\right\}
  \]
\item In fact, the image of $K_{p^0}^+$ is the full stabiliser of $\UU_{p^0}^+$ in $\GUtwo{B_p}/\Q_p^{\times}$.
\end{enumerate}
\label{K0pstab}
\end{prop}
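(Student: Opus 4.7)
Plan. The strategy is to transfer both sides of the claim into the matrix picture $M_2(B_p) \cong M_4(\Q_p)$ via $\Psi$ from \eqref{eq:isom}, where the structural lemmas already proved do most of the work. Under $\Psi$, the group $K_{p^0}^+$ is identified with $\GL_4(\Z_p) \cap \GSp_2(\Q_p) = \GSp_2(\Z_p)$: preservation of the lattice $L_{p^0} = \smat{M_2(\Z_p)\\ M_2(\Z_p)}$ by left multiplication forces both $g$ and $g^{-1}$ to lie in $M_4(\Z_p)$. Under the same identification, $\UU_{p^0}^+$ becomes $M_4(\Z_p) \cap \DimSix_p$.

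With these identifications in place, part (1) is essentially automatic: for $g \in K_{p^0}^+$ and $v \in \UU_{p^0}^+$, the conjugate $g v g^{-1}$ lies in $M_4(\Z_p)$ (since $g, g^{-1} \in \GL_4(\Z_p)$) and in $\DimSix_p$ (by Proposition~\ref{prop:Winvariance}), so it lies in the intersection $\UU_{p^0}^+$.

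For part (2), suppose $g \in \GUtwo{B_p}$ satisfies $g\UU_{p^0}^+ g^{-1} = \UU_{p^0}^+$. Then $g$ normalises the $\Z_p$-subalgebra $R'$ of $M_4(\Q_p)$ generated by $\UU_{p^0}^+$. Specialising Lemma~\ref{order} to $n = 0$ --- where every entry of the form $p^n\Z_p$ collapses to $\Z_p$ --- yields $R' = M_4(\Z_p)$. The normaliser of the maximal order $M_4(\Z_p)$ in $\GL_4(\Q_p)$ is $\Q_p^\times \GL_4(\Z_p)$, as noted at the end of the proof of Lemma~\ref{normaliser}. Intersecting with $\GSp_2(\Q_p)$ yields $\Q_p^\times \GSp_2(\Z_p) = \Q_p^\times K_{p^0}^+$, so modulo $\Q_p^\times$ the class of $g$ lies in the image of $K_{p^0}^+$. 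No substantial obstacle is expected: the hyperspecial nature of $R' = M_4(\Z_p)$ means no Atkin-Lehner element enters its normaliser (contrast with Lemma~\ref{normaliser} for $n \geq 1$), and the argument reduces entirely to well-known facts about $\GSp_2(\Z_p) \subset \GSp_2(\Q_p)$.
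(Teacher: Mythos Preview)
Your proposal is correct and follows essentially the same route as the paper: show that any $g$ stabilising $\UU_{p^0}^+$ must normalise the order $M_4(\Z_p)$, then compute that normaliser inside $\GSp_2(\Q_p)$. The only difference is in the citations for the first step---where the paper invokes \cite[Lemma 4.1]{I} to pass from $\UU_{p^0}^+$ to $M_4(\Z_p)$, you instead specialise the paper's own Lemma~\ref{order} to $n=0$, which is a nice self-contained alternative.
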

\begin{proof}
\begin{enumerate}
\item Immediate.
\item Suppose $g\in\GUtwo{B_p}\simeq\GSp_2(\Q_p)$ is such that
  $g\,\UU_{p^0}^+g^{-1}=\UU_{p^0}^+$. By \cite[Lemma 4.1]{I},
  $gM_4(\Z_p)g^{-1}=M_4(\Z_p)$, hence we are led to compute the
  normaliser of $M_4(\Z_p)$. Although the same proof given by Eichler
  to prove the ``Lemma'' (\cite{MR0485698}, page 93 for $M_2(\Z_p)$)
  applies \emph{mutatis mutandis}, we recall the one given by
  Ibukiyama.
  For some sufficiently large $n$,
  $p^ng\in M_4(\Z_p)$, and then $p^ngM_4(\Z_p)=p^nM_4(\Z_p)g$ is a
  two-sided ideal of $M_4(\Z_p)$. As in \cite[Lemma 3.1]{I2},
  necessarily $p^ngM_4(\Z_p)=p^eM_4(\Z_p)$ for some $e\geq
  0$. Equating sets of determinants, $p^{4n}\det(g)\Z_p=p^{4e}\Z_p$,
  so $\det(g)\in p^{4(e-n)}\Z_p^{\times}$, and
  $p^{n-e}g\in\GSp_2(\Q_p)\cap\GL_4(\Z_p)=K_{p^0}^+$, as required.
\end{enumerate}
\end{proof}

Let us state an elementary result.
\begin{lem}
  If $A \in \frac{1}{p^n}\Z_p + M_2(\Z_p)$ 
  has integral determinant, then $A \in M_2(\Z_p)$. Similarly, if 
  $A \in
  \frac{1}{p^n}\Z_p+\begin{pmatrix}\Z_p&p^{-n}\Z_p\\p^n\Z_p&\Z_p\end{pmatrix}$ has integral determinant, then $A \in \begin{pmatrix}\Z_p&p^{-n}\Z_p\\p^n\Z_p&\Z_p\end{pmatrix}$.
  \label{lemma:integrality}
\end{lem}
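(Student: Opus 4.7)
The plan is to handle the first statement directly by a valuation argument on the determinant, and then reduce the second statement to the first via a diagonal conjugation.

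For the first part, I would write $A = \alpha I + B$ where $\alpha \in \frac{1}{p^n}\Z_p$ and $B \in M_2(\Z_p)$; the claim is equivalent to $\alpha \in \Z_p$. Suppose for contradiction that $v_p(\alpha) = -k$ with $k > 0$. The diagonal entries of $A$ are $A_{11} = \alpha + B_{11}$ and $A_{22} = \alpha + B_{22}$ with $B_{11}, B_{22} \in \Z_p$, so $v_p(A_{11}) = v_p(A_{22}) = -k$, and hence
\[
v_p(A_{11} A_{22}) \;=\; v_p\bigl(\alpha^2 + \alpha(B_{11}+B_{22}) + B_{11}B_{22}\bigr) \;=\; -2k,
\]
since the $\alpha^2$ term strictly dominates the others. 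On the other hand $A_{12}, A_{21} \in \Z_p$, so $v_p(A_{12}A_{21}) \geq 0 > -2k$. Therefore $v_p(\det A) = v_p(A_{11}A_{22} - A_{12}A_{21}) = -2k < 0$, contradicting the hypothesis that $\det A$ is integral.

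For the second part, let $D = \mathrm{diag}(p^n, 1)$ and consider $A' := DAD^{-1}$. If $A = \alpha I + B$ with $\alpha \in \frac{1}{p^n}\Z_p$ and $B \in \left(\begin{smallmatrix}\Z_p & p^{-n}\Z_p\\ p^n\Z_p & \Z_p\end{smallmatrix}\right)$, then $DBD^{-1} \in M_2(\Z_p)$ since conjugation by $D$ scales the $(1,2)$ entry by $p^n$ and the $(2,1)$ entry by $p^{-n}$. Thus $A' = \alpha I + DBD^{-1} \in \frac{1}{p^n}\Z_p + M_2(\Z_p)$, and $\det(A') = \det(A) \in \Z_p$. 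Applying the first part, $A' \in M_2(\Z_p)$, whence $A \in D^{-1} M_2(\Z_p) D = \left(\begin{smallmatrix}\Z_p & p^{-n}\Z_p\\ p^n\Z_p & \Z_p\end{smallmatrix}\right)$, as required.

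No step here presents any real obstacle; the only thing to get right is the observation that when $\alpha$ has a negative valuation, the $\alpha^2$ contribution to the diagonal product is strictly larger in absolute value than both the cross terms (which have valuation $\geq -k$) and the off-diagonal product $A_{12}A_{21}$ (which is integral in both cases). The conjugation trick in the second part is a standard device that makes the paramodular-type lattice look like the full matrix order, so the first part applies verbatim.
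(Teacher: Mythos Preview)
Your proof is correct and follows essentially the same valuation argument as the paper: the paper writes the diagonal entries as $\frac{a}{p^r},\frac{d}{p^r}$ with $a,d\in\Z_p^\times$ and observes $\frac{ad}{p^{2r}}-bc\in\Z_p$ forces $r\le 0$, which is exactly your computation $v_p(\det A)=-2k$. For the second statement the paper merely says ``the other case is similar'' (i.e.\ repeat the direct computation), whereas your conjugation by $\mathrm{diag}(p^n,1)$ is a clean way to reduce to the first case; this is a cosmetic difference rather than a genuinely different route.
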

\begin{proof}
  Suppose on the contrary that
  $A = \smat{\frac{a}{p^r}&b\\ c & \frac{d}{p^r}}$ with
  $a,d \in \Z_p^\times$, $0 < r \le n$. 
  The hypothesis $\det(A) = \frac{ad}{p^{2r}}-bc \in \Z_p$ implies
  $2r \leq 0$ getting a contradiction. The other case is similar.
\end{proof}
\begin{prop} Let $p \not \in S$ be an unramified prime.
  \label{K+pstab} 
\begin{enumerate} 
\item The subgroup $K^+_{p^n}$ of $\GUtwo{B_p}$ preserves the
  $\Z_p$-lattice $\UU^+_{p^n}\subseteq \DimSix_p$, which was defined by 
  \[
    \UU^+_{p^n}:=\left\{\begin{pmatrix} s & \begin{pmatrix}
          a&b\\c&d\end{pmatrix}\\\begin{pmatrix}d&-b\\-c&a\end{pmatrix}
        & t\end{pmatrix}:\,s,t,b,d\in\Z_p, a,c\in p^n\Z_p\right\}.
  \]
  So does the Atkin-Lehner element $W_{p^n}^+$.
\item In fact, the subgroup of $\GUtwo{B_p}/\Q_p^{\times}$ generated by the images
  of $K^+_{p^n}$ and $W_{p^n}^+$ is the full stabiliser of
  $\UU^+_{p^n}$.
\end{enumerate}
\end{prop}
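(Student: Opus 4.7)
Plan. The overarching strategy is to transfer everything from $\GUtwo{B_p}$ to $\GSp_2(\QQ_p)$ via the isomorphism $\Psi$ and reduce to facts about the paramodular order $R$ of \eqref{eq:eltsparamodular} and its normaliser (Lemma~\ref{normaliser}). The key identification, verified by a direct block-wise check, is
\[
  \UU^+_{p^n} \;=\; \Psi^{-1}(R)\,\cap\,U,
  \qquad U:=\{v\in M_2(B_p):v^*=v\}.
\]
Indeed, imposing $v^*=v$ on an element of $\Psi^{-1}(R)$ forces the diagonal blocks to be scalar matrices (so in $\ZZ_p I_2$ by the integrality of $\Psi^{-1}(R)$), while the off-diagonal block structure $r\in M_2(\ZZ_p)\bar{\pi}^n$ and $\bar r\in\pi^n M_2(\ZZ_p)$ of $\Psi^{-1}(R)$ is automatically compatible with the involution-fixed condition.

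Part (1) then follows quickly: conjugation by any element of $\GUtwo{B_p}$ preserves $U$ by Proposition~\ref{prop:Winvariance}, so it suffices to check that $K^+_{p^n}$ and $W^+_{p^n}$ normalise $\Psi^{-1}(R)$. For $K^+_{p^n}$, the earlier lemma shows $\Psi(K^+_{p^n})=K(p^n)$, and since $K(p^n)$ and its inverses both lie in $R$ we have $K(p^n)\subseteq R^{\times}$; any element of $R^{\times}$ conjugates $R$ to itself. For $W^+_{p^n}$, the fact that $\Psi(W^+_{p^n})=W_{p^n}$ normalises $R$ is exactly Lemma~\ref{normaliser}.

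For part (2), let $g\in \GUtwo{B_p}$ satisfy $g\UU^+_{p^n}g^{-1}=\UU^+_{p^n}$. Conjugation by $g$ then stabilises the $\ZZ_p$-subring $R'=\ZZ_p[\UU^+_{p^n}]$ identified in Lemma~\ref{order}. The crux is to promote this to: $\Psi(g)$ normalises $R$ in $M_4(\QQ_p)$. Granting this, Lemma~\ref{normaliser} places $\Psi(g)\in p^{\ZZ}W_{p^n}^{\mu}R^{\times}$ for some $\mu\in\{0,1\}$; intersecting with $\GSp_2(\QQ_p)$ gives $p^{\ZZ}W_{p^n}^{\mu}K(p^n)$, and transferring back via $\Psi^{-1}$ shows that $g$ modulo $\QQ_p^{\times}$ lies in the subgroup of $\GUtwo{B_p}/\QQ_p^{\times}$ generated by the images of $K^+_{p^n}$ and $W^+_{p^n}$.

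The main obstacle is this promotion, because $R'$ sits strictly inside $\Psi^{-1}(R)$ (they differ on the diagonal blocks, where $R'$ has the Eichler-type order $\ZZ_p I_2+p^n M_2(\ZZ_p)$ whereas $\Psi^{-1}(R)$ has the full $M_2(\ZZ_p)$). I would handle this by mimicking the denominator-bounding argument of Proposition~\ref{K0pstab} (which follows \cite[Lemma~4.1]{I}): choosing $r$ large enough that $p^r g\in\Psi^{-1}(R)$, the left-ideal $p^r g\cdot\Psi^{-1}(R)$ is principal and two-sided, and Lemma~\ref{maximal} combined with Lemma~\ref{intersection} forces $p^r g$, up to a power of $p$, to permute the two maximal orders whose intersection is $R$. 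The hypotheses $g\in \GUtwo{B_p}$ (so $g^*g\in \QQ_p^{\times}$) and the invariance of the trace functional under conjugation (Lemma~\ref{lemma:traceinvariance}) are then used to rule out spurious over-orders of $R'$ that would fail to match the paramodular structure, closing the gap.
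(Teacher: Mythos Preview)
Your Part~(1) is correct and in fact slightly cleaner than the paper's. The paper instead uses the description $K^+_{p^n}=\{k:h^{-n}kh^n\in\GL_4(\Z_p)\}$ and checks directly that $h^{-n}Ah^n\in M_4(\Z_p)\iff A\in\UU^+_{p^n}$ for $A\in U_p$, then computes the Atkin--Lehner conjugation by hand. Your identification $\UU^+_{p^n}=\Psi^{-1}(R)\cap U$ together with Lemma~\ref{normaliser} is a nice shortcut.

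Your Part~(2) has a genuine gap at the crucial step. You correctly reduce to showing that $g$ normalises $\tilde R=\Psi^{-1}(R)$, and correctly observe that $g$ automatically normalises the subring $R'$ of Lemma~\ref{order}, which is strictly smaller on the diagonal blocks. But your proposed bridge does not work: the claim that $p^r g\cdot\tilde R$ is a \emph{two-sided} ideal of $\tilde R$ is exactly what you are trying to prove (two-sidedness would require $g\tilde R=\tilde R g$, i.e.\ $g$ normalises $\tilde R$). Applying the ideal argument to $R'$ instead runs into the problem that $R'$ is not a maximal order, so its two-sided ideals are not simply powers of $p$. The final appeal to trace invariance and ``ruling out spurious over-orders'' is too vague to close this.

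The paper bridges the gap by a direct block computation. Writing $g=\left(\begin{smallmatrix}A&B\\C&D\end{smallmatrix}\right)$ and using $g^{-1}=\nu(g)^{-1}g^*$, one computes $g^{-1}\left(\begin{smallmatrix}M&0\\0&0\end{smallmatrix}\right)g$ explicitly for $M\in M_2(\Z_p)$. Since $\left(\begin{smallmatrix}p^nM&0\\0&0\end{smallmatrix}\right)\in R'$, its conjugate lies in $R'$, which forces for instance $\nu(g)^{-1}\overline{A}MA\in p^{-n}\Z_p+M_2(\Z_p)$. The key trick is then Lemma~\ref{lemma:integrality}: since $\det\bigl(\nu(g)^{-1}\overline{A}MA\bigr)=\bigl(\det A/\nu(g)\bigr)^2\det M\in\Z_p$, the matrix must actually lie in $M_2(\Z_p)$. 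The remaining blocks are handled similarly, using the Atkin--Lehner symmetry to reduce the $\left(\begin{smallmatrix}0&0\\0&M'\end{smallmatrix}\right)$ case to the one just done. This determinant argument, not an ideal-theoretic one, is what promotes normalisation of $R'$ to normalisation of $\tilde R$.
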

\begin{proof}
(1) Recall that 
  \[
   K^+_{p^n}:=\{k\in \GUtwo{B_p}:\,h^{-n}kh^n\in \GL_4(\Z_p)\},
    \]
  where $h:=\diag(1,1,1,p)$.  Given
  $A=\begin{pmatrix} s & \begin{pmatrix}
      a&b\\c&d\end{pmatrix}\\\begin{pmatrix}d&-b\\-c&a\end{pmatrix} &
    t\end{pmatrix}\in \DimSix_p$,
    \begin{multline}
    h^{-n}Ah^n\in M_4(\Z_p)\iff\begin{pmatrix} s &
      \begin{pmatrix}
        a&p^nb\\
        c&p^nd
      \end{pmatrix}\\
      \begin{pmatrix}d&-b\\-p^{-n}c&p^{-n}a\end{pmatrix} & t
    \end{pmatrix}\in M_4(\Z_p) \\
      \iff b,d\in \Z_p,\, a,c\in p^n\Z_p\iff A\in
      \UU^+_{p^n}.
\label{eq:condition}
    \end{multline}
If $k \in K^+_{p^n}$ and $A \in \UU^+_{p^n}$, by
Proposition~\ref{prop:Winvariance}, $kAk^{-1} \in \DimSix_p$. Let
$k=h^nmh^{-n}\in K^+_{p^n}$, with $m\in \GL_4(\Z_p)$, and
$A=h^nm'h^{-n}\in \UU^+_{p^n}$, with $m'\in M_4(\Z_p)$, then
\[
kAk^{-1}=h^nm(h^{-n}(h^nm'h^{-n})h^n)m^{-1}h^{-n}=h^n(mm'm^{-1})h^{-n},
\]
and the latter is an element in $\DimSix_p$ which is in $\UU^+_{p^n}$ by (\ref{eq:condition}).

Recall that
$W_{p^n}^+:=\smat{0&0&p^n&0\\
  0&0&0&1\\
  1&0&0&0\\
  0&p^n&0&0}$. Then if $A \in \UU^+_{p^n}$ is as before,
\[
  W_{p^n}^+A{W_{p^n}^+}^{-1}=
  p^{-n}W_{p^n}^+AW_{p^n}^+=
  \begin{pmatrix}t&
    \begin{pmatrix}p^nd&-b\\-p^nc&a\end{pmatrix}
    \\\begin{pmatrix} a&b\\p^nc&p^nd\end{pmatrix}&s\end{pmatrix}\in \UU^+_{p^n}.
  \]

  \noindent (2) Note that $K^+_{p^n}=\GUtwo{B_p}\cap \tilde{R}^{\times}$,
  where $\tilde{R} = \Psi^{-1}(R)$ (cf. (\ref{eq:eltsparamodular})) given by 
  \[
    \tilde{R}=\left(\begin{smallmatrix}
        \smat{\Z_p&\Z_p\\ \Z_p & \Z_p} & \smat{p^n\Z_p&\Z_p\ \\p^nZ_p&\Z_p}\\
        \smat{\Z_p&\Z_p\\ p^n\Z_p&p^n\Z_p} & \smat{\Z_p&p^{-n}\Z_p\\
          p^n\Z_p & \Z_p}
    \end{smallmatrix}\right).
  \]
  By Lemma \ref{normaliser} it suffices to show that if
  $g\in\GUtwo{B_p}$ satisfies $g^{-1}\,\UU^+_{p^n}g=\UU^+_{p^n}$ then
  $g^{-1}\,\tilde{R}g=\tilde{R}$. Switching $g$ and $g^{-1}$ to get the reverse
  inclusion, it suffices to show that $g^{-1}\,\tilde{R}g\subseteq \tilde{R}$.

By Lemma \ref{order}, the minimal order containing $\UU^+_{p^n}$ equals
\[
  R'=\smat{
    \Z_pI_2&0_2\\0_2&\Z_pI_2}
  \oplus\left(\begin{smallmatrix}
      \smat{p^n\Z_p&p^n\Z_p\\p^n\Z_p&p^n\Z_p} &
      \smat{p^n\Z_p&\Z_p \\ p^n\Z_p&\Z_p}\\
      \smat{Z_p&\Z_p\\ p^n\Z_p&p^n\Z_p} &
      \smat{p^n\Z_p&\Z_p \\ p^{2n}\Z_p&p^n\Z_p}.
    \end{smallmatrix}\right)
  \]
  In particular, if $g^{-1}\,\UU^+_{p^n}g=\UU^+_{p^n}$ then
  $g^{-1}\,R'g=R'$.  $\tilde{R}$ is generated (as a $\Z_p$-module) by $R'$ and
  by elements of the form $\smat{M&0_2\\0_2&0_2}$ or
  $\smat{0_2&0_2\\0_2&M'}$, with $M\in M_2(\Z_p)$ and
  $M'\in \smat{\Z_p&p^{-n}\Z_p\\p^n\Z_p&\Z_p}$. The Atkin-Lehner
  operator also fixes $R'$, and since conjugating by $W_{p^n}^+$ a
  general element of the first form gives one of the second, it
  suffices to show that if $M \in M_2(\Z_p)$, $g^{-1}\,\smat{M&0_2\\0_2&0_2}g\in \tilde{R}$.

  Write $g=\smat{A&B\\C&D}$, with $A,B,C,D\in M_2(\Q_p)$.  From
  $g^*\,g=\nu(g)\,I$, we
  get
  \[
    g^{-1}=\frac{1}{\nu(g)}\,g^*=\frac{1}{\nu(g)}\,
    \begin{pmatrix}\overline{A}&\overline{C}\\
      \overline{B}&\overline{D}\end{pmatrix}.
    \]
    Recall that if $A=\smat{a&b\\c&d}$ then
    $\overline{A}=\smat{d&-b\\-c&a}$, and
    $A\overline{A}=\overline{A}A=\det A$.  Now, 
    \begin{equation}
      \label{eq:relation1}
    g^{-1}\,
    \begin{pmatrix}
      M&0_2\\
      0_2&0_2
    \end{pmatrix}g=
    \frac{1}{\nu(g)}
    \begin{pmatrix}
      \overline{A}MA&\overline{A}MB\\
      \overline{B}MA&\overline{B}MB 
    \end{pmatrix}. 
  \end{equation}
  In the particular case $M = I_2$, since $\smat{I_2& 0 \\ 0 & 0} \in R'$, looking
  at the top left and bottom right blocks we find that
  $\frac{\det A}{\nu(g)}, \frac{\det B}{\nu(g)}\in \Z_p$. 

  Since $\smat{p^nM&0_2\\0_2&0_2}\in R'$, its conjugate is also in
  $R'$, hence the relation (\ref{eq:relation1}) implies that
  $\frac{1}{\nu(g)}\overline{A}p^nMA\in \Z_p+p^nM_2(\Z_p),$ so
  $\frac{1}{\nu(g)}\overline{A}MA\in \frac{1}{p^n}\Z_p+M_2(\Z_p).$ On
  the other hand,
  $\det\left(\frac{1}{\nu(g)}\overline{A}MA\right)=\frac{\det A}{\nu(g)}\det(M)\in\Z_p$,
  hence Lemma~\ref{lemma:integrality} implies that
  $\frac{1}{\nu(g)}\overline{A}MA\in M_2(\Z_p)$.

  Similarly,
  $\frac{1}{\nu(g)}\overline{B}MB\in
  \frac{1}{p^n}\Z_p+\begin{pmatrix}\Z_p&p^{-n}\Z_p\\p^n\Z_p&\Z_p\end{pmatrix}$. The same determinant computation
  combined with Lemma~\ref{lemma:integrality} implies that
  $\frac{1}{\nu(g)}\overline{B}MB\in \begin{pmatrix}\Z_p&p^{-n}\Z_p\\p^n\Z_p&\Z_p\end{pmatrix}$. 

  Finally,
  $$\frac{1}{\nu(g)}\overline{A}MB=\frac{\overline{A}MA}{\nu(g)}\,\frac{\nu(g)}{\det
    A}\,\frac{\overline{A}B}{\nu(g)}.$$ We have already shown that the
  first factor is in $M_2(\Z_p)$, the second factor is in $\Z_p$ since
  $\frac{\det A}{\nu(g)}\in\Z_p^{\times}$, and the third factor is in
  $\begin{pmatrix}p^n\Z_p&\Z_p\\p^n\Z_p&\Z_p\end{pmatrix}$, using the
  special case $M=I_2$. Hence
  $\frac{1}{\nu(g)}\overline{A}MB\in\begin{pmatrix}p^n\Z_p&\Z_p\\p^n\Z_p&\Z_p\end{pmatrix}$,
  and similarly
  $\frac{1}{\nu(g)}\overline{B}MA\in\begin{pmatrix}\Z_p&\Z_p\\p^n\Z_p&p^n\Z_p\end{pmatrix}$.
\end{proof}

\begin{prop}\label{K-pstab} 
  Let $p \in S$ be a ramified prime.
  \begin{enumerate}
  \item The subgroup $K^-_p$ of $\GUtwo{B_p}$ preserves the
    $\Z_p$-lattice $\UU^-_p\subseteq \DimSix_p$, which was defined
    by $\UU^-_p=\xi\,\tilde{\UU}^-_p\xi^{-1}$, where
    \[
      \tilde{\UU}^-_p:=
      \left\{\begin{pmatrix} r&ps\\
          t&\overline{r}\end{pmatrix}:\,r\in\OOO_p,\, s,t\in\Z_p\right\}.
      \]
So does the Atkin-Lehner element $\omega_p$.
\item The subgroup of $\GUtwo{B_p}/\Q_p^{\times}$ generated by the images of $K^-_p$ and $\omega_p$ is the full stabiliser of $\UU^-_p$.
\end{enumerate}
\end{prop}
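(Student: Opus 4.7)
Because $K_p^-=\xi K^-(p)\xi^{-1}$, $\omega_p=\xi\omega_p'\xi^{-1}$ and $\UU_p^-=\xi\tilde{\UU}_p^-\xi^{-1}$, part (1) reduces to checking that $K^-(p)$ and $\omega_p'$ stabilise $\tilde{\UU}_p^-$ under conjugation. For the Atkin--Lehner element, using $(\omega_p')^{-1}=\tfrac{1}{p}\omega_p'$, a direct computation yields
\[
\omega_p'\begin{pmatrix} r & ps\\ t & \bar r\end{pmatrix}(\omega_p')^{-1}
=\begin{pmatrix} \bar r & pt\\ s & r\end{pmatrix}\in\tilde{\UU}_p^-.
\]
For $k=\begin{pmatrix}a&b\\c&d\end{pmatrix}\in K^-(p)$ with $a,d\in R_p$, $b\in\id{p}$ and $c\in\id{p}^{-1}$, one expands $k\tilde A k^{-1}$ for $\tilde A\in\tilde{\UU}_p^-$. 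The $(2,1)$-entry sits in $R_p$ and the $(1,2)$-entry lies in $p R_p$ because $p\,\id{p}^{-1}\subseteq\id{p}$ and $\id{p}\cdot\id{p}^{-1}\subseteq R_p$; preservation of the diagonal pattern $(r,\bar r)$ follows from $k^*Hk=\nu(k)H$, which forces the two diagonal entries of the conjugate to be conjugate quaternions.

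For part (2), I would mirror the argument used for the split case in Proposition \ref{K+pstab}(2). Suppose $g\in\GUtwo{B_p}$ satisfies $g\,\tilde{\UU}_p^-g^{-1}=\tilde{\UU}_p^-$. Then $g$ normalises the subring of $M_2(B_p)$ generated by $\tilde{\UU}_p^-$, which by Lemma~\ref{lemma:orderramifiedprime} is
\[
R'=\Bigl\{\begin{pmatrix}a&b\\c&d\end{pmatrix}\in M_2(B_p):\,a,c,d\in\mxlram,\; b\in p\mxlram,\; a\equiv\bar d\pmod{\pi}\Bigr\}.
\]
Thus it suffices to show that, modulo the centre $\Q_p^\times$, the normaliser of $R'$ in $M_2(B_p)^\times$ is generated by $K^-(p)$ and $\omega_p'$. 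The plan is:

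\emph{Step A.} Determine the maximal orders of $M_2(B_p)$ containing $R'$. Using the uniqueness of the maximal order $\mxlram\subset B_p$ and an Iwasawa-type decomposition for upper-triangular elements over $B_p$ (analogous to Lemma~\ref{maximal}), one finds exactly two such maximal orders, namely $\Lambda_0=M_2(\mxlram)$ and $\Lambda_1=(\omega_p')^{-1}M_2(\mxlram)\omega_p'$.

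\emph{Step B.} Deduce, as in Lemma~\ref{intersection}, that $R'=\Lambda_0\cap\Lambda_1$. Consequently any $g$ normalising $R'$ either fixes both $\Lambda_i$ or swaps them. Elements fixing both must lie in the common normaliser $\Q_p^\times\cdot(\Lambda_0\cap\Lambda_1)^\times=\Q_p^\times\cdot(R')^\times$, because each individual maximal order of $M_2(B_p)$ has normaliser $\Q_p^\times\cdot\Lambda_i^\times$ (as in Lemma~\ref{normaliser}). Elements that swap the $\Lambda_i$ can be written as $\omega_p'\cdot h$ with $h$ fixing both.

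\emph{Step C.} Intersect with $\GUtwo{B_p}$. An element of $(R')^\times\cap\GUtwo{B_p}$ lies in $K^-(p)$ (after conjugation by $\xi$), because the Hermitian condition $g^*g=\nu(g)I$ combined with membership in $R'$ is exactly what cuts out $K^-(p)$ from the unit group of the Eichler-type order $\begin{pmatrix}\mxlram&\id p\\ \id p^{-1}&\mxlram\end{pmatrix}$ that contains $R'$.

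The main obstacle is Step A/B: controlling the maximal orders containing $R'$, and checking that no extra normalising elements sneak in from the congruence condition $a\equiv\bar d\pmod{\pi}$ (the subtlety being that $a$ and $d$ are quaternions, so one must verify the congruence is stable under conjugation by $g$, rather than merely under the coarser entrywise $R_p$-structure). Once the normaliser is pinned down, the remainder is a straightforward refinement via the Hermitian condition, entirely parallel to the argument of Proposition~\ref{K+pstab}(2).
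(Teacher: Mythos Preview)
Your treatment of part~(1) is fine, if more laborious than the paper's. The paper observes that $\tilde{\UU}_p^-=\{\tilde A\in\widetilde{U}_p:\tilde h^{-1}\tilde A\tilde h\in M_2(R_p)\}$ and $K^-(p)=\{k:\tilde h^{-1}k\tilde h\in\GL_2(R_p)\}$, where $\tilde h=\smat{\pi&0\\0&1}$; with this simultaneous characterisation, stability is a one-line computation $\tilde h^{-1}(k\tilde Ak^{-1})\tilde h=(mm'm^{-1})\in M_2(R_p)$, avoiding any entry-by-entry bookkeeping.

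For part~(2) your plan has a genuine error. Your Step~A claims there are exactly two maximal orders of $M_2(B_p)$ containing $R'$, namely $\Lambda_0=M_2(R_p)$ and $\Lambda_1=(\omega_p')^{-1}M_2(R_p)\omega_p'$, and Step~B that $R'=\Lambda_0\cap\Lambda_1$. Both are false. One computes $\Lambda_0\cap\Lambda_1=\smat{R_p&pR_p\\R_p&R_p}$, which strictly contains $R'$ (it lacks the congruence $a\equiv\bar d\pmod\pi$). Moreover, since $pR_p=\pi^2R_p$, the intersection $\Lambda_0\cap\Lambda_1$ is an Eichler order of level $\pi^2$, so it lies in \emph{three} maximal orders along a length-$2$ path in the Bruhat--Tits tree: $\Lambda_0$, $\Lambda_1$, and the middle vertex $\smat{R_p&\pi R_p\\\pi^{-1}R_p&R_p}=\smat{R_p&\id p\\\id p^{-1}&R_p}$---which is precisely the order $R$ you mention only in Step~C. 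So the maximal-over-order argument does not reduce to a swap of two objects.

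The paper's route is different and bypasses this. It invokes Ibukiyama \cite[Lemma~4.3, Corollary~4.4]{I} to show directly that any $g$ normalising $R'$ must normalise $R$ (this is exactly where the congruence condition $a\equiv\bar d\pmod\pi$ gets absorbed). Then, rather than analysing maximal over-orders of $R$, it uses the classification of two-sided ideals of $R$ \cite[Proposition~3.2]{I2}: for large enough $n$, $p^ngR$ is a two-sided ideal, hence equals $(\omega_p')^eR$ for some $e\ge0$, giving $g\in\Q_p^\times\cdot(\omega_p')^{\{0,1\}}\cdot R^\times$. Intersecting with $\GUone{B_p}$ then yields $K^-(p)$ and $\omega_p'$. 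Your acknowledgement that the congruence condition is ``the main obstacle'' is exactly right; the paper's resolution is to outsource it to Ibukiyama's lemma rather than attempt a direct maximal-order count.
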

\begin{proof} 
  (1) Let $\tilde{h}:=\smat{\pi&0\\0 & 1}$, where $\pi^2=-p$. Recall
  that $K^-(p)$ is the stabiliser (via left multiplication) of the
  lattice $\id{p} \oplus \OOO_p = \tilde{h} \OOO_p^2$. Then
  $k \in K^-(p) \iff \tilde{h}^{-1} k \tilde{h} \in M_2(\OOO_p)$.
Given $\tilde{A}=\smat{r&s\\t&\overline{r}}\in \xi U_p\xi^{-1}$,
\begin{multline}
  \tilde{h}^{-1}\tilde{A}\tilde{h}\in M_2(\OOO_p)\iff
  \begin{pmatrix} \pi^{-1} r \pi&\pi^{-1} s\\t \pi&\overline{r}\end{pmatrix}\in M_2(\OOO_p)\\
  \iff r \in \OOO_p, s\in p\Z_p,\,t\in \Z_p\iff \tilde{A}\in \tilde{\UU}^-_p.
  \end{multline}
Now given $k\in K^-(p)$, $k=\tilde{h}m\tilde{h}^{-1}$ with $m\in \GL_2(\OOO_p)$, hence
\[
  k\tilde{A} k^{-1}=\tilde{h}m\tilde{h}^{-1}\tilde{A}\tilde{h}m^{-1}\tilde{h}^{-1}.
\]
If $\tilde{A}\in \tilde{\UU}^-_p$ then
$\tilde{h}^{-1}\tilde{A}\tilde{h}=m'\in M_2(\OOO_p)$, and
$\tilde{h}^{-1}(k\tilde{A}k^{-1})\tilde{h}=mm'm^{-1}\in M_2(\OOO_p)$,
so $k\tilde{A}k^{-1}\in \tilde{\UU}^-_p$, as required. Also,
\[
  \omega_p\tilde{A}\omega_p^{-1}=p^{-1}\omega_p\tilde{A}\omega_p=\begin{pmatrix} \overline{r} & pt\\s/p & r\end{pmatrix}\in\tilde{\UU}^-_p.
  \]

  \noindent (2) Suppose that $g\in\GUone{B_p}$ is such that
  $g\tilde{\UU}^-_pg^{-1}=\tilde{\UU}^-_p$. By Lemma~\ref{lemma:orderramifiedprime}, the element $g$ also normalises the order
  \[
    \left\{ \begin{pmatrix} a & b \\ c & d\end{pmatrix}
      \in \begin{pmatrix} \mxlram & p\mxlram \\ \mxlram &
        \mxlram\end{pmatrix} \; : \; a \equiv \bar{d}
      \pmod{\pi}\right\}.
    \]
    By \cite[Lemma 4.3]{I}, then $gRg^{-1}=R$, where
    $R=\begin{pmatrix}\OOO_p & \id{p}\\\id{p}^{-1} &
      \OOO_p\end{pmatrix}$ is the left order of $\id{p}\oplus\OOO_p$
    (this is precisely the statement of \cite[Corollary 4.4]{I}, which
    is only stated for odd primes).

  Looking at the form of $R$, clearly for some
  sufficiently large $n$, $p^ng\in R$, and then $p^ngR=p^nRg$ is a
  two-sided ideal of $R$. As in \cite[Proposition 3.2]{I2},
  $p^ngR=\omega_p^eR$ for some $e\geq 0$. Recalling that
  $\omega_p^2=pI$, $\omega_p^{2n-e}gR=R$, so
  $\omega_p^{2n-e}g\in R^{\times}$. Letting
  $m=\lfloor{\frac{2n-e+1}{2}}\rfloor$, either
  $p^mg\in R^{\times}\cap\GUone{B_p}=K^-(p)$, or
  $p^m\omega_p^{-1}g\in R^{\times}\cap\GUone{B_p}=K^-(p)$,
  as required.
    
%
%
\end{proof}
\subsection{Paramodular subgroups as kernels of sign characters}

Let
$\mathrm{Stab}(\Lattice_p):=\{g\in \GUtwo{B_p}:\,gvg^{-1}=v\,\,\,\forall v\in
\Lattice_p\}$ be the stabiliser of $\Lattice_p$. If $g\in\mathrm{Stab}(\Lattice_p)$ then
the action of $g$ preserves $\mathrm{Rad}(Q,\Lattice_p)$. Suppose that
$p \mid D$, and let $v_0$ denote a generator of $\mathrm{Rad}(Q,\Lattice_p)$
as $\Z/2p$-module (it has rank $1$ by Lemma~\ref{lemma:radicalrank}). Since
$Q(gv_0g^{-1})=Q(v_0)$, we must have
$gv_0g^{-1}\equiv\pm v_0 \pmod{2p}$.

\begin{defi}
  Let $p$ be a prime dividing $D$. Define a homomorphism
$\theta_{p}:\mathrm{Stab}(\Lattice_p)\rightarrow \{\pm 1\}$ by
\[
  gv_0g^{-1}\equiv\theta_{p}(g) v_0 \pmod{2p}.
\]
\label{defi:character}
\end{defi}
\noindent By Propositions~\ref{K-pstab} and \ref{K+pstab}, if $p\mid D$ then 
\[
  \Stab(\Lattice_p) =
  \begin{cases}
    \langle p^\ZZ, \omega_p,K^-_p\rangle & \text{ if } p\mid D^-,\\
    \langle p^\ZZ,W^+_{p^n},K^+_{p^n}\rangle & \text{ if }p^n\parallel D^+.
    \end{cases}
\]

  \begin{prop}
    \label{prop:thetapn}
Let $p$ be an odd prime such that $p \mid D$. Then:
    \begin{enumerate}
\item $\theta_{p}(p^{\Z})=\{1\}$.
\item If $p \mid D^-$, $\theta_p(K^-_p) = \{1\}$ and $\theta_{p}(\omega_p)=-1$.
\item If $p^n \parallel D^+$, $\theta_{p}(K^+_{p^n})=\{1\}$ and $\theta_p(W^+_{p^n})=-1$.
  
\end{enumerate}
\end{prop}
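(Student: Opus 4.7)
The plan is as follows. Part (1) is immediate: $p^{m}I$ is central in $M_2(B)$, so it acts trivially by conjugation on $\DimSix$ and $\theta_p(p^m I)=1$. For parts (2) and (3) the strategy is to exhibit an explicit primitive radical generator $v_0$, and then to compute $\theta_p(g)$ via the pairing identity
\[
\theta_p(g)\,\langle v_0,w\rangle \equiv \langle v_0,g^{-1}wg\rangle \pmod{p}
\]
(which follows from $\langle gv_0g^{-1},w\rangle=\langle v_0,g^{-1}wg\rangle$; the proof of Proposition~\ref{proposition:quadforminvariance} actually yields $Q(gvg^{-1})=Q(v)$, hence the same for the bilinear form). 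For $p$ odd the values $\pm1$ are distinct mod $p$, so this determines $\theta_p(g)$ once one picks a test vector $w$ with $\langle v_0,w\rangle\in\Z_p^{\times}$.

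The explicit choices are: for $p^n\parallel D^+$, take $v_0=\smat{D&0\\0&-D}$ modulo $\Z_p I$ and $w=\smat{I_2&0\\0&0}\in\UU^+_{p^n}$, giving $\langle v_0,w\rangle=1$; for $p\mid D^-$, work with $\widetilde{\DimSix}_p$ and take $v_0=\smat{D\mu&0\\0&-D\mu}$ with $\mu^2=\varepsilon$, and $w=e_1':=\smat{\mu&0\\0&-\mu}\in\tilde\UU^-_p$, giving $\langle v_0,e_1'\rangle=2\varepsilon\in\Z_p^{\times}$. The Hessian normal forms of Proposition~\ref{prop:integral} confirm that $v_0$ is a primitive radical generator in each case. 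For the Atkin--Lehner operators, direct matrix computations give $\theta_p=-1$: under $\Psi$, $v_0\mapsto D\,\diag(1,-1,1,-1)$, and using $W_{p^n}^2=p^nI$ one checks $W_{p^n}\,\diag(1,-1,1,-1)\,W_{p^n}^{-1}=-\diag(1,-1,1,-1)$; similarly, the block formula $\omega_p'\tilde A(\omega_p')^{-1}=\smat{\bar r&pt\\u/p&r}$ applied to $\tilde A=e_1'$ yields $\omega_p'e_1'(\omega_p')^{-1}=-e_1'$.

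For $k\in K^+_{p^n}$, passing through $\Psi$ (for which $\Psi(\smat{sI&r\\\bar r&tI})$ has $(1,1)$-entry $s$ and $(2,2)$-entry $t$), the pairing $\langle v_0,k^{-1}wk\rangle$ reduces to
\[
((k^{-1})_{11}k_{11}+(k^{-1})_{13}k_{31}) - ((k^{-1})_{21}k_{12}+(k^{-1})_{23}k_{32}).
\]
The block bounds of $K(p^n)\subset R$ from (\ref{eq:eltsparamodular}) force each of the cross-products $(k^{-1})_{12}k_{21}$, $(k^{-1})_{14}k_{41}$, $(k^{-1})_{21}k_{12}$, $(k^{-1})_{23}k_{32}$ to lie in $p^n\Z_p$; combined with $(k^{-1}k)_{11}=1$, this yields the desired value $\equiv1\pmod{p}$.

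The main obstacle is the $K^-_p$ case. Writing $k=\smat{a&b\\c&d}\in K^-_p$ with $a,d\in\OOO_p$, $b=\pi\beta$, $c=\pi^{-1}\gamma$ for $\beta,\gamma\in\OOO_p$, the pairing $\langle v_0,k^{-1}e_1'k\rangle$ equals $2\varepsilon\cdot\alpha_1$, where $\alpha_1$ is the $\mu$-coefficient, in the $\Z_p$-basis $\{1,\mu,\pi,\mu\pi\}$ of $\OOO_p$, of the trace-zero part of $W_{11}:=\nu(k)^{-1}(\bar d\mu a-\bar b\mu c)\in\OOO_p$. The key step is the reduction modulo $\pi$ in the residue field $\FF_{p^2}=\OOO_p/\pi\OOO_p$: the simplification $\bar b\mu c=\bar\beta\mu\gamma$ (using $\bar\pi=-\pi$ and $\pi\mu=-\mu\pi$), combined with the fact that conjugation by $\pi$ induces the Frobenius on $\FF_{p^2}$, converts the hermitian conditions $\bar c b+\bar a d=\bar d a+\bar b c=\nu(k)$ into the identity $\widetilde{\nu(k)\,W_{11}}=\widetilde{\nu(k)}\cdot\tilde\mu$ in $\FF_{p^2}$. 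Since $\nu(k)\in\Z_p^{\times}$ (as $k$ preserves the lattice $\pi\OOO_p\oplus\OOO_p$), this gives $W_{11}\equiv\mu\pmod{\pi\OOO_p}$. The final ingredient is that $\pi\OOO_p$ has $\Z_p$-basis $\{p,p\mu,\pi,\mu\pi\}$, so the $\mu$-coefficient of any element of $\pi\OOO_p$ lies in $p\Z_p$; hence $\alpha_1\equiv1\pmod p$ and $\theta_p(k)=1$. The noncommutativity of $B_p$ together with the asymmetric lattice conditions on $b$ and $c$ make a direct approach unwieldy; passing to $\FF_{p^2}$, where the Frobenius action of $\pi$ tames the noncommutativity, is what makes the identity transparent.
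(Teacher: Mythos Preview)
Your proof is correct and follows essentially the same path as the paper: the same explicit radical generators (up to unit scaling), the same direct matrix computation for the Atkin--Lehner elements, commutativity of $\OOO_p/\pi\OOO_p\simeq\FF_{p^2}$ as the key step for $K^-_p$, and the paramodular block bounds of~(\ref{eq:eltsparamodular}) for $K^+_{p^n}$.

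The one methodological difference is that instead of computing $gv_0g^{-1}$ and showing it equals $v_0$ modulo $2p\Lattice_p$ (which forces the paper, in part~(3), to track the scalar-matrix part explicitly and rewrite $gv_0g^{-1}-v_0$ as a scalar plus something visibly in $2p\Lattice_p$), you pair against a fixed test vector $w\in\Lattice_p^\vee$ with $\langle v_0,w\rangle\in\Z_p^\times$. Since $\langle v_0,\,\cdot\,\rangle$ on $\DimSix_p$ picks out $s-t$, this extracts the relevant scalar directly and sidesteps the quotient bookkeeping. In part~(2) the two approaches converge: the pairing simply isolates the $\mu$-coefficient of the $(1,1)$-entry of $k^{-1}e_1'k$, which is exactly the quantity the paper inspects. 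Your treatment of the reduction mod $\pi$ is slightly more explicit than the paper's (you spell out the Frobenius action and why $\pi\OOO_p$ has $\mu$-coefficient in $p\Z_p$), but the substance is the same.
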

\begin{proof}
  (1) Immediate.

\vspace{.1cm}

\noindent(2) From the proof of Proposition~\ref{prop:integral}, it follows
that if $p \neq 2$, then the generator of $\Rad(Q,\Lattice_p)$ is the element
$\smat{p\mu & 0\\ 0 & -p\mu}$ (we are taking the dual of the
basis described in the proof of Proposition~\ref{prop:integral}), where $\tr(\mu)=0$ and $\mu^2=\varepsilon$. Recall
that $\OOO_p=\langle 1,\mu, \pi, \pi\mu\rangle$, where
$\pi \mu = -\mu \pi$, $\pi^2=-p$. If
$g = \smat{a & \pi^{-1}b \\ \pi c & d} \in K^-_p$, then
$g^{-1} = \frac{1}{\nu(g)}\smat{\bar{d} & \bar{b}\bar{\pi}^{-1} \\
  \bar{c}\bar{\pi} &\bar{a}}$ (where $\nu(g)$ is a unit). In
particular, $\bar{d}a -\bar{b}c = \nu(g)$ (as
$\bar{\pi}^{-1} = -\pi^{-1}$). Then
\begin{equation}
  \label{eq:character}
  \frac{p}{\nu(g)}
  \begin{pmatrix} \bar{d} & \bar{b} \overline{\pi}^{-1} \\
    \bar{c} \overline{\pi} & \overline{a}
  \end{pmatrix}
  \begin{pmatrix} \mu & 0\\
    0& \bar{\mu}
  \end{pmatrix}
  \begin{pmatrix}
    a & \pi^{-1}b \\
    \pi  c & d
  \end{pmatrix}
=
\frac{p}{\nu(g)}\begin{pmatrix}
  \bar{d}\mu a + \bar{b} \overline{\pi}^{-1}\bar{\mu}\pi c & *
  \\ * & *
\end{pmatrix}.
\end{equation}

The $(1,1)$ entry equals then $\frac{p}{\nu(g)}(\bar{d}\mu a - \bar{b}\mu c)$ (recall
that $\bar{\mu}=-\mu$ and that $\mu$ and $\pi$ anti-commute), and
since $\OOO_p/(\pi)$ is commutative (generated by $\{1,\mu\}$), it is
congruent to $p\mu \; \frac{\bar{d}a-\bar{b}c}{\nu(g)} = p\mu$ modulo
$\pi p$. Looking at this one entry modulo $\pi p$ is enough to distinguish the cases $gv_0g^{-1}\equiv \pm v_0\pmod{2pL_p}$.

For the Atkin-Lehner statement,
\[
  \begin{pmatrix}
    0 & 1\\
    p & 0
  \end{pmatrix}
  \begin{pmatrix}
    p\mu & 0\\
    0 & p\bar{\mu}
  \end{pmatrix}
  \begin{pmatrix}
    0 & 1/p\\
    1 & 0
  \end{pmatrix} =
  \begin{pmatrix}
    p\bar{\mu} & 0\\
    0 & p\mu
  \end{pmatrix}=
-  \begin{pmatrix}
    p\mu & 0\\
    0 & -p{\mu}
  \end{pmatrix}.
\]

For $p=2$ (the $(-1,-1)$ algebra), let
$\mu = 2D \left(\frac{1+i+j+k}{2}\right)$ and let $\pi = i+k$, so that
$R_2 = \langle 1, \mu, \pi, \pi \mu\rangle$ and $\pi$ generates the
maximal ideal. From the proof of Proposition~\ref{prop:integral} it
follows that once again the generator of $\Rad(Q,L_2)$ is the element
$\left(\begin{smallmatrix} 4\mu & 0 \\ 0 &
    -4\mu\end{smallmatrix}\right)$.
Looking at the $(1,1)$ entry of (\ref{eq:character}),
we are left to prove that
\begin{equation}
  \label{eq:p2char}
\frac{\bar{d} \mu a + \bar{b} \bar{\pi}^{-1}\bar{\mu}\pi c}{\nu(g)} \equiv \mu \pmod{4}.  
\end{equation}
Recall that the maximal order of $B_2$ equals
$\mxlramtwo = \langle 1, \frac{1+i+j+k}{2}, \frac{-1+2i-j-k}{3},
\frac{-1-i+2j-k}{3}\rangle$ (so $\mu$ equals $2D$ times the second
element). Note that the left hand side of (\ref{eq:p2char}) lies in
$4\mxlramtwo$ so in particular, it can be written as
$4\alpha + \beta \mu + 4\gamma e_3 + 4\delta e_4$, for some
$\alpha,\beta,\gamma,\delta \in \Z_2$, where $e_3, e_4$ denote the
third and fourth elements of the generators of $\mxlramtwo$. In
particular, it is congruent to $\beta \mu$ modulo $4\Lattice_2$ (as
expected).

It is easy to check that $\bar{\pi}^{-1} \bar{\mu}\pi = -2D + \frac{\mu}{3} - 2De_4$, hence the left hand side of (\ref{eq:p2char}) is congruent to $\frac{\bar{d}\mu a - \bar{b} \mu c}{\nu(g)}$ modulo $4\Lattice_2$.

On the other hand, since $\mxlramtwo /2$ is commutative (as can easily
be verified), $\frac{\bar{d}\mu a - \bar{b} \mu c}{\nu(g)}$ is
congruent to $\mu$ modulo $8\mxlramtwo$ (recall that $g^{-1}g=1$ together with $\bar{\pi}=-\pi$
implies that $\bar{d}a+\bar{b} c = \nu(g)$). In particular, 
\[
\frac{\bar{d}\mu a - \bar{b} \mu c}{\nu(g)} = 8\tilde{\alpha} + \tilde{\beta}\mu +8\tilde{\gamma}e_3 + 8\tilde{\delta} e_4.
  \]
Since $\tr(1) \equiv \tr(e_3) \equiv \tr(e_4) \equiv 0 \pmod{2}$, the right hand side of the previous line has trace congruent to $2D \tilde{\beta} \pmod{16}$.
Using the fact that in a quaternion algebra,
$\tr(\alpha \beta) = \tr(\beta \alpha)$, the left hand side has trace equal to
\[
\tr\left(\frac{(\bar{d}a - \bar{b}c)}{\nu(g)}\mu\right) = \tr(\mu) = 2D.
\]
Since $v_2(D)=1$, we get that $\tilde{\beta} \equiv 1 \pmod 4$ as stated. The Atkin-Lehner statement follows from the same proof of the odd prime case, noting that (once again) $\bar{\mu} \equiv -\mu$ modulo $4\Lattice_2$.
%

\noindent (3) Arguing as in the previous case, $v_0:=2D\cdot \smat{I_2 & 0_2\\ 0_2 & 0_2}$ is a generator for the radical. Given
$g=\smat{a&b\\c&d}\in K_{p^n}^+ \subset \GUtwo{B_p}\simeq\GSp_2(\Q_p)$,
\[
  gv_0g^{-1}=
  \frac{2D}{\nu(g)}
  \begin{pmatrix}a&b\\
    c&d
  \end{pmatrix}
  \begin{pmatrix}I_2&0_2\\
    0_2&0_2\end{pmatrix}
  \begin{pmatrix}
    \overline{a}&\overline{c}\\
    \overline{b}&\overline{d}
  \end{pmatrix}=
  \frac{2D}{\nu(g)}\begin{pmatrix}
    a \overline{a}&a \overline{c}\\
    c\overline{a}&c \overline{c}\end{pmatrix}.
  \]
The condition
\[
  \frac{1}{\nu(g)}\begin{pmatrix}a&b\\
    c&d\end{pmatrix}\begin{pmatrix}
    \overline{a}&\overline{c}\\
    \overline{b}&\overline{d}\end{pmatrix}=I_4,
\]
implies that $a \bar{a} = \nu(g) -b \bar{b}$ and $c \bar{c} = \nu(g)-d \bar{d}$. Then
\begin{multline*}
  gv_0g^{-1} - v_0= 
  \frac{2D}{\nu(g)}
  \begin{pmatrix}
    \nu(g) - b \overline{b} - \nu(g)  & a\overline{c}\\
    c\overline{a}&\nu(g) - d \bar{d} \end{pmatrix} = \\ 
=  \frac{2D}{\nu(g)}(\nu(g)- d \bar{d})\cdot \begin{pmatrix} I_2 & 0\\ 0 & I_2\end{pmatrix}
  -\frac{2D}{\nu(g)}
      \begin{pmatrix}
        (\nu(g)+b\bar{b}-d\bar{d})I_2&a\overline{c}\\
        c\overline{a}&0_2
      \end{pmatrix}.
\end{multline*}
    The first term is zero in $\DimSix_p/\Q_p I$, hence it is
    enough to check that
    $\nu(g)+b\bar{b}-d\bar{d} \equiv 0 \pmod{2p}$. The equivalence
    $g^{-1} g = I_4$ implies that
    $\nu(g) -b \bar{b} - d \bar{d} = 0$, hence it is enough to prove that $2b \overline{b} \equiv 0 \pmod{2p}$. The fact
$g\in K^+_{p^n}\implies
b\in\begin{pmatrix}p^n\Z_p&\Z_p\\p^n\Z_p&\Z_p\end{pmatrix}$, so
$b\overline{b}=\det b\in p^n\Z_p$ hence the statement. 
Regarding the Atkin-Lehner statement,
\begin{eqnarray*}
W^+_{p^n}\frac{v_0}{2D}{W^+_{p^n}}^{-1}& = &
\begin{pmatrix}0&0&p^n&0\\
  0&0&0&1\\1&0&0&0\\
  0&p^n&0&0
\end{pmatrix}
\begin{pmatrix}
I_2&0_2\\
0_2&0_2
\end{pmatrix}
\begin{pmatrix}
  0&0&1&0\\
  0&0&0&p^{-n}\\
  p^{-n}&0&0&0\\
  0&1&0&0
\end{pmatrix}\\
 &=&\begin{pmatrix}
   0&0&0&0\\
   0&0&0&0\\
   1&0&0&0\\
   0&p^n&0&0
 \end{pmatrix}
\begin{pmatrix}
  0&0&1&0\\
  0&0&0&p^{-n}\\
  p^{-n}&0&0&0\\
  0&1&0&0\end{pmatrix}
         =\begin{pmatrix}0_2&0_2\\0_2&I_2\end{pmatrix} \simeq -\frac{v_0}{2D},
\end{eqnarray*}                                       
where the last statement comes from the fact that our lattice is a
quotient by scalar matrices. Then $\theta_{p^n}(W^+_{p^n})=-1$.
\end{proof}
\begin{remar} Our lattice $\Lattice_p$ with quadratic form $Q$ is equivalent
  (up to unit scaling of the form) to Gross's $\Lambda(N)$ with his
  $\langle,\rangle/N$ \cite[\S 5]{G1}, to Tsai's $\mathbb{L}_m$ with
  $\langle,\rangle_m$ \cite[Definition 7.1.1]{Ts}, and to
  Lachaus\'ee's $L$ with quadratic form $q'$ \cite[Definition 3.4.1,
  \S 3.5]{L}. Our $v_0$ is Gross's $Nc$, Tsai's $\pp^mv_0$ and
  Lachaus\'ee's $v_0'$. Remarque 2 after
  \cite[Proposition-D\'efinition 3.5.1]{L} shows that our
  $\theta_{p^n}$ (when descended to
  $\GUtwo{B_p}/\Q_p^{\times}\simeq \SO_5(\Q_p)$) is the same
  as his $\alpha$, with kernel his $J^+$ (case $n=1$), or the
  character considered for any $n\geq 1$ by Tsai \cite[Definition
  7.1.2]{Ts}. All these authors work more generally with
  $\SO_{2m+1}(\Q_p)$ for any $m\geq 1$, where the kernel of
  $\theta_{p^n}$ on $\SO_{2m+1}(\Q_p)$ is Brumer's generalisation of
  $\Gamma_0(p^n)$ for $m=1$ and paramodular subgroups for $m=2$
  \cite{BK}.
\end{remar}
\begin{remar}\label{properproof}
  A clear statement of Proposition \ref{K+pstab}(2) is given in
  \cite[\S 6.2, p.81]{Ts}, and some combination of Propositions
  \ref{K+pstab}(2) and \ref{prop:thetapn}(3) is stated in \cite[\S 5
  ``When $n=2$'']{G1}. As far as we know, the detailed proofs we
  present here are the first in the literature.
\end{remar}

\begin{remar}\label{snorm}
  As explained in \S~\ref{section:spingroup}, we may reconstruct
  $M_2(B_p)$ as the even part of the Clifford algebra of the quadratic
  space $\DimFive_p$.  The multiplier character
  $\nu:\GUtwo{B_p}\rightarrow\Q_p^{\times}$ (well-defined modulo
  squares on $\GUtwo{B_p}/\Q_p^{\times}$) becomes the spinor norm
  $\nu:\SO_5(\Q_p)\rightarrow\Q_p^{\times}/(\Q_p^{\times})^2$ or
  $\nu:\SO_5^*(\Q_p)\rightarrow\Q_p^{\times}/(\Q_p^{\times})^2$.  Let
  $\chi:\Q_p^{\times}\rightarrow\{\pm 1\}$ be the unramified
  character.  Then $\chi\circ\nu$ acts as $+1$ on $K^+_{p^n}$
  ($p^n\parallel D^+$) or $K^-_p$ ($p\mid D^-$), on which $\nu$ takes
  unit values. Its acts as $-1$ on the Atkin-Lehner elements $W_{p^n}$
  (when $n$ is odd) and $\omega_p$, on which we have seen $\nu$ takes
  values $p^n$, $p$ respectively. So when $p^n\parallel D^+$ with $n$
  {\em odd}, $\theta_p$ agrees with $\chi\circ\nu$.  But beware
  that when $p^n\parallel D^+$ with $n$ {\em even}, $\theta_p$ and
  $\chi\circ\nu$ are not the same, as the latter is the trivial character.
\end{remar}

\section{An isomorphism of algebraic modular forms for $\GUtwo{B}$ and $\SO(V)$.}
\subsection{Finite-dimensional complex representations of
\texorpdfstring{$\GSp_2(\CC)$}{GSp₂(C)} and
\texorpdfstring{$\SO_5(\C)$}{SO₅(C)}.}\label{repns}
Whereas $B\otimes\R\not\simeq M_2(\R)$, $B\otimes\C\simeq M_2(\C)$, so
the situation is like for $B\otimes\Q_p$ with $p\nmid D^-$. Thus
$\GUtwo{B\otimes\C}\simeq \GSp_2(\C)$
and
\[
  \DimFive\otimes\C\simeq\left\{\begin{pmatrix} t & \begin{pmatrix}
      d&-b\\-c&a\end{pmatrix}\\\begin{pmatrix}a&b\\c&d\end{pmatrix} &
    -t\end{pmatrix}:\,t,a,b,c,d\in\C\right\},
\]
with
$Q(A)=\frac{1}{D}(t^2+ad-bc)$.  Let $W$ be the natural $4$-dimensional
complex representation of $\GUtwo{B\otimes\C}\simeq \GSp_2(\C)$
. Given positive integers $a \ge b \ge 0$, let $n=a+b$. Consider the
representation $W^{\otimes(n)}$ (the tensor product
representation). The permutation group $S_{n}$ acts on
$W^{\otimes(n)}$ and its action commutes with that of
$\GSp_2(\C)$. The Young diagram
\begin{equation}
  \begin{ytableau}
    1 & 2 & \cdots & b & \cdots & a\\
    1 & 2 & \cdots & b
  \end{ytableau}
  \label{tableau:GSP4}
\end{equation}
gives rise to a idempotent on the complex group algebra of $S_{n}$
(as explained for example in \cite[Section 4.1]{FH}). More
concretely, enumerate the squares of \eqref{tableau:GSP4} (from $1$ to
$a$ in the first row and from $a+1$ to $n$ in the second one). The
diagram \eqref{tableau:GSP4} corresponds to the partition $\lambda: n = a+b$. Let
$P$ be the subgroup of $S_{n}$ preserving the rows of
\eqref{tableau:GSP4} (via our enumeration) and $Q$ that
preserving the columns. Define two elements:
\[
a_\lambda = \sum_{\sigma \in P} e_{\sigma} \qquad \text{and} \qquad b_\lambda = \sum_{\tau \in Q} \mathrm{sgn}(\tau) e_{\tau}.
  \]
  Then define $c_\lambda : = a_\lambda \cdot b_\lambda$, and let
  $\SSS_\lambda$ denote the subspace $c_\lambda W^{\otimes(n)}$. Such a
  representation is not in general irreducible. For each pair of
  indices $I = \{p,q\}$ with $1 \le p < q \le n$, let
  $\Phi_I : W^{\otimes(n)} \to W^{\otimes(n-2)}$ be the linear map
  \begin{equation}
    \label{eq:reduction}
\Phi_I(v_1 \otimes \cdots \otimes v_n) = E(v_p,v_q) v_1 \otimes \cdots \otimes \widehat{v_p} \otimes \cdots \otimes \widehat{v_q} \otimes \cdots \otimes v_n,
  \end{equation}
where $\widehat{v_i}$ means remove such term from the tensor product, and $E(v,w)$ is the symplectic form preserved by $\Sp_2$. Let $W^{\langle n\rangle}$ denote the intersection of the kernels of all the $\Phi_I$ and $\SSS_{\langle \lambda \rangle} = W^{\langle n\rangle} \cap \SSS_\lambda$.
\begin{thm}\label{Slambda}
  The representation $\SSS_{\langle \lambda \rangle}$ of $\GSp_2(\C)$ is irreducible.
  The standard maximal torus acts on a highest weight vector via the character
  $$\diag(t_1,t_0t_1^{-1},t_2,t_0t_2^{-1})\mapsto t_1^at_2^b.$$ Furthermore, its dimension equals
  \[
\dim(\SSS_{\langle \lambda \rangle}) = \left(\frac{(a+2)^2-(b+1)^2}{3}\right) \cdot \frac{(a+2)}{2} \cdot(b+1).
    \]
  \end{thm}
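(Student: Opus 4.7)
The plan is to recognise $\SSS_{\langle \lambda\rangle}$ as a special case of Weyl's construction of the irreducible representations of the symplectic group, as in \cite[\S17.3]{FH}, specialised to the rank-two case $\Sp_{2m}(\C)$ with $m=2$. I would invoke the main theorem of that construction \cite[Theorem 17.11]{FH}, which asserts that for any partition $\lambda$ with at most $m$ parts, the space $W^{\langle n\rangle}\cap \SSS_\lambda$ is an irreducible representation of $\Sp_{2m}(\C)$ whose highest weight (in the standard basis $\varepsilon_1,\dotsc,\varepsilon_m$ of the character lattice of the diagonal torus) is $\lambda_1\varepsilon_1+\dotsb+\lambda_m\varepsilon_m$. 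In our situation $m=2$ and $\lambda=(a,b)$, so this immediately gives the stated highest weight upon restricting the $\GSp_2(\C)$ torus $\diag(t_1,t_0t_1^{-1},t_2,t_0t_2^{-1})$ to $\Sp_2(\C)$ (taking $t_0=1$).

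To pass from $\Sp_2(\C)$ to $\GSp_2(\C)$, write $\GSp_2(\C)=Z\cdot\Sp_2(\C)$, where the centre $Z=\{\diag(t,t,t,t):t\in\C^\times\}$ acts on $W$ by the scalar $t$, hence on $W^{\otimes n}$ and on $\SSS_{\langle \lambda\rangle}$ by $t^n=t^{a+b}$. Irreducibility for $\GSp_2(\C)$ follows at once from irreducibility for $\Sp_2(\C)$, since any $\GSp_2(\C)$-invariant subspace is in particular $\Sp_2(\C)$-invariant. The full torus of $\GSp_2(\C)$ acts on a highest weight vector through the character $t_1^at_2^b$, as claimed (the scalar factor $t_0$ being absorbed into the eigenvalues via the formula $\det \smat{t_1&0\\0&t_0t_1^{-1}}=t_0$).

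For the dimension I apply the Weyl dimension formula in type $C_2$. The positive roots of $\Sp_4$ are $e_1-e_2$, $2e_2$, $e_1+e_2$, $2e_1$, with half-sum $\rho=2e_1+e_2$. For highest weight $\lambda=ae_1+be_2$ we have
\[
\dim \SSS_{\langle\lambda\rangle}
=\prod_{\alpha>0}\frac{\langle \lambda+\rho,\alpha\rangle}{\langle \rho,\alpha\rangle}
=\frac{(a-b+1)}{1}\cdot\frac{2(b+1)}{2}\cdot\frac{a+b+3}{3}\cdot\frac{2(a+2)}{4},
\]
and using $(a-b+1)(a+b+3)=(a+2)^2-(b+1)^2$ this simplifies to the stated expression
$\bigl((a+2)^2-(b+1)^2\bigr)/3\cdot(a+2)/2\cdot(b+1)$.

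The main obstacle is the irreducibility statement: while the Young symmetrizer $c_\lambda$ alone cuts out an irreducible $\GL_4(\C)$-representation in $W^{\otimes n}$, this decomposes non-trivially on restriction to $\Sp_4(\C)$, and one needs the traceless condition $\bigcap_I \ker\Phi_I$ to project onto the component of highest weight $\lambda$. Establishing this requires the harmonic-style decomposition of $W^{\otimes n}$ under the symplectic form, which is precisely the content of the Weyl construction in \cite[\S17.3]{FH}; once that is invoked, the rest of the argument reduces to the routine Weyl dimension computation above.
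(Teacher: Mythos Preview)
Your proposal is correct and takes essentially the same approach as the paper, which simply refers to \cite[Theorem 17.11 and Exercise 24.17]{FH}. You spell out what that reference contains: Theorem 17.11 gives the irreducibility and highest weight, and Exercise 24.17 is precisely the Weyl dimension formula computation in type $C_2$ that you carry out explicitly.
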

  \begin{proof}
    See Theorem 17.11 and Exercise 24.17 of \cite{FH}, which shows that in fact the restriction to $\Sp_2(\C)$ is irreducible.
  \end{proof}
  Note that the central character of $\SSS_{\langle \lambda \rangle}$ is $z\mapsto z^{a+b}=z^n$, as it must be, inside $W^{\otimes(n)}$. The similitude character restricts to the torus as $$\diag(t_1,t_0t_1^{-1},t_2,t_0t_2^{-1})\mapsto t_0.$$
  For $z=zI_4$ (with $z\in\C^{\times}$), $t_1=t_2=z$ and $t_0=z^2$, so $\nu(zI_4)=z^2$.
  
In the case $a\equiv b\pmod{2}$, the representation of $\GSp_2(\C)$ on $$W_{a,b}:=\SSS_{\langle \lambda \rangle}\otimes\nu^{-\frac{a+b}{2}}$$ has trivial central character, so descends to an irreducible representation of $$\GSp_2(\C)/\C^{\times}\simeq\SO_5(\C),$$ which may be extended to $\OO_5(\C)$ by letting $-I_5$ act trivially. (We shall not consider the other extension, where $-I_5$ acts by $-1$.)

The trivial representation is $W_{0,0}$, the original $W$ is $W_{1,0}$, while $W_{1,1}$ is the irreducible $5$-dimensional representation of $\OO_5(\C)$. We constructed this as $V$ (rather $V\otimes\C$) inside the matrix space $\Hom(W,W)\simeq W^*\otimes W$, on which the natural action of $\GSp_2(\C)$ is via $g^{-1}$ on the domain, $g$ on the codomain, i.e. our conjugation. Via the symplectic form, $W^*\simeq W\otimes\nu^{-1}$, so we see $V\otimes\C$ inside $W^{\otimes(2)}\otimes\nu^{-1}$. In fact the $6$-dimensional anti-symmetric part $(\bigwedge^2 W)\otimes\nu^{-1}$ is a direct sum of $\C\,I_4$ and $V\otimes\C$.
  
  All representations of $\SO_5(\CC)$ come from Young diagrams and tensor powers of the $5$-dimensional representation $W_{1,1}$, with $W_{a,b}$ associated to the partition $\frac{a+b}{2}+\frac{a-b}{2}$ of $a$. To make sense of this, note that conjugation by $\diag(t_1,t_0t_1^{-1},t_2,t_0t_2^{-1})$ acts on $\begin{pmatrix} t & \begin{pmatrix} d&-b\\-c&a\end{pmatrix}\\\begin{pmatrix}a&b\\c&d\end{pmatrix} & -t\end{pmatrix}$ by 
  $$(t,a,b,c,d)\mapsto (t,s_2^{-1}a, s_1b, s_1^{-1}c, s_2d),$$
  where $(s_1,s_2)=(t_1t_2t_0^{-1}, t_1t_2^{-1})$. The highest weight character on $W_{a,b}$ is now $$\diag(t_1,t_0t_1^{-1},t_2,t_0t_2^{-1})\mapsto t_1^at_2^bt_0^{-(a+b)/2}=s_1^{(a+b)/2}s_2^{(a-b)/2}.$$

\subsection{Spaces of algebraic modular forms.}
\label{section:spaces}
Following the conventions of \cite[Introduction]{GV}, let $G/\Q$ be a reductive group with $G(\R)$ compact. Let $W'$ be a finite-dimensional complex representation of $G(\Q)$, and let $\widehat{K}$ be an open compact subgroup of $G(\A_f)$, where $\A_f\simeq \hat{\mathbb{Z}}\otimes\Q$ is the ring of finite ad\`eles. We define
$$M(W',\widehat{K}):=\{f:G(\A_f)\rightarrow W':\,f(\gamma gk)=\gamma\cdot f(g)\,\,\,\forall \gamma\in G(\Q), g\in G(\A_f), k\in\widehat{K}\}.$$
If $\{g_i:\,\,1\leq i\leq h\}$ is a set of representatives of $G(\Q)\backslash G(\A_f)/\hat{K}$ then
\begin{equation}\label{AMF} M(W',\widehat{K})\simeq\bigoplus_{i=1}^h \, (W')^{\Gamma_i},\end{equation}
where $\Gamma_i:=g_i\widehat{K}g_i^{-1}\cap G(\Q)$.

Two special cases will be of particular interest to us.
\begin{enumerate}
\item $G$ such that $G(\Q)=\GUtwo{B}$, with $B/\Q$ a definite quaternion algebra as above, $W'=W_{a,b}$ with $a\geq b\geq 0$, as in the previous section, and $\widehat{K}=\widehat{K}(D)$ defined by its local components
$$\widehat{K}(D)_p:=\begin{cases}K_{0,p} & \text{if } p\nmid D;\\
K^+_{p^n} & \text{if } p^n\parallel D^+;\\
K^-_p & \text{if } p\mid D^-.\end{cases}$$
Then $$M_{a,b}(\hat{K}(D)):=M(W_{a,b}, \widehat{K}(D)).$$
Although strictly speaking $G(\R)$ is only compact modulo its centre, since $W_{a,b}$ has trivial central character we could just as well be working with $\GUtwo{B}/\Q^{\times}$, which does satisfy the condition.
\item $G=\SO(\tilde{V})$, where $\tilde{V}$ is as in
  Remark \ref{remark:identification}. Since the lattice
  $\tilde{L}\subset \tilde{V}$ from Remark \ref{Ltilde} is in the same genus as
  $\Lattice \subset \DimFive$, there is an isomorphism $\Phi$ of
  $\Q$-quadratic spaces from $(\tilde{V}, \tilde{Q})$ to $(\DimFive, Q)$, such that for
  each prime $p$ there is $h_p\in\SO(\DimFive_p)$ with
  \[
    \Phi(\tilde{L}_p)= h_p\Lattice_p.
  \]
  (Though at first $h_p\in\OO(\DimFive_p)$, we may ensure it is in
  $\SO(\DimFive_p)$ by multiplying by $-I_5$ if necessary.)  Via
  $\Phi$, $\SO(\tilde{V})\simeq\SO(\DimFive)\simeq \GUtwo{B}/\Q^{\times}$ (with
  slight abuse of notation), so the representations $W_{a,b}$ may be
  viewed also as representations of $\SO(\tilde{V})$. 
  
  It follows that there is essentially no difference between $(\tilde{V}, \tilde{L}, \tilde{Q})$ and $(\DimFive, \Lattice, Q)$, as far as algebraic modular forms are concerned. (In (\ref{AMF}), replacing $\widehat{K}$ by $h\widehat{K}h^{-1}$ and $g_i$ by $g_ih^{-1}$, where $h:=(h_p)$, leaves $\Gamma_i$ the same.) So from this point onwards we neglect the distinction.

We define an open compact subgroup $\widehat{K(\Lattice)}$ of $G(\A_f)$ by
\[
  \widehat{K(\Lattice)}_p:=\mathrm{Stab}_{G(\Q_p)}(\Lattice_p)\,\,\forall\text{ primes }p,
\]
and let 
\[
  \widetilde{M}_{a,b}(\widehat{K(\Lattice)}):=M(W_{a,b},\widehat{K(\Lattice)}).
\]
We also define a slightly smaller subgroup $\widehat{K(\Lattice)}^+$ by
\[
  \widehat{K(\Lattice)}^+_p:=\begin{cases}\mathrm{Stab}_{G(\Q_p)}(\Lattice_p) & \text{ for }p\nmid D;\\
    \mathrm{Stab}_{G(\Q_p)}(\Lattice_p)^+ & \text{ for }p\mid D,\end{cases}
  \]
  where $\mathrm{Stab}_{G(\Q_p)}(\Lattice_p)^+$ is the subgroup of
  index $2$ in $\mathrm{Stab}_{G(\Q_p)}(\Lattice_p)$ that is
  the kernel of $\theta_p$, with $\theta_p$ as
  in Definition~\ref{defi:character}.  Then we define
  \[
    \widetilde{M}_{a,b}(\widehat{K(\Lattice)}^+):=M(W_{a,b},\widehat{K(\Lattice)}^+).
    \]
\end{enumerate}
\begin{thm}\label{GU2SO5iso}
  There is an isomorphism
  \[
    \widetilde{M}_{a,b}(\widehat{K(\Lattice)}^+)\simeq M_{a,b}(\widehat{K}(D)),
  \]
  equivariant for the right-translation action of
  $SO(\DimFive\otimes\A_f)$ on the left-hand-side and the isomorphic
  $\GUtwo{B\otimes\A^f}/\A_f^{\times}$ on the right-hand-side. For
  $p\mid D$, the action of the involution
  $\mathrm{Stab}_{G(\Q_p)}(\Lattice_p)/\mathrm{Stab}_{G(\Q_p)}(\Lattice_p)^+$
  on the left matches that on the right of $W^+_{p^n}$ (for
  $p^n\parallel D^+$) or $\omega_p$ (for $p\mid D^-$).
\end{thm}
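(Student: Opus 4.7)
The plan is to reduce the theorem to a purely local comparison of open compact subgroups, which has essentially been done piece by piece in the preceding sections. Since $W_{a,b}$ was defined so as to have trivial central character, both sides of the desired isomorphism are algebraic modular forms on the common group $G$ with $G(\Q)=\GUtwo{B}/\Q^{\times}\simeq \SO(\DimFive)$; I would first rewrite $M_{a,b}(\widehat{K}(D))$ as $M(W_{a,b},\overline{\widehat{K}(D)})$ where $\overline{\widehat{K}(D)}$ is the image of $\widehat{K}(D)\cdot \widehat{\Z}^\times$ in $G(\A_f)$, and similarly view $\widetilde{M}_{a,b}(\widehat{K(\Lattice)}^+)$ as $M(W_{a,b},\widehat{K(\Lattice)}^+)$ on $G(\A_f)$ directly.

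Next, for each prime $p$, I would identify the local components $\widehat{K(\Lattice)}^+_p$ and the image of $\widehat{K}(D)_p$ inside $G(\Q_p)$. Lemma~\ref{lemma:stabilizerquotient} already ensures that the stabiliser of $\Lattice_p$ in $G(\Q_p)$ is the same as the stabiliser in $\GUtwo{B_p}$ of $\UU_p$, modulo the centre. Then:
\begin{itemize}
\item For $p\nmid D$, Proposition~\ref{K0pstab}(2) gives $\Stab_{G(\Q_p)}(\Lattice_p)=\overline{K_{0,p}}$, and the character $\theta_p$ is not defined at such $p$, so $\widehat{K(\Lattice)}^+_p=\overline{K_{0,p}}=\overline{\widehat{K}(D)_p}$.
\item For $p^n\parallel D^+$, Proposition~\ref{K+pstab}(2) gives $\Stab_{G(\Q_p)}(\Lattice_p)=\overline{\langle K^+_{p^n},W^+_{p^n}\rangle}$, and Proposition~\ref{prop:thetapn}(3) shows that $\theta_p$ is trivial on $K^+_{p^n}$ and equals $-1$ on $W^+_{p^n}$; hence $\ker\theta_p=\overline{K^+_{p^n}}=\overline{\widehat{K}(D)_p}$.
\item For $p\mid D^-$, Propositions~\ref{K-pstab}(2) and~\ref{prop:thetapn}(2) give the analogous statement with $K^-_p$ in place of $K^+_{p^n}$ and $\omega_p$ in place of $W^+_{p^n}$.
\end{itemize}

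Assembling these local identifications globally yields $\widehat{K(\Lattice)}^+=\overline{\widehat{K}(D)}$ as open compact subgroups of $G(\A_f)$. The isomorphism of algebraic modular form spaces is then formal from the double coset description (\ref{AMF}): both spaces decompose as $\bigoplus_i W_{a,b}^{\Gamma_i}$ over the \emph{same} set of arithmetic subgroups $\Gamma_i$, and the right-translation action of $G(\A_f)$ is tautologically matched on both sides. For the last assertion, the quotient $\Stab_{G(\Q_p)}(\Lattice_p)/\Stab_{G(\Q_p)}(\Lattice_p)^+$ has order two (when $p\mid D$) and, under the above local identifications, is generated precisely by the image of $W^+_{p^n}$ when $p^n\parallel D^+$ or $\omega_p$ when $p\mid D^-$, so the two involutions coincide.

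The only non-routine point is the passage through the centre: one must verify that taking the quotient by $\Q_p^{\times}$ introduces no mismatch between the two descriptions of the local stabiliser, e.g.\ that the uniformiser factor $p^{\Z}$ appearing in $\Stab_{\GUtwo{B_p}}(\UU_p)$ (as in Propositions~\ref{K+pstab} and~\ref{K-pstab}) disappears correctly, and that $\theta_p$ factors through $G(\Q_p)$. Both are immediate once noted, since $p^{\Z}\subset\Q_p^{\times}$ acts trivially by conjugation and so lies in the kernel of $\theta_p$ by Proposition~\ref{prop:thetapn}(1). Beyond this bookkeeping, the proof is pure assembly of results already proved in \S\S\ref{section:lattice}--\ref{section:spaces}.
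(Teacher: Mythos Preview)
Your proposal is correct and follows essentially the same approach as the paper, which simply states that the theorem is a direct consequence of Propositions~\ref{K+pstab}(2), \ref{K-pstab}(2) and \ref{prop:thetapn}. You have spelled out in more detail how these local identifications assemble (including the $p\nmid D$ case via Proposition~\ref{K0pstab}, which the paper leaves implicit) and handled the bookkeeping with the centre, but the underlying argument is the same.
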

\begin{proof} This is a direct consequence of
  Propositions \ref{K+pstab}(2), \ref{K-pstab}(2) and
  \ref{prop:thetapn}. 
\end{proof}  
\begin{remar}\label{Ladd} Ladd \cite[Theorem 1]{Lad} addressed the
  case $D=p$ by a different approach, proving that the left-hand-side injects into the right-hand-side.
 \end{remar}
\begin{remar}
  We extended the representations $W_{a,b}$ of $\SO(\DimFive)$ to
  representations of $\OO(\DimFive)$ by making the element
  $-I_5\in\OO(\DimFive)-\SO(\DimFive)$ act trivially. Extending
  $\tilde{f}\in\widetilde{M}_{a,b}(\widehat{K(\Lattice)})$ to
  $\OO(\DimFive\otimes\A_f)$ by $\tilde{f}(-I_5g):=\tilde{f}(g)$, and noting
  that $-I_5\in\mathrm{Stab}_{\OO(\DimFive_p)}(\Lattice_p)$,
  we see that $\widetilde{M}_{a,b}(\widehat{K(\Lattice)})$ remains the same
  if everywhere in the definition we substitute $\OO(\DimFive)$ (and
  $\OO(\DimFive\otimes\A_f)$ etc.) for $\SO(\DimFive)$ (and $\SO(\DimFive\otimes\A_f)$
  etc.).
\end{remar}
\begin{prop}\label{Hecke}
  For $p\nmid D$, the Hecke operators $T(p)$ and $T_1(p^2)$ on
  $M_{a,b}(\widehat{K}(D))$ correspond to the $p$-neighbour and
  $p^2$-neighbour Hecke operators on
  $\widetilde{M}_{a,b}(\widehat{K(\Lattice)}^+)$, in the sense of
  \cite[Theorem 5.11]{GV}.
\end{prop}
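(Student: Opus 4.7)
The strategy is to reduce to a purely local statement about spherical Hecke algebras at $p$, then match the relevant double cosets under the exceptional isomorphism. Because $p\nmid D$, Proposition~\ref{K0pstab} and Lemma~\ref{symplectic} together give identifications
\[
    \mathrm{Stab}_{\GUtwo{B_p}/\Q_p^\times}(\Lattice_p)
    \;=\; K_{p^0}^+/\Q_p^{\times}
    \;\xrightarrow{\;\Psi\;}\;
    \GSp_2(\Z_p)/\Z_p^{\times}
\]
sitting inside $\GSp_2(\Q_p)/\Z_p^{\times}\simeq \SO_5(\Q_p)$. Consequently the spherical Hecke algebras on the two sides of the isomorphism of Theorem~\ref{GU2SO5iso} (restricted to the $p$-component) are abstractly isomorphic, and it suffices to check that the specific double cosets defining $T(p)$ and $T_1(p^2)$ on the symplectic side correspond to the $p$-neighbour and $p^2$-neighbour cosets on the orthogonal side.

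First I would recall the explicit double-coset descriptions. As in \cite{RS}, $T(p)$ and $T_1(p^2)$ are represented by $\GSp_2(\Z_p)\,\diag(1,1,p,p)\,\GSp_2(\Z_p)$ and $\GSp_2(\Z_p)\,\diag(1,p,p^2,p)\,\GSp_2(\Z_p)$ respectively; on the orthogonal side, \cite[Theorem 5.11]{GV} shows that the $p$-neighbour (resp.\ $p^2$-neighbour) operator is represented by the double coset of the Weyl element associated with the first (resp.\ second) fundamental coweight of the root system of $\SO_5$. Under the exceptional isomorphism $\Sp_2\simeq \Spin_5$ the root systems $C_2$ and $B_2$ are identified with the short and long roots swapped, so the two fundamental coweights on each side are in canonical bijection. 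I would then verify this bijection matches the claimed pairing by computing the action of $\Psi^{-1}(\diag(1,1,p,p))$ by conjugation on $\UU_{p^0}^+$ (and hence on $\Lattice_p$), and checking that the resulting coset expansion enumerates exactly the isotropic lines in $\Lattice_p/p\Lattice_p$; an analogous computation for $\diag(1,p,p^2,p)$ should enumerate the isotropic planes.

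As a cross-check I would count coset representatives on both sides: the number of left $\GSp_2(\Z_p)$-cosets in $\GSp_2(\Z_p)\diag(1,1,p,p)\GSp_2(\Z_p)/p$ is $(p+1)(p^2+1)$, which matches the number of isotropic lines in a split five-dimensional quadratic space over $\FF_p$; a similar count works for the long-root coset and isotropic planes. Combined with the fact that both local Hecke algebras are polynomial rings in two generators, this numerical match together with the root-system bijection pins down the correspondence of generators.

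The main obstacle is the bookkeeping around normalizations and identifications: the similitude/spinor norm (see Remark~\ref{snorm}), the signed determinant of the quinary lattice, and the appearance of the dual lattice $\Lattice_p=(\UU^+_{p^0}/\Z_p I)^\vee$ all enter, so one must be careful that the orbit of $\Psi^{-1}(\diag(1,1,p,p))$ acting by conjugation on $\Lattice_p$ is the $p$-neighbour orbit (and not, for instance, a dual or scalar-shifted variant). Once the identification is made for one generator, the second follows by the same method, and the equivariance under Hecke operators already established in Theorem~\ref{GU2SO5iso} does the rest.
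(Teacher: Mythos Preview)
Your proposal is correct in outline, but it takes a much longer route than the paper does. The paper's proof is a direct two-line computation for each operator, using the explicit torus coordinates already set up in \S\ref{repns}. Concretely, the element $\diag(p,1,p,1)\in\GUtwo{B_p}$ (which $\Psi$ sends to the usual $\diag(p,p,1,1)$) acts by conjugation on the explicit six-dimensional space $\DimSix_p$; writing this in the coordinates $(t,s,a,b,c,d)$ one reads off immediately that it acts as $\diag(1,1,p,p^{-1},1)$ on the five-dimensional quotient, and this is by definition the $p$-neighbour element in the sense of \cite[Theorem~5.11]{GV}. The same trick with $(t_0,t_1,t_2)=(p^2,1,p)$, hence $(s_1,s_2)=(p^{-1},p^{-1})$, handles $T_1(p^2)$.

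So the heart of your argument---computing the conjugation action of the coset representative on $\UU^+_{p^0}$---is exactly what the paper does, but you have embedded it inside a framework of abstract spherical Hecke algebra isomorphisms, $B_2/C_2$ root-system duality, and coset-counting checks that are not needed. In particular, there is no need to enumerate isotropic lines or planes: \cite[Theorem~5.11]{GV} already characterizes the neighbour operators by their double-coset representatives in $\SO_5(\Q_p)$, so once you have the diagonal element $\diag(1,1,p,p^{-1},1)$ you are done. The counting cross-check is a reasonable sanity test but does not by itself pin down the generator, since the spherical Hecke algebra has many elements with the same degree. What your more structural approach does buy is a conceptual explanation of \emph{why} the two generators must match (via the swap of short and long roots under $\Sp_2\simeq\Spin_5$), whereas the paper simply verifies it; but for a proof the explicit computation is both shorter and avoids the normalization worries you yourself flag at the end.
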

\begin{proof}
\begin{enumerate}
\item The Hecke operator usually denoted $T(p)$ or
  $T_1(p)$ is associated to the double coset
  $K_{0,p}\,\diag(p,1,p,1)\,K_{0,p}$. Note that $\Psi: \GUtwo{B_p}\simeq \GSp_2(\Q_p)$ maps $\diag(p,1,p,1)$ to the usual $\diag(p,p,1,1)$. Acting by
  conjugation on $\DimSix_p$,
  \[
    \diag(p,1,p,1):\,\begin{pmatrix} t & \begin{pmatrix} d&-b\\-c&a\end{pmatrix}\\
      \begin{pmatrix}a&b\\c&d\end{pmatrix} & s\end{pmatrix}\mapsto \begin{pmatrix} t & \begin{pmatrix} d&-pb\\-c/p&a\end{pmatrix}\\\begin{pmatrix}a&pb\\c/p&d\end{pmatrix} & s\end{pmatrix}.
    \]
  In the language of \S \ref{repns}, $(t_0,t_1,t_2)=(p,p,p)$, $(s_1,s_2)=(t_1t_2t_0^{-1},t_1t_2^{-1})=(p,1)$, and
$$\diag(p,1,p,1): (t,s,a,b,c,d)\mapsto (t,s,a,pb,p^{-1}c,d).$$
Thus (passing to the $5$-dimensional quotient) $T(p)$ on $M_{a,b}(\widehat{K}(D))$ corresponds
to
$$\mathrm{Stab}_{\OO(\DimFive_p)}(\Lattice_p)\, \diag(1,1,p,p^{-1},1)\, \mathrm{Stab}_{\OO(\DimFive_p)}(\Lattice_p)$$
on $\widetilde{M}_{a,b}(\widehat{K(\Lattice)}^+)$, which is the
$p$-neighbour operator as in \cite[Theorem 5.11]{GV}.
\item Similarly if $(t_0,t_1,t_2)=(p^2,1,p)$ then
  $(s_1,s_2)=(p^{-1},p^{-1})$, and we see that the operator $T_1(p^2)$
  on $M_{a,b}(\widehat{K}(D))$ corresponds to the $p^2$-neighbour
  Hecke operator on $\widetilde{M}_{a,b}(\widehat{K(\Lattice)}^+)$, in
  the sense of \cite[Theorem 5.11]{GV} (which might also reasonably be
  called ``$(p,p)$-neighbour operator'').
\end{enumerate}
\end{proof}

Given a positive divisor $d\mid D$ with $\mathrm{gcd}(d,D/d)=1$, we
may define a character
$\theta_d: \widehat{K(\Lattice)}\rightarrow\{\pm 1\}$ by
\[
  \theta_d|_{\widehat{K(\Lattice)_p}}=\begin{cases}
    \mathrm{id.} & \text{ if }p\nmid d;\\
\theta_p & \text{ if }p\mid d.\end{cases}.
\]
We define a subspace $\widetilde{M}_{a,b}(\widehat{K(\Lattice)}^+)^{\theta_d}$ of $\widetilde{M}_{a,b}(\widehat{K(\Lattice)}^+)$ by 

\begin{multline*}
  \widetilde{M}_{a,b}(\widehat{K(\Lattice)}^+)^{\theta_d}:=\{\tilde{f}\in\widetilde{M}_{a,b}(\widehat{K(\Lattice)}^+):\,\,\,f(gk)=\theta_d(k)f(g)\\ \forall g\in \SO(\DimFive\otimes\A_f), k\in\widehat{K(\Lattice)}\}.
\end{multline*}

Under the isomorphism of Theorem \ref{GU2SO5iso}, clearly this corresponds to the subspace of $M_{a,b}(\widehat{K}(D))$ on which $W^+_{p^n}$ or $\omega_p$ acts as $-1$ precisely for $p\mid d$.

\begin{cor}
  \label{coro:ALdecomposition}
  Keeping the previous notation, there is a natural isomorphism
  \[
M_{a,b}(\widehat{K}(D)) \simeq \bigoplus_{\stackrel{d \mid D}{\gcd(d,D/d)=1}} \widetilde{M}_{a,b}(\widehat{K(\Lattice)}^+)^{\theta_d}.
    \]
\end{cor}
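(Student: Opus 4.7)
The plan is to exploit Theorem~\ref{GU2SO5iso} together with the standard character decomposition for a finite abelian $2$-group acting on a complex vector space. First I would observe that at each prime $p\mid D$ the character $\theta_p$ cuts $\mathrm{Stab}_{G(\Q_p)}(\Lattice_p)$ down by index $2$, and these characters at distinct primes are visibly independent (they live at different places). Hence the quotient $\widehat{K(\Lattice)}/\widehat{K(\Lattice)}^+$ is canonically the elementary abelian group $\prod_{p\mid D}\{\pm1\}$, whose characters are in natural bijection with subsets of the set of prime divisors of $D$, equivalently with divisors $d\mid D$ satisfying $\gcd(d,D/d)=1$ (the divisor $d$ recording the subset of primes $p$ at which the character takes the value $-1$).

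Next, right translation by $\widehat{K(\Lattice)}$ descends to an action of this finite quotient on $\widetilde{M}_{a,b}(\widehat{K(\Lattice)}^+)$, since the functions in question are already invariant under $\widehat{K(\Lattice)}^+$. Because we are working over $\C$ and the group is finite abelian, the space decomposes canonically as the direct sum of its character isotypic components:
\[
\widetilde{M}_{a,b}(\widehat{K(\Lattice)}^+) \;=\; \bigoplus_{\stackrel{d\mid D}{\gcd(d,D/d)=1}} \widetilde{M}_{a,b}(\widehat{K(\Lattice)}^+)^{\theta_d}.
\]

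To finish, I would combine this decomposition with the isomorphism $\widetilde{M}_{a,b}(\widehat{K(\Lattice)}^+)\simeq M_{a,b}(\widehat{K}(D))$ of Theorem~\ref{GU2SO5iso}. There is essentially no obstacle: the only thing to verify is that the summands match under the isomorphism with the prescribed Atkin--Lehner signs, but this is precisely the content of the last sentence of Theorem~\ref{GU2SO5iso}, which asserts that the involution induced by the quotient $\mathrm{Stab}_{G(\Q_p)}(\Lattice_p)/\mathrm{Stab}_{G(\Q_p)}(\Lattice_p)^+$ on the left corresponds to the action of $W^+_{p^n}$ (for $p^n\parallel D^+$) or $\omega_p$ (for $p\mid D^-$) on the right. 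Thus a $\theta_d$-eigenvector on the left is mapped to a simultaneous Atkin--Lehner eigenvector on the right, with eigenvalue $-1$ exactly at the primes dividing $d$, and the corollary follows.
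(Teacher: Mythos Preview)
Your proposal is correct and follows exactly the approach implicit in the paper: the corollary is stated immediately after the isomorphism of Theorem~\ref{GU2SO5iso} and the observation that each $\widetilde{M}_{a,b}(\widehat{K(\Lattice)}^+)^{\theta_d}$ corresponds, under that isomorphism, to the Atkin--Lehner eigenspace where $W^+_{p^n}$ or $\omega_p$ acts by $-1$ precisely for $p\mid d$. Your spelling-out of the character decomposition for the elementary abelian $2$-group $\widehat{K(\Lattice)}/\widehat{K(\Lattice)}^+$ is the only content the paper leaves to the reader, and you have supplied it correctly.
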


If there is no $p$ such that $\ord_p(d)$ is even, in particular if $d$
is square-free, then $\theta_d$ may be extended to
$\SO(\DimFive\otimes\A_f)$, since locally at $p\mid d$ it is $\chi\circ\nu$,
as in Remark \ref{snorm}. Thus we may define a representation
$W_{a,b}\otimes\theta_d$ of $\SO(\DimFive\otimes\A)$, with
$\SO(\DimFive\otimes\R)\simeq\SO_5(\R)$ acting on the $W_{a,b}$ factor. This
restricts to give a representation of the diagonally embedded
$\SO(\DimFive)$.
\begin{prop}\label{thetad} Suppose that $d$ is a positive divisor of $D$ with $\mathrm{gcd}(d,D/d)=1$, and that there is no $p$ such that $\ord_p(d)$ is even. Then
  \[
    \widetilde{M}_{a,b}(\widehat{K(\Lattice)}^+)^{\theta_d}\simeq M(W_{a,b}\otimes\theta_d, \widehat{K(\Lattice)}).
    \]
\end{prop}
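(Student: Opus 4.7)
The plan is to exhibit an explicit twisting isomorphism. By the hypothesis on $d$, for every $p\mid d$ we have $\ord_p(d)$ odd; combining this with Remark~\ref{snorm} shows that on $\mathrm{Stab}_{G(\Q_p)}(\Lattice_p)$ the character $\theta_p$ agrees with $\chi_p\circ\nu_p$, where $\chi_p$ is the unramified quadratic character of $\Q_p^\times$ and $\nu_p$ is the spinor norm. But $\chi_p\circ\nu_p$ is defined on \emph{all} of $\SO(\DimFive_p)$, not merely on the stabiliser. Taking the product over $p\mid d$ (extending trivially at places not dividing $d$) produces a character
\[
  \widetilde{\theta_d}:\SO(\DimFive\otimes\A_f)\longrightarrow\{\pm1\}
\]
whose restriction to $\widehat{K(\Lattice)}$ is exactly $\theta_d$, and whose restriction to the diagonally embedded $\SO(\DimFive)=G(\Q)$ is the character appearing in the twist $W_{a,b}\otimes\theta_d$.

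Now define $\Phi: \widetilde{M}_{a,b}(\widehat{K(\Lattice)}^+)^{\theta_d}\longrightarrow M(W_{a,b}\otimes\theta_d,\widehat{K(\Lattice)})$ by
\[
  \Phi(\tilde f)(g):=\widetilde{\theta_d}(g)\,\tilde f(g),\qquad g\in \SO(\DimFive\otimes\A_f).
\]
There are two transformation laws to check. For $k\in\widehat{K(\Lattice)}$, using $\theta_d(k)^2=1$ and the fact that $\tilde f$ transforms by $\theta_d$ under right-translation by $\widehat{K(\Lattice)}$, we compute
\[
  \Phi(\tilde f)(gk)=\widetilde{\theta_d}(g)\theta_d(k)\cdot\theta_d(k)\tilde f(g)=\Phi(\tilde f)(g),
\]
which gives right $\widehat{K(\Lattice)}$-invariance. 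For $\gamma\in G(\Q)$,
\[
  \Phi(\tilde f)(\gamma g)=\widetilde{\theta_d}(\gamma)\widetilde{\theta_d}(g)\cdot\gamma\cdot\tilde f(g)=\theta_d(\gamma)\,\gamma\cdot\Phi(\tilde f)(g),
\]
which is precisely the equivariance required for the representation $W_{a,b}\otimes\theta_d$ of $G(\Q)$.

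The inverse map is given by the symmetric formula $f\mapsto\bigl(g\mapsto\widetilde{\theta_d}(g)f(g)\bigr)$, and the two compositions equal the identity since $\widetilde{\theta_d}(g)^2=1$. Both maps are clearly $\C$-linear, so $\Phi$ is the desired isomorphism. The only non-formal ingredient is the existence of the global extension $\widetilde{\theta_d}$; this is the sole place where the hypothesis ``no $p$ has $\ord_p(d)$ even'' is used, as Remark~\ref{snorm} explains that $\theta_p$ fails to equal $\chi_p\circ\nu_p$ precisely when $p^n\parallel D^+$ with $n$ even, in which case no such extension to $\SO(\DimFive_p)$ exists.
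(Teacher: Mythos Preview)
Your proof is correct and follows essentially the same approach as the paper: both construct the isomorphism by multiplying by the globally extended character $\theta_d$, using Remark~\ref{snorm} to justify that extension. The paper states the map in the direction $\tilde f(g):=\theta_d(g)f(g)$ and leaves the routine verifications to the reader, whereas you write out both transformation laws explicitly; since $\theta_d^2=1$, your $\Phi$ is simply the inverse of the paper's map.
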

\begin{proof} An element of
  $M(W_{a,b}\otimes\theta_d, \widehat{K(\Lattice)})$ is a function
  $f:\SO(\DimFive\otimes\A_f)\rightarrow W_{a,b}$ such that for
  $\gamma\in\SO(\DimFive)$, $g\in\SO(\DimFive\otimes\A_f)$ and
  $k\in \widehat{K(\Lattice)}$,
  $$f(\gamma gk)=\theta_d(\gamma)\gamma\cdot f(g).$$ An element of
  $\widetilde{M}_{a,b}(\widehat{K(\Lattice)}^+)^{\theta_d}$ is a
  function $\tilde{f}:\SO(\DimFive\otimes\A_f)\rightarrow W_{a,b}$ such that
  for $\gamma\in\SO(\DimFive)$, $g\in\SO(\DimFive\otimes\A_f)$ and
  $k\in\widehat{K(\Lattice)}$,
  $$\tilde{f}(\gamma gk)=\theta_d(k)\gamma\cdot f(g).$$
  $\tilde{f}(g):=\theta_d(g)f(g)$ gives the isomorphism we seek.
\end{proof}

\section{From algebraic modular forms for $\GUtwo{B}$ to Siegel modular forms of paramodular level}
For any positive integer $N$, let 
$$P(N):=\left[\begin{array}{cccc} \Z & N\Z & \Z &\Z \\ \Z & \Z & \Z & \frac{1}{N}\Z \\ \Z & N\Z & \Z &\Z \\ N\Z & N\Z & N\Z & \Z \end{array}\right] \cap \mathrm{Sp}_2(\Q)$$ be the paramodular group of level $N$. For integers $k\geq 1$, $j\geq 0$, let $\rho_{k,j}:\GL_2(\C)\rightarrow\Aut(V_{k,j})$ be the $\det^k\otimes\Sym^j$ representation. Let $S_{k,j}(P(N))$ be the space of holomorphic functions $F:\HH_2\rightarrow V_{k,j}$, satisfying a cuspidality condition, and 
$$F((AZ+B)(CZ+D)^{-1})=\rho_{k,j}(CZ+D)(F(Z))\,\,\,\,\forall\left(\begin{smallmatrix}A&B\\C&D\end{smallmatrix}\right)\in P(N).$$
Although it is not true that $P(p^n)$ is contained in $P(p^m)$ for
$n > m$, there is still a theory of oldforms and newforms for
automorphic forms on $\GSp_2$, as studied in \cite{RS} (in
particular, the formula for the number of oldforms given in Theorem
7.5.6). 

In a series of articles, Ibukiyama and some coauthors (see
\cite{MR3638279} and also \cite{I,I2}) stated a series of conjectures
relating automorphic forms on $\GUtwo{B}$ and $\GSp_2$. The
conjectures were proven by R\"osner and Weissauer in a recent article \cite{RW}, using the trace formula. A somewhat less general result was obtained independently by van Hoften \cite{vH} using very different, algebro-geometric tools. Although the
original conjecture (and the proof) was made for square-free levels
$D=D^-$, a more general version holds. In \cite{MR3638279} the authors
consider a quaternion algebra ramified at all primes dividing $D$ and at infinity, (taking as open compact subgroup the one corresponding to $D^+=1$), and
relate the spaces $M_{a,b}(\widehat{K}(D))$ and
$S_{b+3,a-b}(P(D))$.

\subsection{Generalisation of Ibukiyama-Kitayama conjecture} 
For $M \mid N$ with $\gcd(M,N/M)=1$, let $S_k^{\text{$M$-new}}(N)$
denote the space of cusp forms for $\Gamma_0(N)$ (genus-$1$) that are
new at $M$. We keep the notation of previous sections, i.e.
$D= D^+ D^-$ where $D^-$ is square-free. The contribution of the
Yoshida and Saito-Kurokawa lifts to the algebraic modular forms for
$\GUtwo{B}$ is given in Propositions \ref{prop:yoshida} and
\ref{prop:saito-kurokawa} below.

\begin{prop}[Yoshida lifts]
Fix a pair of positive integers $d_+, c_+$ with $d_+c_+\mid D^+$.
There is an injective embedding, sending eigenforms to eigenforms (in both cases, for Hecke operators at primes not dividing the level, Atkin-Lehner involutions at primes dividing the level, where the level depends on the form),
\[
  \iota: \bigoplus_{\stackrel{d_-\mid D^-}{\omega(d_-)\ \text{odd}}}S^{\text{new}}_{2+a-b}(d_-d_+)\times S^{\text{new}}_{4+a+b}\left(\frac{D^-}{d_-}\,c_+\right)\hookrightarrow M_{a,b}(\widehat{K}(D^-d_+c_+))
  \]
  The oldforms in $M_{a,b}(\widehat{K}(D))$ generated from the
  $\iota(g,h)$ by applying Roberts and Schmidt's level-raising
  operators $\theta, \theta'$ and $\eta$, at primes dividing
  $\frac{D^+}{d_+c_+}$, (or the newforms $\iota(g,h)$ if
  $\frac{D^+}{d_+c_+}=1$), span (as we vary $d_+$ and $c_+$) the subspace
  $M_{a,b}(\widehat{K}(D))_Y$ of all forms of Yoshida type, i.e. the
  span of $\widehat{K}(D)$-fixed vectors in endoscopic automorphic
  representations.

  Furthermore, for eigenforms $g$ and $h$ the spinor $L$-function
  $L^{D^-d_+c_+}(s,\iota(g,h),\mathrm{spin})$ of $\iota(g,h)$ is given
  by $L^{D^-d_+c_+}(s-b-1,g)L^{D^-d_+c_+}(s,h)$, where the superscript
  indicates that Euler factors at primes dividing $D^-d_+c_+$ are
  omitted.
\label{prop:yoshida}
\end{prop}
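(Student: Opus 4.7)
The plan is to construct $\iota(g,h)$ as a Yoshida-type endoscopic lift to $\GUtwo{B}$, generalizing the construction of Ibukiyama and R\"osner-Weissauer from the squarefree case of \cite{MR3638279, RW} to arbitrary $D^+$. Both $g$ and $h$ are transferred via Jacquet-Langlands to quaternionic eigenforms, and then paired using the block-diagonal embedding
\[
\iota_0 \colon (B^\times \times B^\times)/\Delta(\Q^\times) \hookrightarrow \GUtwo{B}/\Q^\times, \qquad (b_1, b_2) \mapsto \diag(b_1, b_2),
\]
which is the compact inner-form analogue of the elliptic endoscopic subgroup underlying Yoshida's classical construction. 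The parity condition $\omega(d_-)$ odd reflects global sign requirements in the theta correspondence: $|S|$ is even and $\omega(D^-)$ is odd, so the decomposition $D^- = d_- \cdot (D^-/d_-)$ partitions the primes of $D^-$ between $g$ and $h$ in such a way that a nontrivial lift on this specific inner form exists.

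After the Jacquet-Langlands transfers (possibly through auxiliary quaternion algebras ramified at $d_- \cup \{\infty\}$ and $(D^-/d_-) \cup \{\infty\}$), one obtains algebraic modular forms on $B^\times \times B^\times$ of appropriate weights and levels $d_+$ and $c_+$. The representation $W_{a,b}$ of $\GUtwo{B}/\Q^\times$, restricted along $\iota_0$, contains an irreducible component isomorphic to $\Sym^{a-b} \boxtimes \Sym^{a+b+2}$ (the relevant branching for Yoshida lifts), and this provides a $(B^\times \times B^\times)$-equivariant embedding used to define $\iota(g,h)$ cell-by-cell via the decomposition (\ref{AMF}) on a set of double coset representatives compatible with $\iota_0$. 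Hecke equivariance at primes $p \nmid D^- d_+ c_+$ is automatic; local compatibility at $p \mid D^-$ (where $\iota_0$ sends pairs of Steinberg-type vectors to $K^-_p$-fixed vectors) and at $p \mid d_+ c_+$ (where it sends pairs of Iwahori-fixed vectors to $K^+_{p^n}$-fixed vectors with prescribed Atkin-Lehner eigenvalue) is checked using Propositions \ref{K-pstab}, \ref{K+pstab}, and \ref{prop:thetapn}, which relate our open compact subgroups to the relevant level structures on each side.

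The oldforms spanning $M_{a,b}(\widehat{K}(D))_Y$ for $D \supsetneq D^- d_+ c_+$ are produced by applying Roberts and Schmidt's level-raising operators $\theta$, $\theta'$ and $\eta$ at primes dividing $D^+/(d_+ c_+)$; these operators commute with Hecke operators at primes outside the level, and their interaction with Atkin-Lehner involutions is given in \cite{RS}. Injectivity of $\iota$ follows from the distinctness of Satake parameters at unramified primes for distinct pairs $(g,h)$, and the total dimension of the union of the images matches the dimension formula \cite[Theorem 7.5.6]{RS} applied via Corollary \ref{coro:ALdecomposition} and Proposition \ref{thetad}. The spin $L$-function factorisation $L^{D^- d_+ c_+}(s, \iota(g,h), \spin) = L^{D^- d_+ c_+}(s-b-1, g)\,L^{D^- d_+ c_+}(s, h)$ is immediate from the construction, since the unramified local Langlands parameter of the automorphic representation generated by $\iota(g,h)$ decomposes as the direct sum of the Langlands parameter of $\pi_g \otimes |\cdot|^{b+1}$ and that of $\pi_h$: this is the defining property of the endoscopic packet attached to $\iota_0$.

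The main obstacle is the local analysis at primes $p$ with $p^n \parallel D^+$ and $n \geq 2$: identifying precisely which pair of Iwahori-type vectors on $B^\times_p \times B^\times_p \simeq \GL_2(\Q_p) \times \GL_2(\Q_p)$ gives rise under $\iota_0$ to a $K^+_{p^n}$-fixed vector with the correct $W^+_{p^n}$-Atkin-Lehner eigenvalue requires either direct computation in $M_2(B_p)$ using the orders of Section \ref{section:lattice}, or a careful invocation of the paramodular-vector theory of \cite{RS}. This is precisely the step at which the squarefree hypothesis on $D^+$ in \cite{MR3638279, RW} needs to be removed.
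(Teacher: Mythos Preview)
Your block-diagonal construction via $\iota_0:(B^\times\times B^\times)/\Delta(\Q^\times)\hookrightarrow\GUtwo{B}/\Q^\times$ has a genuine obstruction. For this to produce an algebraic modular form on $\GUtwo{B}$, both $g$ and $h$ must individually transfer to $B^\times$ via Jacquet--Langlands, i.e.\ their local components must be discrete series at every prime $p\mid D^-$. But $g\in S^{\text{new}}_{2+a-b}(d_-d_+)$ is an unramified principal series at each $p\mid D^-/d_-$, so it does \emph{not} transfer to $B$. Your suggestion of routing through ``auxiliary quaternion algebras ramified at $d_-\cup\{\infty\}$ and $(D^-/d_-)\cup\{\infty\}$'' cannot work either: since $\omega(D^-)$ is odd and $\omega(d_-)$ is odd, the set $(D^-/d_-)\cup\{\infty\}$ has odd cardinality, and no quaternion algebra is ramified at an odd set of places. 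Even granting such auxiliary algebras, they would not embed in $M_2(B)$ in any way compatible with the diagonal map you want.

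The paper's argument is of a completely different nature and sidesteps this. It invokes Arthur's multiplicity formula for endoscopic packets on $\GUtwo{B}$, as made explicit by Chan--Gan \cite{MR3267112}: one \emph{specifies} the local components $\pi^{\iota(g,h)}_p$ inside the local $L$-packets by choosing signs ($\pi_\phi^{-+}$ at $p\mid d_-$, $\pi_\phi^{+-}$ at $p\mid D^-/d_-$, $\pi^{-+}$ at $\infty$, the generic member elsewhere), and the parity condition $\omega(d_-)$ odd is precisely what makes the global sign product trivial, so that this collection of local components is automorphic. No explicit lifting map is ever written down. The paramodular level at split primes $p\mid D^+$ is then read off from $\epsilon$-factors via \cite[Corollary~7.5.5]{RS}, since the local $L$-parameter is the direct sum of those of $g$ and $h$; this also handles your ``main obstacle'' at $p^n\parallel D^+$ with $n\geq 2$ without any computation in $M_2(B_p)$.
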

\begin{proof}
  The proof of \cite[Proposition 12.1]{RW} (based on results
  of Chan and Gan (\cite{MR3267112}) applies to our more general
  context, but there $D^+=1$ so there is no $d_+$, $c_+$ or oldforms. Let $\pi^{\iota(g,h)}$ be the associated automorphic representation of $\GUtwo{B\otimes\A}$.
  
  In the notation of \cite[2.2, 2.4]{MR3267112}, $\pi^{\iota(g,h)}_p$ is $\pi_{\phi}^{-+}$ for $p\mid d_-$, $\pi_{\phi}^{+-}$ for $p\mid\frac{D^-}{d_-}$, and $\pi^{\iota(g,h)}_{\infty}$ is $\pi^{-+}$. Since $\omega(d_-)$ is odd and $\omega\left(\frac{D^-}{d_-}\right)$ is even, the product of the $\pm\pm$ pairs over all places is trivial, hence \cite[Theorem 3.1]{MR3267112} (Arthur's multiplicity formula) gives the existence of $\pi^{\iota(g,h)}$. Note that for primes $p\nmid D^-$,  $\pi^{\iota(g,h)}_p$ is $\pi_{\phi}^{+}$ in the notation of \cite[2.1]{MR3267112}, or even $\pi_{\phi}^{++}$ in the notation of \cite[2.3]{MR3267112}, and does not contribute to the product. (When there are two elements in the $L$-packet, we must choose the generic one, to have paramodular fixed vectors, cf. \cite[Remark 3.5]{MR3092267}.)
  
We need to know that the paramodular level of $\pi^{\iota(g,h)}_p$ is $p^{\ord_p(d_+c_+)}$, for primes $p\nmid D^-$, so that $\iota(g,h)\in M_{a,b}(\widehat{K}(D^-d_+c_+))$. This follows from the fact that the $L$-parameter of $\pi^{\iota(g,h)}_p$ is a kind of direct sum of those of $\pi^g_p$ and $\pi^h_p$ (representations of $\GL_2(\Q_p)$), from the behaviour of $\epsilon$-factors under the local Langlands correspondence, and the relation between paramodular level and $\epsilon$-factors, for generic representations, which is \cite[Corollary 7.5.5]{RS}.
\end{proof}

\begin{exo} Let $D^- = 5$, $D^+ = 77$ and $(a,b)=(0,0)$. Then the
  only non-zero contributions come from taking $d_- = 5$, $d_+ \in \{1, 7, 11, 77\}$ and $c_+=D^+/d_+$. The algorithm described in \cite{RT}
  computes the space $M_{0,0}(385)$ corresponding to the genus of quinary forms with
  Hasse-Witt invariant $-1$ only at the prime $5$. There are four rational
  Yoshida lifts, corresponding to the pairs of modular forms:
  $(\lmfdbfform{35}{2}{a}{b},\lmfdbfform{11}{4}{a}{a})$,
  $(\lmfdbfform{35}{2}{a}{a},\lmfdbfform{11}{4}{a}{a})$,
  $(\lmfdbfform{55}{2}{a}{a},\lmfdbfform{7}{4}{a}{a})$ and
  $(\lmfdbfform{55}{2}{a}{b},\lmfdbfform{7}{4}{a}{a})$.

  There are precisely two newforms in $S_2(\Gamma_0(35))$, labelled
  $\lmfdbfform{35}{2}{a}{a}$ and $\lmfdbfform{35}{2}{a}{b}$ and two
  newforms in $S_2(\Gamma_0(55))$, labelled $\lmfdbfform{55}{2}{a}{a}$
  and $\lmfdbfform{55}{2}{a}{b}$. There is a unique form in
  $S_4(\Gamma_0(7))$ labelled $\lmfdbfform{7}{4}{a}{a}$ and one form in
  $S_4(\Gamma_0(11))$ labelled $\lmfdbfform{11}{4}{a}{a}$. All other combinations of spaces involve one that is trivial.
  
\end{exo}

\begin{lem}\label{trivchi} Consider an eigenform
  $f\in M_{b,b}(\widehat{K}(D))$ whose associated automorphic
  representation $\pi^f$ of $\GUtwo{B\otimes\A}$ is CAP associated to
  the Siegel parabolic. In the notation of \cite{MR2402681}, $\pi^f$
  belongs to a global $A$-packet $A_{\tau,\chi}$, where $\tau$ is a
  cuspidal automorphic representation of $\GL_2(\A)$ with trivial
  central character, and $\chi$ is a quadratic character of
  $\A^{\times}/\Q^{\times}$. Then $\chi$ is trivial.
\end{lem}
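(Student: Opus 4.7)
The strategy is to compute the central character of $\pi^f$ in two ways and compare. On the one hand, the weight $(a,b)=(b,b)$ is scalar: as explained in \S\ref{repns}, the representation $W_{b,b}$ is obtained from $\SSS_{\langle\lambda\rangle}$ by twisting with $\nu^{-b}$ precisely so that the center acts trivially, and $W_{b,b}$ descends to an irreducible representation of $\GSp_2(\CC)/\CC^\times\simeq\SO_5(\CC)$. Consequently, algebraic modular forms in $M_{b,b}(\widehat{K}(D))$ have trivial central character, and so the automorphic representation $\pi^f$ of $\GUtwo{B\otimes\A}/\Q^{\times}$ satisfies $\omega_{\pi^f}=1$.

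On the other hand, since $\pi^f$ is CAP associated to the Siegel parabolic and belongs to the global $A$-packet $A_{\tau,\chi}$ of \cite{MR2402681}, its Arthur parameter is of the Saito--Kurokawa shape $\tau\boxtimes\nu^{1/2}\oplus\chi\boxtimes\nu^{-1/2}$. The associated similitude character (equivalently the central character of any member of the packet) is then $\chi\cdot\omega_\tau$. Combining these two computations with the hypothesis $\omega_\tau=1$ yields $\chi=\omega_{\pi^f}\cdot\omega_\tau^{-1}=1$.

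The plan therefore reduces to two routine verifications: (i) that the normalisation of $W_{b,b}$ in \S\ref{repns} really does give trivial central character on $\GUtwo{B\otimes\R}$ (which follows from $z\cdot I_4\mapsto z^{2b}$ on $\SSS_{\langle\lambda\rangle}$ being cancelled by $\nu(zI_4)^{-b}=z^{-2b}$), and (ii) that in Schmidt's conventions the central character of representations in $A_{\tau,\chi}$ is indeed $\chi\cdot\omega_\tau$. The main obstacle, such as it is, lies in (ii): one needs to match Schmidt's parameterisation of the Saito--Kurokawa $A$-packet against the standard recipe for reading the similitude character off an Arthur parameter for $\GSp_2$. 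Once this bookkeeping is pinned down, the lemma is immediate.
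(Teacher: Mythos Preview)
Your argument has a genuine gap in step (ii). The central character of a representation in the Saito--Kurokawa packet $A_{\tau,\chi}$ is \emph{not} $\chi\cdot\omega_\tau$; it is $\omega_\tau\cdot\chi^2$. Indeed, in the Sally--Tadi\'c/Roberts--Schmidt notation the Siegel Levi is identified with $\GL_2\times\GL_1$ so that the scalar $zI_4$ corresponds to $(zI_2,z^2)$, and hence the central character of $\pi\rtimes\sigma$ is $\omega_\pi\cdot\sigma^2$. Applying this to $\nu^{1/2}\tau_p\rtimes\nu^{-1/2}\chi_p$ gives $(\nu\cdot\omega_{\tau_p})\cdot(\nu^{-1}\chi_p^2)=\omega_{\tau_p}\cdot\chi_p^2$. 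Since $\chi$ is by hypothesis \emph{quadratic}, $\chi^2=1$ and the central character is simply $\omega_\tau$. Thus knowing $\omega_{\pi^f}=1$ and $\omega_\tau=1$ tells you nothing whatsoever about $\chi$: the quadratic twist is invisible to the central character, which is precisely why the lemma requires work.

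The paper's proof is genuinely different and does not go through the central character. It argues locally: at each prime $p$, the local component $\pi^f_p$ is an irreducible quotient of $\nu^{1/2}\tau_p\rtimes\nu^{-1/2}\chi_p$ (or its inner-form analogue at $p\mid D^-$), and one checks that an element $(\mathrm{id.},u)$ of the Levi with $u\in\Z_p^\times$ lies in $\widehat{K}(D)_p$ and acts on the induced representation by $\chi_p(u)$ (up to terms trivial on units). The existence of a nonzero $\widehat{K}(D)_p$-fixed vector therefore forces $\chi_p$ to be unramified. Since an everywhere-unramified quadratic id\`ele class character of $\Q$ is trivial, one concludes $\chi=1$. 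This is exactly the mechanism in \cite[Proposition~5.2(i)]{MR4103275}, adapted to the compact inner form. Your step (i), that $\omega_{\pi^f}$ is trivial, is correct but plays no role in the actual argument.
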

\begin{proof} This is modelled on the proof (in the case $\GSp_2(\A)$) of \cite[Proposition 5.2(i)]{MR4103275}, where $\tau$ is $\mu$ and $\chi$ is $\sigma$.
(In \cite[\S 5.5]{RS}, $\tau$ is $\pi$ and $\chi$ is $\sigma$.)

At any split prime $p$, $\pi^f_p$ is the unique irreducible quotient of a representation denoted $\nu^{1/2}\tau_p\rtimes\nu^{-1/2}\chi_p$, induced from a Siegel parabolic. At a non-split prime $p\mid D^-$, $\pi^f_p$ is the unique irreducible quotient of a representation denoted $\nu^{1/2}\mathrm{JL}(\tau_p)\rtimes\nu^{-1/2}\chi_p$, induced from a parabolic with Levi subgroup $B_p^{\times}\times\Q_p^{\times}$. Considering the action of an element $(\mathrm{id}.,u)$ of the Levi subgroup, where $u\in\Z_p^{\times}$, it is easy to show that $\pi^f_p$ cannot have a non-zero $\widehat{K}(D)_p$-fixed vector unless $\chi_p$ is unramified, and the only $\chi$ such that all $\chi_p$ are unramified is the trivial character.
\end{proof}

Let $\mathfrak{S}^{\text{$\frac{D^-}{d_-}$-new},
  -(-1)^{\omega(d_-)}}_{2b+4}(\Gamma_0(D/d_-)$ be the subspace of
$S^{\text{$\frac{D^-}{d_-}$-new},-(-1)^{\omega(d_-)}}_{2b+4}(\Gamma_0(D/d_-)$
spanned by eigenforms with the same Atkin-Lehner
eigenvalues, at all primes dividing $D^+$, as the newforms they come
from. For more, see around \cite[(23)]{MR2208781}.

\begin{prop}[Saito-Kurokawa lifts]
  For each positive divisor $d_-$ of $D^-$ there is an injective embedding 
  \[
    \iota_{d_-}: \mathfrak{S}^{\text{$\frac{D^-}{d_-}$-new}, -(-1)^{\omega(d_-)}}_{2b+4}(\Gamma_0(D/d_-)) \hookrightarrow M_{b,b}(\widehat{K}(D))
    \]
    
    Furthermore, the span of the images of the $\iota_{d_-}$ is precisely the subspace $M_{b,b}(\widehat{K}(D))_{\mathrm{SK}}$ of forms of Saito-Kurokawa type, i.e. spanned by $\widehat{K}(D)$-fixed vectors in automorphic representations that are CAP for the Siegel parabolic subgroup. 
    
If $h\in \mathfrak{S}_{2b+4}^{\text{$\frac{D^-}{d_-}$-new}, -(-1)^{\omega(d_-)}}(\Gamma_0\left(D/d_-\right))$ is
    an eigenform (of Hecke operators for $p\nmid\frac{D}{d_-}$, Atkin-Lehner involutions for $p\mid\frac{D}{d_-}$), then $\iota_{d_-}(h)$ is an eigenform (for Hecke operators at $p\nmid D$, Atkin-Lehner involutions at $p\mid D$). The spinor L-function
    $L^D(s,\iota_{d-}(h),\mathrm{spin})$ of $\iota_{d_-}(h)$ is given by
    $\zeta^D(s-b-2)\zeta^D(s-b-1)L^D(s,h)$, where the superscript $D$ indicates that Eulers factors at primes $p\mid D$ are omitted.
\label{prop:saito-kurokawa}
\end{prop}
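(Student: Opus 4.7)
The plan is to closely follow the template of the proof of Proposition~\ref{prop:yoshida}, using the classification of CAP representations of $\GSp_2(\A)$ attached to the Siegel parabolic (see \cite{MR2402681} and \cite{MR2208781}) combined with transfer to the inner form $\GUtwo{B\otimes\A}$. Given an eigenform $h\in\mathfrak{S}^{\text{$\frac{D^-}{d_-}$-new},-(-1)^{\omega(d_-)}}_{2b+4}(\Gamma_0(D/d_-))$, let $\tau$ be the cuspidal automorphic representation of $\GL_2(\A)$ associated to $h$; it has trivial central character since $h$ has even weight and trivial nebentypus. By Lemma~\ref{trivchi}, the only quadratic character $\chi$ of $\A^\times/\Q^\times$ that can yield $\widehat{K}(D)$-fixed vectors is the trivial one, so we work throughout with the global $A$-packet $A_{\tau,1}$.

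At each place I would then select a distinguished local member of the packet. At the archimedean place we must take the (unique) unitary member coming from the compact $\Sp(2)$, consistent with weight $(b,b)$. At each split prime $p\nmid D^-$, the local packet contains a paramodular representation whose level is governed by the conductor of $\tau_p$, and whose local Arthur sign is $+1$; choose this one. At each non-split prime $p\mid D^-$, the local packet has two members; the one possessing a fixed vector under $K^-_p\subset\GUtwo{B_p}$ has local Arthur sign $-1$ when $p\mid d_-$ and $+1$ when $p\mid D^-/d_-$, and matches the Atkin--Lehner eigenvalue of the Jacquet--Langlands transfer of $\tau_p$, which in turn corresponds to the stipulated Atkin--Lehner eigenvalue of $h$ at $p$. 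The existence of the global representation $\pi^{\iota_{d_-}(h)}$ then follows from Arthur's multiplicity formula, once one checks that the product of local signs equals the global sign demanded by the parameter: this product is $-(-1)^{\omega(d_-)}$ times the archimedean and split-prime contributions, and the sign condition built into the domain $\mathfrak{S}^{\text{$\frac{D^-}{d_-}$-new},-(-1)^{\omega(d_-)}}_{2b+4}$ is precisely what makes these compatible. The paramodular level of $\pi^{\iota_{d_-}(h)}$ at split primes is computed via \cite[Corollary~7.5.5]{RS} from the conductor of $\tau_p$ on the $\GL_2$ side, placing $\iota_{d_-}(h)$ in $M_{b,b}(\widehat{K}(D))$.

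For the $L$-function identity, recall that the Arthur parameter of a Siegel-parabolic CAP representation with datum $(\tau,1)$ is (as a representation of the conjectural $L_\Q\times\SL_2(\C)$) the sum of $\tau\boxtimes\nu^{1/2}$ and $1\boxtimes\nu^{3/2}$, shifted so as to match the weight. The spinor $L$-function outside $D$ therefore factors as $L^D(s,\iota_{d_-}(h),\mathrm{spin})=\zeta^D(s-b-2)\,\zeta^D(s-b-1)\,L^D(s,h)$. Injectivity of each $\iota_{d_-}$ is then clear, because $h$ (at primes $p\nmid D^-/d_-$) can be recovered from the Satake or paramodular Hecke eigenvalues of $\iota_{d_-}(h)$. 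Conversely, every form in $M_{b,b}(\widehat{K}(D))_{\mathrm{SK}}$ sits in some CAP-Siegel $A$-packet $A_{\tau,1}$, and the set of non-split primes $p\mid D^-$ at which its local component has sign $-1$ uniquely determines $d_-$; Jacquet--Langlands then gives a classical newform $h$ of level $D/d_-$ with the required Atkin--Lehner signs, proving surjectivity of $\bigoplus\iota_{d_-}$ onto $M_{b,b}(\widehat{K}(D))_{\mathrm{SK}}$.

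The main obstacle, exactly as for Proposition~\ref{prop:yoshida}, is the bookkeeping of local Arthur signs against Atkin--Lehner eigenvalues, and in particular confirming that the global parity $-(-1)^{\omega(d_-)}$ in the statement is the one forced by the multiplicity formula with our choice of local members; the secondary technicality is matching the paramodular level contributed at split primes $p\mid D^+$ to $v_p(D)$, which however reduces to the standard conductor/paramodular-level dictionary for $\GL_2$ induced to $\GSp_2$ via the Siegel parabolic.
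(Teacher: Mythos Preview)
Your overall strategy is the right one and matches the paper's: both reduce to \cite[Proposition~12.2]{RW} (Gan's Saito--Kurokawa lifting \cite{MR2402681}), the new work being the extension from $D^+=1$ to general $D^+$. However, there are two concrete problems.

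First, your citation of \cite[Corollary~7.5.5]{RS} for the paramodular level at split primes is wrong: that result applies to \emph{generic} local representations, and Saito--Kurokawa lifts are precisely non-generic (CAP). The paper instead invokes \cite[Proposition~5.5.5(i)]{RS}, which computes the paramodular level for the local Saito--Kurokawa lift directly; this is what you need, together with the observation that Gan's global lift agrees locally at split primes with the local lift of \cite[\S5.5]{RS}.

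Second, and more substantively, you have not addressed the counting of oldforms at primes $p\mid D^+$, which is the real content of the extension beyond $D^+=1$. The domain $\mathfrak{S}$ consists of \emph{all} forms (new and old) at $D^+$, but restricted to those sharing Atkin--Lehner eigenvalues with the underlying newform. On the $\GUtwo{B}$ side one produces oldforms via Roberts--Schmidt's operators $\theta,\theta',\eta$; the crucial fact (\cite[Theorem~5.5.9]{RS}) is that for local representations of Saito--Kurokawa type, $\theta$ and $\theta'$ act the same up to a sign, and by Lemma~\ref{trivchi} (which forces the character $\sigma$ to be trivial) that sign is $+1$. Thus only $\theta$ and $\eta$ produce independent oldforms, and all of them retain the newform's Atkin--Lehner eigenvalues. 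This is exactly what makes the dimension count on the two sides match, following \cite[Theorem~6.3]{MR2208781}. Without this step you have not shown that the image of $\bigoplus_{d_-}\iota_{d_-}$ is \emph{all} of $M_{b,b}(\widehat{K}(D))_{\mathrm{SK}}$; the paper also cites Sayag's theorem \cite[Theorem~6.9]{MR2402681} for the classification of the CAP-Siegel subspace.
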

\begin{proof}
  This may be proved in very much the same way as \cite[Proposition 12.2]{RW}. (The part about the span of the images follows from a theorem of E. Sayag \cite[Theorem 6.9]{MR2402681}.)
  The difference is that on both left and right, forms need not be $D^+$-new, whereas in \cite[Proposition 12.2]{RW}, $D^+=1$. 
  
  Any eigenform on the left comes from some newform
  $g\in
  S^{\text{new},-(-1)^{\omega(d_-)}}_{2b+4}\left(\Gamma_0\left(\frac{D^-}{d_-}\frac{D^+}{d_+}\right)\right)$,
  for some positive divisor $d_+$ of $D^+$. The sign
  $ -(-1)^{\omega(d_-)}$ is by definition required to be the global
  sign in the functional equation of the $L$-function associated to
  $g$. Using Gan's Saito-Kurokawa lifting \cite{MR2402681} as in
  \cite[Proposition 12.2]{RW}, we obtain an eigenform
  $G\in M_{b,b}\left(\widehat{K}\left(D/d_+\right)\right)$. In fact
  $D^+/d_+$ is precisely the paramodular level of the cuspidal
  automorphic representation $\pi^G$ of $\GUtwo{B\otimes\A}$ attached
  to $G$. This means that for all split primes, $p\nmid D^-$, the
  least $n$ such that $\pi^G_p$ has non-zero $K^+_{p^n}$-invariants
  (necessarily $1$-dimensional) is $\ord_p(D^+/d^+)$. This is
  precisely \cite[Proposition 5.5.5(i)]{RS}, bearing in mind that at
  split places Gan's Saito-Kurokawa lifting is locally the local
  Saito-Kurokawa lifting of \cite[\S 5.5]{RS}.
    
On the left we can obtain, from $g$, oldforms in $S^{{\text{$\frac{D^-}{d_-}$-new}},-(-1)^{\omega(d_-)}}_{2b+4}(\Gamma_0(D/d_-))$. Recall that we are taking only those with the same Atkin-Lehner eigenvalues as $g$ for primes dividing $D^+$. On the right we can obtain from $G$ oldforms in $M_{b,b}(\widehat{K}(D))$, by applying Roberts and Schmidt's operators $\theta$ and $\eta$ at primes dividing $d_+$. Including their third level-raising operator $\theta'$ would not produce anything linearly independent of what one obtains using $\theta$ and $\eta$, since by \cite[Theorem 5.5.9]{RS}, $\theta$ and $\theta'$ act the same (up to $\pm$) on paramodular-fixed vectors in local representations of Saito-Kurokawa type. 

In fact, by Lemma \ref{trivchi} and \cite[Theorem 5.5.9(iv)]{RS}, the sign is $+$, i.e. $\theta$ and $\theta'$ act the same, and the oldforms coming from $G$ as above have the same Atkin-Lehner eigenvalues as $G$. As explained in the discussion preceding \cite[Theorem 6.3]{MR2208781}, the oldforms coming from $g$ can be exactly paired with those coming from $G$. Although the argument there was applied to Siegel modular forms, it applies just the same, being local at split primes.
 \end{proof}

\begin{exo}
  Let us continue the previous example. Let $D^-= 5$ and $D^+ =
  77$. Taking $d_- = 1$ implies computing the space
  $S_4^{\text{5-new},-}(385)$. The newforms of the different
  spaces are $S_4^{\text{new},-}(5) = \{\lmfdbfform{5}{4}{a}{a}\}$,
  $S_4^{\text{new},-}(35) = \{\lmfdbfform{35}{4}{a}{a}\}$,
  $S_4^{\text{new},-}(55) = \emptyset$ and
  $S_4^{\text{new},-}(385) = \{\lmfdbfform{385}{4}{a}{b},
  \lmfdbfform{385}{4}{a}{e}, \lmfdbfform{385}{4}{a}{h},
  \lmfdbfform{385}{4}{a}{i}, \lmfdbfform{385}{4}{a}{j},
  \lmfdbfform{385}{4}{a}{l}\}$.

  Taking $d_- = 5$ implies computing spaces of forms of level not
  divisible by $5$ and sign $+1$ in their functional equation. The
  newforms of the different spaces are $S_4^{\text{new},+}(1)=\emptyset$,
  $S_4^{\text{new},+}(7) = \{\lmfdbfform{7}{4}{a}{a}\}$,
  $S_4^{\text{new},+}(11) = \{\lmfdbfform{11}{4}{a}{a}\}$ and
  $S_4^{\text{new},+}(77) = \{\lmfdbfform{77}{4}{a}{a},
  \lmfdbfform{77}{4}{a}{d}, \lmfdbfform{77}{4}{a}{e}\}$.
  All such forms contribute to the space $M_{0,0}(\widehat{K}(385))$.
\end{exo}

Let $M_{(a,b)}(\widehat{K}(D))_G$ denote the subspace of $M_{(a,b)}(\widehat{K}(D)$ orthogonal to all
Yoshida lifts, and to all Saito-Kurokawa
lifts. Here we are using a natural inner product, with respect to which Hecke operators are self-adjoint. Let $S_{k,j}^{\text{$D^-$-new}}(P(D))_G$ denote the subspace of forms (among those $D^-$-new in the sense of Roberts and Schmidt) that are orthogonal to all Saito-Kurokawa lifts. When $j>0$ there are no Saito-Kurokawa lifts, and this is the whole space. Then we have the following natural generalisation of the conjecture of Ibukiyama-Kitayama proved in \cite[Proposition 12.3]{RW}. The subscript ``$G$'' stands for ``general type'', and will be justified in the course of the proof.

\begin{thm}
There is a linear isomorphism, sending eigenforms to eigenforms (for Hecke operators at $p\nmid D$, Atkin-Lehner involutions at $p\mid D$), and preserving $L$-functions (with Euler factors at primes dividing $D$ omitted):
  \[
S_{b+3,a-b}^{\text{$D^-$-new}}(P(D))_G \simeq M_{a,b}(\widehat{K}(D))_G.
    \]
  \label{thm:Rainer1}
  \end{thm}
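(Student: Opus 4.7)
The plan is to bootstrap from the Rösner--Weissauer theorem \cite{RW} (the case $D^+=1$) by observing that the extension to general $D^+$ introduces essentially nothing new at the representation-theoretic level. After excising Yoshida (endoscopic) and Saito--Kurokawa (CAP) contributions via Propositions~\ref{prop:yoshida} and \ref{prop:saito-kurokawa}, both sides decompose as direct sums over cuspidal automorphic representations $\pi$ of $\GSp_2(\A)$, resp.\ $\pi'$ of $\GUtwo{B\otimes\A}$, of \emph{general} (stable tempered) type. For such $\pi$ Arthur's multiplicity formula, in the form used by Chan--Gan \cite{MR3267112}, gives multiplicity one, which reduces the isomorphism to a bijective matching of $\pi$ with $\pi'$ together with a matching of dimensions of local invariants under the relevant open compacts, representation by representation.

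The bijection is provided by the Jacquet--Langlands correspondence established in \cite{RW}: given a general-type $\pi$ contributing to $S_{b+3,a-b}^{\text{$D^-$-new}}(P(D))_G$, set $\pi'_v=\pi_v$ at split places $v\notin S\cup\{\infty\}$ and $\pi'_v=\mathrm{JL}(\pi_v)$ for $v\in S\cup\{\infty\}$; the archimedean JL matches the holomorphic discrete series of Harish-Chandra parameter attached to $\det^{b+3}\otimes\Sym^{a-b}$ on the Siegel side with the irreducible $\Sp(2)$-representation $W_{a,b}$. Running this in reverse uses the fact from \cite{RW} that cuspidal general-type representations of $\GUtwo{B\otimes\A}$ descend to cuspidal representations of $\GSp_2(\A)$.

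For the local matching of fixed vectors, the crucial point is that at every split prime $p\nmid D^-$, Lemma~\ref{symplectic} together with Proposition~\ref{K+pstab} identifies $K^+_{p^n}\subset\GUtwo{B_p}$ with the paramodular group $K(p^n)\subset\GSp_2(\Q_p)$, and $\pi_p=\pi'_p$ under $\Psi$; hence $\dim\pi_p^{K(p^n)}=\dim(\pi'_p)^{K^+_{p^n}}$ with matching Atkin-Lehner actions, realised on the $\GUtwo{B}$ side via the character $\theta_p$ of Definition~\ref{defi:character}. At ramified $p\in S$, the $\varepsilon$-factor theory of paramodular newforms \cite{RS} implies that $D^-$-new generic representations of $\GSp_2(\Q_p)$ have paramodular conductor exactly $p$, and precisely these correspond under local JL to representations of $\GUtwo{B_p}$ carrying a one-dimensional $K^-_p$-fixed line; this is the content of the $D^-$-new hypothesis on the Siegel side. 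Hecke equivariance at unramified primes, and the spinor $L$-function identity away from $D$, follow because local Langlands parameters are preserved under JL.

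The main obstacle I anticipate is not substantive but presentational: one must articulate that the Rösner--Weissauer trace-formula argument, written for $D^+=1$, extends \emph{verbatim} once one installs the open compact $K^+_{p^n}$ at split primes on the $\GUtwo{B}$ side. The genuinely new ingredient beyond \cite{RW} is the identification in Proposition~\ref{K+pstab}(2), which guarantees that this global open compact really is the image of the paramodular group under $\Psi$, so the comparison of local factors at split places is automatic rather than requiring any additional inner-form argument. A secondary point worth verifying is the combinatorics of Roberts--Schmidt oldforms at $p\mid D^+$: because $\pi_v=\pi'_v$ at split places, oldform multiplicities and Atkin--Lehner eigenvalues agree tautologically on both sides via \cite[Theorem 7.5.6]{RS}, but this needs to be spelled out so that the eigenform-to-eigenform and $L$-function assertions are unambiguous.
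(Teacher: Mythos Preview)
Your proposal is correct and follows essentially the same route as the paper's own proof: reduce to general type by excising Yoshida and Saito--Kurokawa (and Borel/Klingen CAP) contributions, invoke the Jacquet--Langlands transfer of \cite[Theorem 11.4]{RW} as in \cite[Proposition 12.3]{RW}, and then observe that the only novelty for $D^+>1$ is that $\pi^f_p\simeq\pi^F_p$ at split primes, so paramodular fixed vectors, Atkin--Lehner actions, and the Roberts--Schmidt oldform structure match tautologically. One small citation slip: the identification of $K^+_{p^n}$ with the paramodular group under $\Psi$ is the (unlabelled) lemma immediately following Lemma~\ref{symplectic}, not Proposition~\ref{K+pstab}(2), which instead identifies the full lattice stabiliser and is relevant to Theorem~\ref{GU2SO5iso} rather than to this theorem.
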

  \begin{proof}
  This may be proved following \cite[Proposition 12.3]{RW}, though since for us $\omega(D^-)$ is odd, the large part of their proof dealing with their Conjecture 7.5 is not needed here.  
  
There are no forms of Yoshida type on the left, as explained at the beginning of the proof of \cite[Theorem 12.3]{RW}, or on the right, by definition. By \cite[Proposition 5.1]{MR4103275}, there cannot be forms that are CAP with respect to the Borel or Klingen parabolic subgroups. This leaves only forms of general type, neither CAP nor endoscopic.
  
For forms of general type we apply \cite[Theorem 11.4]{RW} as in
\cite[Proposition 12.3]{RW}. Again, the difference is that on both
sides there may be forms that are not $D^+$-new. Consider an eigenform
$f$ in $M_{a,b}(\widehat{K}(D/d_+))_G$ (where $d_+$ is some positive
divisor of $D^+$), with paramodular level exactly $D^+/d_+$, and
associated automorphic representation $\pi^f$ of
$\GUtwo{B\otimes\A}$. Applying \cite[Theorem 11.4]{RW} gives us an
eigenform $F\in S_{b+3,a-b}^{\text{$D^-$-new}}(P(D/d_+))_G$, with
exact paramodular level $D/d_+$, and associated cuspidal automorphic
representation $\pi^F$ of $\GSp_2(\A)$. The arguments of
\cite[Proposition 12.3]{RW} prove this paramodular level for primes
dividing $D^-$, but it also holds at other primes $p$, simply because
$\pi^f_p\simeq \pi^F_p$, even for $p\mid D^+$. Then the oldforms in
$M_{a,b}(\widehat{K}(D))_G$ and
$S_{b+3,a-b}^{\text{$D^-$-new}}(P(D))_G$ generated from $f$ and $F$
respectively, by applying Roberts and Schmidt's level-raising
operators $\theta, \theta'$ and $\eta$ at primes $p\mid d_+$, exactly
correspond, again because $\pi^f_p\simeq \pi^F_p$.
\end{proof}
\begin{remar} Under a different convention, we could leave in the $D^-$-new Saito-Kurokawa lifts on both sides (thus having simply $S_{b+3,a-b}^{\text{$D^-$-new}}(P(D))$ on the left), and they would correspond to each other.
  \end{remar}

\begin{thm}
  Keeping the previous notation, let $D = D^+D^-$. Then
  \[
    S_{b+3,a-b}^{\text{$D^-$-new}}(P(D))_G\simeq
    \bigoplus_{\stackrel{d \mid
        D}{\gcd(d,D/d)=1}}\widetilde{M}_{a,b}(\widehat{K(\Lattice)}^+)_G^{\theta_d}.
    \]
  \label{thm:basisproblem}
\end{thm}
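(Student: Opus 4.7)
The plan is to chain together Theorem~\ref{thm:Rainer1} with Corollary~\ref{coro:ALdecomposition}, the only real content being a compatibility check between the "general type" truncation on the two sides of Ibukiyama--Kitayama with the Atkin--Lehner decomposition coming from the characters $\theta_d$.

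\medskip
\noindent\textbf{Step 1.} Apply Theorem~\ref{thm:Rainer1} to identify
\[
S^{\text{$D^-$-new}}_{b+3,a-b}(P(D))_G \;\simeq\; M_{a,b}(\widehat{K}(D))_G
\]
as modules for the Hecke algebra at primes $p\nmid D$ together with the Atkin--Lehner involutions at primes $p\mid D$. Then apply Corollary~\ref{coro:ALdecomposition} to the ambient space, giving
\[
M_{a,b}(\widehat{K}(D)) \;\simeq\; \bigoplus_{\substack{d\mid D\\ \gcd(d,D/d)=1}} \widetilde{M}_{a,b}(\widehat{K(\Lattice)}^+)^{\theta_d},
\]
where the summand indexed by $d$ is the joint $(-1)$-eigenspace for $W_{p^n}^+$ (for $p^n\parallel D^+$) and $\omega_p$ (for $p\mid D^-$) precisely when $p\mid d$.

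\medskip
\noindent\textbf{Step 2.} Show that the subspaces $M_{a,b}(\widehat{K}(D))_Y$ and $M_{a,b}(\widehat{K}(D))_{\mathrm{SK}}$ introduced in Propositions~\ref{prop:yoshida} and \ref{prop:saito-kurokawa} are stable under all Atkin--Lehner operators, so that the decomposition of Step~1 restricts to the subspace of forms of general type. The point is that these two subspaces are defined as spans of $\widehat{K}(D)$-fixed vectors inside automorphic representations of $\GUtwo{B\otimes\A}$ of prescribed Arthur type (endoscopic and Siegel-CAP respectively). Since each Atkin--Lehner element at $p\mid D$ lies in the normaliser of the local component of $\widehat{K}(D)$ and acts on the one-dimensional space of fixed vectors of the local representation (with eigenvalue $\pm1$ as $\omega_p^2,(W_{p^n}^+)^2$ act by scalars), these subspaces are preserved. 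Equivalently, since Atkin--Lehner operators commute with the Hecke algebra, and since $M_{a,b}(\widehat{K}(D))_Y$, $M_{a,b}(\widehat{K}(D))_{\mathrm{SK}}$ are intersections of generalised Hecke eigenspaces, stability is automatic. Define
\[
\widetilde{M}_{a,b}(\widehat{K(\Lattice)}^+)^{\theta_d}_G
\;:=\; \widetilde{M}_{a,b}(\widehat{K(\Lattice)}^+)^{\theta_d} \cap M_{a,b}(\widehat{K}(D))_G
\]
via the isomorphism of Corollary~\ref{coro:ALdecomposition}. Because Yoshida and Saito-Kurokawa subspaces are $W_{p^n}^+$- and $\omega_p$-stable, so is their sum, and so is the orthogonal complement with respect to the natural Hecke-invariant inner product; consequently
\[
M_{a,b}(\widehat{K}(D))_G \;=\; \bigoplus_{\substack{d\mid D\\ \gcd(d,D/d)=1}} \widetilde{M}_{a,b}(\widehat{K(\Lattice)}^+)^{\theta_d}_G.
\]

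\medskip
\noindent\textbf{Step 3.} Combine Steps~1 and 2 to obtain the claimed isomorphism. The correspondences in Theorem~\ref{thm:Rainer1} (on the right) and Corollary~\ref{coro:ALdecomposition} (on the left) are intertwined by the same Atkin--Lehner operators, so the decomposition transports directly to $S^{\text{$D^-$-new}}_{b+3,a-b}(P(D))_G$.

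\medskip
\noindent The only real point requiring care is Step~2: one should verify that the spans described in Propositions~\ref{prop:yoshida} and \ref{prop:saito-kurokawa} really are Atkin--Lehner stable as subspaces of $M_{a,b}(\widehat{K}(D))$. I expect this to be straightforward since both subspaces are unions of Hecke-isotypic components and Atkin--Lehner operators commute with the Hecke operators away from the relevant prime, while their local squares are scalars; alternatively one invokes multiplicity one for the relevant $A$-packets at split places (and for forms of Saito--Kurokawa type, Lemma~\ref{trivchi} combined with \cite[Theorem 5.5.9]{RS}).
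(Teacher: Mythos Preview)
Your proposal is correct and follows exactly the same approach as the paper, which simply cites Corollary~\ref{coro:ALdecomposition} and Theorem~\ref{thm:Rainer1}. Your Step~2, verifying that the Yoshida and Saito--Kurokawa subspaces are stable under the Atkin--Lehner operators so that the decomposition restricts to the general-type part, is a detail the paper leaves implicit; your justification (via Hecke-isotypy and the fact that the local Atkin--Lehner elements normalise the level subgroup and square to scalars) is fine.
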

\begin{proof}
  Follows from Corollary~\ref{coro:ALdecomposition} and Theorem~\ref{thm:Rainer1}.
\end{proof}
In particular, this proves Conjecture 15 in \cite{RT}.
\begin{remar}
  \label{rem:basisproblem}
  Theorem~\ref{thm:basisproblem} gives a solution of the basis problem
  when the level of the paramodular form is divisible by a prime to
  the first power (as studied by Eichler \cite{MR0485699} and Hijikata
  \cite{MR337783} for classical modular forms). If $N$ is a positive
  integer such that there exists $p$ with $v_p(N)=1$, then the space
  of paramodular newforms of level $N$ can be computed using the
  decomposition $N = p (N/p)$, i.e. looking at a special positive
  definite quinary form of discriminant $2N$ with Hasse-Witt invariant
  $-1$ at $p$, and Eichler invariant $+1$ at all primes dividing $N/p$
  as described in Section~\ref{section:globallattice}.
  
\end{remar}

\section{Atkin-Lehner eigenvalues}
Recall from Corollary \ref{coro:ALdecomposition} the decomposition
\[
  M_{a,b}(\widehat{K}(D)) \simeq \bigoplus_{\stackrel{d \mid D}{\gcd(d,D/d)=1}} \widetilde{M}_{a,b}(\widehat{K(\Lattice)}^+)^{\theta_d}.
  \]
  We know that the left hand side can contain eigenforms of
  Saito-Kurokawa type (if $a=b$), Yoshida type and general type. We
  now wish to identify, for each of these types, what we have in a
  given $\widetilde{M}_{a,b}(\widehat{K(\Lattice)}^+)^{\theta_d}$, in
  terms of the Atkin-Lehner eigenvalues of associated modular
  forms. In this section, as already in the previous section, we
  freely apply the isomorphism $\Psi: \GUtwo{B_p}\simeq\GSp_2(\Q_p)$
  at split $p$, and do not distinguish between $W^+_{p^n}$ and
  $W_{p^n}$.
 \begin{thm}\label{ALsignchange}  


Let $M_{a,b}(\widehat{K}(D))$ be a space of $\GUtwo{B}$ algebraic modular forms as in \S \ref{section:spaces}. In particular $D=D^-D^+$, with $\gcd(D^-,D^+)=1$ and $D^-$ square-free, $\omega(D^-)$ odd. Consider $f\in M_{a,b}(\widehat{K}(D))$, a Hecke eigenform for $T(p)$ and $T_1(p^2)$ (all $p\nmid D$), $W_{p^n}$ ($p^n\parallel D^+$) and $\omega_p$ ($p\mid D^-$). For $p\mid D$ let $e_p$ denote the eigenvalue of $W_{p^n}$ or $\omega_p$ on $f$. 
\begin{enumerate}
\item If $f\in M_{a,b}(\widehat{K}(D))_G$, with corresponding $F\in S_{b+3,a-b}^{\text{$D^-$-new}}(P(D))_G$, with eigenvalues $w_p$ of $W_{p^n}$ on $F$ (where $n$ depends on $p$, with $p^n\parallel D$), then
$$e_p=\begin{cases} w_p & \text{ if $p\mid D^+$};\\-w_p & \text{ if $p\mid D^-$}.\end{cases}$$
\item If $f\in M_{a,b}(\widehat{K}(D))_{\mathrm{SK}}$, say $f=\iota_{d_-}(h)$ for some $h\in \mathfrak{S}^{\text{$\frac{D^-}{d_-}$-new}, -(-1)^{\omega(d_-)}}_{2b+4}(\Gamma_0(D/d_-))$. Let $\epsilon_p$ be the local sign at $p$ attached to $h$, in particular $\epsilon_p$ is the eigenvalue of a $\GL_2$ Atkin-Lehner operator when $p\mid \frac{D}{d_-}$, and is $1$ for other $p$. Then 
$$e_p=\begin{cases} \epsilon_p & \text{ if $p\mid d_-D^+$};\\-\epsilon_p & \text{ if $p\mid \frac{D^-}{d_-}$}.\end{cases}$$

\item If $f'=\iota(g,h)\in M_{a,b}(\widehat{K}(D^-d_+c_+))_Y$, as in Proposition \ref{prop:yoshida}, then the eigenvalue of $W_{p^n}$ on $f'$, for $p\mid D^+$ is a product of local signs $\epsilon_p(g)\epsilon_p(h)$, while that of $\omega_p$ for $p\mid D^-$ is $-\epsilon_p(g)\epsilon_p(h)$.

Oldforms in $M_{a,b}(\widehat{K}(D))_Y$ may be produced from $f'$ by (possibly repeated) application of Roberts and Schmidt's level-raising operators $\theta, \theta'$ (at $p\mid \frac{D^+}{d_+c_+}$) and $\eta$ (at $p$ when $p^2\mid\frac{D^+}{d_+c_+}$). Application of $\eta, \theta+\theta'$ or $\theta-\theta'$ takes (Atkin-Lehner) eigenforms to eigenforms, with a change of sign in the case of $\theta-\theta'$.
\end{enumerate}
 \end{thm}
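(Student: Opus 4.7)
The plan is to treat the three cases uniformly, reducing each to a statement about how the Atkin-Lehner eigenvalues of the paramodular form $F$ (or the auxiliary classical forms $g,h$ that build $F$ as a lift) transport to eigenvalues of $\omega_p$ or $W_{p^n}^+$ on the corresponding $\GUtwo{B}$-form $f$, under the local isomorphism $\Psi$ at $p\mid D^+$ and the local Jacquet--Langlands correspondence at $p\mid D^-$. Throughout, I would use the last statement of Theorem~\ref{GU2SO5iso}, which identifies the action of $W_{p^n}^+$ or $\omega_p$ on $M_{a,b}(\widehat{K}(D))$ with the involution $\mathrm{Stab}(\Lattice_p)/\mathrm{Stab}(\Lattice_p)^+$ on $\widetilde{M}_{a,b}(\widehat{K(\Lattice)}^+)$ generated by $\theta_p$, so that every Atkin-Lehner statement translates into a statement about $\theta_p$.

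For part (1), at a split prime $p\mid D^+$ the local components $\pi^f_p$ and $\pi^F_p$ are isomorphic via $\Psi$ (this is what the proof of Theorem~\ref{thm:Rainer1} gives at good and split places), and $\Psi$ sends $W_{p^n}^+$ to $W_{p^n}$; since the Atkin-Lehner eigenvalue is determined by the action on the one-dimensional paramodular-fixed line, we conclude $e_p=w_p$. At a non-split prime $p\mid D^-$, $\pi^f_p$ and $\pi^F_p$ are matched by local Jacquet--Langlands, and I would extract the sign change $e_p=-w_p$ from the explicit form of this correspondence used in \cite{RW}; equivalently it follows from Remark~\ref{snorm}, since $\theta_p$ and the spinor-norm character $\chi\circ\nu$ differ precisely where the quaternionic Atkin-Lehner disagrees with the symplectic one, and $\chi\circ\nu$ is the character that transports cleanly to $\SO_5^\ast(\Q_p)$.

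For part (2), I would analyse $\iota_{d_-}$ place-by-place using the explicit construction recalled in Proposition~\ref{prop:saito-kurokawa} (Gan's Saito-Kurokawa theta lift together with the local paramodular theory of \cite[\S 5.5]{RS}). At $p\mid D^+$ split, the local lift is the paramodular Saito-Kurokawa representation with $W_{p^n}^+$-eigenvalue equal to $\epsilon_p(h)$. At $p\mid d_-$ non-split the local component of $h$ is unramified, and its JL-type image on the division-algebra side has $\omega_p$ acting trivially on the $K^-_p$-fixed line, giving $e_p=1=\epsilon_p$. At $p\mid D^-/d_-$ non-split, $h$ is locally the (twisted) Steinberg of sign $\epsilon_p$ and the same JL analysis as in part (1) produces the sign flip $e_p=-\epsilon_p$. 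Combining these three local assertions with the construction of $\iota_{d_-}$ yields the rule stated.

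Part (3) follows by the same template but for the two-factor $L$-parameter: on the Yoshida packet the local component at a place $p$ is a product-type representation, so the Atkin-Lehner sign on $\iota(g,h)$ is $\epsilon_p(g)\,\epsilon_p(h)$ at split $p\mid D^+$ and $-\epsilon_p(g)\,\epsilon_p(h)$ at non-split $p\mid D^-$, the extra sign again coming from local JL. For the oldform statement, the computation is purely local at split primes and amounts to the action of $\theta,\theta',\eta$ on the paramodular lines in a local endoscopic representation, which by \cite[\S 7]{RS} pairs $\eta$ and $\theta+\theta'$ as sign-preserving operations while $\theta-\theta'$ flips the sign. The main obstacle in all three parts is the local JL sign change at $p\mid D^-$; the cleanest route is to read it off the explicit local transfer already built into \cite{RW}, rather than to redo a test-vector computation on the division-algebra side.
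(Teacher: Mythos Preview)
Your high-level plan is right: work prime-by-prime, use $\pi^f_p\simeq\pi^F_p$ at $p\mid D^+$, and compare local Atkin-Lehner eigenvalues across the Jacquet--Langlands transfer at $p\mid D^-$. But the crucial step---the actual sign change $e_p=-w_p$ at $p\mid D^-$---is not established in your proposal.

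You offer two routes for this sign. The first, ``read it off \cite{RW}'', is not a proof: R\"osner--Weissauer match Hecke modules and local packets, but do not tabulate the $\omega_p$-eigenvalue on the $K^-_p$-fixed line versus the $W_p$-eigenvalue on the paramodular line. The second, invoking Remark~\ref{snorm}, is a non sequitur. That remark compares $\theta_p$ with $\chi\circ\nu$ on the $\SO_5$ side; it says nothing about how the Atkin-Lehner eigenvalue on the $\GUone{B_p}$ representation compares with that on the matching $\GSp_2(\Q_p)$ representation. There is no general principle saying ``JL flips Atkin-Lehner signs''; one must compute.

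The paper proves the sign change by identifying the representation types explicitly. For (1), at $p\mid D^-$ one knows $\pi^F_p$ is type IIa, namely $\chi\,\mathrm{St}_{\GL_2}\rtimes\sigma$ with $\chi,\sigma$ unramified, and the table in \cite{RS} gives $w_p=-(\chi\sigma)(p)$. The transferred $\pi^f_p$ is type $\text{IIa}^G$, namely $\chi\,1_{B^\times}\rtimes\sigma$, and one computes directly (factoring $\omega'_p$ through a central element times an element of $K^-(p)$, then evaluating on the induced model) that $\omega_p$ acts by $(\chi\sigma)(p)=-w_p$. For (2) at $p\mid D^-$ three types occur: $\text{IIa}^G$ when $p\mid d_-$ (and Lemma~\ref{trivchi} is needed to force the relevant character trivial), and $\text{Vb}^G$ or $\text{VIc}^G$ when $p\mid D^-/d_-$; each requires its own short induced-model computation, compared against the paramodular eigenvalue of the corresponding type Vb or VIc representation from \cite[Table A.12]{RS}. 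Your proposal collapses all of this into ``same JL analysis as in (1)'', which misses both the case distinction and the actual computations.

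For (3), your oldform citation is also off: the fact that Atkin-Lehner intertwines $\theta$ and $\theta'$ (hence $\theta\pm\theta'$ are eigenoperators with signs $\pm$) is in \cite[\S 3.2]{RS}, not \S 7; and nonvanishing uses the linear-independence theorems \cite[7.5.6, 7.5.8]{RS}.
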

 \begin{proof} 
 \begin{enumerate}
 \item First suppose that $f\in M_{a,b}(\widehat{K}(D))_G$. Let $\pi^F$ be 
the cuspidal automorphic representation of $\GSp_2(\A)$ associated with $F$, and $\pi^f$ the automorphic representation of $\GUtwo{B\otimes\A}$ attached to $f$. For $p\mid D^+$, $\pi^f_p\simeq\pi^F_p$, so $e_p=w_p$. For $p\mid D^-$, we need  to show that $\omega_p$ acts on $f$ by $-w_p$. 
 
As in the proof of \cite[Proposition 12.3]{RW}, $\pi^F_p$ is of type IIa, in the notation (from \cite{RS}) used there. The local component $\pi^F_p$ (for $p\mid D^-$) is of type IIa, i.e. $\chi\,\mathrm{St}_{\GL_2}\rtimes\sigma$, for some unramified characters $\sigma, \chi$ of $\Q_p^{\times}$, with $\chi^2\neq \nu^{\pm 1}, \chi\neq\nu^{\pm 3/2}$, where $\nu$ is an unramified character of $\Q_p^{\times}$ with $\nu(p)=p^{-1}$. (Here we switch temporarily to the notation of \cite{RS}, \cite{Sc}, so $\nu$ here is what would have been $\chi^{-1}$ in the notation of Remark \ref{snorm}, not the spinor norm.) The fact that $\sigma$ and $\chi$ must be unramified follows from the fact that $p\mid\mid D$, and the `$N$' column of \cite[Table A.14]{RS}. By \cite[Table A.12]{RS}, the eigenvalue of $W_p$ acting on $F$ is $w_p=-(\chi\sigma)(p)$. Note that the ambiguous comment above \cite[Table A.15]{RS} suggests this might not be correct, but it is referring to the $\pm$-signs under the dimensions of the fixed spaces, not the entries in the Atkin-Lehner eigenvalue column. This is clearer in \cite[\S 1.3]{MR2114732}.

The local component of $\pi^f_p$, viewed as a representation of $\GUone{B_p}$, is of the type called $\text{IIa}^G$ in \cite[table in A.4]{Sc}. It is $\chi\,1_{B^{\times}}\rtimes\sigma$, for the same $\chi$ and $\sigma$ as above. For $\pi\in B_p$ with $\pi^2=-p$, the Atkin-Lehner element in $\GUone{B_p}$ is
$$\omega'_p=\begin{pmatrix}0&p\\1&0\end{pmatrix}=\begin{pmatrix}\pi &0\\0&\pi\end{pmatrix}\begin{pmatrix}0& -\pi\\\pi^{-1}& 0\end{pmatrix},$$
with $\begin{pmatrix}0& -\pi\\\pi^{-1}& 0\end{pmatrix}\in K^-(p)$, so $\begin{pmatrix}\pi &0\\0&\pi\end{pmatrix}$ is an acceptable alternative Atkin-Lehner element. It now follows from \cite[after Lemma A.2]{Sc} that $\omega_p$ acts on the $K_p^-$-fixed line in $\pi^f_p$ as multiplication by $(\chi\sigma)(p)=-w_p$.

\item Suppose that $f\in M_{a,b}(\widehat{K}(D))_{\mathrm{SK}}$, with
  $f=\iota_{d_-}(h)$ for some eigenform
  $h\in \mathfrak{S}^{\text{$\frac{D^-}{d_-}$-new},
    -(-1)^{\omega(d_-)}}_{2b+4}(\Gamma_0(D/d_-))$, $\epsilon_p$ the
  local sign at $p$ attached to $h$, or rather to the newform $g$ from
  which it comes.

Suppose that $p\mid D^-$. As in the proof of \cite[Proposition 12.2]{RW}, the representation $\pi^f_p$ of $\GUone{B_p}$ could be $\text{IIa}^G$ (if $p\mid d_-$), or $\text{Vb}^G$ or $\text{VIc}^G$ (if $p\mid \frac{D^-}{d_-}$). 

We have already seen that in the case that $\pi^f_p\simeq \chi\,1_{B^{\times}}\rtimes\sigma$, of type $\text{IIa}^G$, $e_p=(\chi\sigma)(p)$. As in \cite[Proposition 5.5.1(i)]{RS}, there is an associated representation of $\GSp_2(\Q_p)$, of type IIb, which is $\chi\,1_{\GL_2}\rtimes\sigma$. The $\sigma$ here is the same as the $\sigma$ in the tables at the end of \cite{RS}, but it corresponds to the $\chi^{-1}\sigma$ in \cite[Proposition 5.5.1(i)]{RS}, where $\chi$ is the same. If we call the $\sigma$ in \cite[Proposition 5.5.1(i)]{RS} ``$\sigma'$'' instead, then $e_p=\sigma'(p)$. In fact $\sigma'$ is trivial, by Lemma \ref{trivchi}. On the other hand, $\epsilon_p=1$, since $p\nmid\frac{D}{d_-}$. Hence $e_p=\epsilon_p$.

Next consider the case that $\pi^f_p$ is $\text{Vb}^G$, so it is $L(\nu^{1/2}\xi\,1_{B^{\times}},\nu^{-1/2}\sigma)$ in the notation of \cite[Proposition A.1]{Sc}, and is a quotient of the induced representation $\nu^{1/2}\xi\,1_{B^{\times}}\rtimes\nu^{-1/2}\sigma$. This is for characters $\xi,\,\sigma$ of $\Q_p^{\times}$, with $\xi$ non-trivial quadratic. Arguing as in the proof of Lemma \ref{trivchi} (considering also elements of $B_p^{\times}$ with unit norm), we see that $\xi$ and $\sigma$ are unramified. (Alternatively, in the next paragraph we can apply $p\mid\mid D$ and the `$N$' column of \cite[Table A.14]{RS}.) Consider $\mathfrak{h}:\GUone{B_p}\rightarrow \mathcal{V}$ a vector in the space of the induced representation $\nu^{1/2}\xi\,1_{B^{\times}}\rtimes\nu^{-1/2}\sigma$, mapping to a $K_p^-$-fixed vector in $L(\nu^{1/2}\xi\,1_{B^{\times}},\nu^{-1/2}\sigma)$, and normalised so that $\mathfrak{h}(\mathrm{id.})=1$. The eigenvalue of $\omega'_p$ is $\mathfrak{h}\left(\left(\begin{smallmatrix}\pi&0\\0&\pi\end{smallmatrix}\right)\right)$, where $\pi\in B_p$ with $\pi^2=-p$. Letting $a=\pi,\,\lambda=p$, this is
$$\mathfrak{h}\left(\left(\begin{smallmatrix}a&0\\0&\lambda\,\overline{a}^{-1}\end{smallmatrix}\right)\right)=\nu^{1/2}\xi(a\overline{a})\,\nu^{-1/2}\sigma(\lambda)=\xi(p)\sigma(p)=-\sigma(p),$$
where $\xi(p)=-1$ because $\xi$ is non-trivial, quadratic and unramified. This computation is of exactly the same type that may be used to prove that $\omega_p$ acts as $(\chi\sigma)(p)$ in the case $\text{IIa}^G$. In the case $\text{Vb}^G$ we have simply substituted $\nu^{1/2}\xi$ for $\chi$ and $\nu^{-1/2}\sigma$ for $\sigma$.

There is a corresponding representation of $\GSp_2(\Q_p)$ of type Vb, which is $L(\nu^{1/2}\xi\mathrm{St}_{\GL_2},\nu^{-1/2}\sigma)$, and by \cite[Table A.12]{RS}, the eigenvalue of $W_p$ on a $K^+_p$-fixed vector is $\sigma(p)$. Since this local representation is of Saito-Kurokawa type (for the same newform $g$), this is $\epsilon_p$, by \cite[Proposition 5.5.8(i)]{RS}. Hence $e_p=-\sigma(p)=-\epsilon_p$, as required.

Finally, suppose that $\pi^f_p$ is $\text{VIc}^G$, which is $\nu^{1/2}\,1_{B^{\times}}\rtimes\nu^{-1/2}\sigma$, with $\sigma$ unramified. Then similarly $e_p=\sigma(p)$. There is a corresponding representation of $\GSp_2(\Q_p)$ of type VIc, which is $L(\nu^{1/2}\mathrm{St}_{\GL_2},\nu^{-1/2}\sigma)$, and by \cite[Table A.12]{RS}, the eigenvalue of $W_p$ on a $K^+_p$-fixed vector is $-\sigma(p)$. Since this local representation is of Saito-Kurokawa type (for the same newform $g$), this is $\epsilon_p$, by \cite[Proposition 5.5.8(i)]{RS}. Hence $e_p=\sigma(p)=-\epsilon_p$, as required.
\item The first part, in the case that $p\mid D^+$, follows from the
  fact that the $L$-parameter of $\pi^{\iota(g,h)}_p$ is a kind of
  direct sum of those of $\pi^g_p$ and $\pi^h_p$ (representations of
  $\GL_2(\Q_p)$), from the behaviour of $\epsilon$-factors under the
  local Langlands correspondence, and the relation between
  Atkin-Lehner eigenvalues and $\epsilon$-factors, for generic
  representations of $\GSp_2(\Q_p)$, which is \cite[Corollary
  7.5.5]{RS}. In the case $p\mid D^-$, the representation
  $\pi^{\iota(g,h)}_p$ of $\GUone{\Q_p}$ is of type $\text{IIa}^G$ (as
    in the proof of \cite[Proposition 12.3]{RW}). The corresponding
    representation of $\GSp_2(\Q_p)$ of type $\text{IIa}$ has the same
    $L$-parameter (cf. \cite[Table 1]{MR2887605}), and we may argue as
    in the proof of (1) to obtain the sign-change in the Atkin-Lehner
    eigenvalue.

The second part follows from the fact that (being a bit sloppy over the distinction between different Atkin-Lehner involutions at different levels), Atkin-Lehner involutions commute with $\eta$ and intertwine $\theta$ and $\theta'$ with each other. (This is in \cite[\S 4]{MR2208781} or \cite[\S 3.2]{RS}.) That none of $\eta, \theta+\theta'$ or $\theta-\theta'$ will produce $0$ follows from the linear independence in \cite[Theorem 7.5.6]{RS}. (See also following \cite[(5.49)]{RS}.)
 \end{enumerate}
 \end{proof}
 \begin{remar}\label{whichd} Returning to the decomposition $$M_{a,b}(\widehat{K}(D)) \simeq \bigoplus_{\stackrel{d \mid D}{\gcd(d,D/d)=1}} \widetilde{M}_{a,b}(\widehat{K(\Lattice)}^+)^{\theta_d},$$ given an eigenform $f\in M_{a,b}(\widehat{K}(D))$ as in the theorem, we want to know for which $d$ the corresponding $\tilde{f}$ lies in $\widetilde{M}_{a,b}(\widehat{K(\Lattice)}^+)^{\theta_d}$. We already know that this is characterised by $p\mid d\iff e_p=-1$, and the theorem, which tells us the $e_p$ for all primes $p\mid D$, thus determines $d$.
 \end{remar}
 \begin{exo}\label{RT61} (1) is illustrated by \cite[Example 9]{RT}, where $D=D^-=61$. Here there is $F\in S_{3,0}(P(61))_G$ with $w_{61}=-1$ (as in \cite[\S 8, Example 1]{MR3315514}), while for the associated $f\in M_{0,0}(\widehat{K}(61))$, $e_{61}=+1$, hence for $\tilde{f}\in\widetilde{M}_{0,0}(\widehat{K(61)}^+)$, $d=1$.
 \end{exo}
 \begin{remar}\label{oldF} The second part of (3) applies equally to the production of oldforms from newforms in (1). For the linear independence, apply \cite[Theorem 7.5.8]{RS}, that $\pi^f_p$ is generic because it is tempered. For $D^-=61$, $D^+=5$, oldforms with both signs for $e_5$, both coming from the newform in the previous remark, are $D_2$ and $E_1$ in \cite[Table 3]{RT}. As expected, $d=1$ when $e_5=+1$ (for $D_2$), while $d=5$ when $e_5=-1$ (for $E_1$).
 \end{remar}
\begin{exo} For the squarefree $D=5\cdot 61$, (2) is illustrated by $A_3$, $C_4$, $D_8$ and $G_2$ in \cite[Table 3]{RT}, while (3) is illustrated by $D_5$ and $F_1$ in \cite[Table 3]{RT}. In reading that table, beware that the Atkin-Lehner eigenvalues are not in the column ``A-L'', but they can be deduced from the values of $d$.
 \end{exo}
 
 \section{Applications to congruences}
 \label{section:congruences}
 \begin{thm} Let $F\in S_{k,j}(P(N))_G$ be a new Hecke eigenform, with $k\geq 3$. For any prime number $\ell$, there exists a $4$-dimensional $\Qbar_{\ell}$-vector space $V$, a continuous representation
 $$\rho_F:\Gal(\Qbar/\Q)\rightarrow\Aut_{\Qbar_{\ell}}(V),$$
 and a Galois-equivariant symplectic pairing $$\langle\cdot,\cdot\rangle: V\times V\rightarrow \Qbar_{\ell}(3-2k-j),$$
 such that for each prime $p\nmid \ell N$ it is unramified, with 
 $\det(I-\rho_F(\Frob_p^{-1})p^{-s})$ the reciprocal of the Euler factor at $p$ in the spin $L$-function of $F$. In particular, for $p\nmid\ell N$, $\tr(\rho_F(\Frob_p^{-1}))$ is the Hecke eigenvalue $\lambda_F(p)$ of the Hecke operator $T(p)$ on $F$.
 
The Hodge-Tate weights of $\rho_F|_{\Gal(\Qbar_{\ell}/\Q_{\ell})}$ are $0,k-2, j+k-1$ and $j+2k-3$. If $\ell\nmid N$ then $\rho_F|_{\Gal(\Qbar_{\ell}/\Q_{\ell})}$ is crystalline, and the Artin conductor of $\rho_F$ is $N$.
 \end{thm}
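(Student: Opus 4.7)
The existence statement is essentially an assembly of results on Galois representations attached to Siegel cuspforms of degree two, so the plan is to cite rather than construct. First I would associate to the paramodular eigenform $F\in S_{k,j}(P(N))_G$ a cuspidal automorphic representation $\pi_F$ of $\GSp_2(\A)$, with $\pi_{F,\infty}$ the holomorphic discrete series of Harish-Chandra parameter determined by $(k,j)$ (through the usual translation of weights for vector-valued Siegel forms), and whose finite part admits a paramodular-invariant vector of exact conductor $N$. Because $F$ lies in the $G$-part, $\pi_F$ is neither CAP nor endoscopic, so by Arthur's multiplicity formula (as in \cite{MR2402681,MR3267112} and cf. Proposition~\ref{prop:yoshida}, Proposition~\ref{prop:saito-kurokawa}) the global Arthur parameter of $\pi_F$ transfers to a unitary cuspidal self-dual automorphic representation $\Pi_F$ of $\GL_4(\A)$ of symplectic type.

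Next I would invoke the construction of $\ell$-adic Galois representations attached to such $\Pi_F$: Weissauer \cite{RW} and Laumon established the Galois representation for the $\GSp_4$ side in the regular, cohomological case (which we are in, thanks to $k\geq 3$), and more uniformly the cuspidal self-dual symplectic $\Pi_F$ on $\GL_4$ falls under the general construction of Galois representations attached to regular algebraic essentially self-dual cuspidal representations of $\GL_n$ (Chenevier--Harris, Shin, and their predecessors). This produces the four-dimensional representation $\rho_F$ together with the symplectic pairing into $\Qbar_\ell(3-2k-j)$; the Hodge--Tate twist is read off from the infinitesimal character of $\pi_{F,\infty}$, giving weights $\{0,k-2,j+k-1,j+2k-3\}$. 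Unramifiedness outside $\ell N$ and the trace formula identifying $\tr\rho_F(\Frob_p^{-1})=\lambda_F(p)$ for $p\nmid\ell N$ follow from local--global compatibility at the good primes, proved by Laumon and Weissauer.

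The harder statements, and the main obstacle, are the claims at the bad primes: crystallinity at $\ell\nmid N$ and the identification of the Artin conductor as $N$ (not merely $N$ up to a divisor). Crystallinity at a good prime $\ell\nmid N$ is established by Mokrane--Tilouine and Weissauer via the crystalline comparison theorem applied to the Siegel variety, once one knows $\pi_{F,\ell}$ is spherical. For the conductor, one uses local--global compatibility at primes $p\mid N$: the local Langlands correspondence of Gan--Takeda for $\GSp_4$, matched with the paramodular newform theory of Roberts--Schmidt \cite{RS} (especially the fact that the paramodular conductor of a generic representation equals the Artin conductor of its $L$-parameter, \cite[Theorem 7.5.4]{RS}), shows that the conductor of $\rho_F$ locally at $p$ is exactly $p^{v_p(N)}$. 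The compatibility of the Langlands correspondence between $\pi_{F,p}$ and $\rho_F|_{G_{\Q_p}}$ at ramified primes is the deepest input; for paramodular Siegel forms of general type this is due to Sorensen and Jorza (with contributions of Mok), giving the required statement up to semisimplification, which together with the conductor formula above suffices.

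Thus the proposal is: (i) produce $\pi_F$ and its $\GL_4$-transfer $\Pi_F$ from Theorem~\ref{thm:Rainer1} and Arthur's classification; (ii) take $\rho_F$ to be the Galois representation attached to $\Pi_F$ by Chenevier--Harris/Shin, or equivalently the one constructed on the $\GSp_4$ side by Weissauer--Laumon; (iii) extract the symplectic pairing from the $\GSp$-structure/self-duality; (iv) read off the Hodge--Tate weights from the infinitesimal character at infinity; (v) conclude crystallinity at $\ell\nmid N$ via Mokrane--Tilouine/Weissauer, and the conductor statement via local--global compatibility combined with Roberts--Schmidt's theory of paramodular newforms. The technically load-bearing step, and the one I would flag as the bottleneck, is the conductor equality at ramified primes.
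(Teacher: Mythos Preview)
Your proposal is correct and matches the paper's own treatment: the paper does not prove this theorem but simply attributes it to the literature, citing \cite[Theorem 3.1]{M} as a summary (with the conductor statement following from local--global compatibility), and noting that over $\Q$ most of it was already due to Weissauer via the $\ell$-adic cohomology of Siegel threefolds. Two small corrections: you do not need Theorem~\ref{thm:Rainer1} to produce $\pi_F$ (the form $F$ is already a Siegel cusp form, so $\pi_F$ is just the automorphic representation it generates), and your citation \cite{RW} points to R\"osner--Weissauer rather than Weissauer's earlier work on Galois representations.
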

 This comes from the work of many mathematicians, and is summarised in \cite[Theorem 3.1]{M}. (The part about the conductor follows from the local-global compatibility.) It is part of a more general theorem, about cohomological automorphic representations of $\GSp_2(\A_F)$, with $F$ a totally real field. This uses a lifting to $\GL_4(\A_F)$, and the $\ell$-adic cohomology of Shimura varieties for unitary groups. But in the case $F=\Q$, most of it had been proved by Weissauer \cite[Theorem I]{MR2234860}, \cite{MR2530981}, using the $\ell$-adic cohomology of Siegel modular three-folds.
 
 It is expected that $\rho_F$ is irreducible. If we choose a $\Gal(\Qbar/\Q)$-invariant $\Zbar_{\ell}$-lattice then reduce modulo the maximal ideal, we get a representation $\rhobar_F$ of $\Gal(\Qbar/\Q)$ on a $4$-dimensional $\Fbar_{\ell}$-vector space.
For some $\ell$ it may be reducible, in which case it depends on the choice of invariant lattice, but its irreducible composition factors are independent of the choice.

If $\theta$ on $W$ is a composition factor of $\rhobar_F$ then so is $\Hom_{\Fbar_{\ell}[\Gal(\Qbar/\Q)]}(W,\Fbar_{\ell}(3-2k-j))$, i.e. $\theta^*(3-2k-j)$. If $\ell>j+2k-2$  and $\ell\nmid N$ then the Hodge-Tate weights can be detected on $\rhobar_F$, via its associated Fontaine-Lafaille module \cite{MR707328}. They come in twisted dual pairs $\{0, j+2k-3\}$ and $\{k-2, j+k-1\}$. It follows that if  $N$ is square-free, then any $1$-dimensional composition factor must be unramified away from $\ell$, to avoid it and its twisted dual partner (which is different) contributing a square factor to the conductor.

Therefore, for $\ell>j+2k-2$, $\ell\nmid N$ and $N$ square-free, the only possible pairs of $1$-dimensional composition factors of $\rhobar_F$ are $\{\Fbar_{\ell}, \Fbar_{\ell}(3-2k-j)\}$ and $\{\Fbar_{\ell}(2-k), \Fbar_{\ell}(1-k-j)\}$. 
Hence if $\rhobar_F$ is reducible, the possibilities for its composition factors are as follows.
\begin{enumerate}
\item $\{\rhobar_g, \Fbar_{\ell}(2-k), \Fbar_{\ell}(1-k-j)\}$, where $\rhobar_g$ is a $2$-dimensional representation attached to a cuspidal Hecke eigenform for $\Gamma_0(N)$, of weight $j+2k-2$. 
\item $\{\rhobar_g(2-k), \Fbar_{\ell}, \Fbar_{\ell}(3-2k-j)\}$, where $\rhobar_g$ is a $2$-dimensional representation attached to a cuspidal Hecke eigenform for $\Gamma_0(N)$, of weight $j+2$. 
\item $\{\rhobar_g, \rhobar_h(2-k)\}$, $2$-dimensional representations attached to cuspidal Hecke eigenforms $g$ of weight $j+2k-2$, $h$ of weight $j+2$, of levels $\Gamma_0(M)$ and $\Gamma_0(N/M)$ for some $M\mid N$.
\item $\{\Fbar_{\ell}, \Fbar_{\ell}(2-k), \Fbar_{\ell}(1-k-j), \Fbar_{\ell}(3-2k-j)\}$.
\end{enumerate}
We have used here the theorem of Khare and Wintenberger \cite{MR2551763, MR2551764} and Kisin \cite{MR2551765}, that an odd, irreducible representation of $\Gal(\Qbar/\Q)$ with coefficients in $\Fbar_{\ell}$ is modular, proved also by Dieulefait in the case that $\ell N$ is odd \cite{MR2959671}. These reducibilities translate into congruences of Hecke eigenvalues as follows, for all primes $p\nmid \ell N$. The instances we prove are actually for all $p\nmid N$.
\begin{enumerate}
\item $\lambda_F(p)\equiv a_p(g)+p^{k-2}+p^{j+k-1}\pmod{\lambda}$, with $g$ a cuspidal Hecke eigenform for $\Gamma_0(N)$, of weight $j+2k-2$.
\item $\lambda_F(p)\equiv p^{k-2}a_p(g)+1+p^{j+2k-3}\pmod{\lambda}$, with $g$ a cuspidal Hecke eigenform for $\Gamma_0(N)$, of weight $j+2$.
\item $\lambda_F(p)\equiv a_p(g)+p^{k-2}a_p(h)$, with cuspidal Hecke eigenforms $g$ of weight $j+2k-2$, $h$ of weight $j+2$, of levels $\Gamma_0(M)$ and $\Gamma_0(N/M)$ for some $M\mid N$.
\item $\lambda_F(p)\equiv 1+p^{k-2}+p^{j+k-1}+p^{j+2k-3}\pmod{\lambda}$.
\end{enumerate}
Here, $\lambda$ is a divisor of $\ell$ in a sufficiently large extension of $\Q$. Unless the conductor of $\rhobar_F$ happens to be a proper divisor of the conductor of $\rho_F$, $g$ in (1) and (2), and $g, h$ in (3), will be newforms. This will certainly be the case in the instances of such congruences that we prove, which will be of types (1) and (2).

The strategy for proving instances of such congruences is as follows. Assuming $N$ is exactly divisible by at least one prime, we choose a square-free divisor $D^-$ of $N$ with $\omega(D^-)$ odd, and apply Theorem \ref{thm:Rainer1} for suitable $D=D^-D^+$ with $\gcd(D^-,D^+)=1$ and $N\mid D$. So we take $F$ (or some oldform derived from $F$ if $N\neq D$) in $S_{k,j}^{\text{$D^-$-new}}(P(D))_G$, and corresponding $f\in M_{j+k-3,k-3}(\widehat{K}(D))_G$. For $p\nmid D$, the Hecke eigenvalue $\lambda_F(p)$, the left-hand-side of the congruence, is the same as the eigenvalue of $T(p)$ on $f$. 

The aim is  to arrange for there to be another Hecke eigenform $f_1\in M_{j+k-3,k-3}(\widehat{K}(D))$ such that the right-hand-side is the eigenvalue of $T(p)$ on $f_1$, or is at least congruent to it modulo $\lambda$. If we can then observe a congruence $\pmod{\lambda}$ between the vectors $f$ and $f_1$ inside 
$M_{j+k-3,k-3}(\widehat{K}(D))$ (with respect to the natural integral structure on the coefficient module $W_{j+k-3, k-3}$) then the desired congruence of Hecke eigenvalues follows, at least for $p\nmid D$. (In fact this gives us a congruence of Hecke eigenvalues also for $T_1(p^2)$, not just for $T(p)$.)

We further arrange for $f$ and $f_1$ to have matching Atkin-Lehner eigenvalues for $p\mid D$. Then in practice we actually prove the congruence between corresponding $\tilde{f}$ and $\tilde{f_1}$ in $\widetilde{M}_{j+k-3,k-3}(\widehat{K(\Lattice)}^+)^{\theta_d}$, for some suitable $d\mid D$ with $\gcd(d,D/d)=1$, determined by Theorem \ref{ALsignchange}. Actually, except in the first following section, we shall see some interesting departures from the simple strategy outlined above, with the involvement of additional eigenforms in $\widetilde{M}_{j+k-3,k-3}(\widehat{K(\Lattice)}^+)^{\theta_d}$, or even of two spaces for different genera of lattices.

 \subsection{Congruences between forms of Saito-Kurokawa and general types}
 First we look at case (1), in the sub-case $j=0$, with a newform $g\in S_{2k-2}(\Gamma_0(N))$. If the sign in the functional equation of $L(g,s)$ is $(-1)^{\omega(N)}$, and $q$ is an auxiliary prime when $\omega(N)$ is even, then for $p\nmid N$ (or $p\nmid Nq$), $a_p(g)+p^{k-2}+p^{k-1}$ is the Hecke eigenvalue of $T(p)$ on the Saito-Kurokawa lift $$\hat{g}=\begin{cases}\iota_1(g) & \text{if $\omega(N)$ is odd};\\\iota_q(g) & \text{if $\omega(N)$ is even}.\end{cases}$$ In Proposition \ref{prop:saito-kurokawa}, $D=D^-=N$ or $Nq$, and $d_-=1$ or $q$. We seek then a congruence between $\tilde{f}$ and $\tilde{\hat{g}}$ in $M_{k-3,k-3}(\widehat{K(\Lattice)}^+)^{\theta_d}$, for suitable $d$, where $\tilde{f}$ comes from a newform $F\in S_k(P(N))_G:=S_{k,0}(P(N))_G$.
 \subsubsection{Example with $N=61$, $k=3$, $\ell=43$}\label{61and43}
 The space $S_4(\Gamma_0(61))$ (\lmfdbform{61}{4}{a}) is
 $15$-dimensional, spanned by a newform $g$ (\lmfdbfform{61}{4}{a}{a})
 with Hecke eigenvalues in a number field $E$ of degree $6$, and a
 newform whose coefficient field has degree $9$. The sign in the
 functional equation of $L(g,s)$ is $\epsilon_{61}=-1$. Consequently
 there exists a $6$-dimensional subspace of Saito-Kurokawa lifts, of
 $g$ and its conjugates, inside $S_3(P(61))$. In \cite[\S 8, Example
 1]{MR3315514}, Poor and Yuen refer to this as the subspace of
 Gritsenko lifts of associated Jacobi forms. They show that
 $S_3(P(61))$ is $7$-dimensional, with $S_3(P(61))_G$ spanned by a
 Hecke eigenform $F$ with rational Fourier coefficients and
 Atkin-Lehner eigenvalue $w_{61}=-1$. They also prove (with
 appropriate scaling) a mod $43$ congruence of Fourier coefficients
 between $F$ and some Gritsenko lift (not a Hecke eigenform). It is
 easy to deduce from this a congruence of Hecke eigenvalues (or even
 of Fourier coefficients) between $F$ and the Saito-Kurokawa lift
 $\mathrm{SK}(g)$, modulo some prime divisor $\lambda$ of $43$ in
 $E$, as in \cite[Example 5.7]{DGlas}. It is (for all primes $p\neq 61$)
 $$\lambda_F(p)\equiv a_p(g)+p+p^2\pmod{\lambda}.$$
 
We can recover this as a congruence of Hecke eigenvalues between associated $f$ and $\hat{g}$ in $M_{0,0}(\widehat{K}(61))$, with $D=D^-=61, d_-=1$. (Note that, unlike the method of Poor and Yuen, ours does not lead to a congruence of Fourier coefficients.) By Theorem \ref{ALsignchange}, $\omega_{61}$ has eigenvalue $e_{61}=1$ on both $f$ and $\hat{g}$. So both $\tilde{f}$ and $\tilde{\hat{g}}$ find themselves inside $\widetilde{M}_{0,0}(\widehat{K(\Lattice)}^+)^{\theta_1}$ (cf. Remark \ref{whichd}). In fact, as noted in Example \ref{RT61}, the space $\widetilde{M}_{0,0}(\widehat{K(\Lattice)}^+)^{\theta_1}$ was computed in \cite[Example 9]{RT}. Computing with integral coefficients, the congruence between the suitably scaled eigenvectors $\tilde{f}$ and $\tilde{\hat{g}}$ may be observed directly.

We computed in Sage \cite{sagemath}, using the package \texttt{quinary\_module\_l.sage}, which may be found in \cite{rama_quinary}. The quadratic form $q$ is associated to a special lattice of determinant $2D$, with $D=61$. All the quinary forms used in this article, like those tabulated in \url{http://www.cmat.edu.uy/cnt/omf5/}, were obtained via a box search.

\begin{verbatim}
sage: q = QuadraticForm(ZZ, 5, [1, 0, 0, 1, 1, 1, 0, 1, 0, 1, 0, 0, 
1, 0, 8])
sage: qmod = quinary_module(q)
sage: T2 = qmod.Tp_d(2, 1)
sage: T2
[7 4 4 0 0 0 0 0]
[1 4 3 3 3 1 0 0]
[1 3 3 0 0 0 2 6]
[0 2 0 5 0 2 2 4]
[0 6 0 0 1 0 4 4]
[0 1 0 3 0 9 0 2]
[0 0 4 6 4 0 1 0]
[0 0 3 3 1 1 0 7]
sage: T2.fcp()
(x - 15) * (x + 7) * (x^6 - 29*x^5 + 322*x^4 - 1714*x^3 + 4471*x^2 - 
5205*x + 2026)
sage: v61 = (T2 + 7).right_kernel().0
sage: v61
(0, 6, -6, -4, -12, 0, 12, 3)
sage: K.<a> = NumberField(T2.fcp()[2][0])
sage: vSK = (T2 - a).right_kernel().0
sage: vSK*= denominator(vSK)
sage: I = K.ideal([43, a + 7])
sage: R = K.residual_field(I)
sage: (v61 - 4*vSK).change_ring(R) == 0
True
\end{verbatim}
Note that we are able to use the single Hecke operator $T(2)$ to decompose the space into simple Hecke submodules.

Generalising a construction of Ribet, as in \cite[\S 11]{MR4301048}, the invariant $\Zbar_{43}$-lattice in $V$ may be chosen in such a way that inside $\rhobar_F$ we get a non-trivial extension of $\Fbar_{43}(-1)$ by $\rhobar_g$, hence of $\Fbar_{43}$ by $\rhobar_g(1)$. This leads to a non-zero element in a certain Selmer group, which by the Bloch-Kato conjecture should lead to divisibility by $\lambda$ of a suitably normalised algebraic part $L^N_{\mathrm{alg}}(3, g)$ (Euler factors at primes dividing $N$ omitted). Note that $3$ is paired with $1$ by the functional equation with respect to $s$ and $4-s$. More generally, $j+k$ is 
paired with $k-2$. Using the command LRatio in the Magma computer package \cite{
MR1484478}, one readily checks that in fact $43\mid \Nm_{E/\Q}(L_{\mathrm{alg}}(
3, g))$.

A general theorem of Brown and Li, Corollary 6.14 in \cite{MR4301048}, proves a mod $\lambda$ congruence of Hecke eigenvalues between $\mathrm{SK}(g)$ ($g\in S_{2k-2}(\Gamma_0(N)^-$ a newform) and a Hecke eigenform $F\in S_k(P(N))_G$, from divisibility by $\lambda$ of $L_{\mathrm{alg}}(k, g)$, under various conditions. These include that $k\geq 6$, so it does not apply here.

\subsubsection{Example with $N=89$, $k=3$, $\ell=29$}\label{89and29}

As in the previous example, we can prove a congruence between the Hecke eigenvalues of the classical modular form $g$ with label \lmfdbform{89}{4}{b}
and a paramodular form in $S_3(P(89))_G$. More precisely, the space $S_3(P(89))$ decomposes as the sum of $4$ eigenspaces, $3$ of degree $1$ and $1$ of
degree $6$. This can be proved as before using the Hecke operator $T(2)$. Because the degree of $E$, the coefficient field of $g$, is $6$ we conclude
that the eigenspace of degree $6$ in $S_3(P(89))$ must correspond to a Saito-Kurokawa lift of $g$. Of the other $3$ eigenspaces, two correspond to Saito-Kurokawa lifts, which we can identify by looking at their eigenvalues. The third one, spanned by $F$, must be in $S_3(P(89))_G$. Using the same argument as in the previous example, we have proved the following.
\begin{thm}
The following congruence holds for all primes $p\neq 89$:
\[\lambda_F(p)\equiv a_p(g) + p + p^2\pmod{\lambda},\]
where $\lambda$ is a prime divisor of $29$ in $E$.
\end{thm}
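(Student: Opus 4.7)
The plan is to mirror the strategy already carried out for the $N=61$, $\ell=43$ example in Section~\ref{61and43}, transferring the problem from paramodular forms to algebraic modular forms on a quinary orthogonal group and then exhibiting the congruence as an identity between integral eigenvectors in a concrete finite-dimensional space. Since $89$ is prime and $\omega(89)=1$ is odd, we take $D=D^-=89$ and $D^+=1$, so that Theorem~\ref{thm:Rainer1} gives an isomorphism $S_3(P(89))_G\simeq M_{0,0}(\widehat{K}(89))_G$ preserving Hecke eigenvalues away from $89$. Let $f\in M_{0,0}(\widehat{K}(89))_G$ be the eigenform corresponding to $F$.

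Next I would produce the Saito-Kurokawa companion inside the same $\GUtwo{B}$-space. The sign in the functional equation of $L(s,g)$ for the newform $g\in S_4(\Gamma_0(89))$ (label \lmfdbform{89}{4}{b}) is $\epsilon_{89}=-1$, which equals $-(-1)^{\omega(d_-)}$ with $d_-=1$, so Proposition~\ref{prop:saito-kurokawa} provides an eigenform $\hat g=\iota_1(g)\in M_{0,0}(\widehat{K}(89))_{\mathrm{SK}}$ whose $T(p)$-eigenvalue for $p\neq 89$ is exactly $a_p(g)+p+p^2$. Theorem~\ref{ALsignchange} then computes the Atkin-Lehner signs at $p=89$: for $f$ the relation $e_{89}=-w_{89}$ together with $w_{89}=-1$ for the general-type form $F$ gives $e_{89}(f)=+1$, and for $\hat g$ the identity $e_{89}=-\epsilon_{89}$ gives $e_{89}(\hat g)=+1$. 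Hence by Remark~\ref{whichd} both $\tilde f$ and $\tilde{\hat g}$ lie in the single summand $\widetilde M_{0,0}(\widehat{K(\Lattice)}^+)^{\theta_1}$ of the decomposition in Corollary~\ref{coro:ALdecomposition}.

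At this point the congruence reduces to a finite computation inside that summand: pick a totally positive definite quinary lattice of determinant $2\cdot 89$ which is special with $\HW_{89}=-1$ (guaranteed by Remark~\ref{Ltilde} together with Proposition~\ref{HW}), form the corresponding quinary module, compute a single Hecke operator (say $T(2)$, via the $2$-neighbour operator of Proposition~\ref{Hecke}) and factor its characteristic polynomial. The decomposition $1+1+1+6$ of $S_3(P(89))$ matches the expected splitting: the degree-$6$ factor is forced to be $\iota_1(g)$ by comparing eigenvalues, two rational eigenlines are Saito-Kurokawa lifts of rational newforms in $S_4(\Gamma_0(89))$ which we can identify by their $a_2$, and the remaining rational eigenline must be $\tilde f$. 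I would then scale both integral eigenvectors $\tilde f$ and $\tilde{\hat g}$, choose a prime $\lambda\mid 29$ of $E$ lying above the appropriate factor of the charpoly, and verify directly in the residue field that the two eigenvectors are proportional mod $\lambda$; this immediately implies the congruence of Hecke eigenvalues for every $p\neq 89$.

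The only genuine obstacle is verifying the integral congruence of eigenvectors on the computer, which is the analogue of the final Sage block in Section~\ref{61and43}; everything else is bookkeeping with Theorems~\ref{thm:Rainer1}, \ref{ALsignchange} and Propositions~\ref{prop:saito-kurokawa}, \ref{Hecke}. As supporting evidence for why $29$ should appear, I would compute $L^{89}_{\mathrm{alg}}(3,g)$ in Magma and check that its norm down to $\Q$ is divisible by $29$, consistent with the Bloch-Kato heuristic that produced $43$ in the $N=61$ case; this is not logically necessary for the theorem but explains the modulus and confirms the computation was set up correctly.
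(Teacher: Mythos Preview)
Your proposal is correct and follows essentially the same approach as the paper: transfer to the quinary orthogonal setting with $D=D^-=89$, decompose the space via $T(2)$ into pieces of degrees $1+1+1+6$, identify the degree-$6$ block as the Saito--Kurokawa lift of $g$ and the remaining rational eigenline as $F$, then verify the mod-$\lambda$ collinearity of the integral eigenvectors. Your write-up is in fact more detailed than the paper's own sketch (which simply refers back to the $N=61$ computation), correctly invoking Theorem~\ref{ALsignchange} to place both eigenvectors in the $\theta_1$ summand and adding the Bloch--Kato check on $L_{\mathrm{alg}}(3,g)$ as motivation.
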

 
 \subsection{Harder's conjecture for paramodular level: examples of Fretwell}
Still in case (1), we suppose now that $j>0$. Then the right hand side $a_p(g)+p^{k-2}+p^{j+k-1}$ is no longer the Hecke eigenvalue of $T(p)$ on some element of $S_{k,j}(P(N))$. Still, one may conjecture that if $\ell>j+2k-2$ and $\lambda$ divides $L_{\mathrm{alg}}(j+k, g)$ then there exists a Hecke eigenform $F\in S_{k,j}(P(N))_G$ satisfying the congruence
$$\lambda_F(p)\equiv a_p(g)+p^{k-2}+p^{j+k-1}\pmod{\lambda}.$$

This conjecture was made by Harder in the case $N=1$ \cite{MR2409680}, and an instance $(k,j,\ell)=(10,4,41)$ was proved by Chenevier and Lannes \cite{MR3929692}, using algebraic modular forms, with constant coefficients, for orthogonal groups of even unimodular lattices of rank $24$.

For $N=2,3,5,7$, Fretwell \cite{MR3803970} found experimental evidence for several instances of such congruences, in each case checking it for a few small values of $p$.
He computed Hecke eigenvalues by computing traces of Hecke operators on $1$-dimensional spaces of algebraic modular forms for $\GUtwo{B}$, with $B$ a definite quaternion algebra over $\Q$, ramified at $N$. In Theorem \ref{mod7} below, we prove an instance of a congruence of the same type, but for larger $N$. Three further examples we have proved, noted in \S \ref{further}, include one of Fretwell's. 

\subsubsection{Example with $N=19$, $k=3$, $j=2$, $\ell=7$} Notice
that $\ell>j+2k-2$, just. Consider the newform
$g\in S_6(\Gamma_0(19))$, with rational Hecke eigenvalues and
$\epsilon_{19}=-1$
(\lmfdbfform{19}{6}{a}{a})). Its Fourier expansion starts as
$$g=q-6q^2+4q^3+4q^4+54q^5-24q^6+\ldots.$$ Using the command LRatio in
the Magma computer package \cite{MR1484478}, one checks that
$7\mid L_{\mathrm{alg}}(5, g))$, so we expect a Hecke eigenform
$F\in S_{3,2}(P(19))_G$ with
\begin{equation}
  \label{eq:ex1}
\lambda_F(p)\equiv a_p(g)+p+p^4\pmod{7},  
\end{equation}
for all primes $p\nmid 7\cdot 19$.

\begin{thm}\label{mod7}
  The congruence (\ref{eq:ex1}) holds for all $p\neq 19$.
\end{thm}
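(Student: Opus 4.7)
The plan follows the strategy for Harder-type congruences with $j>0$ sketched at the start of \S\ref{section:congruences}. The first step is to realize the right-hand side $a_p(g)+p+p^4=a_p(g)+p(1+p^3)$ modulo $7$ as a Hecke eigenvalue on a Yoshida lift. Since $1+p^3$ is the $T(p)$-eigenvalue of the level-$1$ weight-$4$ Eisenstein series, we pick an auxiliary prime $q$ with $q^4\equiv 1\pmod 7$ (equivalently $q\equiv\pm 1\pmod 7$), together with a cuspidal newform $h\in S_4(\Gamma_0(q))$ satisfying $a_p(h)\equiv 1+p^3\pmod 7$ for all $p\neq q$ (an Eisenstein-type congruence at level $q$). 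Applying Proposition \ref{prop:yoshida} with $D^-=q$, $D^+=19$, $d_-=q$, $d_+=1$, $c_+=19$ then produces a Yoshida eigenform $\iota(h,g)\in M_{2,0}(\widehat K(19q))$, and the formula for its spinor $L$-function yields
\[
\lambda_{\iota(h,g)}(p)=p\,a_p(h)+a_p(g)\equiv a_p(g)+p+p^4\pmod 7
\]
for every $p\nmid 7\cdot 19q$.

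The key subtlety, as flagged in \S\ref{section:congruences}, is that $\iota(h,g)$ lives in the quinary genus of determinant $2\cdot 19q$ whose Hasse--Witt invariant is $-1$ at $q$ (``Genus B''), whereas $F\in S_{3,2}(P(19))_G$ corresponds via Theorem \ref{thm:Rainer1} to an eigenvector $f\in M_{2,0}(\widehat K(19))_G$ in the genus of determinant $2\cdot 19$ with Hasse--Witt $-1$ at $19$; the Roberts--Schmidt oldforms of $f$ at paramodular level $19q$ live in a different genus (``Genus A'', of determinant $2\cdot 19q$ with Hasse--Witt $-1$ at $19$). To bridge these two genera we look for a mediating paramodular newform $F'\in S^{\text{new}}_{3,2}(P(19q))_G$; by Theorem \ref{thm:Rainer1} applied to each genus separately, such an $F'$ has representatives $f'_A\in M_{2,0}(\widehat K(19q))_G$ in Genus A and $f'_B\in M_{2,0}(\widehat K(19q))_G$ in Genus B, sharing the same Hecke eigensystem.

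The core of the proof is then the explicit computational verification, using the quinary-neighbour algorithm of \cite{RT} exactly as in \S\ref{61and43} and \S\ref{89and29}, of two mod-$\lambda$ eigenvector congruences under an appropriate choice of integral structure on $W_{2,0}$: in Genus A, that $f'_A$ is congruent to a suitable $\theta,\theta'$-oldform of $f$ (a Ribet-type level-raising congruence for $F$ from $P(19)$ to $P(19q)$), and in Genus B, that $f'_B$ is congruent to $\iota(h,g)$. Concatenating these and using $\lambda_{F'}(p)=\lambda_{f'_A}(p)=\lambda_{f'_B}(p)$ for $p\nmid 19q$, we obtain $\lambda_F(p)\equiv a_p(g)+p+p^4\pmod 7$ for every prime $p\nmid 19q$ (including $p=7$, since $7\nmid 19q$). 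To reach the remaining prime $p=q$ we invoke Chebotarev and Brauer--Nesbitt: both $\overline\rho_F^{\mathrm{ss}}$ and $\overline\rho_g\oplus\overline{\epsilon}_7\oplus\overline{\epsilon}_7^{4}$ are $4$-dimensional continuous semisimple mod-$7$ representations of $\Gal(\overline{\Q}/\Q)$ unramified at $q$ (as $q\neq 7,19$) whose Frobenius traces agree on a positive-density set of primes, so they are isomorphic as semisimple representations and their traces agree at $\mathrm{Frob}_q$ as well.

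The main obstacle is the middle step: one must identify an explicit small prime $q$ for which simultaneously the Eisenstein-type weight-$4$ cuspidal newform $h$ at level $q$ exists, the mediating Siegel paramodular newform $F'\in S^{\text{new}}_{3,2}(P(19q))_G$ exists, and both required mod-$\lambda$ eigenvector congruences can be verified by direct computation in the two quinary genera of determinant $2\cdot 19q$. Since the computational cost of a quinary genus of that determinant grows rapidly with $q$, finding a sufficiently small admissible $q$ among the candidates $q=13,29,41,\dots$ satisfying $q\equiv\pm 1\pmod 7$ is crucial for the concrete verification.
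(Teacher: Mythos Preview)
Your proposal is correct and follows essentially the same route as the paper: choose an auxiliary prime $q$ with $q^4\equiv 1\pmod 7$ (the paper takes $q=13$), form the Yoshida lift $\iota(h,g)$ in the genus ramified at $q$, take an oldform of $F$ in the genus ramified at $19$, and bridge the two via a paramodular newform of level $19q$ appearing in both genera, verifying the two mod-$7$ eigenvector congruences by direct quinary computation. The only minor deviation is your treatment of the leftover prime $p=q$: the paper simply checks it by hand (or repeats with a different $q$), whereas you invoke Brauer--Nesbitt on the semisimplified mod-$7$ Galois representations, which is a valid and arguably cleaner way to finish.
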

\begin{proof}
We may view the right-hand-side as $a_p(g)+p(1+p^3)$, which looks like
the Hecke eigenvalue of a Yoshida lift of $g$ and an Eisenstein series
of weight $4$ and level $1$. Though there is no such thing, we can
replace the Eisenstein series by a cuspidal Hecke eigenform $h$ of
weight $4$ and prime level $q$, chosen to have the same Hecke
eigenvalues mod $7$. Such an $h$ exists as long as
$q^4\equiv 1\pmod{7}$. This is an instance of a general theorem on
congruences ``of local origin'', conjectured by Harder \cite{HSO} and
proved independently in \cite[Theorem 1.1]{MR3227346} and by Billerey
and Menares \cite{MR3512875}. In the case at hand, the smallest $q$
that works is $q=13$, and we can find the form $h$
(\lmfdbfform{13}{4}{a}{a}) in the LMFDB \cite{lmfdb}, with
$\epsilon_{13}=-1$, $$h=q-5q^2-7q^3+17q^4-7q^5+35q^6+\ldots.$$

By Proposition \ref{prop:yoshida}, with $a=2, b=0$, $d_-=D^-=13$, $c_+=D^+=19$, we have $f_1=\iota(h, g)\in M_{2,0}(\widehat{K}(13\cdot 19))$. By Theorem \ref{ALsignchange}, $\omega_{13}$ and $W_{19}$ have eigenvalues $e_{13}=1, e_{19}=-1$ on $f_1$. Hence the corresponding $\tilde{f_1}$ lives in $\widetilde{M}_{2,0}(\widehat{K(\Lattice_1)}^+)^{\theta_{19}}$, where $\Lattice_1$ is the lattice associated to $D^- = 13$ and $D^+ = 19$.

 With $D=D^-=19$, one finds that the space $\widetilde{M}_{2,0}(\widehat{K(\Lattice)}^+)^{\theta_{1}}$ (with a different $\Lattice$ for $D=19$) is $1$-dimensional, spanned by an eigenform corresponding by Theorem \ref{thm:Rainer1} to our target $F\in S_{3,2}(P(19))_G$, with $w_{19}=-1$ by Theorem \ref{ALsignchange}. Following Remark \ref{oldF} (and switching to $D^- = 19$, $D^+ = 13$), we can manufacture an associated oldform $f\in M_{2,0}(\widehat{K}(19\cdot 13))$ with $e_{19}=1, e_{13}=-1$, whose associated $\tilde{f}$ is in $\widetilde{M}_{2,0}(\widehat{K(\Lattice_2)}^+)^{\theta_{13}}$, where $\Lattice_2$ corresponds to $D^- = 19$, $D^+ = 13$.

The space
$\widetilde{M}_{2,0}(\widehat{K(\Lattice_1)}^+)^{\theta_{19}}$ (working with coefficients in a $\Q$-vector space)
decomposes as $A_1\oplus A_2\oplus A_3$ where $A_1$ corresponds to the
Yoshida lift $f_1$, $A_2$ to the Yoshida lift of the forms \lmfdbfform{13}{4}{a}{a}
and \lmfdbfform{19}{6}{a}{d}. Finally, $A_3$ corresponds to a paramodular newform of
level $13\cdot 19$ for $k=3$, $j=2$.

The space
$\widetilde{M}_{2,0}(\widehat{K(\Lattice_2)}^+)^{\theta_{13}}$
decomposes as $B_1\oplus B_2\oplus B_3$ where $B_1$ corresponds to the
oldform $\tilde{f}$, $B_2$ to the Yoshida lift of the forms
\lmfdbfform{19}{4}{a}{a} and \lmfdbfform{13}{6}{a}{b}. Lastly, $B_3$
corresponds to a paramodular newform of level $13\cdot 19$ for $k=3$, $j=2$, so it is isomorphic to $A_3$.

If we work with coefficients in $\Z$-modules rather than $\Q$-vector spaces, we do not get direct sum decompositions. The space $A_1$ is included in $A_3$ modulo $7$, and $B_1$ is included in $B_3$ modulo $7$. In fact, the isomorphism of $A_3$ to $B_3$ sends $A_1$ to $B_1$ modulo $7$. We conclude that the eigenvalues of $\tilde{f}$ and $f_1$ must be the same modulo $7$. This would be for all $p\nmid 19\cdot 13$, but we may check the congruence for $p=13$ by hand, or by choosing a different $q$, as remarked below.
\end{proof}

The computations for this theorem were done using a package similar to that used in the previous example, but implemented in Pari/GP
\cite{PARI2}, where we implemented the representations in general. This
also can be found in \cite{rama_quinary}.

The next three primes $q$ such that $q^4\equiv 1\pmod{7}$ are
$q=29, 41$ and $43$. We checked that the same congruence can be proved
in the same fashion using any of these $q$ in place of $q=13$. Note in
particular that in all such cases there exists a congruence modulo $7$
between the paramodular form $F$ and a paramodular newform of level
$19\cdot q$. This naturally raises the following question.

\begin{ques}[level-raising]\label{lraise}
  Consider a new Hecke eigenform $F\in S_{k,j}(P(N))_G$ such that (for
  some $\ell>j+2k-2$) $\rhobar_F$ is reducible of type (1). Is it true
  that for any prime $q$ such that $q^{j+2}\equiv 1\pmod{\ell}$, there
  exists a new Hecke eigenform $H\in S_{k,j}(P(qN))_G$ such that
  $\rhobar_H$ has the same composition factors as $\rhobar_F$?
\end{ques}
Note that $\rhobar_F$ is reducible, so although we call this ``level-raising'', it should perhaps be viewed as more closely analogous to \cite{MR3512875}, \cite{MR3227346} or \cite{MR3917209} than to well-known results of Diamond and Ribet on level-raising for residually irreducible representations \cite{MR804706}, \cite{MR804706}.
\subsubsection{Further examples}\label{further} With the above approach we only succeeded in proving one of the examples of Fretwell. The reason is that our method
is very efficient for small values of $j$, in which case the dimension of
$W_{j+k-3, k-3}$ (especially for small $k$) can be manageably small,
cf. Theorem \ref{Slambda}.

The example we could handle is when $N=5$, $k=7$, $j=2$, and $\ell =
61$. Let $g\in S_{14}(\Gamma_0(5))$ be the newform with LMFDB label
\lmfdbfform{5}{14}{a}{b} and coefficient field $E$ of degree $3$, and
$F\in S_{7, 2}(P(5))_G$.
\begin{thm}
  The following congruence holds, for all primes $p\neq 5$:
\[\lambda_F(p) \equiv a_p(g) + p^3 + p^8\pmod{\lambda}
\]
where $\lambda$ is a prime in $E$ that divides $61$.  
\end{thm}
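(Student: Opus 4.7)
The plan is to adapt the Yoshida-lift technique of Theorem~\ref{mod7} to this larger weight. The right-hand side rearranges as $a_p(g)+p^{k-2}+p^{j+k-1}=a_p(g)+p^{5}(1+p^{3})$, where $1+p^3$ is the Hecke eigenvalue of a weight-$4$ Eisenstein series for $\SL_2(\Z)$. Since the Eisenstein series cannot arise as a Siegel cusp form, I would invoke a congruence of local origin (\cite{MR3227346,MR3512875}) to replace it by a cuspidal eigenform $h\in S_4(\Gamma_0(q))$ satisfying $a_p(h)\equiv 1+p^3\pmod{\lambda_h}$ for a prime $\lambda_h\mid 61$ of the coefficient field of $h$. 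The required condition is $q^{j+2}=q^4\equiv 1\pmod{61}$; the solutions to $x^4\equiv 1\pmod{61}$ are $\{\pm 1,\pm 11\}$ (since $11^2\equiv -1$), so the smallest prime that works is $q=11$, and I would identify the appropriate Hecke eigenform $h\in S_4(\Gamma_0(11))$ by a direct check in LMFDB.

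With this $h$ in hand, Proposition~\ref{prop:yoshida} applied with $D^-=11$, $D^+=5$, $d_-=11$, $d_+=1$, $c_+=5$ gives a Yoshida lift $f_1:=\iota(h,g)$ in $M_{6,4}(\widehat{K}(55))\otimes_{\Q}E$, where $E$ is a number field containing the coefficient fields of both $h$ and $g$. The Satake-parameter calculation yields $T(p)\cdot f_1=(a_p(g)+p^{k-2}a_p(h))\cdot f_1\equiv (a_p(g)+p^5+p^8)\cdot f_1\pmod{\lambda}$ for a suitable prime $\lambda\mid 61$ of $E$. By Theorem~\ref{ALsignchange}(3), the Atkin-Lehner eigenvalues of $f_1$ at $5$ and $11$ are $e_5=\epsilon_5(h)\epsilon_5(g)=\epsilon_5(g)$ and $e_{11}=-\epsilon_{11}(h)\epsilon_{11}(g)=-\epsilon_{11}(h)$, so by Remark~\ref{whichd} the corresponding vector $\widetilde{f_1}$ sits inside $\widetilde{M}_{6,4}(\widehat{K(\Lattice_1)}^+)^{\theta_{d_1}}$, where $\Lattice_1$ is the special lattice with $D^-=11$, $D^+=5$ and $d_1\mid 55$ is determined by the two signs above.

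On the other side, Theorem~\ref{thm:Rainer1} with $D=D^-=5$ transfers $F$ to an eigenform $f\in M_{6,4}(\widehat{K}(5))_G$; applying the Roberts-Schmidt operators $\theta,\theta',\eta$ at the prime $11$ (as in Remark~\ref{oldF}) produces oldforms at paramodular level $55$, and combining $\theta\pm\theta'$ I can arrange both Atkin-Lehner eigenvalues at $5$ and $11$ to match those of $f_1$. The resulting oldform corresponds via Theorem~\ref{GU2SO5iso} to an eigenvector $\widetilde{f}$ in $\widetilde{M}_{6,4}(\widehat{K(\Lattice_2)}^+)^{\theta_{d_2}}$ for the distinct special lattice $\Lattice_2$ with $D^-=5$, $D^+=11$. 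Both spaces should contain an isomorphic Hecke-simple submodule attached to a common paramodular newform $H\in S_{7,2}(P(55))_G$ of level $55$. As in the proof of Theorem~\ref{mod7}, one expects integrally that inside the first space $\widetilde{f_1}$ lies modulo $\lambda$ in the $H$-submodule, and inside the second space $\widetilde{f}$ lies modulo $\lambda$ in the corresponding $H$-submodule; transporting the congruence across the canonical Hecke isomorphism of the two copies of $H$ then yields $\widetilde{f}\equiv\widetilde{f_1}\pmod{\lambda}$, hence the desired congruence of $T(p)$-eigenvalues for all $p\nmid 55$. The remaining prime $p=11$ can be cleared by repeating the argument with a different auxiliary prime, the next candidates being $q=233$ and $q=487$.

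The main obstacle is purely computational. By Theorem~\ref{Slambda}, $\dim W_{6,4}=260$, a substantial jump from the $\dim W_{2,0}=14$ employed in Theorem~\ref{mod7}, and both genera of special quinary lattices of determinant $2\cdot 55=110$ (corresponding to $(D^-,D^+)=(11,5)$ and $(5,11)$) are likely to contain many tens of isometry classes. The Hecke matrices will therefore be several orders of magnitude larger than in any previous congruence of this paper. Identifying the relevant Hecke-simple submodules, constructing compatible integral bases across the two distinct genera, and verifying the mod-$\lambda$ congruence of eigenvectors will be the bulk of the work; no new conceptual difficulty is anticipated, but an efficient implementation (e.g.\ extending the Pari/GP package used for Theorem~\ref{mod7}) is essential.
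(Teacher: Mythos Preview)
Your approach is exactly the paper's: choose the auxiliary prime $q=11$ (since $11^4\equiv 1\pmod{61}$), take $h\in S_4(\Gamma_0(11))$ (LMFDB label \texttt{11.4.a.a}) to form the Yoshida lift $\iota(h,g)$, and run the two-genus argument of Theorem~\ref{mod7}. The computation, despite $\dim W_{6,4}=260$, was in fact carried out successfully, so your anticipated obstacle does not materialise; note also that you have tacitly (and correctly) read the exponent $p^3$ in the displayed congruence as $p^{k-2}=p^5$, and that $q=367$ precedes $q=487$ among the next admissible auxiliary primes.
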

\begin{proof}
  Take the form $h\in S_{4}(\Gamma_0(11))$ of label
  \lmfdbfform{11}{4}{a}{a} to construct the Yoshida lift, and proceed
  as in the proof of Theorem \ref{mod7}. Note that $11^4\equiv 1\pmod{61}$.
\end{proof}

We also proved the following additional two examples.
\begin{thm}
  In the previous notation, let $N = 42$, $k = 3$, $j=2$, and
  $\ell = 13$. Let $g$ be the newform \lmfdbfform{42}{6}{a}{d} and $F$ the
  paramodular form in $S_{3, 2}(P(42))$. Then, for all primes $p\nmid 42$,
\[\lambda_F(p) \equiv a_p(g) + p + p^4\pmod{13}.
\]
\end{thm}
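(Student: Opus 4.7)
The plan is to mirror the proof of Theorem~\ref{mod7}, specialised to the parameters $(N, k, j, \ell) = (42, 3, 2, 13)$. Since $5^2 \equiv -1 \pmod{13}$, the prime $q = 5$ satisfies $q^{j+2} = q^4 \equiv 1 \pmod{13}$, so the congruence ``of local origin'' (\cite{MR3227346}, \cite{MR3512875}) produces a cusp form $h \in S_4^{\text{new}}(\Gamma_0(5))$ with $a_p(h) \equiv 1 + p^3 \pmod{13}$ for every $p \neq 5$; the relevant $h$ is \lmfdbfform{5}{4}{a}{a}, and the congruence is verified by inspection of the first few Hecke eigenvalues. I would then apply Proposition~\ref{prop:yoshida} with $D^- = 5$, $D^+ = 42$, $d_- = 5$ (so $\omega(d_-) = 1$ is odd), $d_+ = 1$, $c_+ = 42$ to obtain a Yoshida lift $f_1 := \iota(h, g) \in M_{2,0}(\widehat K(210))_Y$ for the definite quaternion algebra $B_1$ ramified at $\{5, \infty\}$. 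The $T(p)$-eigenvalue of $f_1$ at $p \nmid 210$ equals $p\,a_p(h) + a_p(g) \equiv a_p(g) + p + p^4 \pmod{13}$, matching the desired right-hand side; Theorem~\ref{ALsignchange}(3) and Remark~\ref{whichd} then pin down the unique $d_1 \mid 210$ such that the transfer $\tilde f_1$ lies in $\widetilde M_{2,0}(\widehat{K(\Lattice_1)}^+)^{\theta_{d_1}}$, where $\Lattice_1$ is the special quinary lattice of determinant $420$ associated to $(D^-, D^+) = (5, 42)$.

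In parallel, Theorem~\ref{thm:Rainer1} applied with $D = D^- = 42$ transfers $F \in S_{3,2}(P(42))_G$ to an eigenform $f \in M_{2,0}(\widehat K(42))_G$ on the algebra $B_2$ ramified at $\{2, 3, 7, \infty\}$. Applying the Roberts--Schmidt level-raising operators at $5$ as in Remark~\ref{oldF}, I produce an oldform in $M_{2,0}(\widehat K(210))_G$ whose Atkin-Lehner eigenvalue at $5$ is chosen to match the sign appearing on $f_1$, and whose eigenvalues $e_2, e_3, e_7$ are prescribed by $F$ via Theorem~\ref{ALsignchange}(1). Its transfer lies in $\widetilde M_{2,0}(\widehat{K(\Lattice_2)}^+)^{\theta_{d_2}}$, where $\Lattice_2$ is the special quinary lattice of determinant $420$ associated to $(D^-, D^+) = (42, 5)$.

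The crux of the argument is that both spaces $\widetilde M_{2,0}(\widehat{K(\Lattice_i)}^+)^{\theta_{d_i}}$ contain a common paramodular newform $H \in S_{3,2}^{\text{new}}(P(210))_G$, realised through Theorem~\ref{thm:Rainer1} for each of the two decompositions $210 = 5 \cdot 42 = 42 \cdot 5$. Using the neighbour-operator algorithm of \cite{RT} (cf.\ Proposition~\ref{Hecke}) over $\Z$, I would compute each of the two spaces, decompose them into Hecke eigenspaces, identify the eigenvectors corresponding to $f_1$, the $F$-oldform, and $H$ in the respective settings, and verify by direct linear algebra the two mod-$13$ congruences $\tilde f_1 \equiv H$ (in setup~1) and $(F\text{-oldform}) \equiv H$ (in setup~2). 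Transitivity through $H$ then yields $\lambda_F(p) \equiv p\,a_p(h) + a_p(g) \equiv a_p(g) + p + p^4 \pmod{13}$ for every $p \nmid 210$. The remaining prime $p = 5$ would be handled by repeating the whole argument with a different auxiliary, e.g.\ $q = 31$ or $q = 47$, both of which satisfy $q^4 \equiv 1 \pmod{13}$ and are coprime to $42$.

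The main obstacle is computational feasibility rather than conceptual: by Theorem~\ref{Slambda} the representation $W_{2,0}$ is $10$-dimensional, and the genera of special quinary lattices of determinant $420$ have non-trivial class number, so the spaces $\widetilde M_{2,0}(\widehat{K(\Lattice_i)}^+)^{\theta_{d_i}}$ may easily reach into the hundreds of dimensions. The existence of the two pivotal mod-$13$ congruences with $H$ is not guaranteed \emph{a priori}, and must be exhibited numerically. Care must also be taken to track Atkin-Lehner data correctly, so that $f_1$, the $F$-oldform, and $H$ are located in matching $\theta_d$-isotypic components in the two setups, and to decompose the relevant Hecke modules over $\Z$ (rather than over $\Q$) so that the congruences modulo $13$ are observable at the level of eigenvectors.
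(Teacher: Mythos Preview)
Your proposal is correct and follows essentially the same route as the paper's proof: choose the auxiliary prime $q=5$, build the Yoshida lift from $h=\lmfdbfform{5}{4}{a}{a}$ and $g=\lmfdbfform{42}{6}{a}{d}$, and link the $F$-oldform to $\tilde f_1$ through a paramodular newform of level $210$ in the two quinary-lattice models, exactly as in Theorem~\ref{mod7}. The only point the paper adds is a computational wrinkle you would hit in practice: since $2,3,5,7$ all divide $qN=210$, the Hecke decomposition must be carried out with $T(11)$ rather than $T(2)$.
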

\begin{proof}
  Proceed as in the previous examples, using the form
  \lmfdbfform{5}{4}{a}{a} to construct the Yoshida lift. The only
  difference with the other examples is that since the primes $2,3,5$ and $7$ all divide $qN$, we have to decompose the space using $T(11)$, whereas in earlier examples we always used $T(2)$.
\end{proof}

\begin{thm}
  In the previous notation, let $N = 13$, $k = 3$, $j=4$, and
  $\ell = 11$. Let $g$ be the newform \lmfdbfform{13}{8}{a}{a} and $F$ the
  paramodular form in $S_{3, 4}(P(13))_G$. Then, for all primes $p\neq 13$, 
\[\lambda_F(p) \equiv a_p(g) + p + p^6\pmod{11}.
\]
\end{thm}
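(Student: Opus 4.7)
The plan is to follow the strategy of Theorem~\ref{mod7} with the obvious substitutions $(k,j,\ell,N)=(3,4,11,13)$. First I would rewrite the right-hand side as $a_p(g)+p(1+p^5)$ and observe that, modulo $\ell=11$, the factor $1+p^5$ agrees with the $T(p)$-eigenvalue of the weight $j+2=6$, level $1$ Eisenstein series. Since this Eisenstein series is not cuspidal, I would invoke the Billerey--Menares / Dummigan--Fretwell theorem on congruences of local origin (\cite{MR3512875}, \cite[Thm 1.1]{MR3227346}) to replace it modulo $\lambda$ by a cuspidal newform $h\in S_6^{\mathrm{new}}(\Gamma_0(q))$ at an auxiliary prime $q$ satisfying $q^{j+2}=q^6\equiv 1\pmod{11}$. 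Since the $6$-th powers modulo $11$ are precisely the squares, this forces $q\equiv\pm 1\pmod{11}$; the smallest admissible choice is $q=23$, and one verifies the existence of a suitable newform $h$ in $S_6^{\mathrm{new}}(23)$ (alternatively $q=43$ would work).

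Second, I would produce the Yoshida lift $f_1=\iota(h,g)\in M_{4,0}(\widehat{K}(13\cdot 23))$ via Proposition~\ref{prop:yoshida} with $D^-=13\cdot 23$, $d_-=23$ (so $\omega(d_-)=1$ is odd), and $D^+=1$; the required weights $2+a-b=6$ and $4+a+b=8$ for $(a,b)=(4,0)$ match those of $h$ and $g$. By Theorem~\ref{ALsignchange}(3), the Atkin-Lehner eigenvalues of $\omega_{13}$ and $\omega_{23}$ on $f_1$ are $-\epsilon_{13}(g)$ and $-\epsilon_{23}(h)$ respectively. In parallel, starting from the target newform $F\in S_{3,4}(P(13))_G$, I would apply the level-raising operators $\theta,\theta',\eta$ at $23$ (cf. Remark~\ref{oldF}) to manufacture an oldform corresponding to some $f\in M_{4,0}(\widehat{K}(13\cdot 23))$ (for the other relevant genus, $D^-=13$, $D^+=23$) whose eigenvalues for $\omega_{13}$ and the Atkin-Lehner operator at $23$ match those of $f_1$; Remark~\ref{whichd} then determines a unique $d\mid 13\cdot 23$ with $\gcd(d,(13\cdot 23)/d)=1$ for which both $\tilde f_1$ and $\tilde f$ land in the same sign space $\widetilde M_{4,0}(\widehat{K(L)}^+)^{\theta_d}$ in each of the two lattice genera.

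Third, as in Theorem~\ref{mod7}, I would compute these two spaces explicitly using the algorithm of \cite{RT} implemented in \cite{rama_quinary}, with coefficients in the $35$-dimensional representation $W_{4,0}$ of $\mathrm{SO}_5(\C)$ (cf. Theorem~\ref{Slambda}), and decompose each under the action of a single Hecke operator $T(p)$ for $p$ coprime to $11\cdot 13\cdot 23$. The expectation, parallel to the decompositions $A_1\oplus A_2\oplus A_3$ and $B_1\oplus B_2\oplus B_3$ in the proof of Theorem~\ref{mod7}, is that each space contains the respective contribution ($\tilde f_1$ or $\tilde f$), possibly some other Yoshida-type contributions, and a common genuine paramodular newform eigenspace at level $13\cdot 23$ of weight $(3,4)$. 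Establishing that the two occurrences of this common eigenspace are identified, and that modulo $\lambda$ (a prime above $11$) the sublattices generated by $\tilde f_1$ and $\tilde f$ are contained in this common eigenspace, would give the desired congruence for all $p\nmid 11\cdot 13\cdot 23$. Combining the theorem's assertion for all $p\neq 13,23$ coming from this $q=23$ argument with the same argument run for another admissible auxiliary prime (e.g.\ $q=43$) disposes of the prime $p=23$ as well.

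The main obstacle will be the explicit computer verification in these $W_{4,0}$-valued spaces: compared to the scalar-valued setting of \S\ref{61and43}, the coefficient module is substantially larger, so the linear algebra over a residue field of characteristic $11$ is more delicate, and one must check not only that an integral congruence $\tilde f\equiv c\,\tilde f_1\pmod\lambda$ holds (with the correct scaling $c$) but also that the bridging newform at level $13\cdot 23$ is genuinely of general type rather than another endoscopic form, so that its $\bmod\ \lambda$ Galois representation can transmit the congruence across the two genera as in Theorem~\ref{mod7}.
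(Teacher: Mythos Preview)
Your strategy is exactly that of the paper: take the auxiliary prime $q=23$ and the newform $h=\lmfdbfform{23}{6}{a}{a}$, construct the Yoshida lift, and run the two-genus argument of Theorem~\ref{mod7}.

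There is one genuine slip in your parameterisation, however. You set $D^-=13\cdot 23$, $D^+=1$; but then $\omega(D^-)=2$ is even, so the quaternion algebra ramified at $\{13,23,\infty\}$ is not definite and the whole framework of \S\ref{section:spaces} collapses. Following the proof of Theorem~\ref{mod7} (where $D^-=13$, $D^+=19$ for the Yoshida-lift genus), the correct choice here is $D^-=23$, $D^+=13$, with $d_-=23$, $d_+=1$, $c_+=13$; this gives $h\in S_6^{\mathrm{new}}(d_-d_+)=S_6^{\mathrm{new}}(23)$ and $g\in S_8^{\mathrm{new}}\!\left(\tfrac{D^-}{d_-}c_+\right)=S_8^{\mathrm{new}}(13)$ as required. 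Consequently, by Theorem~\ref{ALsignchange}(3) the Atkin--Lehner eigenvalue of $f_1$ at $13$ is $+\epsilon_{13}(g)$ (since $13\mid D^+$), not $-\epsilon_{13}(g)$ as you wrote; only at $23\mid D^-$ do you pick up the extra sign. With this correction the remainder of your plan---the second genus $D^-=13$, $D^+=23$ for the oldform coming from $F$, the bridging via a common level-$13\cdot 23$ newform, and the disposal of $p=23$ either by hand or via another $q$---is correct and is precisely what the paper's one-line proof abbreviates.
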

\begin{proof}
Take the form \lmfdbfform{23}{6}{a}{a} to construct the Yoshida lift.
\end{proof}

 \subsection{Proof of a congruence of Buzzard and Golyshev}
 We turn now to case (2). We already saw, in \S \ref{61and43}, a
 congruence, modulo a divisor of $43$, involving $F\in S_3(P(61))_G$
 and $g\in S_4(\Gamma_0(61))$. Around the end of 2010, V. Golyshev
 conjectured the existence of a second congruence for $F$, beyond the
 one involving $43$, then K. Buzzard found it experimentally, having
 realised the possibility of it involving weight $2$ rather than
 weight $4$, and computations of A. Mellit provided further
 support. The congruence is
 \begin{equation}
   \label{eq:buzzard}
\lambda_F(p)\equiv 1+p^3+pa_p(g)\pmod{\lambda},   
 \end{equation}
 where $g\in S_2(\Gamma_0(61))$ (\lmfdbfform{61}{2}{a}{b}) is a
 newform with cubic coefficient field $E$, $\epsilon_{61}=-1$, and
 $\lambda$ is a prime divisor of $19$ in $E$.

 If this is true then inside $\rhobar_F$ we get a non-trivial extension of $\Fbar_{19}$ by $\rhobar_g(-1)$ ($\rhobar_g(2-k)$ in general), which is connected by the Bloch-Kato conjecture to $L(3,g)$ ($L(j+k)$ in general), but since this is a non-critical value (not in the range $1\leq s\leq (j+2)-1$) we cannot detect the factor computationally.

 \begin{thm}
   The congruence~(\ref{eq:buzzard}) holds, for all primes $p\neq 61$.
\label{thm:buzzard}
 \end{thm}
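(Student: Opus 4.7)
The plan is to adapt the strategy used for Theorem~\ref{mod7}: rewrite the right-hand side of~(\ref{eq:buzzard}) as the Hecke eigenvalue of a Yoshida lift, place both $F$ and this lift in a single space of algebraic modular forms, and verify the congruence there. Unlike the situation of~\S\ref{61and43}, the modulus $19$ is not detected by a critical value of $L(s,g)$, so there is no direct analytic mechanism forcing $f$ and the Yoshida lift $f_1$ to be congruent as vectors; instead we will exhibit an intermediate general-type newform of paramodular level $61\cdot 37$ whose mod-$19$ Hecke eigensystem coincides with that of both.

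\emph{Step 1: the auxiliary weight-$4$ form.} First I would locate a cuspidal newform $h\in S_4(\Gamma_0(37))$ satisfying $a_p(h)\equiv 1+p^3\pmod{19}$ for all $p\neq 37$, i.e.\ an Eisenstein congruence of local origin at $q=37$ mod $19$ (cf.~\cite{MR3512875}). Its Atkin-Lehner eigenvalue $\epsilon_{37}(h)$ can be read off from the LMFDB.

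\emph{Step 2: the Yoshida lift.} Apply Proposition~\ref{prop:yoshida} with $(a,b)=(0,0)$, $d_-=D^-=61$, $d_+=1$, $c_+=D^+=37$ to obtain
\[
  f_1:=\iota(g,h)\in M_{0,0}(\widehat{K}(61\cdot 37))_Y,
\]
whose spin $L$-function is $L(s-1,g)L(s,h)$. The resulting $T(p)$-eigenvalue at $p\nmid 61\cdot 37$ is $p\,a_p(g)+a_p(h)\equiv p\,a_p(g)+1+p^3\pmod{19}$, which is exactly the right-hand side of~(\ref{eq:buzzard}). By Theorem~\ref{ALsignchange}(3), together with $\epsilon_{61}(g)=-1$, $\epsilon_{61}(h)=\epsilon_{37}(g)=1$, the Atkin-Lehner data of $f_1$ is $e_{61}(f_1)=1$, $e_{37}(f_1)=\epsilon_{37}(h)$.

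\emph{Step 3: a common ambient space.} The form $F\in S_3(P(61))_G$ has $w_{61}(F)=-1$ (as in Example~\ref{RT61}), so the associated newform in $M_{0,0}(\widehat{K}(61))_G$ has $e_{61}=+1$ by Theorem~\ref{ALsignchange}(1). Applying Roberts--Schmidt's level-raising operators at $37$ (cf.~Remark~\ref{oldF}) produces an oldform $f\in M_{0,0}(\widehat{K}(61\cdot 37))$ with $e_{37}(f)=\epsilon_{37}(h)$. Thus, by Corollary~\ref{coro:ALdecomposition} and Remark~\ref{whichd}, both $\tilde f$ and $\tilde f_1$ lie in a single isotypic component $\widetilde M_{0,0}(\widehat{K(\Lattice)}^+)^{\theta_d}$, for the unique $d\in\{1,37,61,61\cdot 37\}$ dictated by the shared Atkin-Lehner signs; this uses a suitable special quinary lattice $\Lattice$ of determinant $2\cdot 61\cdot 37$ with $\HW_{61}=-1$ and Eichler invariant $+1$ at $37$.

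\emph{Step 4: the bridging newform.} Using Proposition~\ref{Hecke} and Theorem~\ref{GU2SO5iso}, compute the simple Hecke-module decomposition of $\widetilde M_{0,0}(\widehat{K(\Lattice)}^+)^{\theta_d}$ over $\QQ$ and then over $\Fbar_{19}$. Among the rational eigensystems one expects to find a new eigenform $\tilde f_2$ of general type at paramodular level $61\cdot 37$ whose reduction mod $19$ gives the same Hecke eigensystem as $\tilde f\pmod{19}$ \emph{and} as $\tilde f_1\pmod{19}$. The congruence $\tilde f\equiv\tilde f_2\pmod{19}$ is a level-raising congruence from level $61$ to $61\cdot 37$ (the composition factors of $\rhobar_F$ being $\{\rhobar_g(-1),\Fbar_{19},\Fbar_{19}(-3)\}$, which are of reducible type~(2) from the taxonomy of \S\ref{section:congruences}); the congruence $\tilde f_1\equiv\tilde f_2\pmod{19}$ is the Eisenstein congruence encoded by $h\equiv E_4\pmod{19}$. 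Transitivity then yields $\lambda_F(p)\equiv \lambda_{f_1}(p)\pmod{\lambda}$ at all $p\nmid 61\cdot 37$. The remaining prime $p=37$ is then handled by noting that $\rhobar_F$, having Artin conductor dividing $61$, is unramified at $37$, so the Chebotarev density of the already-established congruence determines the trace of $\Frob_{37}^{-1}$ on $\rhobar_F$ uniquely.

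\emph{Main obstacle.} I expect Step~4 to be the principal difficulty. Because no critical $L$-value is available to certify the enlarged mod-$19$ Hecke eigenspace, the existence and shape of $\tilde f_2$ must be discovered experimentally; one has to compute enough Hecke operators on a relatively large space of quinary orthogonal modular forms to confirm that $\tilde f, \tilde f_1, \tilde f_2$ all reduce to the same mod-$19$ eigensystem, ruling out the possibility of a numerical coincidence restricted to a few primes. The Eisenstein input in Step~1 at the prime $37$ (with $37\equiv-1\pmod{19}$) is a secondary subtlety, since this is not the cleanest regime for Eisenstein congruences of local origin and requires verifying the specific newform $h$ satisfies the required congruence for all small primes.
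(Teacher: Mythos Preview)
Your setup (Steps~1--3) matches the paper exactly: auxiliary prime $q=37$, the form $h$ is \lmfdbfform{37}{4}{a}{a} with $\epsilon_{37}=-1$, the Yoshida lift $f_1=\iota(g,h)$ has $(e_{61},e_{37})=(+1,-1)$, and an oldform $f$ coming from $F$ with the same Atkin--Lehner data puts both $\tilde f$ and $\tilde f_1$ into $\widetilde M_{0,0}(\widehat{K(\Lattice)}^+)^{\theta_{37}}$ for the single genus with $D^-=61$, $D^+=37$. (Unlike Theorem~\ref{mod7}, only one genus is used.)

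The gap is in Step~4. You expect a bridging newform $\tilde f_2$ with $\tilde f\equiv\tilde f_2$ and $\tilde f_1\equiv\tilde f_2$ modulo~$19$, then transitivity. If by ``$\equiv$'' you mean collinearity of the reduced eigenvectors, this is \emph{false}: the paper computes that the mod-$19$ kernel of $T(2)+7$ on $\widetilde M_{0,0}(\widehat{K(\Lattice)}^+)^{\theta_{37}}\otimes\FF_{19}$ is only $2$-dimensional, and the four relevant $\Z_{19}$-eigenvectors (one from $\tilde f$, one from the orbit of $\tilde f_1$, two from the level-$61\cdot 37$ newform block) reduce to vectors $v_1,v_2,v_3,v_4$ \emph{no two of which are collinear}. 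So no pair shares a line, and the simple transitivity picture does not hold. If instead you mean congruence of Hecke eigensystems, then ``computing enough Hecke operators'' verifies only finitely many primes; you have not supplied a mechanism valid for all~$p$.

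The paper's argument fills exactly this gap. Since each $v_i$ is an eigenvector for every $T(p)$ and any three of them span the $2$-dimensional space, if some $T(p)$ had two distinct eigenvalues on that space then a third $v_i$ (on neither eigenline) could not be an eigenvector. Hence every $T(p)$ (and $T_1(p^2)$) acts as a scalar on the whole $2$-dimensional space, forcing all four mod-$19$ eigensystems to coincide. This pigeonhole/multiplicity-failure step is the crucial idea you are missing, and is precisely the subtlety flagged in the introduction: the eigenvectors are \emph{not} congruent mod~$\lambda$, yet are forced into a common mod-$\lambda$ Hecke eigenspace by the mere presence of the extra newforms at level $61\cdot 37$. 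The paper then handles $p=37$ by a direct check rather than by your Chebotarev argument.
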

 \begin{proof}
 To prove the congruence, we interpret $1+p^3+pa_p(g)$ as congruent
 mod $\lambda$ to $a_p(h)+pa_p(g)$, where $h\in S_4(\Gamma_0(q))$ is a
 newform congruent mod $\lambda$ to the level $1$ Eisenstein series of
 weight $4$, with $q^4\equiv 1\pmod{19}$. The smallest $q$ we can use
 is $q=37$, and $h$ is \lmfdbfform{37}{4}{a}{a}, with
 $\epsilon_{37}=-1$. Since $h$ has coefficient field of degree $4$, we
 need to replace $E$ by its compositum with this field, and $\lambda$
 by a suitable divisor of the original.
 
By Proposition \ref{prop:yoshida}, with $a=0, b=0$, $d_-=D^-=61$, $c_-=D^+=37$, we have $f_1=\iota(g, h)\in M_{0,0}(\widehat{K}(61\cdot 37))$. By Theorem \ref{ALsignchange}, $\omega_{61}$ and $W_{37}$ have eigenvalues $e_{61}=+1, e_{37}=-1$ on $f_1$. Hence the corresponding $\tilde{f_1}$ lives in $\widetilde{M}_{0,0}(\widehat{K(\Lattice)}^+)^{\theta_{37}}$. Using Theorem \ref{ALsignchange} ($w_{61}=-1\implies e_{61}=+1$) and Remark \ref{oldF}, we may produce an oldform $f$ associated to $F$, with $\tilde{f}$ in the same $\widetilde{M}_{0,0}(\widehat{K(\Lattice)}^+)^{\theta_{37}}$.

If we compute the Hecke operator $T(2)$ restricted to
$\widetilde{M}_{0,0}(\widehat{K(\Lattice)}^+)^{\theta_{37}}$, its
characteristic polynomial factors as $(x+7)\cdot p_2(x)\cdot p_3(x)$,
where $p_2$ and $p_3$ are irreducible of degree $12$ and $211$
respectively. Working with $\Z$-coefficients, let $C_1$, $C_2$, $C_3$ be $\Z$-submodules of $\widetilde{M}_{0,0}(\widehat{K(\Lattice)}^+)^{\theta_{37}}$ killed by $T(2)+7$, $p_2(T(2))$ and $p_3(T(2))$, respectively. The space $C_1$ corresponds to $\tilde{f}$, $C_2$
to $\tilde{f}_1$ (and its Galois conjugacy class) and $C_3$ likewise to a paramodular newform of level $37\cdot 61$ for $k=3$, $j=0$.

The kernel of $T(2)+7$ on $\widetilde{M}_{0,0}(\widehat{K(\Lattice)}^+)^{\theta_{37}}\otimes\FF_{19}$  has dimension $2$. In $\widetilde{M}_{0,0}(\widehat{K(\Lattice)}^+)^{\theta_{37}}\otimes\Z_{19}$ we find 
four $T(2)$-eigenspaces with eigenvalues congruent to $-7$ modulo $19$, all rank-one and therefore common eigenspaces for all the $T(p)$ and $T_1(p^2)$ ($p\nmid 61\cdot 37$). The line $C_1\otimes\Z_{19}$ has eigenvalue $-7$, inside $C_2\otimes\Z_{19}$ we have a line with eigenvalue $-7+10\cdot19+8\cdot19^2+\cdots$, inside $C_3\otimes\Z_{19}$ eigenvalues $-7+15\cdot 19+2\cdot 19^2+\cdots$ and $-7+18\cdot19+10\cdot19^2+\cdots$. 

We find that when we take four eigenvectors spanning these eigenspaces, and reduce them mod $19$, say to $\{v_1, v_2, v_3, v_4\}$, which lie in the aforementioned $2$-dimensional kernel of $T(2)+7$ on $\widetilde{M}_{0,0}(\widehat{K(\Lattice)}^+)^{\theta_{37}}\otimes\FF_{19}$, no two of them is collinear. Now for any $p\nmid 61\cdot 37$, consider the eigenvalues $\mu_1, \mu_2, \mu_3, \mu_4\in\FF_{19}$ of $T(p)$ acting on $v_1, v_2, v_3, v_4$ respectively. If $\mu_1\neq \mu_2$ then $v_3$, being in neither the $\mu_1$-eigenspace nor the $\mu_2$-eigenspace, would not be an eigenvector. This kind of contradiction shows that all four vectors lie in a single $2$-dimensional simultaneous eigenspace for all the $T(p)$ and $T_1(p^2)$ ($p\nmid 61\cdot 37$). This implies the congruence we were to prove. (It may be checked by hand for the auxiliary prime $p=37$.)
 \end{proof}

\begin{remar} The fact that the simultaneous eigenspace is not $1$-dimensional may be viewed as a ``multiplicity-one failure'', analogous to that discovered by Ribet and Yoo for certain Eisenstein ideals at composite level \cite[Example 4.7, Remark 4.11]{MR3473658}.
\end{remar}

The above computations were also performed using the Pari/GP package. Note that  our paramodular form has reducible residual representation modulo $19$, and is congruent to a newform of paramodular level $61 \cdot q$ (with $q=37$). The next $q$ such that $q^4\equiv 1\pmod{19}$ is $q=113$. This is rather large to reprove the congruence. 

\subsubsection{Example for $N = 89$, $\ell = 5$}
As in the case of $N = 61$, we also have a ``second'' congruence for $N=89$, involving a modular form of weight $2$.
Let $g_1$, $g_2$ be the classical modular forms of level $89$, weight $2$ and sign $-1$ in their $L$-functions, with LMFDB labels \lmfdbfform{89}{2}{a}{b} and \lmfdbfform{89}{2}{a}{c} respectively.
Their coefficient fields have degree $1$ and $5$ respectively, and we denote by $E$ the second one.
It is easy to prove in Sage that
\[a_p(g_1)\equiv a_p(g_2)\pmod{\lambda}\]
where $\lambda$ is a prime dividing $5$ in $E$, using the class \texttt{CuspForms} and the method
\texttt{Hecke\_matrix} to compute the Hecke matrix at $2$ and prove the congruence for the corresponding
eigenspaces.

Using the same method as before we have proved the following.
\begin{thm}
The following congruences hold, for all primes $p\neq 89$:
\[\lambda_F(p)\equiv pa_p(g_1) + 1 + p^3\pmod{5};\]
\[\lambda_F(p)\equiv pa_p(g_2) + 1 + p^3\pmod{\lambda},\]
where $\lambda$ is a prime divisor of $5$ in $E$.
\end{thm}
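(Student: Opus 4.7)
The plan is to adapt the strategy used for the Buzzard--Golyshev congruence at $N=61$. Note first that since $a_p(g_1)\equiv a_p(g_2)\pmod{\lambda}$ (as stated just before the theorem), the two congruences are equivalent and it suffices to prove the one involving $g_1$, which has rational Hecke eigenvalues. The idea is to replace $1+p^3$ by $a_p(h)$, where $h$ is a weight $4$ newform on $\Gamma_0(q)$ (for some auxiliary prime $q$) congruent to the level-one Eisenstein series $E_4$ modulo a prime above $5$. By the theorem of Billerey--Menares (or of Dummigan--Fretwell--Greenberg), such an $h$ exists as soon as $q^4\equiv 1\pmod 5$, which by Fermat's little theorem is automatic for every prime $q\neq 5$; among the small choices we will pick $q$ so that $h$ has Atkin-Lehner sign $\epsilon_q(h)=-1$, and small enough that the associated space of algebraic modular forms is computationally tractable.

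Next, apply Proposition~\ref{prop:yoshida} with $a=b=0$, $D^-=89$, $d_-=89$, $d_+=1$, $c_+=q$ to form the Yoshida lift $f_1=\iota(g_1,h)\in M_{0,0}(\widehat{K}(89q))_Y$. By Theorem~\ref{ALsignchange}(3), the Atkin-Lehner eigenvalues on $f_1$ are $e_{89}=-\epsilon_{89}(g_1)\epsilon_{89}(h)=+1$ and $e_q=\epsilon_q(g_1)\epsilon_q(h)=-1$. On the other hand, the newform $F\in S_3(P(89))_G$ from the earlier example at $N=89$ has (as may be verified by the same argument that gave $w_{61}=-1$) paramodular eigenvalue $w_{89}=-1$, so by Theorem~\ref{ALsignchange}(1) the associated $f\in M_{0,0}(\widehat{K}(89))$ has $e_{89}=+1$. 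Applying one of Roberts--Schmidt's level-raising operators ($\theta-\theta'$) at $q$ we produce an oldform $f\in M_{0,0}(\widehat{K}(89q))_G$ with $e_q=-1$ as well. By Theorem~\ref{GU2SO5iso} and Remark~\ref{whichd}, both $\tilde f$ and $\tilde f_1$ sit in $\widetilde{M}_{0,0}(\widehat{K(\Lattice)}^+)^{\theta_q}$ for the special quinary lattice $\Lattice$ of determinant $2\cdot 89\cdot q$ with $\HW_v(Q)=-1$ at $89$ and $\infty$ and Eichler invariant $+1$ at $q$.

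The heart of the proof is then a concrete computation in that space, exactly as in Theorem~\ref{thm:buzzard}: restrict a small Hecke operator (say $T(2)$, provided $q\neq 2$) and factor its characteristic polynomial, identifying the simple $\Z$-submodule containing $\tilde f$ and the one containing (the Galois conjugacy class of) $\tilde f_1$, together with a third piece coming from a genuine paramodular newform at level $89q$. One then verifies that after base change to $\FF_5$ (or the appropriate residue field), the reductions of these rank-one simultaneous eigenspaces all lie inside a single $T(p)$-eigenspace of dimension $\geq 2$. Running the same ``multiplicity-one failure'' argument as in the proof of Theorem~\ref{thm:buzzard} forces all the reduced eigenvectors into a single common simultaneous mod-$\lambda$ eigenspace for every $T(p)$ and $T_1(p^2)$ with $p\nmid 89q$, yielding $\lambda_F(p)\equiv a_p(h)+p\,a_p(g_1)\pmod{\lambda}$. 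Substituting $a_p(h)\equiv 1+p^3\pmod{\lambda}$ gives the stated congruence for $p\nmid 89q$, and the case $p=q$ is checked by hand (or, redundantly, by repeating the argument with a different $q$).

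The main obstacle is the computational one: the dimension of $\widetilde{M}_{0,0}(\widehat{K(\Lattice)}^+)^{\theta_q}$ grows quickly with $q$, so one needs the smallest $q$ for which a suitable $h$ with $\epsilon_q(h)=-1$ exists in $S_4(\Gamma_0(q))$. Inspecting small levels, $q=7$ (with $h=$\lmfdbfform{7}{4}{a}{a}) or $q=11$ (with $h=$\lmfdbfform{11}{4}{a}{a}) are natural candidates; one must check that the relevant Atkin-Lehner sign is indeed $-1$ and that $h$ is congruent mod $5$ to $E_4$. The only genuinely subtle point is the same as in the $N=61$ case: the congruence passes through a mod-$\lambda$ eigenspace of dimension $>1$ linking the Yoshida lift, the oldform coming from $F$, and a newform of level $89q$, rather than through a direct term-by-term identification of eigenvectors.
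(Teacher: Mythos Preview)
Your proposal is correct and follows essentially the same route as the paper: the authors take $q=7$ (one of your two candidates), use $h=\text{7.4.a.a}$, set $D^-=89$, $D^+=7$, and proceed exactly as in the Buzzard--Golyshev proof. One small point: your requirement that $\epsilon_q(h)=-1$ is unnecessary. As in Remark~\ref{oldF}, from the newform $F$ of level $89$ one can produce oldforms at level $89q$ with \emph{either} Atkin--Lehner sign at $q$ (via $\theta\pm\theta'$), so whatever $\epsilon_q(h)$ turns out to be, you can match it; the choice of $q$ is governed only by computational feasibility and the Eisenstein congruence $q^4\equiv 1\pmod 5$, which as you note holds for every $q\neq 5$.
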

\begin{proof}
Take $q = 7$, $d_- = 1$, $D^- = 89$ and $D^+ = 7$, and proceed as with the previous example using
the form $h\in S_4(\Gamma_0(7))$ of label \lmfdbfform{7}{4}{a}{a} to construct the Yoshida lift.
\end{proof}

In this case all primes $q\neq5$ satisfy $q^4\equiv1\pmod{5}$. We checked that there exists a congruence
modulo $5$ of the paramodular form $F$ of level $89$ and a paramodular newform of level $5\cdot q$ for $q = 2, 3, 7, 11$. So once again, the following seems a very natural question.
\begin{ques}\label{lraise2}
  Consider a new Hecke eigenform $F\in S_{k,j}(P(N))_G$ such that (for
  some $\ell>j+2k-2$) $\rhobar_F$ is reducible of type (2). Is it true
  that for any prime $q$ such that $q^{j+2k-2}\equiv 1\pmod{\ell}$,
  there exists a new Hecke eigenform $H\in S_{k,j}(P(qN))_G$ such that
  $\rhobar_H$ has the same composition factors as $\rhobar_F$?
\end{ques}

\bibliographystyle{alpha}
\bibliography{biblio}
\end{document}